\def\input@path{{figures/}}
\title{Hopf dreams and diagonal harmonics}
\author[N.~Bergeron]{Nantel Bergeron$^{\diamond}$} 
\address[N.~Bergeron]{Department of Mathematics and Statistics, York University, Toronto}
\email{bergeron@mathstat.yorku.ca}
\urladdr{http://www.math.yorku.ca/bergeron/}
\author[C.~Ceballos]{Cesar Ceballos$^{\star}$} 
\address[C.~Ceballos]{Institute of Geometry, TU Graz, Graz}
\email{cesar.ceballos@tugraz.at}
\urladdr{http://www.geometrie.tugraz.at/ceballos/}
\author[V.~Pilaud]{Vincent Pilaud$^{\ddagger}$} 
\address[V.~Pilaud]{CNRS \& LIX, \'Ecole Polytechnique, Palaiseau}
\email{vincent.pilaud@lix.polytechnique.fr}
\urladdr{http://www.lix.polytechnique.fr/~pilaud/}
\thanks{This project was partially supported by the project ``Austria/France Scientific \& Technological Cooperation'', which is co-financed by the Austrian Federal Ministry of Science, Research and Economy BMWFW (Project No. FR 10/2018) and by the French Ministry of Foreign Affairs and International Development (PHC Amadeus 2018 Project No. 39444WJ).
$^{\diamond}$NB was supported by NSERC and a York Research Chair.
$^\star$CC was supported by the Austrian Science Foundation FWF, grant F 5008-N15, in the framework of the Special Research Program Algorithmic and Enumerative Combinatorics.
$^\ddagger$VP was partially supported by the french ANR grants SC3A~(15\,CE40\,0004\,01) and CAPPS~(17\,CE40\,0018).
}
\newtheorem{theorem}{Theorem}[section]
\newtheorem{corollary}[theorem]{Corollary}
\newtheorem{proposition}[theorem]{Proposition}
\newtheorem{lemma}[theorem]{Lemma}
\newtheorem{definition}[theorem]{Definition}
\newtheorem{conjecture}[theorem]{Conjecture}
\theoremstyle{definition}
\newtheorem{example}[theorem]{Example}
\newtheorem{remark}[theorem]{Remark}
\newtheorem{question}[theorem]{Question}
\newcommand{\N}{\mathbb{N}} 
\newcommand{\Z}{\mathbb{Z}} 
\newcommand{\C}{\mathbb{C}} 
\newcommand{\bk}{\mathbf{k}} 
\newcommand{\set}[2]{\left\{ #1 \;\middle|\; #2 \right\}} 
\newcommand{\bigset}[2]{\big\{ #1 \;|\; #2 \big\}} 
\newcommand{\ssm}{\smallsetminus} 
\newcommand{\eqdef}{\mbox{\,\raisebox{0.2ex}{\scriptsize\ensuremath{\mathrm:}}\ensuremath{=}\,}} 
\newcommand{\fref}[1]{Figure~\ref{#1}} 
\newcommand{\ie}{\textit{i.e.}~} 
\definecolor{PineGreen}{RGB}{2,120,120} 
\definecolor{DarkGreen}{RGB}{57,181,74} 
\renewcommand{\b}[1]{{\color{blue} #1}} 
\renewcommand{\r}[1]{{\color{red} #1}} 
\newcommand{\g}[1]{{\color{DarkGreen} #1}} 
\renewcommand{\o}[1]{{\color{orange} #1}} 
\newcommand{\defn}[1]{\textbf{\textsf{\color{PineGreen} #1}}} 
\newcommand{\fS}{\mathfrak{S}} 
\newcommand{\gdproduct}{\bullet} 
\newcommand{\gdshuffle}{\shuffle_\gdproduct} 
\newcommand{\gddeconcat}{\coproduct_\gdproduct} 
\newcommand{\HS}{{\bk\fS}} 
\newcommand{\boxsize}{.35}
\newlength{\verticalOffset}
\newlength{\verticalShift}
\tikzstyle{dashed} = [dash pattern = on 2pt off 1pt]
\tikzstyle{dashdotted} = [dash pattern=on .5pt off .5pt on 1.7pt off .5pt]
\newcounter{length}
\newcommand{\length}[1]{%
	\setcounter{length}{0}%
	\foreach \x in {#1} {%
		\stepcounter{length}%
	}%
}
\newcommand{\pipeDreamMonoColor}[3]{
	\length{#3}%
	\begin{tikzpicture}[baseline = \value{length}*\verticalShift+\verticalOffset, scale=1]
		\coordinate (origin) at (0,0);
		\newcount{\y} \y=0
		\newcount{\x}
		\foreach \line in {#3} {
			\x=0
			\foreach \t in \line {
				\coordinate (W) at ($ (origin) + ( \boxsize * \x , -\boxsize * \y ) + ( 0      , \boxsize / 2 ) $);
				\coordinate (E) at ($ (origin) + ( \boxsize * \x , -\boxsize * \y ) + ( \boxsize     , \boxsize / 2 ) $);
				\coordinate (N) at ($ (origin) + ( \boxsize * \x , -\boxsize * \y ) + ( \boxsize / 2 , \boxsize     ) $);
				\coordinate (S) at ($ (origin) + ( \boxsize * \x , -\boxsize * \y ) + ( \boxsize / 2 , 0 ) $);
				\coordinate (C) at ($ (origin) + ( \boxsize * \x , -\boxsize * \y ) + ( \boxsize / 2 , \boxsize / 2 ) $);
				\ifthenelse{\equal{\t}{e}}{
					\draw[rounded corners=\boxsize * 8, thick, #1] (W) -- (C) -- (N);
					\draw[rounded corners=\boxsize * 8, thick, #1] (S) -- (C) -- (E);			
				}{
				\ifthenelse{\equal{\t}{c}}{
					\draw[color=#1, thick] (W) -- (E);
					\draw[color=#1, thick] (S) -- (N);
				}{
				\ifthenelse{\equal{\t}{t}}{
					\draw[rounded corners=\boxsize * 8, #2] (W) -- (C) -- (N);
					\draw[rounded corners=\boxsize * 8, thick, #1] (S) -- (C) -- (E);			
				}{
				\ifthenelse{\equal{\t}{b}}{
					\draw[rounded corners=\boxsize * 8, thick, #1] (W) -- (C) -- (N);
					\draw[rounded corners=\boxsize * 8, #2] (S) -- (C) -- (E);			
				}{
				\ifthenelse{\equal{\t}{tb}}{
					\draw[rounded corners=\boxsize * 8, #2] (W) -- (C) -- (N);
					\draw[rounded corners=\boxsize * 8, #2] (S) -- (C) -- (E);			
				}{
				\ifthenelse{\equal{\t}{n}}{}{\node at (C) {$\small \t$};
				}}}}}}
				\global\advance\x by 1
			}
			\global\advance\y by 1
		}
	\end{tikzpicture}%
}
\newcommand{\pipeDreamBiColor}[4]{ 
	\length{#4}%
	\begin{tikzpicture}[baseline = \value{length}*\verticalShift+\verticalOffset, scale=1]
		\coordinate (origin) at (0,0);
		\newcount{\y} \y=0
		\newcount{\x}
		\foreach \line in {#4} {
			\x=0
			\foreach \t/\colorW/\colorS in \line {
				\coordinate (W) at ($ (origin) + ( \boxsize * \x , -\boxsize * \y ) + ( 0      , \boxsize / 2 ) $);
				\coordinate (E) at ($ (origin) + ( \boxsize * \x , -\boxsize * \y ) + ( \boxsize     , \boxsize / 2 ) $);
				\coordinate (N) at ($ (origin) + ( \boxsize * \x , -\boxsize * \y ) + ( \boxsize / 2 , \boxsize     ) $);
				\coordinate (S) at ($ (origin) + ( \boxsize * \x , -\boxsize * \y ) + ( \boxsize / 2 , 0 ) $);
				\coordinate (C) at ($ (origin) + ( \boxsize * \x , -\boxsize * \y ) + ( \boxsize / 2 , \boxsize / 2 ) $);
				\ifthenelse{\equal{\t}{e}}{
					\ifthenelse{\equal{\colorW}{l}}{\draw[rounded corners=\boxsize * 8, thick, #1] (W) -- (C) -- (N);}{}
					\ifthenelse{\equal{\colorW}{r}}{\draw[rounded corners=\boxsize * 8, thick, #2] (W) -- (C) -- (N);}{}
					\ifthenelse{\equal{\colorW}{b}}{\draw[rounded corners=\boxsize * 8, #3] (W) -- (C) -- (N);}{}
					\ifthenelse{\equal{\colorS}{l}}{\draw[rounded corners=\boxsize * 8, thick, #1] (S) -- (C) -- (E);}{}
					\ifthenelse{\equal{\colorS}{r}}{\draw[rounded corners=\boxsize * 8, thick, #2] (S) -- (C) -- (E);}{}
					\ifthenelse{\equal{\colorS}{b}}{\draw[rounded corners=\boxsize * 8, #3] (S) -- (C) -- (E);}{}
				}{
				\ifthenelse{\equal{\t}{c}}{
					\ifthenelse{\equal{\colorW}{l}}{\draw[thick, #1] (W) -- (E);}{}
					\ifthenelse{\equal{\colorW}{r}}{\draw[thick, #2] (W) -- (E);}{}
					\ifthenelse{\equal{\colorS}{l}}{\draw[thick, #1] (S) -- (N);}{}
					\ifthenelse{\equal{\colorS}{r}}{\draw[thick, #2] (S) -- (N);}{}
				}{
				\ifthenelse{\equal{\t}{h}}{
					\ifthenelse{\equal{\colorW}{l}}{\draw[thick, #1] (W) -- (E);}{}
					\ifthenelse{\equal{\colorW}{r}}{\draw[thick, #2] (W) -- (E);}{}
					\ifthenelse{\equal{\colorS}{l}}{\draw[thick, #1] ($(S)+(0,\boxsize / 3)$) -- ($(N)-(0,\boxsize / 3)$);}{}
				}{
				\ifthenelse{\equal{\t}{v}}{
					\ifthenelse{\equal{\colorS}{l}}{\draw[thick, #1] (S) -- (N);}{}
					\ifthenelse{\equal{\colorS}{r}}{\draw[thick, #2] (S) -- (N);}{}
					\ifthenelse{\equal{\colorW}{r}}{\draw[thick, #2] ($(W)+(\boxsize / 3,0)$) -- ($(E)-(\boxsize / 3,0)$);}{}
				}{
				\ifthenelse{\equal{\t}{n}}{}{\node at (C) {$\small \t$};}}}}}
				\global\advance\x by 1
			}
			\global\advance\y by 1
		}
	\end{tikzpicture}%
}
\newcommand{\pipeDreamTriColor}[5]{ 
	\length{#5}%
	\begin{tikzpicture}[baseline = \value{length}*\verticalShift+\verticalOffset, scale=1]
		\coordinate (origin) at (0,0);
		\newcount{\y} \y=0
		\newcount{\x}
		\foreach \line in {#5} {
			\x=0
			\foreach \t/\colorW/\colorS in \line {
				\coordinate (W) at ($ (origin) + ( \boxsize * \x , -\boxsize * \y ) + ( 0      , \boxsize / 2 ) $);
				\coordinate (E) at ($ (origin) + ( \boxsize * \x , -\boxsize * \y ) + ( \boxsize     , \boxsize / 2 ) $);
				\coordinate (N) at ($ (origin) + ( \boxsize * \x , -\boxsize * \y ) + ( \boxsize / 2 , \boxsize     ) $);
				\coordinate (S) at ($ (origin) + ( \boxsize * \x , -\boxsize * \y ) + ( \boxsize / 2 , 0 ) $);
				\coordinate (C) at ($ (origin) + ( \boxsize * \x , -\boxsize * \y ) + ( \boxsize / 2 , \boxsize / 2 ) $);
				\ifthenelse{\equal{\t}{e}}{
					\ifthenelse{\equal{\colorW}{l}}{\draw[rounded corners=\boxsize * 8, thick, #1] (W) -- (C) -- (N);}{}
					\ifthenelse{\equal{\colorW}{m}}{\draw[rounded corners=\boxsize * 8, thick, #2] (W) -- (C) -- (N);}{}
					\ifthenelse{\equal{\colorW}{r}}{\draw[rounded corners=\boxsize * 8, thick, #3] (W) -- (C) -- (N);}{}
					\ifthenelse{\equal{\colorW}{b}}{\draw[rounded corners=\boxsize * 8, #4] (W) -- (C) -- (N);}{}
					\ifthenelse{\equal{\colorS}{l}}{\draw[rounded corners=\boxsize * 8, thick, #1] (S) -- (C) -- (E);}{}
					\ifthenelse{\equal{\colorS}{m}}{\draw[rounded corners=\boxsize * 8, thick, #2] (S) -- (C) -- (E);}{}
					\ifthenelse{\equal{\colorS}{r}}{\draw[rounded corners=\boxsize * 8, thick, #3] (S) -- (C) -- (E);}{}
					\ifthenelse{\equal{\colorS}{b}}{\draw[rounded corners=\boxsize * 8, #4] (S) -- (C) -- (E);}{}
				}{
				\ifthenelse{\equal{\t}{c}}{
					\ifthenelse{\equal{\colorW}{l}}{\draw[thick, #1] (W) -- (E);}{}
					\ifthenelse{\equal{\colorW}{m}}{\draw[thick, #2] (W) -- (E);}{}
					\ifthenelse{\equal{\colorW}{r}}{\draw[thick, #3] (W) -- (E);}{}
					\ifthenelse{\equal{\colorS}{l}}{\draw[thick, #1] (S) -- (N);}{}
					\ifthenelse{\equal{\colorS}{m}}{\draw[thick, #2] (S) -- (N);}{}
					\ifthenelse{\equal{\colorS}{r}}{\draw[thick, #3] (S) -- (N);}{}
				}{
				\ifthenelse{\equal{\t}{n}}{}{\node at (C) {$\small \t$};}}}
				\global\advance\x by 1
			}
			\global\advance\y by 1
		}
	\end{tikzpicture}%
}
\newcommand{\crossMonoColor}[1]{\raisebox{.1cm}{\pipeDreamMonoColor{#1}{black}{c}}\,}
\newcommand{\cross}{\raisebox{-.15cm}{\includegraphics[scale=.8]{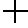}}\,}
\newcommand{\crossBiColor}[2]{\raisebox{.1cm}{\pipeDreamBiColor{#1}{#2}{black}{c/l/r}}}
\newcommand{\elbow}{\raisebox{-.15cm}{\includegraphics[scale=.8]{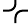}}\,}
\newcommand{\elbowBiColor}[2]{\raisebox{.1cm}{\pipeDreamBiColor{#1}{#2}{black}{e/l/r}}}
\newcommand{\transpose}{\mathsf{transp}}
\newcommand{\pipeDreams}{\Pi} 
\newcommand{\HP}{{\bk\pipeDreams}} 
\newcommand{\HT}{{\bk\nuTrees}} 
\newcommand{\indecomposablePipeDreams}{\Lambda} 
\newcommand{\lrot}{\rotatebox{90}{$\mathsf{L}$}}
\newcommand{\lmir}{\reflectbox{$\mathsf{L}$}}
\newcommand{\contact}{^\#} 
\newcommand{\gammainsertion}{\ll}  
\newcommand{\ltlex}{<_{\mathrm{lex}}} 
\newcommand{\duality}{^\star} 
\newcommand{\Ptop}{P^{\mathrm{top}}} 
\newcommand{\HA}{{\bk\acyclicPipeDreams}} 
\newcommand{\acyclicPipeDreams}{\Sigma} 
\newcommand{\Sid}{S^{\operatorname{Id}}} 
\newcommand{\Sdom}{\fS^{\operatorname{dom}}} 
\newcommand{\pipeDreamsDom}{\pipeDreams^{\operatorname{dom}}} 
\newcommand{\HSdom}{{\bk\fS}^{\operatorname{dom}}} 
\newcommand{\HPdom}{\bk\pipeDreamsDom} 
\newcommand{\nuTrees}{\Theta} 
\newcommand{\product}{\cdot} 
\newcommand{\coproduct}{\triangle} 
\newcommand{\columnReading}{\mathsf{CRW}} 
\newcommand{\leadingTerm}{\mathsf{LT}} 
\newcommand{\leadingCoefficient}{\mathsf{LC}} 
\newcommand{\leadingPipeDream}{\mathsf{LP}} 
\newcommand{\elbowLetter}{\elbow}
\newcommand{\crossLetter}{\cross}
\newcommand{\DiagHarm}[2]{\mathrm{DH}_{#1,#2}} 
\newcommand{\area}{\operatorname{area}} 
\newcommand{\dinv}{\operatorname{dinv}} 
\newcommand{\bounce}{\operatorname{bounce}} 
\newcommand{\steep}{\operatorname{steep}} 
\newcommand{\col}{\operatorname{col}} 
\newcommand{\type}{\operatorname{type}} 
\newcommand{\Sym}[2]{\mathrm{Sym}_{#1,#2}} 
\newcommand{\Alt}{\mathrm{Alt}} 
\newcommand{\TC}[2]{\mathrm{TC}_{#1,#2}} 
\newcommand{\HC}[2]{\mathrm{HC}_{#1,#2}} 
\newcommand{\bpi}{\boldsymbol \pi} 
\newcommand{\LLT}[2]{\mathbb{L}_{#1}(#2)} 
\def\part{\@startsection{part}{1}%
\z@{.7\linespacing\@plus\linespacing}{.8\linespacing}%
{\LARGE\sffamily\centering}}
\def\l@section{\@tocline{1}{5pt}{0pc}{}{}}
\let\oldtocpart=\tocpart
\renewcommand{\tocpart}[2]{\bf\large\oldtocpart{#1}{#2}}
\let\oldtocsection=\tocsection
\renewcommand{\tocsection}[2]{\bf\oldtocsection{#1}{#2}}
\begin{document}

\begin{abstract}
This paper introduces a Hopf algebra structure on a family of reduced pipe dreams. 
We show that this Hopf algebra is free and cofree, and construct a surjection onto a commutative Hopf algebra of permutations. 
The pipe dream Hopf algebra contains Hopf subalgebras with interesting sets of generators and Hilbert series related to subsequences of Catalan numbers. 
Three other relevant Hopf subalgebras include 
the Loday--Ronco Hopf algebra on complete binary trees,
a Hopf algebra related to a special family of lattice walks on the quarter plane, and 
a Hopf algebra on $\nu$-trees related to $\nu$-Tamari lattices. 
One of this Hopf subalgebras motivates a new notion of Hopf chains in the Tamari lattice, which are used to present applications and conjectures 
in the theory of multivariate diagonal harmonics. 

\medskip
\noindent
\textsc{MSC classes}: 16T05, 16T30, 05E10
\end{abstract}

\maketitle

\tableofcontents


\section*{Introduction}

Pipe dreams (or RC-graphs) are combinatorial objects closely related to reduced expressions of permutations in terms of simple transpositions.
They were introduced by N.~Bergeron and S.~Billey in~\cite{BergeronBilley} to compute Schubert polynomials and later revisited in the context of Gr\"obner geometry by A.~Knutson and E.~Miller~\cite{KnutsonMiller-GroebnerGeometry}, who coined the name \emph{pipe dreams} in reference to a game involving pipe connections. In brief, a pipe dream is an arrangement of pipes, each connecting an entry on the vertical axis to an exit on the horizontal axis, and remaining in a triangular shape of the grid.
Pipe dreams are grouped according to their exiting permutation, given by the order in which the pipes appear along the horizontal axis.
Pipe dreams have important connections and applications to various areas related to Schubert calculus and Schubert varieties~\cite{LascouxSchutzenberger-PolynomesSchubert, LascouxSchutzenberger-SchubertLittlewoudRichardson}. 
They have a rich combinatorial and geometric structure but their algebraic structure was less considered.

The objective of this paper is to introduce a Hopf algebra structure on pipe dreams.
Hopf algebras are rather rigid structures which often reveal deep combinatorial properties and connections.
This paper contributes to this general philosophy: the Hopf algebra of pipe dreams will give us insight  on a special family of lattice walks on the quarter plane studied by M.~Bousquet-M\'elou, S.~Melczer, M.~Mishna, and A.~Rechnitzer in a series of papers~\cite{BousquetMelouMishna,MishnaRechnitzer,MelczerMishna}, as well as applications to the still emerging theory of multivariate diagonal harmonics~\cite{Bergeron-multivariateDiagonalCoinvariantSpaces}.

The starting point of this project is the Hopf algebra of J.-L.~Loday and M.~Ronco on complete binary trees~\cite{LodayRonco}.
There is a strong correspondence between the complete binary trees with $n$ internal nodes and the reduced pipe dreams with exiting permutation~$0 n \dots 1$.
This correspondence preserves a lot of the combinatorial structure and allows to interpret the product and the coproduct of the Loday--Ronco Hopf algebra in terms of pipe dreams.
This interpretation yields to an extension of the Loday--Ronco Hopf algebra on a bigger family~$\pipeDreams$ consisting of reduced pipe dreams with an elbow in the top left corner.
We show that it results in a free and cofree Hopf algebra structure~$(\HP, \product, \coproduct)$.
We also show that mapping a pipe dream to its exiting permutation defines a surjective morphism from the Hopf algebra~$(\HP, \product, \coproduct)$ of pipe dreams to a commutative Hopf algebra~$(\HS, \gdshuffle, \gddeconcat)$ of permutations.

\medskip

The pipe dream Hopf algebra~$\HP$ has many Hopf subalgebras with interesting combinatorial structure and enumeration (Hilbert series).
These Hopf algebras are obtained using pipe dreams whose exiting permutations belong to a Hopf subalgebra of~$\HS$.
Even the first naive examples, obtained from permutations with restricted atom sets (\ie with prescribed decompositions in~$\HS$), give rise to relevant Hopf algebras whose sets of generators are counted by formulas involving Catalan numbers.
Three relevant Hopf subalgebras are:
\begin{enumerate}
\item the Loday--Ronco Hopf algebra on complete binary trees~\cite{LodayRonco}.
\item a Hopf algebra related to a special family of lattice walks on the quarter plane~\cite{BousquetMelouMishna, MishnaRechnitzer, MelczerMishna}.
\item a Hopf algebra on $\nu$-trees connected to the $\nu$-Tamari lattices of L.~F.~Pr\'eville-Ratelle and X.~Viennot~\cite{PrevilleRatelleViennot}.
\end{enumerate}

The Loday--Ronco Hopf algebra has a remarkable geometric interpretation which, together with~\cite{GelfandKrobLascouxLeclercRetakhThibon,MalvenutoReutenauer}, implies significant results about their algebraic structures as shown by M.~Aguiar and F.~Sottile in~\cite{AguiarSottile-MalvenutoReutenauer, AguiarSottile-LodayRonco} and by F.~Hivert, J.-C.~Novelli and J.-Y.~Thibon~\cite{HivertNovelliThibon-algebraBinarySearchTrees}.
It also inspired work on related Hopf algebras including the Cambrian Hopf algebra of G.~Chatel and V.~Pilaud~\cite{ChatelPilaud} related to the Cambrian lattices introduced by N.~Reading~\cite{Reading-CambrianLattices}, the Hopf algebra of N.~Bergeron and C.~Ceballos~\cite{BergeronCeballos} related to A.~Knutson and E.~Miller's theory of subword complexes~\cite{KnutsonMiller-subwordComplex}, and the Hopf algebra of V.~Pilaud~\cite{Pilaud-brickAlgebra} related to brick polytopes~\cite{PilaudSantos-brickPolytope, PilaudStump-brickPolytopes}.
On the other side, the enumeration of lattice walks in the quarter plane is a challenging problem of interest in combinatorics and computer science.
M.~Bousquet-M\'elou and M.~Mishna considered 79 different models in~\cite{BousquetMelouMishna}.
The family of walks that we consider has acquired special attention~\cite{MishnaRechnitzer, MelczerMishna}, and our Hopf algebra approach gives an alternative (conjectural) refinement of their enumeration (see Remark~\ref{rem:latticewalksEnumeration} and Corollary~\ref{cor:walksEnumerationDeterminants}).
It also leads to a new conjecture concerning a bijection between two families of pairs of nested Dyck paths related to the zeta map in $q,t$-Catalan combinatorics~\cite{Haglund-qt-catalan} (see Conjecture~\ref{conj:SteepBounce}).
Last but not least, L.-F.~Pr\'eville-Ratelle and X.~Viennot~\cite{PrevilleRatelleViennot} recently introduced the $\nu$-Tamari lattices inspired by F.~Bergeron's conjectural connections between interval enumeration 
in the classical and Fuss--Catalan Tamari lattices and dimension formulas of certain spaces in trivariate and higher multivariate diagonal harmonics~\cite{BergeronPrevilleRatelle, BousquetMelouChapuyPrevilleRatelle, BousquetMelouFusyPrevilleRatelle}.
Even though no application of $\nu$-Tamari lattices are known in this context, they possess remarkable enumerative and geometric properties~\cite{FangPrevilleRatelle, CeballosPadrolSarmiento-geometryNuTamari}.

\medskip

One of the most important contributions of this paper is the application of the Hopf algebra of dominant pipe dreams to the theory of multivariate diagonal harmonics.
The foundation of diagonal harmonics was inspired by famous conjectures and results related to the theory of Macdonald polynomials pioneered by I.~G.~Macdonald~\cite{Macdonald-newClassSymFunct}.
The discovery of these polynomials originated important developments including the proof of the Macdonald constant-term identities~\cite{Cherednik} and the resolution of the Macdonald positivity conjecture~\cite{Haiman-hilbertSchemes}, as well as many results in connection with representation theory of quantum groups~\cite{EtingofKirillov}, affine Hecke algebras~\cite{KirillovNoumi, Knop, Macconald-affineHeckeAlgebras}, and the Calogero--Sutherland model in particle physics~\cite{LapointeVinet}.
Diagonal harmonics is also connected to many areas in mathematics where they play a central role, including the rectangular and rational Catalan combinatorics in algebraic combinatorics~\cite{ArmstrongLoehrWarrington-parking, ArmstrongLoehrWarrington-sweep, Bergeron-rectangular, BergeronGarsiaSergelLevenXin-compositional, Mellit-rationalshuffle}, cohomology of flag manifolds and group schemes in algebraic topology~\cite{DalalMorse},
Hilbert schemes in algebraic geometry~\cite{Haiman-hilbertSchemes, Haiman-vanishingTheorems}, homology of torus links in knot theory~\cite{GorskyNegut, Mellit-Homology}, and more.

The space of diagonal harmonics is an $\mathfrak{S}_n$-module of polynomials in two sets of variables that satisfy some harmonic properties.
The dimensions of these spaces (and of their bigraded components) led to numerous important conjectures.
These include the $(n+1)^{n-1}$-conjecture by A.~Garsia and M.~Haiman~\cite{GarsiaHaiman-gradedRepresentationModel, GarsiaHaiman-BigradedModules}, proved by M.~Haiman using properties of the Hilbert scheme in algebraic geometry~\cite{Haiman-vanishingTheorems}, and the shuffle conjecture by J.~Haglund et al.~\cite{HaglundHaimanLoehrRemmelUlyanov}, recently proved by E.~Carlsson and A.~Mellit in~\cite{CarlssonMellit}.
The bigraded Hilbert series of the alternating component of the space of diagonal harmonics also gave rise to the now famous \mbox{$q,t$-Catalan} polynomials~\cite{Haglund-qt-catalan}.

The module of diagonal harmonics has natural generalizations in three or more sets of variables.
However, very little is known about these generalizations and the techniques from algebraic geometry do not straightforward apply.
Computational experiments by M.~Haiman from the early 1990's~\cite[Fact~2.8.1]{Haiman-conjectures} suggest explicit simple dimension formulas for the space of diagonal harmonics and its alternating component in the trivariate case.  
F.~Bergeron noticed that these formulas coincide with formulas counting labeled and unlabeled intervals in the classical Tamari lattice, and opened the door to a more systematic study of the multivariate case~\cite{BergeronPrevilleRatelle,Bergeron-multivariateDiagonalCoinvariantSpaces}. 
In particular, together with L.-F.~Pr\'eville-Ratelle, they established a connection between trivariate diagonal harmonics and intervals in the Tamari lattice, by presenting an explicit combinatorial conjecture for the Frobenius characteristic that involves two statistics $\dinv$ and length of the longest chain in the intervals~\cite[Conj.~1]{BergeronPrevilleRatelle}.  
This conjecture and the dimension formulas in the trivariate case remain widely open. In addition, not even a dimension formula is known yet for the four variable case.
F.~Bergeron suggested to us that there might be a way to understand the $r$-variate diagonal harmonics in terms of some suitable $(r-1)$-chains in the Tamari lattice.
In this paper we present a milestone towards this understanding by introducing a new class of chains in the Tamari lattice that was motivated by our Hopf algebra construction.

A very natural question is to ask about the number of Tamari chains related to intervals in the graded dimensions of the Hopf algebra of dominant pipe dreams.
This motivates a natural notion of \emph{Hopf chains} of Dyck paths.
A Hopf chain is a chain~$(\pi_1,\dots,\pi_r)$ in the classical Tamari lattice satisfying an extra property motivated from the Hopf algebra, and such that $\pi_1$ is the diagonal path. Surprisingly, they turn out to be closely related to multivariate diagonal harmonics, not only in the numerology related to dimension formulas, but also in their underlying representation theory. More explicitly, let us denote by~$\DiagHarm{n}{r}$ the diagonal harmonics space on $r$ sets of $n$ variables.
We will prove that, for degree~$n\leq 4$ and \emph{any number $r$} of sets of variables, the $q,t$-Frobenius characteristic of~$\DiagHarm{n}{r}$ can be obtained as a generating function of Hopf chains using a new \emph{collar} statistic and the LLT polynomials of A.~Lascoux, B.~Leclerc and J.-Y.~Thibon (see Theorem~\ref{thm:LLT}).
Our result generalizes the shuffle conjecture to the multivariate case (for $n\leq 4$), and has several immediate consequences. 
For instance, the dimensions of~$\DiagHarm{n}{r}$ and its alternating component~$\Alt(\DiagHarm{n}{r})$ are equal to the number of labeled and unlabeled Hopf chains, respectively, and the bigraded Hilbert series of~$\Alt(\DiagHarm{n}{r})$ is the generating function of Hopf chains with respect to the collar and $\dinv$ statistics (see Corollary~\ref{coro:bigcoro}).


\part{The pipe dream Hopf algebra}

This first part introduces a Hopf algebra structure on certain pipe dreams (Section~\ref{sec:HopfAlgebraPipeDreams}), shows its freeness (Section~\ref{sec:freeness}) and studies its connection to a Hopf algebra on permutations (Section~\ref{sec:HopfAlgebraPermutations}).


\section{A Hopf algebra on permutations}
\label{sec:HopfAlgebraPermutations}

Before we work on pipe dreams, we first introduce a Hopf structure on permutations. This Hopf algebra of permutations  is commutative and non-cocommutative. In particular it is different from the classical Malvenuto--Reutenauer Hopf algebra~\cite{MalvenutoReutenauer}. It is isomorphic to one of the commutative Hopf algebras introduced by Y.~Vargas in~\cite{Vargas}.
We denote by~$\fS_n$ the set of permutations of~$[n] \eqdef \{1,2,\ldots,n\}$, and we let~$\fS \eqdef \bigsqcup_{n \ge 0} \fS_n$. We consider the graded vector space~$\HS \eqdef \bigoplus_{n\ge 0} \HS_n$, where~$\HS_n$ is the~$\bk$-span of the permutations in~$\fS_n$.


\subsection{Global descents and atomic permutations}
\label{subsec:globalDescents}

Consider a permutation~$\omega \in \fS_n$. Index by~$\{0, \dots, n\}$ from left to right the \defn{gaps} before the first position, between two consecutive positions, or after the last position of~$\omega$. A gap~$\gamma$ is a \defn{global descent} if~$\omega([\gamma]) = [n] \ssm [n-\gamma]$. In other words, the first~$\gamma$ positions are sent to the last~$\gamma$ values. For example, the global descents in the permutation~$\omega = 635421$ are~$0, 1, 4, 5, 6$. Note that the gaps~$0$ and~$n$ are always global descents. A permutation with no other global descent is called \defn{atomic}.

For two permutations~$\mu \in \fS_m$ and~$\nu \in \fS_n$ we define the permutation~$\mu \gdproduct \nu \in \fS_{m+n}$ by 
\[
\mu \gdproduct \nu(i) \eqdef
\begin{cases}
	\mu(i)+n & \text{ if } 1   \le i \le m,  \\
	\nu(i-m) & \text{ if } m+1 \le i \le m+n.
\end{cases}
\]
The product~$\gdproduct$ is associative, and the unique permutation~$\epsilon$ of~$\fS_0$ is neutral for~$\gdproduct$. Observe that~$\mu \gdproduct \nu$ has a global descent in position~$m$. Conversely, given a permutation~$\omega \in \fS_{m+n}$ with a global descent in position~$m$, there exist a unique pair of permutations~${\mu \in \fS_m}$ and~${\nu \in \fS_{n}}$ such that~${\omega = \mu \gdproduct \nu}$. Therefore any permutation~$\omega \in \fS$ factorizes in a unique way as a product~${\omega = \omega_1 \gdproduct \omega_2 \gdproduct \cdots \gdproduct \omega_\ell}$ of atomic permutations~$\omega_i$. For example, we have~$\b{6}\r{354}\g{2}\o{1} = \b{1} \gdproduct \r{132} \gdproduct \g{1} \gdproduct \o{1}$. For such a factorization, we denote by~$\omega^{\gdproduct} \eqdef \{\omega_1, \omega_2, \dots, \omega_\ell\}$ the set of its atomic factors.


\subsection{Coproduct on permutations}
\label{subsec:coproductPermutations}

We first introduce a coproduct~$\gddeconcat$ on permutations. Consider a permutation~$\omega \in \fS$, and let~${\omega = \omega_1 \gdproduct \cdots \gdproduct \omega_\ell}$ be its unique factorization into atomic permutations. We define the coproduct~$\gddeconcat(\omega)$ by
\[
\gddeconcat(\omega) \eqdef \sum_{i=0}^\ell (\omega_1 \gdproduct \cdots \gdproduct \omega_i) \otimes (\omega_{i+1} \gdproduct \cdots \gdproduct \omega_\ell),
\]
where an empty $\gdproduct\,$-product is the neutral element~$\epsilon$ for~$\gdproduct$. This coproduct extends to~$\HS$ by linearity and is clearly coassociative.

\begin{example} 
For the permutation~$635421 \in \fS_6$, we have~$\b{6}\r{354}\g{2}\o{1} = \b{1} \gdproduct \r{132} \gdproduct \g{1} \gdproduct \o{1}$. Hence
\begin{align*}
\gddeconcat(\b{6}\r{354}\g{2}\o{1}) & = \epsilon \otimes \b{6}\r{354}\g{2}\o{1} + \b{1} \otimes (\r{132} \gdproduct \g{1} \gdproduct \o{1}) + (\b{1} \gdproduct \r{132}) \otimes (\g{1} \gdproduct \o{1}) + (\b{1} \gdproduct \r{132} \gdproduct \g{1}) \otimes \o{1} + \b{6}\r{354}\g{2}\o{1} \otimes \epsilon \\
& = \epsilon \otimes \b{6}\r{354}\g{2}\o{1} + \b{1} \otimes \r{354}\g{2}\o{1} + \b{4}\r{132} \otimes \g{2}\o{1} + \b{5}\r{243}\g{1} \otimes \o{1} + \b{6}\r{354}\g{2}\o{1} \otimes \epsilon.
\end{align*}
\end{example}


\subsection{Product on permutations}
\label{subsec:productPermutations}

We now introduce a commutative product~$\gdshuffle$ on permutations. Consider two permutations~$\b{\pi}, \r{\omega} \in \fS$. Define first~$\b{\pi} \gdshuffle \r{\epsilon} \eqdef \b{\pi}$ and~$\b{\epsilon} \gdshuffle \r{\omega} \eqdef \r{\omega}$. Assume now that~$\b{\pi} = \b{\mu} \gdproduct \b{\nu}$ and~$\r{\omega} = \r{\sigma} \gdproduct \r{\tau}$ where~$\b{\mu}$ and~$\r{\sigma}$ are non-trivial atomic permutations, and define
\[
\b{\pi} \gdshuffle \r{\omega} \eqdef \b{\mu} \gdproduct (\b{\nu} \gdshuffle \r{\omega}) + \r{\sigma} \gdproduct (\b{\pi} \gdshuffle \r{\tau}).
\]
This product extends to~$\HS$ by linearity and is clearly associative.
This is the standard shuffle product, but performed on the atomic factorizations of the factors.

\begin{example} 
 For~$\b{2431} = \b{132} \gdproduct \b{1} \in \fS_4$ and~$\r{312} = \r{1} \gdproduct \r{12} \in \fS_3$, we have 
\begin{align*}
\b{2431} \gdshuffle \r{312} & = \b{132} \gdproduct \b{1} \gdproduct \r{1} \gdproduct \r{12} + \b{132} \gdproduct \r{1} \gdproduct \b{1} \gdproduct  \r{12} + \b{132} \gdproduct  \r{1} \gdproduct \r{12} \gdproduct \b{1} \\
& \qquad + \r{1} \gdproduct \b{132} \gdproduct \b{1} \gdproduct \r{12} + \r{1} \gdproduct \b{132} \gdproduct \r{12} \gdproduct \b{1} + \r{1} \gdproduct \r{12} \gdproduct  \b{132} \gdproduct \b{1} \\
& = \b{576}\b{4}\r{3}\r{12} + \b{576}\r{4}\b{3}\r{12} + \b{576}\r{4}\r{23}\b{1} + \r{7}\b{465}\b{3}\r{12} + \r{7}\b{465}\r{23}\b{1} + \r{7}\r{56}\b{243}\b{1} \\
& = 2 \times 5764312 + 5764231 + 7465312 + 7465231 + 7562431.
\end{align*}
\end{example}

The product~$\gdproduct$ and the coproduct~$\gddeconcat$ are compatible in the sense that
\[
\gddeconcat(\b{\pi} \gdshuffle \r{\omega}) = \gddeconcat(\b{\pi}) \gdshuffle \gddeconcat(\r{\omega}),
\]
where the right hand side product has to be understood componentwise. This structure thus gives us a graded and connected commutative Hopf algebra~$(\HS, \gdshuffle, \gddeconcat)$. See~\cite{Vargas} for more details.
In the following sections we will see that this Hopf structure on permutations is directly linked to our Hopf structure on pipe dreams.


\section{A Hopf algebra on pipe dreams}
\label{sec:HopfAlgebraPipeDreams}

In this section, we construct a product and a coproduct on certain pipe dreams and show that they define a graded connected Hopf algebra. We also relate this structure to the Hopf algebra of Section~\ref{sec:HopfAlgebraPermutations}.


\subsection{Pipe dreams}
\label{subsec:pipeDreams}

A \defn{pipe dream}~$P$ is a filling of a triangular shape with crosses~\cross{} and elbows~\elbow{} so that all pipes entering on the left side exit on the top side~\cite{BergeronBilley, KnutsonMiller-GroebnerGeometry}. See \fref{fig:pipeDreams}. We only consider \defn{reduced} pipe dreams, where two pipes have at most one intersection. We also limit ourself to pipe dreams with an elbow~\elbow{} in the top left corner. In particular the pipe entering in the topmost row always exits in the leftmost column. We label this pipe with~$0$ and the other pipes with~$1, 2, \dots, n$ in the order of their entry points from top to bottom. We also label accordingly the rows and the columns of the pipe dream~$P$ from~$0$ to~$n$. We denote by~$\omega_P \in \fS_n$ the order of the exit points of the non-zero pipes of~$P$ from left to right. In other words, the pipe entering at row~$i > 0$ exits at column~$\omega^{-1}_P(i) > 0$. For a fixed permutation~$\omega \in \fS_n$, we denote by~$\pipeDreams(\omega)$ the set of reduced pipe dreams~$P$ with an elbow~\elbow{} in the top left corner and such that~$\omega_P = \omega$. We let~$\pipeDreams_n \eqdef \bigsqcup_{\omega \in \fS_n} \pipeDreams(\omega)$ and~$\pipeDreams \eqdef \bigsqcup_{n \in \N} \pipeDreams_n$. 

\begin{figure}[ht]
	\capstart
	\centerline{
		\input{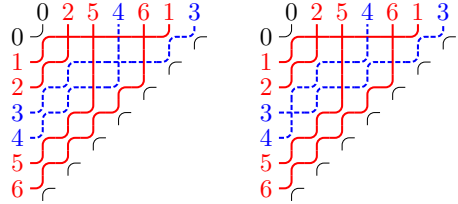}
	}
	\caption{Two pipe dreams of~$\pipeDreams(254613)$ connected by a flip exchanging an elbow with the crossing on the two dashed blue pipes~$3$ and~$4$.}
	\label{fig:pipeDreams}
\end{figure}

Two pipe dreams are related by a \defn{flip} if they define the same permutation of the pipes and only differ by the position of a cross and an elbow. An elbow~$e$ in a pipe dream~$P$ is \defn{flippable} if the two pipes passing through elbow~$e$ have a crossing~$c$, and the flip exchanges the elbow~$e$ with the cross~$c$. See \fref{fig:pipeDreams}. A \defn{chute move} (resp.~\defn{ladder move}) is a flip where~$e$ and~$c$ appear in consecutive rows (resp.~columns). We refer to~\cite{BergeronBilley, PilaudStump-ELlabelings} for thorough discussions on flips.

We consider the graded vector space~$\HP = \bigoplus_{n\ge 0} \HP_n$, where~$\HP_n$ is the~$\bk$-span of the pipe dreams in~$\pipeDreams_n$. 


\subsection{Horizontal and vertical packings}
\label{subsec:packings}

Let~$P \in \pipeDreams_n$ and~$k \in \{0,\dots, n\}$.
We color plain \r{red} the pipes entering in the rows $1, 2, \dots, k$ and dashed \b{blue} the pipes entering in the rows $k+1, k+2, \dots, n$.
The \defn{horizontal packing}~$\r{\lrot_k(P)}$ is obtained by removing all blue pipes and contracting the horizontal parts of the red pipes that were contained in a cell~\crossBiColor{red}{blue,dashed} (\ie red horizontal steps that are crossed vertically by a blue pipe).
\fref{fig:horizontalPackingPipeDream} illustrates an example. The following lemma shows that this operation is well defined.
\begin{figure}[ht]
	\capstart
	\centerline{
		\input{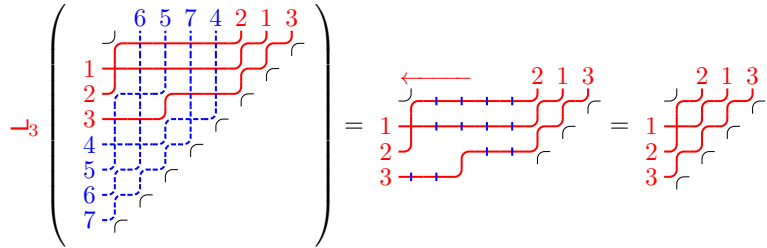}
	}
	\caption{Horizontal packing of a pipe dream at~$k = 3$.}
	\label{fig:horizontalPackingPipeDream}
\end{figure}
\begin{lemma}
\label{lem:horizontalPacking}
The horizontal packing~$\r{\lrot_k(P)}$ is a reduced pipe dream in $\pipeDreams_k$.
\end{lemma}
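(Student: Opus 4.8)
The plan is to describe the pipes of~$\r{\lrot_k(P)}$ explicitly and to verify directly that they form a reduced pipe dream of the expected shape. First I would fix coordinates, drawing the staircase of~$P$ as the cells~$(i,j)$ with~$0 \le i,j$ and~$i+j \le n$, so that pipe~$0$ occupies the corner elbow~$(0,0)$, the pipe entering in row~$i$ reaches the top boundary in column~$\omega_P^{-1}(i)$, and every pipe is a monotone lattice path using only steps going up and to the right. The key preliminary observation is that a red pipe enters in a row~$i \le k$ and therefore never leaves the rows~$0,\dots,k$, whereas every cell in the rows~$k+1,\dots,n$ is traversed only by blue pipes. Hence removing the blue pipes empties the bottom rows entirely, and these rows can be discarded; all the contraction activity happens in the top~$k+1$ rows. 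Since no blue pipe can enter to the left of~$(0,0)$, the corner cell is untouched and remains an elbow~\elbow{}, which already secures the top-left-corner condition.

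For the heart of the argument I would view each surviving pipe as a monotone lattice path and track what the contraction does to it. A red pipe~$p$ is a word in up-steps and right-steps; the cells contracted on~$p$ are exactly its right-steps sitting at a crossing~\crossBiColor{red}{blue,dashed}, and deleting a right-step from a monotone path yields a shorter monotone path~$p'$. The claim to establish is that the paths~$\{p'\}$, together with pipe~$0$, tile the staircase of size~$k$ as crosses~\cross{} and elbows~\elbow{}. I would prove this by induction on the number~$n-k$ of blue pipes, peeling off one blue pipe at a time (say the one entering the bottommost occupied row) and showing that a single removal-and-contraction sends a reduced pipe dream with a corner elbow in~$\pipeDreams_m$ to one in~$\pipeDreams_{m-1}$; the general statement then follows by confluence, since contractions associated with distinct blue pipes act on disjoint right-steps and the column relabelling is order-preserving. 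For a single blue pipe~$b$, reducedness of~$P$ is what tames the local picture: every other pipe crosses~$b$ at most once, so each red pipe carries at most one contracted cell coming from~$b$. Walking along the monotone trajectory of~$b$, one checks cell by cell that where~$b$ is vertical the crossing red right-steps collapse and the red pipes reconnect without gaps, and where~$b$ is horizontal the leftover tiles reorganize into elbows.

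Finally I would collect the consequences. Reducedness is inherited: each crossing of~$\r{\lrot_k(P)}$ corresponds to a red--red crossing already present in~$P$ (the contraction only deletes red--blue crossings and creates none), so since~$P$ had at most one crossing per pair of pipes, so does~$\r{\lrot_k(P)}$. The shape is the staircase of size~$k$ because exactly~$k+1$ pipes survive and the induction shows that each single removal lowers the staircase size by one; the exit order of the surviving pipes is the order-preserving standardization of the red exit columns, which pins down~$\omega_{\lrot_k(P)} \in \fS_k$ and hence membership in~$\pipeDreams_k$. The main obstacle is precisely the single-pipe consistency step, which is where well-definedness really lives: proving that purely local contractions assemble into a genuine staircase filling---that every cell again receives exactly two strands, that no hole or ragged boundary appears, and that the boundary stays a straight staircase even when a red pipe runs vertically inside the exit column of~$b$ below the point where~$b$ turns into it. Monotonicity of the pipes and the at-most-one-crossing property of a reduced pipe dream are the two facts I would lean on to control this local surgery.
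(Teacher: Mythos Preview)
Your proposal is essentially correct but takes a genuinely different route from the paper. You set up an induction on the number of blue pipes, removing the bottommost one at each step and arguing that a single removal drops the staircase size by one. The paper instead treats all blue pipes at once: it classifies every cell of~$P$ by the colors of the two strands passing through it, observes that the bicolored crossing~$\crossBiColor{blue,dashed}{red}$ is impossible (this is the key structural fact---blue always crosses red \emph{vertically}, because the blue pipe enters below and reducedness forbids a second crossing), and then tracks how each surviving cell slides left. In particular, a run~$\elbowBiColor{red}{blue,dashed}\cdots\elbowBiColor{blue,dashed}{red}$ in a given row collapses to a single~$\elbowBiColor{red}{red}$, and one then simply \emph{counts}: each of the~$n-k$ blue pipes removes exactly one cell from row~$i$, leaving~$k+1-i$ cells, which is the triangular shape.

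What your approach buys is a clean conceptual picture (the packing is an iterated elementary surgery), and you are right that the bottommost pipe is always vertical at its crossings, which makes the single step tractable. What it costs is exactly the ``single-pipe consistency step'' you flag as the main obstacle: making precise why the two half-elbows left behind at~$(i,c)$ and~$(i,c+1)$ after removing~$b$ reassemble into one honest elbow, and why the shifts in different rows line up into a staircase rather than a ragged shape. The paper sidesteps this by never looking at the surgery dynamically---it just enumerates cell types and counts survivors per row. Your plan would go through, but the paper's static argument is shorter and avoids the confluence check entirely.
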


\begin{proof}
We will show that the resulting pipes in the horizontal packing fill a triangular shape with elbows and crossings. For this we analyze how the cells containing red pipes are moved after packing.  
First, note that blue pipes in $P$ can only cross red pipes vertically, and so, bicolored cells of the form~\crossBiColor{blue,dashed}{red} are forbidden. Furthermore, cells~\crossBiColor{red}{blue,dashed} are horizontally contracted and any other cell~\elbowBiColor{red}{red},~\elbowBiColor{red}{blue,dashed},~\elbowBiColor{blue,dashed}{red},~\crossBiColor{red}{red} is moved to the left as many steps as the number of blue pipes passing to its left. Therefore, ``consecutive" bicolored elbows~$\elbowBiColor{red}{blue,dashed} \; \cdots \elbowBiColor{blue,dashed}{red}$ in the same row (\ie with no red pipes in between them) are moved together:
\[
\elbowBiColor{red}{blue,dashed} \; \cdots \elbowBiColor{blue,dashed}{red} \quad \longrightarrow \quad \elbowBiColor{red}{red},
\]
and full red elbows and crossings are preserved.
As a consequence, the pipes in~$\r{\lrot_k(P)}$ fill a shape with elbows and crossings.
We claim that the final shape of~$\r{\lrot_k(P)}$ is triangular.
To see it, we count the number of elbows and crossings in its $i$th row for~$i \in [k]$.
Note that since the shape of~$P$ is triangular, the $i$th row of~$P$ contains $n-i+1$ elbows and crossings.
As each of the last~$n-k$ pipes contributes to either delete a cross~\crossBiColor{red}{blue,dashed}, a cross~\crossMonoColor{blue,dashed} or to merge~$\elbowBiColor{red}{blue,dashed} \; \cdots \elbowBiColor{blue,dashed}{red}$ into~$\elbowBiColor{red}{red}$, the number of elbows and crossings in the $i$th row of~$\r{\lrot_k(P)}$ is~$k-i+1$.  
Finally, since the crossings between red pipes in $P$ are preserved after packing, $\r{\lrot_k(P)}$ is a reduced pipe dream.
\end{proof}

The \defn{vertical packing}~$\b{\lmir_k(P)}$ is defined symmetrically: it keeps the pipes exiting through the columns~$1, 2, \dots, k$ and contracts the vertical steps that are crossed horizontally by the pipes exiting through the columns~$k+1, k+2, \dots, n$. Note that the remaining pipes need to be relabeled from~$1$ to~$k$. See \fref{fig:verticalPackingPipeDream}. The result is also a reduced pipe dream in $\pipeDreams_k$.
Although they are valid for any gap~$k$, we have illustrated the horizontal and vertical packings only at a global descent of~$\omega_P$ as we will only use this situation later.

\begin{figure}[ht]
	\capstart
	\centerline{
		\input{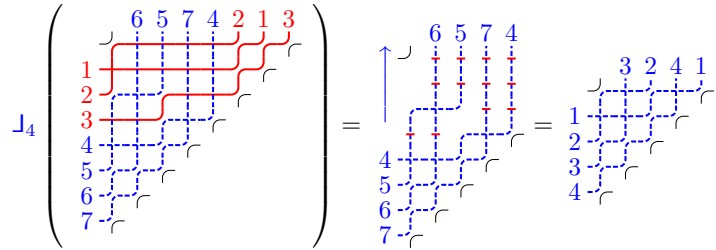}
	}
	\caption{Vertical packing of a pipe dream at~$k = 4$.}
	\label{fig:verticalPackingPipeDream}
\end{figure}


\subsection{Coproduct on pipe dreams}
\label{subsec:coproductPipeDreams}

Consider a reduced pipe dream~$P \in \pipeDreams_n$, and a global descent~$\gamma$ of the permutation~$\omega_P$.
Since~$\gamma$ is a global descent of~$\omega_P$, the relevant pipes of~$P$ are split into two disjoint tangled sets of pipes: those entering in the first~$n-\gamma$ rows (plain \r{red}) and those exiting in the first~$\gamma$ columns (dashed \b{blue}).
The horizontal and vertical packings~$\r{\lrot_{n-\gamma}(P)}$ and~$\b{\lmir_\gamma(P)}$ should thus be regarded as a way to untangle these two disjoint sets of pipes.

We denote by~$\coproduct_{\gamma,n-\gamma}(P)$ the tensor product~$\b{\lmir_\gamma(P)} \otimes \r{\lrot_{n-\gamma}(P)}$ and we say that~$\coproduct_{\gamma,n-\gamma}$ \defn{untangles}~$P$. This operation is illustrated on \fref{fig:untanglePipeDream}.

\begin{figure}[ht]
	\capstart
	\centerline{
		\input{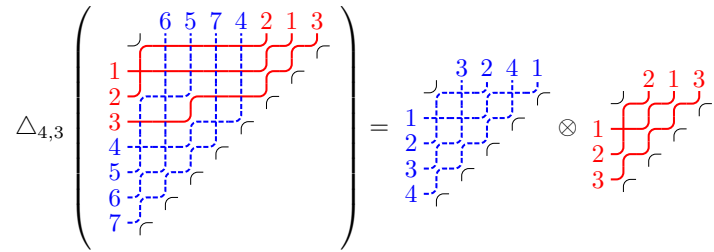}
	}
	\caption{Untangling a pipe dream~$P$ at the global descent~$\gamma = 4$ of~$\omega_P = 6574213$.}
	\label{fig:untanglePipeDream}
\end{figure}

If $\gamma$ is not a global descent of~$\omega_P$, then the pipes of~$P$ are not split by~$\gamma$, and we therefore define $\coproduct_{\gamma,n-\gamma}(P) = 0$.
Finally, we define the coproduct on~$\pipeDreams$ as
\[
\coproduct \eqdef \sum_{m,n \in \N} \coproduct_{m,n}.
\]
See \fref{fig:coproductPipeDreams}.
Note that the non-zero terms of~$\coproduct(P)$ occur when~$\gamma$ ranges over all global descents of~$\omega_P$.
\begin{figure}[t]
	\capstart
	\centerline{
		\input{coproductPipeDreams}
	}
	\caption{Coproduct of a pipe dream.}
	\label{fig:coproductPipeDreams}
\end{figure}
Extended by linearity on~$\HP$, the map~$\coproduct : \HP \to \HP \otimes \HP$ defines a graded comultiplication~on~$\HP$.

\begin{proposition}
\label{prop:coalgebra}
The coproduct~$\coproduct$ defines a coassociative graded coalgebra structure on $\HP$.
\end{proposition}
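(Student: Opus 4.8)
The plan is to reduce coassociativity to a pair of \emph{nesting} identities and one \emph{commutation} identity for the horizontal and vertical packings, using the description of $\coproduct$ as a sum over global descents. I will also dispatch the grading and the counit quickly. By Lemma~\ref{lem:horizontalPacking} and its vertical analogue, $\lmir_\gamma(P) \in \pipeDreams_\gamma$ and $\lrot_{n-\gamma}(P) \in \pipeDreams_{n-\gamma}$, so each summand $\coproduct_{\gamma,n-\gamma}$ lands in $\HP_\gamma \otimes \HP_{n-\gamma}$ and $\coproduct$ is graded. The counit is the projection $\varepsilon : \HP \to \HP_0 \cong \bk$: since $0$ and $n$ are always global descents and packing at these gaps is the identity (it retains all, respectively no, pipes), the terms $\gamma = 0$ and $\gamma = n$ yield $\epsilon \otimes P$ and $P \otimes \epsilon$, where $\epsilon$ is the unique element of $\pipeDreams_0$, and all other terms have both factors of positive degree. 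This gives $(\varepsilon \otimes \mathrm{id})\coproduct = \mathrm{id} = (\mathrm{id} \otimes \varepsilon)\coproduct$.

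Next I would make explicit which terms survive. Writing $\omega_P = \omega_1 \gdproduct \cdots \gdproduct \omega_\ell$ for the atomic factorization, the global descents of $\omega_P$ are exactly the partial sums $\gamma_i = |\omega_1| + \cdots + |\omega_i|$, so $\coproduct(P) = \sum_{i=0}^\ell \lmir_{\gamma_i}(P) \otimes \lrot_{n-\gamma_i}(P)$. As a short lemma I would establish that $\omega_{\lmir_{\gamma_i}(P)} = \omega_1 \gdproduct \cdots \gdproduct \omega_i$ and $\omega_{\lrot_{n-\gamma_i}(P)} = \omega_{i+1} \gdproduct \cdots \gdproduct \omega_\ell$, which both identifies the global descents of each packed factor and shows that $P \mapsto \omega_P$ intertwines $\coproduct$ with the deconcatenation $\gddeconcat$.

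The core is then a term-by-term comparison of $(\coproduct \otimes \mathrm{id})\coproduct(P)$ and $(\mathrm{id} \otimes \coproduct)\coproduct(P)$. Expanding both using the previous paragraph, the summands are indexed by pairs $i \le j$, and coassociativity amounts to the three identities (for $\gamma_i \le \gamma_j$)
\[
\lmir_{\gamma_i}(\lmir_{\gamma_j}(P)) = \lmir_{\gamma_i}(P), \qquad \lrot_{n-\gamma_j}(\lrot_{n-\gamma_i}(P)) = \lrot_{n-\gamma_j}(P),
\]
together with the mixed identity
\[
\lrot_{\gamma_j - \gamma_i}(\lmir_{\gamma_j}(P)) = \lmir_{\gamma_j - \gamma_i}(\lrot_{n-\gamma_i}(P)).
\]
The first two are nesting properties: packing to the first $\gamma_i$ columns of a pipe dream already packed to its first $\gamma_j \ge \gamma_i$ columns retains the same pipes and contracts the same horizontal steps as packing directly, and symmetrically for horizontal packing; these follow by comparing the retained pipes and the contracted steps on both sides.

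The main obstacle is the mixed identity, which is the commutation of the two packings. Both sides are the pipe dream formed by the \emph{middle block} of pipes of $P$, namely those entering in rows $n-\gamma_j+1, \dots, n-\gamma_i$ and exiting in columns $\gamma_i+1, \dots, \gamma_j$. I would prove it by the cell-tracking analysis already used in Lemma~\ref{lem:horizontalPacking}: color these middle pipes together with the two discarded families, and observe that horizontal packing only contracts horizontal steps crossed vertically (moving surviving cells left) while vertical packing only contracts vertical steps crossed horizontally (moving surviving cells up). Because these act in orthogonal directions, and each operation merges only runs of consecutive bicolored elbows belonging to the discarded pipes, the final position of every surviving cell depends only on the number of discarded pipes passing to its left and above it, independently of the order in which the two packings are applied. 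Hence both orders produce the same triangular reduced pipe dream, establishing the mixed identity and completing the proof of coassociativity.
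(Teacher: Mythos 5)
Your proof is correct and takes essentially the same route as the paper: the paper's own argument simply asserts that both $({\bf 1} \otimes \coproduct)\coproduct(P)$ and $(\coproduct \otimes {\bf 1})\coproduct(P)$ equal ``$P$ untangled at two global descents,'' and your two nesting identities together with the mixed identity $\lrot_{\gamma_j - \gamma_i}(\lmir_{\gamma_j}(P)) = \lmir_{\gamma_j - \gamma_i}(\lrot_{n-\gamma_i}(P))$ are exactly the content of that assertion, made explicit and verified by the same cell-tracking analysis underlying Lemma~\ref{lem:horizontalPacking}. Your treatment of the grading and of the counit (via the terms $\gamma = 0$ and $\gamma = n$) also matches the paper's, so the only difference is that you supply details the paper leaves implicit.
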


\begin{proof}
By construction, $\coproduct$ is graded. 
The map~$\epsilon : \HP \to \bk$ determined by~$\epsilon(P) = 1$ if~${P=\elbow \in \pipeDreams_0}$ and ${\epsilon(P) = 0}$ otherwise is the counit for~$\coproduct$. 
Coassociativity follows from the fact that both ${({\bf 1} \otimes \coproduct) \coproduct(P)}$ and ${(\coproduct \otimes {\bf 1})\coproduct(P)}$ are equal to~$P$ untangled at two global descents.
\end{proof}

\begin{proposition}
\label{prop:codescentproduct}
The map $\omega \colon (\HP, \coproduct) \to (\HS, \gddeconcat)$ is a graded morphism of coalgebras.
\end{proposition}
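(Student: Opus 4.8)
The plan is to verify the three conditions defining a graded coalgebra morphism: that $\omega$ is graded, that it intertwines the coproducts, and that it respects the counits. Gradedness is immediate, since $\omega$ sends $\pipeDreams_n$ into $\fS_n$ by construction, and counit compatibility is equally immediate: the only pipe dream in degree~$0$ is the single elbow $\elbow \in \pipeDreams_0$, which $\omega$ sends to the empty permutation $\epsilon \in \fS_0$, so the two counits agree. The substance of the statement is therefore the identity
\[
(\omega \otimes \omega) \circ \coproduct = \gddeconcat \circ \omega,
\]
which by linearity it suffices to check on a single pipe dream $P \in \pipeDreams_n$.

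Next I would match the two sides term by term according to the global descents of $\omega_P$. On the left, $\coproduct(P) = \sum_\gamma \lmir_\gamma(P) \otimes \lrot_{n-\gamma}(P)$, where by definition the summand vanishes unless $\gamma$ is a global descent of~$\omega_P$; on the right, $\gddeconcat(\omega_P) = \sum_i (\omega_1 \gdproduct \cdots \gdproduct \omega_i) \otimes (\omega_{i+1} \gdproduct \cdots \gdproduct \omega_\ell)$, where $\omega_P = \omega_1 \gdproduct \cdots \gdproduct \omega_\ell$ is the atomic factorization. By the discussion in Section~\ref{subsec:globalDescents}, the cuts of this factorization are exactly the global descents of~$\omega_P$, so both sums are indexed by the same set. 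Thus the proposition reduces to the following key claim: for each global descent~$\gamma$, writing $\omega_P = \mu \gdproduct \nu$ with $\mu \in \fS_\gamma$ and $\nu \in \fS_{n-\gamma}$, one has
\[
\omega_{\lmir_\gamma(P)} = \mu \qquad \text{and} \qquad \omega_{\lrot_{n-\gamma}(P)} = \nu.
\]

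To establish the claim I would first use the global descent condition to sort the nonzero pipes of~$P$ into the two colors used to define the packings. Since $\omega_P([\gamma]) = [n] \ssm [n-\gamma]$, the pipes exiting in the first~$\gamma$ columns (blue) are exactly the pipes $n-\gamma+1, \dots, n$ entering in the last~$\gamma$ rows, while the pipes entering in the first~$n-\gamma$ rows (red) are exactly the pipes exiting in the last~$n-\gamma$ columns; these two sets partition the nonzero pipes, which is precisely what makes the untangling well defined here. The crucial geometric observation, already implicit in the proof of Lemma~\ref{lem:horizontalPacking} and its vertical analogue, is that a packing only deletes the pipes of the other color and contracts steps, so it neither creates nor destroys an exit and preserves the left-to-right order of the exits of the retained pipes. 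Reading off the resulting exit permutation and unwinding the relabeling (blue pipe $n-\gamma+j \mapsto j$ on the vertical side, red pipes needing no relabeling on the horizontal side) against the defining formula for~$\gdproduct$, I obtain $\omega_{\lmir_\gamma(P)}(c) = \omega_P(c) - (n-\gamma) = \mu(c)$ for $c \in [\gamma]$ and $\omega_{\lrot_{n-\gamma}(P)}(c) = \omega_P(\gamma+c) = \nu(c)$ for $c \in [n-\gamma]$, which is exactly the claim.

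I expect the main obstacle to be this last step: carefully verifying that a packing preserves the relative left-to-right order of the retained exits, and tracking the index shift so that the relabeling matches the shift by~$n-\gamma$ built into the definition of~$\gdproduct$. Everything else is bookkeeping, but this order-and-shift verification is where an off-by-one slip would hide; I would pin it down by combining the cell-by-cell analysis of Lemma~\ref{lem:horizontalPacking} with the explicit formula for~$\mu \gdproduct \nu$.
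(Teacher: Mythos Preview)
Your proposal is correct and follows essentially the same approach as the paper: both reduce the statement to the identity $\omega_P = \omega_{\lmir_\gamma(P)} \gdproduct \omega_{\lrot_{n-\gamma}(P)}$ for each global descent~$\gamma$, and prove it by observing that the packings preserve the crossings (hence the relative exit order) among the retained pipes, with the relabeling matching the shift in~$\gdproduct$. Your version is more explicit about the counit and the index bookkeeping, but the argument is the same.
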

 
\begin{proof} 
Let~$P \in \pipeDreams_n$ and~$\gamma$ be a global descent of $\omega_P$. Let~$P_1 \otimes P_2 = \coproduct_{\gamma,n-\gamma}(P)$.
We have
\[
\omega_P = \omega_{P_1} \gdproduct \omega_{P_2}.
\]
Indeed, when we untangle~$P$ at~$\gamma$ we preserve the crossings of the pipes~$1, 2, \dots, n-\gamma$ and the crossings of the pipes~$n-\gamma+1, n-\gamma+2, \dots, n$. This shows that~$\omega_P(i) = \omega_{P_2}(i)$ for~$1 \le i \le n-\gamma$ and~$\omega_P(j) = \omega_{P_1}(j-n+\gamma) + n - \gamma$ for~$n-\gamma+1 \le j \le n$. Since we sum over all global descents of~$\omega_P$, we obtain the desired result. It is also clear that~$\omega$ preserves the grading.
\end{proof}


\subsection{Product on pipe dreams}
\label{subsec:productPipeDreams}

Let~$\b{P} \in \pipeDreams_\b{m}$ and~$\r{Q} \in \pipeDreams_\r{n}$ be two pipe dreams, and let~$\b{\gamma}$ be a global descent of~$\omega_\b{P}$. We denote by~$\b{P} \gammainsertion_\b{\gamma} \r{Q}$ the pipe dream of~$\pipeDreams_{\b{m}+\r{n}}$ obtained from~$\b{P}$ by
\begin{itemize}
\item inserting~$\r{n}$ columns after column~$\b{\gamma}$ and~$\r{n}$ rows after row~$\b{m}-\b{\gamma}$,
\item filling with~$\r{Q}$ the triangle of boxes located both in one of the rows~$\gamma, \dots, n+\gamma$ and in one of the columns~$m-\gamma, \dots, m+n-\gamma$,
\item filling with crosses~\crossBiColor{blue,dashed}{red} the remaining boxes located in a new column, 
\item filling with crosses~\crossBiColor{red}{blue,dashed} the remaining boxes located in a new row. 
\end{itemize}
We say that~$\gammainsertion_\b{\gamma}$ \defn{inserts}~$\r{Q}$ at gap~$\b{\gamma}$ in~$\b{P}$. This operation is illustrated in \fref{fig:pipeDreamsInsertion}.

\begin{figure}[ht]
	\capstart
	\centerline{
		\input{insertionPipeDreams}
	}
	\caption{Inserting a pipe dream~$\r{Q}$ in a pipe dream~$\b{P}$ at the global descent~$4$ of~$\omega_{\b{P}} = 635412$.}
	\label{fig:pipeDreamsInsertion}
\end{figure}

Note that the result~$\b{P} \gammainsertion_\b{\gamma} \r{Q}$ is a reduced pipe dream in~$\pipeDreams_{\b{m}+\r{n}}$.
Indeed, since~$\b{\gamma}$ is a global descent of~$\omega_{\b{P}}$, the pipes of~$\b{P}$ entering in the last~$\b{\gamma}$ rows and exiting through the first~$\b{\gamma}$ columns all cross the pipes of~$\r{Q}$ only once vertically, while the pipes of~$\b{P}$ entering in the first~$\b{m}-\b{\gamma}$ rows and exiting through the last~$\b{m}-\b{\gamma}$ columns all cross the pipes of~$\r{Q}$ only once horizontally. 
It is crucial here that~$\b{\gamma}$ is a global descent of~$\omega_{\b{P}}$, as otherwise there would exist~$i \in [\b{m}]$ such that~$i > \b{m}-\b{\gamma}$ and~$\omega_{\b{P}}^{-1}(i) > \b{\gamma}$ and pipe~$i+\r{n}$ in~$\b{P} \gammainsertion_\b{\gamma} \r{Q}$ would cross twice each of the new pipes~$\b{m}+1-\gamma, \dots, \b{m}+\r{n}-\gamma$.
The following immediate lemma is left to the reader.

\begin{lemma}
\label{lem:rulesInsertion}
For pipe dreams~$\b{P} \in \pipeDreams_\b{m}$, $\r{Q} \in \pipeDreams_\r{n}$ and~$\g{R} \in \pipeDreams_\g{p}$, and for global descents~$\gamma < \delta$ of~$\omega_\b{P}$ and $\nu$ of~$\omega_\r{Q}$ , we have
\begin{align}
\label{eq:ruleInsertionA}
& (\b{P} \gammainsertion_\gamma \r{Q}) \gammainsertion_{\delta + \r{n}} \g{R} = (\b{P} \gammainsertion_\delta \g{R}) \gammainsertion_\gamma \r{Q}, \\
\label{eq:ruleInsertionB}
\text{and}\qquad
& (\b{P} \gammainsertion_\gamma \r{Q}) \gammainsertion_{\gamma + \nu} \g{R} = \b{P} \gammainsertion_\gamma (\r{Q} \gammainsertion_\nu \g{R}).
\end{align}
\end{lemma}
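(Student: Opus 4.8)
The plan is to prove both identities as equalities of concrete pipe dreams: since two pipe dreams coincide exactly when they agree cell by cell, it suffices to describe, on each side, where the copies of $\b{P}$, $\r{Q}$ and $\g{R}$ sit in the grid of $\pipeDreams_{\b{m}+\r{n}+\g{p}}$, where the inserted crosses of types $\crossBiColor{blue,dashed}{red}$ and $\crossBiColor{red}{blue,dashed}$ sit, and to check that this description does not depend on the order of the two insertions. To keep track of indices I would first record the effect of a single insertion on the exiting permutation: if $\gamma$ is a global descent of $\omega_\b{P}$, so that $\omega_\b{P} = \alpha \gdproduct \beta$ with $|\alpha| = \gamma$, then $\omega_{\b{P} \gammainsertion_\gamma \r{Q}} = \alpha \gdproduct \omega_\r{Q} \gdproduct \beta$; that is, inserting $\r{Q}$ at the gap $\gamma$ splices the factorization of $\omega_\r{Q}$ into that of $\omega_\b{P}$ at the cut $\gamma$. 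This is read off exactly as in the proof of Proposition~\ref{prop:codescentproduct}, from the fact that the new pipes of $\r{Q}$ enter in the rows inserted after row $\b{m}-\gamma$ and exit in the columns inserted after column $\gamma$. It has two useful consequences: writing $\omega_\b{P} = \alpha \gdproduct \beta \gdproduct \sigma$ with $|\alpha| = \gamma$ and $|\alpha \gdproduct \beta| = \delta$, the gap $\delta$ of $\omega_\b{P}$ becomes the gap $\delta + \r{n}$ of $\omega_{\b{P} \gammainsertion_\gamma \r{Q}}$; and writing $\omega_\r{Q} = \phi \gdproduct \psi$ with $|\phi| = \nu$, the gap $\gamma + \nu$ is a global descent of $\omega_{\b{P} \gammainsertion_\gamma \r{Q}}$. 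This guarantees that the second insertions in \eqref{eq:ruleInsertionA} and \eqref{eq:ruleInsertionB} are well defined.

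For \eqref{eq:ruleInsertionA}, the two global descents $\gamma < \delta$ cut $\b{P}$ along two different antidiagonals, and the corner regions into which $\r{Q}$ and $\g{R}$ are inserted are disjoint. Inserting $\r{Q}$ at $\gamma$ opens $\r{n}$ columns after column $\gamma$ and $\r{n}$ rows after row $\b{m}-\gamma$; inserting $\g{R}$ at $\delta + \r{n}$ then opens $\g{p}$ columns and rows in the block associated with the later cut $\delta$, without touching the $\r{Q}$-block, and symmetrically when the two operations are performed in the opposite order. I would check that in both orders the final grid decomposes into the very same regions, namely a copy of $\b{P}$ with its two cuts expanded, a copy of $\r{Q}$, a copy of $\g{R}$, and the two families of inserted crosses, occupying the same cells with the same entries; the shift $\delta \mapsto \delta + \r{n}$ is precisely what aligns the coordinates.

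For \eqref{eq:ruleInsertionB}, the key observation is that the gap $\gamma + \nu$ of $\b{P} \gammainsertion_\gamma \r{Q}$ lies \emph{inside} the sub-triangle occupied by $\r{Q}$ and, under the identification of the previous paragraph, is exactly the image of the global descent $\nu$ of $\omega_\r{Q}$. Hence inserting $\g{R}$ at $\gamma + \nu$ subdivides the $\r{Q}$-region in precisely the same way as forming $\r{Q} \gammainsertion_\nu \g{R}$ beforehand does, while the crosses of the outer insertion at $\gamma$ are unchanged, since they depend only on $\b{P}$ and on the total size $\r{n}+\g{p}$ of the block being inserted. Therefore both $(\b{P} \gammainsertion_\gamma \r{Q}) \gammainsertion_{\gamma + \nu} \g{R}$ and $\b{P} \gammainsertion_\gamma (\r{Q} \gammainsertion_\nu \g{R})$ place the same three pipe dreams in the same cells and fill the same new rows and columns with the same crosses.

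The substance of the argument is entirely in the bookkeeping, and this is where I expect the only real care to be needed: one must verify that the two families of inserted crosses $\crossBiColor{blue,dashed}{red}$ (new columns) and $\crossBiColor{red}{blue,dashed}$ (new rows) land in identical cells \emph{with identical types} in either order, and that no cell is claimed by two distinct blocks. To avoid a proliferation of cases I would set this up once and for all, giving a single closed formula for a double insertion as a function on cell coordinates $(i,j)$ --- determining the region of $(i,j)$ from the cut data and the sizes $\b{m}, \r{n}, \g{p}$ --- and then observe that this formula is manifestly symmetric under the exchange appearing in \eqref{eq:ruleInsertionA} and compatible with the splicing appearing in \eqref{eq:ruleInsertionB}. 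This is the sense in which the lemma is immediate.
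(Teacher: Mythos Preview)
Your proposal is correct and is precisely the cell-by-cell bookkeeping the paper has in mind: the authors explicitly leave this lemma to the reader as ``immediate,'' so there is no proof in the paper to compare against. Your tracking of how global descents shift under insertion (via the splicing of the $\gdproduct$-factorization, exactly as in Lemma~\ref{lem:gbinsertion}) and your description of the disjoint regions in~\eqref{eq:ruleInsertionA} and the nested region in~\eqref{eq:ruleInsertionB} constitute the intended verification.
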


Finally, we observe in the next statement that the operation~$\gammainsertion$ behaves nicely with the product on permutations defined in Section~\ref{sec:HopfAlgebraPermutations}. The immediate proof is left to the reader.

\begin{lemma}
\label{lem:gbinsertion}
Let~$\b{P} \in \pipeDreams_\b{m}$ and~$\r{Q} \in \pipeDreams_\r{n}$ be two pipe dreams, such that~$\omega_{\b{P}} = \omega_1 \gdproduct \omega_2$ for~$\omega_1 \in \fS_{\b{\gamma}}$ and~$\omega_2 \in \fS_{\b{m}-\b{\gamma}}$. Then~$\omega_{(\b{P} \gammainsertion_\b{\gamma} \r{Q})} = \omega_1 \gdproduct \omega_{\r{Q}} \gdproduct \omega_2$.
\end{lemma}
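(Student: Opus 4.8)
The plan is to compute both sides of the claimed identity explicitly and to match them term by term. Write~$\tau \eqdef \omega_{(\b{P} \gammainsertion_{\b{\gamma}} \r{Q})}$ for the permutation to be determined. Unfolding the definition of~$\gdproduct$ twice (using its associativity), the target permutation~$\omega_1 \gdproduct \omega_{\r{Q}} \gdproduct \omega_2 \in \fS_{\b{m}+\r{n}}$ is given by
\[
i \longmapsto
\begin{cases}
\omega_1(i) + \r{n} + (\b{m}-\b{\gamma}) & \text{if } 1 \le i \le \b{\gamma}, \\
\omega_{\r{Q}}(i-\b{\gamma}) + (\b{m}-\b{\gamma}) & \text{if } \b{\gamma}+1 \le i \le \b{\gamma}+\r{n}, \\
\omega_2(i-\b{\gamma}-\r{n}) & \text{if } \b{\gamma}+\r{n}+1 \le i \le \b{m}+\r{n}.
\end{cases}
\]
So it suffices to check that~$\tau$ agrees with this three-range formula.

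First I would record how the insertion relabels the rows and columns of~$\b{P}$. Adding~$\r{n}$ rows after row~$\b{m}-\b{\gamma}$ and~$\r{n}$ columns after column~$\b{\gamma}$ leaves rows~$1,\dots,\b{m}-\b{\gamma}$ and columns~$1,\dots,\b{\gamma}$ in place, shifts rows~$\b{m}-\b{\gamma}+1,\dots,\b{m}$ to rows~$\b{m}-\b{\gamma}+\r{n}+1,\dots,\b{m}+\r{n}$ and columns~$\b{\gamma}+1,\dots,\b{m}$ to columns~$\b{\gamma}+\r{n}+1,\dots,\b{m}+\r{n}$, and installs~$\r{Q}$ in the fresh rows~$\b{m}-\b{\gamma}+1,\dots,\b{m}-\b{\gamma}+\r{n}$ and columns~$\b{\gamma}+1,\dots,\b{\gamma}+\r{n}$, with row~$r$ of~$\r{Q}$ sitting in new row~$\b{m}-\b{\gamma}+r$ and column~$c$ of~$\r{Q}$ in new column~$\b{\gamma}+c$. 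The pipes of~$\b{P}\gammainsertion_{\b{\gamma}}\r{Q}$ thus split into three families: the top pipes of~$\b{P}$ (entering rows~$1,\dots,\b{m}-\b{\gamma}$), the bottom pipes of~$\b{P}$ (entering rows~$\b{m}-\b{\gamma}+1,\dots,\b{m}$), and the pipes of~$\r{Q}$.

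Then I would read off~$\tau$ on the three column ranges using the hypothesis~$\omega_{\b{P}} = \omega_1 \gdproduct \omega_2$, which says that in~$\b{P}$ the pipe exiting column~$j$ enters at row~$\omega_1(j)+(\b{m}-\b{\gamma})$ when~$j \le \b{\gamma}$ and at row~$\omega_2(j-\b{\gamma})$ when~$j > \b{\gamma}$. Since~$\b{\gamma}$ is a global descent, the bottom pipes exit in the unchanged columns~$1,\dots,\b{\gamma}$ after crossing~$\r{Q}$ only vertically, so their entry rows are merely shifted by~$\r{n}$ and~$\tau(j) = \omega_1(j)+(\b{m}-\b{\gamma})+\r{n}$ for~$1 \le j \le \b{\gamma}$. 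The pipes of~$\r{Q}$ connect its new rows to its new columns, giving~$\tau(\b{\gamma}+j') = \omega_{\r{Q}}(j')+(\b{m}-\b{\gamma})$ for~$1 \le j' \le \r{n}$. Finally the top pipes exit in the columns~$\b{\gamma}+\r{n}+1,\dots,\b{m}+\r{n}$ after crossing~$\r{Q}$ only horizontally, with entry rows unchanged, so~$\tau(\b{\gamma}+\r{n}+c') = \omega_2(c')$ for~$1 \le c' \le \b{m}-\b{\gamma}$. These three expressions match the three ranges displayed above, hence~$\tau = \omega_1 \gdproduct \omega_{\r{Q}} \gdproduct \omega_2$.

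The lemma is advertised as immediate, and indeed no new idea is needed: the work is index bookkeeping. The one substantive point is that each original pipe keeps its entry row connected to its exit column and is simply translated, rather than being diverted by the inserted block. This would fail at an arbitrary gap, but it holds here precisely because~$\b{\gamma}$ is a global descent, which keeps every top pipe entirely to the right of the inserted region and every bottom pipe entirely to its left, so that each crosses every pipe of~$\r{Q}$ exactly once. This is exactly the reducedness observation made just before the statement, so I would invoke it rather than reprove it.
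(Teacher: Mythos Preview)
Your proof is correct and is precisely the direct index bookkeeping the paper has in mind when it says ``the immediate proof is left to the reader.'' The three-range computation of~$\tau$ matches the unfolded definition of~$\omega_1 \gdproduct \omega_{\r{Q}} \gdproduct \omega_2$, and you correctly isolate the one non-trivial point: that each pipe of~$\b{P}$ passes straight through the inserted block rather than being diverted, which is exactly the consequence of~$\b{\gamma}$ being a global descent noted in the paragraph preceding the lemma.
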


Consider now a word~$s$ on the alphabet~$\{\b{p}, \r{q}\}$ with~$\b{m}$ letters~$\b{p}$ and~$\r{n}$ letters~$\r{q}$. We call \defn{$\b{p}$-blocks} (resp.~\defn{$\r{q}$-blocks}) the blocks of consecutive letters~$\b{p}$ (resp.~$\r{q}$). We consider that the $\b{p}$-blocks in~$s$ mark gaps of the permutation~$\omega_{\r{Q}}$ while the $\r{q}$-blocks in~$s$ mark gaps of the permutation~$\omega_{\b{P}}$. We say that~$s$ is a $\b{P}/\r{Q}$-\defn{shuffle} if all $\b{p}$-blocks appear at global descents of~$\omega_\r{Q}$ while all $\r{q}$-blocks appear at global descents of~$\omega_\b{P}$. For example, if~$\omega_\b{P} = 53412= 1 \gdproduct 12 \gdproduct 12$ and~$\omega_\r{Q} = 635421 = 1 \gdproduct 132 \gdproduct 1 \gdproduct 1$, then~$\r{q}\b{p}\r{qqq}\b{pp}\r{q}\b{pp}\r{q}$ and~$\r{q}\b{ppp}\r{qqqqq}\b{pp}$ are~$\b{P}/\r{Q}$-shuffles, while $\r{qqq}\b{pp}\r{qqq}\b{ppp}$ is not.

For a $\b{P}/\r{Q}$-shuffle~$s$ with~$\ell$ $\r{q}$-blocks, we denote by~$\b{P} \star_{s} \r{Q}$ the pipe dream obtained by
\begin{itemize}
\item untangling~$\r{Q}$ at all gaps of~$\omega_\r{Q}$ marked by $\b{p}$-blocks in~$s$, resulting to~$\ell$ pipe dreams~$\r{Q_1}, \dots, \r{Q_\ell}$,
\item inserting succesively the pipe dreams~$\r{Q_1}, \dots, \r{Q_\ell}$ in~$\b{P}$ at the positions of the $\r{q}$-blocks in~$s$ (by Lemma~\ref{lem:rulesInsertion}\,\eqref{eq:ruleInsertionA} it does not matter in which order we insert the pipe dreams~$\r{Q_1}, \dots, \r{Q_\ell}$).
\end{itemize}
We say that~$\star_{s}$ \defn{tangles} the pipe dreams~$\b{P}$ and~$\r{Q}$ according to the $\b{P}/\r{Q}$-shuffle~$s$. This operation is illustrated in \fref{fig:pipeDreamsTangle}. 

\begin{figure}[ht]
	\capstart
	\centerline{
		\input{tanglePipeDreams}
	}
	\caption{Tangling two pipe dreams~$\b{P}$ and~$\r{Q}$ according to the $\b{P}/\r{Q}$-shuffle~$\b{p}\r{qqq}\b{pp}\r{q}\b{pp}\r{qq}$.}
	\label{fig:pipeDreamsTangle}
\end{figure}

We define the product of~$\b{P} \in \pipeDreams_\b{m}$ and~$\r{Q} \in \pipeDreams_\r{n}$ by
\[
\b{P} \product \r{Q} = \sum_{s}  \b{P} \star_{s} \r{Q},
\]
where~$s$ ranges over all possible $\b{P}/\r{Q}$-shuffles. See \fref{fig:productPipeDreams}. Note that it always contains~${\b{P} \gammainsertion_0 \r{Q}}$ and~${\b{P} \gammainsertion_\b{m} \r{Q}}$ corresponding to the $\b{P}/\r{Q}$-shuffles~$\r{q^n}\b{p^m}$ and~$\b{p^m}\r{q^n}$.

\begin{figure}[ht]
	\capstart
	\centerline{
		\input{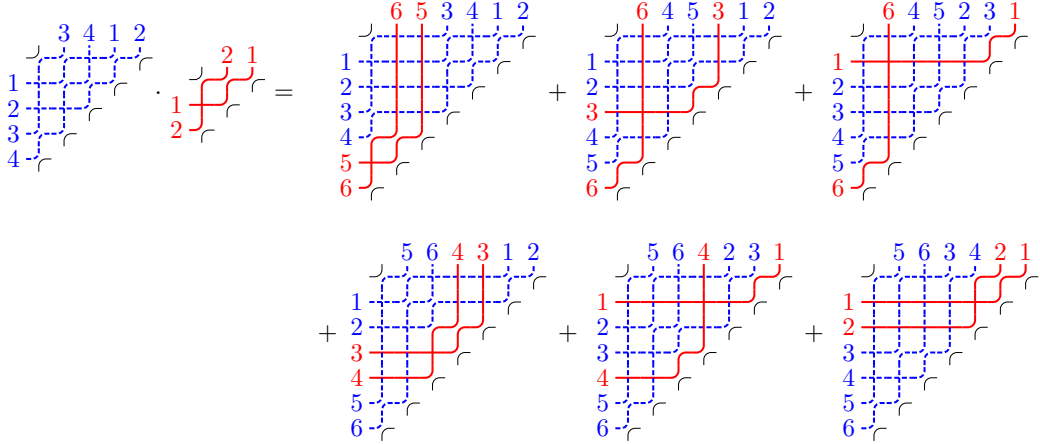}
	}
	\caption{Product of two pipe dreams.}
	\label{fig:productPipeDreams}
\end{figure}

Extended by bilinearity on~$\HP \otimes \HP$, the map~$\product \colon \HP \otimes \HP \to \HP$ defines a graded multiplication~on~$\HP$.

\begin{proposition}
The product~$\product$ defines an associative graded algebra structure on~$\HP$.
\end{proposition}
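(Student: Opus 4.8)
The plan is to prove associativity directly on the basis of pipe dreams: by bilinearity of~$\product$ it suffices to establish $(\b{P} \product \r{Q}) \product \g{R} = \b{P} \product (\r{Q} \product \g{R})$ for three fixed pipe dreams $\b{P} \in \pipeDreams_\b{m}$, $\r{Q} \in \pipeDreams_\r{n}$ and $\g{R} \in \pipeDreams_\g{p}$. The strategy is to show that both bracketings expand as one and the same sum of iterated insertions, obtained by untangling $\r{Q}$ and $\g{R}$ into pieces along their global descents and inserting all of these pieces into $\b{P}$ in every admissible interleaved order. The engine of the whole argument is Lemma~\ref{lem:rulesInsertion}: its commutation rule~\eqref{eq:ruleInsertionA} makes the order of independent insertions irrelevant, and its nesting rule~\eqref{eq:ruleInsertionB} converts an insertion performed inside an already-inserted piece into an insertion of a pretangled piece.

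First I would reformulate the binary product in purely insertional terms. Unfolding the definition of $\b{P} \star_s \r{Q}$, the product $\b{P} \product \r{Q}$ is the sum, over all untanglings of $\r{Q}$ at a set of global descents of $\omega_\r{Q}$ into consecutive pieces $\r{Q_1}, \dots, \r{Q_\ell}$ and over all admissible choices of global descents of $\omega_\b{P}$ at which to insert them, of the iterated insertion $\b{P} \gammainsertion \r{Q_1} \gammainsertion \cdots \gammainsertion \r{Q_\ell}$; by~\eqref{eq:ruleInsertionA} this depends only on the unordered assignment of $\r{Q}$-pieces to gaps of $\b{P}$. Expanding $(\b{P} \product \r{Q}) \product \g{R}$ then proceeds in two stages: form the intermediate pipe dreams $X = \b{P} \star_s \r{Q}$, and insert pieces of $\g{R}$ into each $X$. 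By Lemma~\ref{lem:gbinsertion} applied iteratively, $\omega_X$ is the $\gdproduct$-interleaving of the atomic factorizations of $\omega_\b{P}$ and $\omega_\r{Q}$, so its global descents are exactly the gaps simultaneously compatible with both factorizations. Consequently each $\g{R}$-piece is inserted either into a gap of $\b{P}$ that is independent of all $\r{Q}$-pieces, or into a gap lying strictly inside one inserted $\r{Q}$-piece. In the first case~\eqref{eq:ruleInsertionA} commutes the $\g{R}$-insertion past the $\r{Q}$-insertions; in the second case~\eqref{eq:ruleInsertionB} rewrites it as the insertion into $\b{P}$ of a piece of $\r{Q} \gammainsertion \g{R}$. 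Expanding $\b{P} \product (\r{Q} \product \g{R})$ symmetrically—first tangling $\r{Q}$ and $\g{R}$, then inserting the resulting pieces into $\b{P}$—the same two rules bring it into the identical shape, namely the sum over all compatible simultaneous untanglings of $\r{Q}$ and $\g{R}$ together with an interleaved insertion schedule into $\b{P}$.

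The main obstacle, and the step demanding real care, is that the matching is \emph{not} term by term: the set of shuffle words indexing $(\b{P} \product \r{Q}) \product \g{R}$ is genuinely different from the one indexing $\b{P} \product (\r{Q} \product \g{R})$, because the admissibility condition governing where a $\g{R}$-piece may sit is phrased against the global descents of the intermediate permutation $\omega_X$ (respectively $\omega_{\r{Q} \star \g{R}}$), and these two descent sets do not coincide gap by gap. This mismatch is already visible under the morphism~$\omega$ of Proposition~\ref{prop:codescentproduct}, where a single permutation can appear in a shuffle product with multiplicity greater than one. Thus I must argue at the level of multisets of pipe dreams, and the crux is precisely the nested case: one has to verify that a $\g{R}$-piece inserted inside a $\r{Q}$-region on one side corresponds, through~\eqref{eq:ruleInsertionB}, to first tangling that portion of $\g{R}$ into $\r{Q}$ on the other side, and that the global-descent admissibility conditions transport exactly under this correspondence (using Lemma~\ref{lem:gbinsertion} to track the intermediate descents). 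A convenient way to organize this bookkeeping, which I would adopt, is to encode every term of the triple product by a single iterated-insertion schedule taken modulo the two relations of Lemma~\ref{lem:rulesInsertion}; associativity then follows because this quotient is manifestly symmetric in the roles of the three factors, so both bracketings compute the same well-defined ternary tangling.
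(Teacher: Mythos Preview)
Your approach is correct and rests on the same two ingredients as the paper's proof: the commutation and nesting rules of Lemma~\ref{lem:rulesInsertion}, together with the associativity of the underlying shuffle of atomic factorizations. The paper argues slightly more directly. It observes that the terms of both $(\b{P}\product\r{Q})\product\g{R}$ and $\b{P}\product(\r{Q}\product\g{R})$ are indexed by one and the same set of ternary $\b{P}/\r{Q}/\g{R}$-shuffles (words in $\{\b{p},\r{q},\g{r}\}$ whose block boundaries respect the atomic decompositions of all three permutations). This is because, by Lemma~\ref{lem:gbinsertion}, the atomic factorization of~$\omega_X$ for $X=\b{P}\star_s\r{Q}$ is exactly the $s$-interleaving of the atoms of~$\omega_\b{P}$ and~$\omega_\r{Q}$; hence a pair (a $\b{P}/\r{Q}$-shuffle~$s$, an $X/\g{R}$-shuffle~$t$) is the same data as a single ternary shuffle word, and symmetrically on the other side. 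The role of Lemma~\ref{lem:rulesInsertion} is then only to check that the two pipe dreams attached to the same ternary word coincide.

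So your worry that ``the matching is not term by term'' is too pessimistic. The multiplicity phenomenon you point to (a permutation appearing several times in $\omega_\b{P}\gdshuffle\omega_\r{Q}$) concerns different shuffle \emph{words} yielding the same \emph{permutation}; it does not mean the indexing sets of words differ between the two bracketings---they are literally the same set of ternary shuffles. Your ``iterated-insertion schedule modulo the relations of Lemma~\ref{lem:rulesInsertion}'' is precisely this ternary shuffle word in disguise, so your argument does land in the right place; it just routes through an unnecessary multiset bookkeeping that the cleaner indexing by ternary shuffles avoids.
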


\begin{proof}
By construction, $\product$ is graded. It is also clear that~$P\product\elbow = \elbow\product P = P$. Associativity follows from the fact that the terms of $(\b{P}\product\r{Q})\product\g{R}$ and $\b{P}\product(\r{Q}\product\g{R})$ both correspond to $\b{P}/\r{Q}/\g{R}$-shuffles and the shuffle operation is associative. We also make use of the fact that it does not matter in which order we insert pipe dreams in $\b{P}$ according to Lemma~\ref{lem:rulesInsertion}\,\eqref{eq:ruleInsertionA}. Thus, in both~$(\b{P}\product\r{Q})\product\g{R}$ and~$\b{P}\product(\r{Q}\product\g{R})$ we will insert the pieces of $\r{Q}$ and $\g{R}$ as dictated by the $\b{P}/\r{Q}/\g{R}$-shuffles, so that the result will be the same by Lemma~\ref{lem:rulesInsertion}\,\eqref{eq:ruleInsertionB}.
\end{proof}

\begin{proposition}
\label{prop:descentproduct}
The map $\omega\colon (\HP, \product) \to (\HS, \gdshuffle)$ is a graded morphism of algebras.
\end{proposition}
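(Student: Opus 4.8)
The statement to prove is that $\omega_{\b{P} \product \r{Q}} = \omega_\b{P} \gdshuffle \omega_\r{Q}$ for all $\b{P} \in \pipeDreams_\b{m}$ and $\r{Q} \in \pipeDreams_\r{n}$; that $\omega$ is degree preserving and sends the unit $\elbow \in \pipeDreams_0$ to $\epsilon$ is immediate. Since $\omega$ is linear and $\b{P} \product \r{Q} = \sum_s \b{P} \star_s \r{Q}$, it suffices to match summands. The plan is to (i) set up a bijection between the $\b{P}/\r{Q}$-shuffles $s$ and the interleavings indexing the terms of the permutation shuffle $\omega_\b{P} \gdshuffle \omega_\r{Q}$, and then (ii) check that each $\omega_{\b{P} \star_s \r{Q}}$ equals the interleaved $\gdproduct$-product attached to $s$.

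For step (i), I would write the atomic factorizations $\omega_\b{P} = \b{\alpha_1} \gdproduct \cdots \gdproduct \b{\alpha_a}$ and $\omega_\r{Q} = \r{\beta_1} \gdproduct \cdots \gdproduct \r{\beta_b}$, and use that the global descents of $\omega_\b{P}$ (resp.\ $\omega_\r{Q}$) are exactly the partial sums $P_i = |\b{\alpha_1}| + \cdots + |\b{\alpha_i}|$ (resp.\ $Q_j$). A term of $\omega_\b{P} \gdshuffle \omega_\r{Q}$ is a shuffle of the sequences $(\b{\alpha_1}, \dots, \b{\alpha_a})$ and $(\r{\beta_1}, \dots, \r{\beta_b})$, that is, a binary word $w$ with $a$ letters $A$ and $b$ letters $B$. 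I would map $w$ to the word $s$ obtained by replacing the $i$-th $A$ by the block $\b{p}^{|\b{\alpha_i}|}$ and the $j$-th $B$ by $\r{q}^{|\r{\beta_j}|}$, so that maximal runs of equal letters in $w$ merge into blocks of $s$. Each $\r{q}$-block of $s$ then begins after a number of $\b{p}$'s equal to some $P_i$, hence at a global descent of $\omega_\b{P}$, and symmetrically for $\b{p}$-blocks, so $s$ is a $\b{P}/\r{Q}$-shuffle. Conversely, the defining condition of a $\b{P}/\r{Q}$-shuffle forces each maximal $\b{p}$-block to span a cumulative range $[P_i, P_k]$ of global descents, so it re-splits uniquely into $\b{\alpha_{i+1}}, \dots, \b{\alpha_k}$ (and dually for $\r{q}$-blocks); this recovers $w$ and proves the map is a bijection.

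For step (ii), I would follow the recipe defining $\star_s$. The $\b{p}$-block gaps cut $[0, \r{n}]$ exactly at the boundaries of the $\r{q}$-blocks, so untangling $\r{Q}$ there yields the pieces $\r{Q_1}, \dots, \r{Q_\ell}$, one per $\r{q}$-block; iterating Proposition~\ref{prop:codescentproduct} (legitimate by the coassociativity of Proposition~\ref{prop:coalgebra}), the exiting permutation $\omega_{\r{Q_i}}$ is the $\gdproduct$-product of the atomic factors $\r{\beta}$ grouped in the $i$-th $\r{q}$-block. Each $\r{Q_i}$ is then inserted into $\b{P}$ at a global descent $P_{c_i}$, and Lemma~\ref{lem:gbinsertion} shows that a single such insertion splices $\omega_{\r{Q_i}}$ between the corresponding atomic factors of $\omega_\b{P}$. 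Iterating the insertions—where the order is irrelevant by Lemma~\ref{lem:rulesInsertion}\,\eqref{eq:ruleInsertionA} and nesting is governed by~\eqref{eq:ruleInsertionB}—splices all the $\omega_{\r{Q_i}}$ at once, so $\omega_{\b{P} \star_s \r{Q}}$ is precisely the $\gdproduct$-product of the atomic factors of $\omega_\b{P}$ and $\omega_\r{Q}$ read in the order prescribed by $s$, i.e.\ the interleaving attached to $s$ in step (i). Summing over all $s$ then gives $\sum_s \omega_{\b{P} \star_s \r{Q}} = \omega_\b{P} \gdshuffle \omega_\r{Q}$, which is the claim.

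The main obstacle will be the bookkeeping in step (i): I must verify that the merging of adjacent same-type atomic factors into one block of $s$ is exactly compensated by the unique re-splitting forced by the global-descent condition, so that the correspondence is a genuine bijection and not merely a surjection carrying multiplicities. The rest is a careful but routine application of Lemma~\ref{lem:gbinsertion} and Proposition~\ref{prop:codescentproduct}; the only delicate point there is to confirm that the gaps used for successive insertions remain global descents and that trivial untanglings at the gaps $0$ and $\r{n}$ contribute no spurious pieces.
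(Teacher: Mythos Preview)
Your proposal is correct and follows essentially the same approach as the paper: set up the bijection between $\b{P}/\r{Q}$-shuffles and the terms of $\omega_\b{P} \gdshuffle \omega_\r{Q}$, then use Lemma~\ref{lem:gbinsertion} iteratively to identify $\omega_{\b{P} \star_s \r{Q}}$ with the corresponding shuffle term. The paper's proof compresses your step~(i) into a single sentence and leaves the untangling part of step~(ii) implicit, whereas you make it explicit via Proposition~\ref{prop:codescentproduct}; this is a welcome clarification but not a different argument.
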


\begin{proof}  
Let~$\b{P} \in \pipeDreams_\b{m}$, $\r{Q} \in \pipeDreams_\r{n}$ and~$s$ be a~$\b{P}/\r{Q}$-shuffle. Then~$s$ corresponds to exactly one term in~${\omega_{\b{P}} \gdshuffle \omega_{\r{Q}}}$. 
Moreover, successive use of Lemma~\ref{lem:gbinsertion} shows that~$\omega_{(\b{P} \star_{s} \r{Q})}$ is the term in the shuffle corresponding to~$s$. This shows that~$\omega_{\b{P}} \gdshuffle \omega_{\r{Q}} = \omega_{(\b{P} \product \r{Q})}$. It is also clear that~$\omega$ preserves the grading.
\end{proof}


\subsection{Hopf Algebra Structure}
\label{subsec:HopfAlgebraPipeDreams}

We now show that the product and coproduct on pipe dreams defined in the previous sections are compatible in the sense of the following statement.

\begin{proposition}
\label{prop:HopfAlgebraFixedDepth}
The product~$\product$ and coproduct~$\coproduct$ endow the family~$\pipeDreams$ of all pipe dreams with a graded connected Hopf algebra structure.
\end{proposition}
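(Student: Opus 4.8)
The plan is to assemble the Hopf structure from the pieces already in place. Proposition~\ref{prop:coalgebra} shows that $(\HP, \coproduct)$ is a coassociative graded coalgebra, and the associativity result of the previous subsection shows that $(\HP, \product)$ is an associative graded algebra, both for the same grading; moreover $\pipeDreams_0 = \{\elbow\}$, so $\HP_0 = \bk\,\elbow \cong \bk$ and the structure is connected. To upgrade this to a bialgebra it remains only to verify the compatibility between $\product$ and $\coproduct$, namely that $\coproduct$ is a morphism of algebras from $\HP$ to $\HP \otimes \HP$ (with the componentwise product), together with the routine unit and counit compatibilities. Once this holds, the existence of an antipode is automatic, since every graded connected bialgebra is a Hopf algebra, the antipode being recovered by the usual recursion on the grading (Takeuchi's formula).

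Thus the only real content is to prove
\[
\coproduct(\b{P} \product \r{Q}) = \coproduct(\b{P}) \product \coproduct(\r{Q})
\]
for all $\b{P} \in \pipeDreams_\b{m}$ and $\r{Q} \in \pipeDreams_\r{n}$, and I would do this by expanding both sides and exhibiting a term-by-term bijection. On the left, $\b{P} \product \r{Q} = \sum_s \b{P} \star_s \r{Q}$ ranges over $\b{P}/\r{Q}$-shuffles $s$, and $\coproduct$ then untangles each $\b{P} \star_s \r{Q}$ at every global descent $\gamma$ of $\omega_{\b{P} \star_s \r{Q}}$, so the left side is indexed by pairs $(s, \gamma)$. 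On the right, writing $\coproduct(\b{P}) = \sum_\alpha \b{P_1^\alpha} \otimes \b{P_2^\alpha}$ and $\coproduct(\r{Q}) = \sum_\beta \r{Q_1^\beta} \otimes \r{Q_2^\beta}$ (sums over global descents $\alpha$ of $\omega_\b{P}$ and $\beta$ of $\omega_\r{Q}$) and tangling componentwise, the right side is indexed by quadruples $(\alpha, \beta, s', s'')$, where $s'$ is a $\b{P_1^\alpha}/\r{Q_1^\beta}$-shuffle and $s''$ a $\b{P_2^\alpha}/\r{Q_2^\beta}$-shuffle.

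The bijection is dictated by the permutation level. By Proposition~\ref{prop:descentproduct}, $\omega_{\b{P} \star_s \r{Q}}$ is exactly the term of $\omega_\b{P} \gdshuffle \omega_\r{Q}$ indexed by $s$, and by the compatibility $\gddeconcat(\omega_\b{P} \gdshuffle \omega_\r{Q}) = \gddeconcat(\omega_\b{P}) \gdshuffle \gddeconcat(\omega_\r{Q})$ recorded in Section~\ref{sec:HopfAlgebraPermutations}, a global descent $\gamma$ of this term corresponds to a cut of the shuffle word $s$ into a prefix and a suffix that simultaneously cuts $\omega_\b{P}$ at a global descent $\alpha$ and $\omega_\r{Q}$ at a global descent $\beta$. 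The prefix is then a $\b{P_1^\alpha}/\r{Q_1^\beta}$-shuffle $s'$ and the suffix a $\b{P_2^\alpha}/\r{Q_2^\beta}$-shuffle $s''$, giving the assignment $(s, \gamma) \mapsto (\alpha, \beta, s', s'')$, whose inverse concatenates $s'$ and $s''$.

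The crux, and the step I expect to be the main obstacle, is the purely geometric identity
\[
\coproduct_{\gamma,\,\b{m}+\r{n}-\gamma}(\b{P} \star_s \r{Q}) = (\b{P_1^\alpha} \star_{s'} \r{Q_1^\beta}) \otimes (\b{P_2^\alpha} \star_{s''} \r{Q_2^\beta}),
\]
which must be checked at the level of individual crosses and elbows: the permutation bookkeeping tells us which terms ought to match, but one must still verify that untangling a tangled pipe dream really splits it into the two tangles of its untangled pieces. This amounts to tracking, cell by cell, how the insertion operation $\gammainsertion$ (which fills new rows and columns with crosses) interacts with the horizontal and vertical packings $\lrot$ and $\lmir$ (which contract exactly those crossings). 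The fact that every insertion occurs at a global descent is essential here, since it guarantees that each newly inserted pipe crosses each old pipe exactly once, so that packing removes precisely the cells created by insertion; and the commutation rules of Lemma~\ref{lem:rulesInsertion} allow one to reorder the insertions so that those in the prefix block and those in the suffix block separate cleanly along the cut at $\gamma$. Once this local verification is in place, summing over the bijection yields the compatibility, and with it the graded connected Hopf algebra structure.
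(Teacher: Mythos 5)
Your proposal is correct and follows essentially the same route as the paper: both reduce everything to the compatibility $\coproduct(\b{P}\product\r{Q}) = \coproduct(\b{P})\product\coproduct(\r{Q})$ and establish it by the same term-by-term bijection, matching a pair $(s,\gamma)$ of a $\b{P}/\r{Q}$-shuffle and a global descent of $\omega_{(\b{P}\star_s\r{Q})}$ with the induced global descents of $\omega_{\b{P}}$ and $\omega_{\r{Q}}$ together with the prefix and suffix shuffles $s'$ and $s''$. If anything, you are more explicit than the paper on two points it leaves implicit, namely that the antipode comes for free from graded connectedness, and that one must check at the level of crosses and elbows that untangling $\b{P}\star_s\r{Q}$ at $\gamma$ actually produces $(\b{P'}\star_{s'}\r{Q'})\otimes(\b{P''}\star_{s''}\r{Q''})$.
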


\begin{proof}
Consider two pipe dreams~$\b{P} \in \pipeDreams_\b{m}$ and~$\r{Q} \in \pipeDreams_\r{n}$. We need to show that
\[
\coproduct(\b{P}\product\r{Q}) = \coproduct(\b{P}) \product \coproduct(\r{Q}),
\]
where the right hand side product has to be understood componentwise. For that, it is enough to show that each term on the left hand side exactly corresponds to one term on the right hand side.

The terms of~$\coproduct(\b{P}\product\r{Q})$ are in correspondence with all choices of a $\b{P}/\r{Q}$-shuffle~$s$ and of a global descent~$\gamma \in \{0, \dots, \b{m} + \r{n}\}$ of~$\omega_{(\b{P} \star_s \r{Q})}$. This global descent uniquely identifies global descents~$\gamma_{\b{P}} \eqdef |\set{i \in [\gamma]}{s_i = \b{p}}|$ of~$\omega_\b{P}$ and~$\gamma_{\r{Q}} \eqdef |\set{i \in [\gamma]}{s_i = \r{q}}|$ of~$\omega_\r{Q}$, and hence defines terms~$\b{P'} \otimes \b{P''}$ of~$\coproduct(\b{P})$ and~$\r{Q'} \otimes \r{Q''}$ of~$\coproduct(\r{Q})$. Moreover~$s' \eqdef s_1 \dots s_\gamma$ is a~$\b{P'}/\r{Q'}$-shuffle and hence corresponds to a unique term of~$\b{P'} \product \r{Q'}$ while~$s'' \eqdef s_{\gamma + 1} \dots s_{\b{m}+\r{n}}$ is a $\b{P''}/\r{Q''}$-shuffle and hence corresponds to a unique term of~$\b{P''} \product \r{Q''}$. This shows that each term of~$\coproduct(\b{P} \product \r{Q})$ appears as a term of~$\coproduct(\b{P}) \product \coproduct(\r{Q})$.

Conversely, the terms of~$\coproduct(\b{P}) \product \coproduct(\r{Q})$ are in correspondence with all choices of a global descent~$\gamma_{\b{P}}$ of~$\omega_\b{P}$ corresponding to a term~$\b{P'} \otimes \b{P''}$ of~$\coproduct(\b{P})$ and a global descent~$\gamma_{\r{Q}}$ of~$\omega_\r{Q}$ corresponding to a term~$\r{Q'} \otimes \r{Q''}$ of~$\coproduct(\r{Q})$, together with a $\b{P'}/\r{Q'}$-shuffle~$s'$ and a $\b{P''}/\r{Q''}$-shuffle~$s''$. Then~$s \eqdef s's''$ is a $\b{P}/\r{Q}$-shuffle which gives us a term of~$\coproduct(\b{P} \product \r{Q})$.
\end{proof}

Combining Propositions~\ref{prop:descentproduct} and~\ref{prop:codescentproduct}, we obtain the following statement.

\begin{proposition}
\label{prop:descentHopf}
The map~$\omega \colon (\HP, \product, \coproduct) \to (\HS, \gdshuffle, \gddeconcat)$ is a morphism of graded Hopf algebras.
\end{proposition}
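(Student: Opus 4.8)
The statement to prove is that $\omega \colon (\HP, \product, \coproduct) \to (\HS, \gdshuffle, \gddeconcat)$ is a morphism of graded Hopf algebras. My plan is to observe that almost all of the required verification has already been done in the two immediately preceding propositions, so the proof should be short and essentially a bookkeeping argument. A morphism of Hopf algebras between two graded connected Hopf algebras is precisely a map that is simultaneously a morphism of graded algebras and a morphism of graded coalgebras; once both of those are in hand, compatibility with the antipode is automatic, because in a graded connected bialgebra the antipode is uniquely determined by the product, the coproduct, the unit and the counit (it is computed recursively from these via the defining convolution identity $S \star \mathrm{id} = \eta \epsilon = \mathrm{id} \star S$). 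So the plan is to assemble the algebra-morphism and coalgebra-morphism statements and then invoke this standard fact about graded connected bialgebras.

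Concretely, the first step is to recall Proposition~\ref{prop:descentproduct}, which states that $\omega \colon (\HP, \product) \to (\HS, \gdshuffle)$ is a graded morphism of algebras, and Proposition~\ref{prop:codescentproduct}, which states that $\omega \colon (\HP, \coproduct) \to (\HS, \gddeconcat)$ is a graded morphism of coalgebras. Taken together these say that $\omega$ respects the product, the coproduct, and the grading. The second step is to note that $\omega$ also respects the unit and the counit: the unit of $\HP$ is the empty pipe dream $\elbow \in \pipeDreams_0$, which $\omega$ sends to the unique permutation $\epsilon \in \fS_0$, the unit of $\HS$; dually, the counit of $\HP$ (which is $1$ on $\elbow$ and $0$ otherwise, as defined in the proof of Proposition~\ref{prop:coalgebra}) factors through $\omega$ and the counit of $\HS$. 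This is immediate from the fact that $\omega$ preserves the grading and is a bijection in degree~$0$.

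The third and final step is to deduce Hopf-morphism compatibility. Since $\HP$ and $\HS$ are both graded and connected (their degree-zero components are one-dimensional) and $\omega$ is a bialgebra morphism preserving the grading, the antipodes $S_{\HP}$ and $S_{\HS}$ are the unique convolution inverses of the respective identity maps, and any graded bialgebra morphism automatically intertwines them, i.e.\ $\omega \circ S_{\HP} = S_{\HS} \circ \omega$. Hence $\omega$ is a morphism of graded Hopf algebras, as claimed.

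I do not expect any genuine obstacle here: the real content lies in Propositions~\ref{prop:descentproduct} and~\ref{prop:codescentproduct}, which have already been established, and the passage from a graded bialgebra morphism to a Hopf algebra morphism is a formal consequence of connectedness. The only point requiring a line of care is the unit/counit compatibility in degree~$0$, which is where one must check that $\omega$ is not merely multiplicative and comultiplicative but genuinely unital and counital; this is transparent because $\pipeDreams_0$ and $\fS_0$ are both singletons and $\omega$ matches them up.
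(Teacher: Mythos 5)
Your proposal is correct and follows exactly the paper's route: the paper's entire proof is the one-line observation that Propositions~\ref{prop:descentproduct} and~\ref{prop:codescentproduct} combine to give the statement, with the unit/counit and antipode compatibilities left implicit as standard facts about graded connected bialgebras. Your additional remarks on degree-zero unitality and on the antipode being determined by the convolution identity simply make explicit what the paper takes for granted.
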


\begin{remark}
We can refine the grading of both~$\HP$ and~$\HS$ so that it remains preserved by~$\omega$.
For~${P \in \pipeDreams_n}$ and~$\omega_P = \nu_1 \gdproduct \cdots \gdproduct \nu_\ell$ the factorisation into atomic permutations, we have two particularly interesting refinements:
\begin{itemize}
\item the number~$\ell-1$ of global descents,
\item the sum~$\displaystyle \sum_{i=1}^\ell \ell(\nu_i)$ of inversions within atomic permutations.
\end{itemize}
\end{remark}

\begin{remark}
The transposition~$\transpose$ provides a Hopf algebra involution on~$\HP$:
\[
\transpose \circ \product = \product \circ (\transpose \otimes \transpose)
\qquad\text{and}\qquad
\coproduct \circ \transpose = (\transpose \otimes \transpose) \circ \coproduct.
\]
Moreover, given~$\omega_P = \nu_1 \gdproduct \nu_2 \gdproduct \cdots \gdproduct \nu_\ell$ we have
\[
\omega_{\transpose(P)} = \omega_P^{-1} = \nu_\ell^{-1} \gdproduct \cdots \gdproduct \nu_2^{-1} \gdproduct \nu_1^{-1}.
\]
\end{remark}

\begin{remark}
Before we close this section, there is much more that can be done in studying the Hopf algebra  $\HP$. In particular, it would be interesting to study the quasi-symmetric invariants~\cite{AguiarBergeronSottile} associated to 
$\HP$ and its related odd subalgebras. This will be the focus of other projects.
\end{remark}


\section{Freeness of $\HP$}
\label{sec:freeness}

In this section we show that~$(\HP, \product, \coproduct)$ is a free and cofree Hopf algebra and we identify explicitly a set of free generators and cogenerators. This is done by introducing an order on pipe dreams such that the leading term of~${\b{P} \product \r{Q}}$ is given by~${\b{P} \gammainsertion_0 \r{Q}}$ (where~$\gammainsertion_0$ is the insertion in position~$0$, defined in Section~\ref{subsec:productPipeDreams}). More generally, for any~$\b{\Phi}$ and~$\r{\Psi}$ in~$\HP$, our order satisfies that the leading term of the product~${\b{\Phi} \product \r{\Psi}}$ is the product of the leading terms of~$\b{\Phi}$ and~$\r{\Psi}$. The analogue statement is valid for the dual Hopf algebra as well. The (free and cofree) generators are then identified with the indecomposable pipe dreams~$P$ with respect to the operation~$\gammainsertion_0$. We also give in this section a good combinatorial description of the indecomposable pipe dreams.

An alternative approach would be to introduce a bidendriform structure on $\HP$ as in~\cite{Foissy}. This is certainly possible but requires to introduce more algebraic structures. We choose to follow the strategy above as it gives us some combinatorial understanding of the indecomposable pipe dreams.


\subsection{Lexicographic order on pipe dreams and leading term}
\label{subsec:order}

Consider a pipe dream~${P \in \pipeDreams_n}$, and index its rows and columns by~$0, 1, \dots, n$ as before. We denote by~$p_{i,j} \in \{\elbowLetter, \crossLetter\}$ the symbol (elbow or cross) that appears in row~$i$ and column~$j$ of~$P$.
The \defn{column reading word} of~$P$ is the word~$\columnReading(P)$ defined by
\[
\columnReading(P) \eqdef p_{0,0} \, p_{1,0} \cdots p_{n-1,0} p_{n,0} \, | \, p_{0,1} \, p_{1,1} \cdots p_{n-2,1} p_{n-1,1} \, | \, \cdots \, | \, p_{0,n-1} p_{1,n-1} \, | \, p_{0,n}.
\]
Note that~$p_{0,0} = p_{n-i,i} = \elbowLetter$ for all~$0 \le i \le n$.
We order the alphabet~$\{\elbowLetter, \crossLetter\}$ by~$\elbowLetter < \crossLetter$.
Given two pipe dreams $P,Q \in \pipeDreams_n$ we write that~$P \ltlex Q$ if and only if~$\columnReading(P) \ltlex \columnReading(Q)$ in lexicographic order.
For example, the column reading words of the pipe dreams~$P$ (left) and~$Q$ (right) of~\fref{fig:pipeDreams} are
\begin{align*}
\columnReading(P) & = \elbowLetter\elbowLetter\elbowLetter\elbowLetter\elbowLetter\elbowLetter\elbowLetter \, | \, \crossLetter\elbowLetter\elbowLetter\elbowLetter\elbowLetter\elbowLetter \, | \, \crossLetter\crossLetter\crossLetter\elbowLetter\elbowLetter \, | \, \crossLetter\crossLetter\elbowLetter\elbowLetter \, | \, \crossLetter\crossLetter\elbowLetter \, | \, \elbowLetter\elbowLetter \, | \, \elbowLetter \\
\columnReading(Q) & = \elbowLetter\elbowLetter\elbowLetter\crossLetter\elbowLetter\elbowLetter\elbowLetter \, | \, \crossLetter\elbowLetter\elbowLetter\elbowLetter\elbowLetter\elbowLetter \, | \, \crossLetter\crossLetter\crossLetter\elbowLetter\elbowLetter \, | \, \crossLetter\elbowLetter\elbowLetter\elbowLetter \, | \, \crossLetter\crossLetter\elbowLetter \, | \, \elbowLetter\elbowLetter \, | \, \elbowLetter
\end{align*}
and we thus have~$P \ltlex Q$.

\begin{definition}
Given~$\Phi \in \HP_n$, we can order the terms of~$\Phi$ with respect to~$\ltlex$ 
\[
\Phi = \sum_{i=1}^m c_i P_i
\]
where~$c_i \ne 0$ and~$P_i \ltlex P_j$ for any~$1 \le i < j \le m$. We then define the \defn{leading term} of~$\Phi$ to be~$\leadingTerm(\Phi) = c_1 P_1$, the \defn{leading coefficient} of~$\Phi$ to be~$\leadingCoefficient(\Phi) = c_1$ and the \defn{leading pipe dream} of~$\Phi$ to be~$\leadingPipeDream(\Phi) = P_1$.
\end{definition}


\subsection{Leading term of a product}
\label{subsec:leadingTermProduct}

With the lexicographic order on pipe dreams, we are now in position to identify the leading term of a product.

\begin{lemma}
\label{lem:LTproductroduct}
For any pipe dreams~$\b{P} \in \pipeDreams_{\b{m}}$ and~$\r{Q} \in \pipeDreams_{\r{n}}$ we have
\(
\leadingTerm({\b{P} \product \r{Q}})={\b{P} \gammainsertion_0 \r{Q}}.
\)
\end{lemma}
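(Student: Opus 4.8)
The plan is to show that $\b{P} \gammainsertion_0 \r{Q}$ is the unique $\ltlex$-minimal pipe dream among the terms of $\b{P}\product\r{Q}$. Since $\leadingTerm$ selects the $\ltlex$-smallest term, this identifies $\leadingPipeDream(\b{P}\product\r{Q}) = \b{P}\gammainsertion_0\r{Q}$, and uniqueness of the minimum forces $\leadingCoefficient(\b{P}\product\r{Q}) = 1$, giving the statement. Recall that the terms of $\b{P}\product\r{Q}$ are the tangled pipe dreams $\b{P}\star_s\r{Q}$ as $s$ ranges over all $\b{P}/\r{Q}$-shuffles, and that $\b{P}\gammainsertion_0\r{Q}$ is exactly the term attached to the shuffle $s_0 = \r{q^n}\b{p^m}$, in which $\r{Q}$ is inserted (untangled into a single piece) at gap~$0$ of $\b{P}$.

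First I would make precise how $\columnReading(\b{P}\star_s\r{Q})$ depends on $s$. Reading $s$ from left to right records, column by column, whether the pipe exiting that column comes from $\r{Q}$ (a letter $\r{q}$) or from $\b{P}$ (a letter $\b{p}$); this is the content of Proposition~\ref{prop:descentproduct}, namely that $\omega_{\b{P}\star_s\r{Q}}$ is the term of $\omega_{\b{P}}\gdshuffle\omega_{\r{Q}}$ selected by $s$. In particular $s_0$ is the shuffle placing every column of $\r{Q}$ strictly to the left of every column of $\b{P}$, so that the exiting permutation is $\omega_{\r{Q}}\gdproduct\omega_{\b{P}}$. Using the explicit description of the insertions making up the tangling $\star_s$ — the cells of each inserted copy of $\r{Q}$ together with the forced crossings $\crossBiColor{blue,dashed}{red}$ in the new columns and $\crossBiColor{red}{blue,dashed}$ in the new rows — I would write down the symbol in each cell of the leftmost columns of every term, expressing it in terms of $\columnReading(\b{P})$, $\columnReading(\r{Q})$, and $s$.

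The core step is a monotonicity principle: pushing $\r{Q}$-material to the left can only lower the column reading word. Concretely, for $s\neq s_0$ there is a first position $j\le\r{n}$ with $s_j=\b{p}$, so that in $\b{P}\star_s\r{Q}$ the pipe exiting column~$j$ comes from $\b{P}$, while in $\b{P}\gammainsertion_0\r{Q}$ it comes from $\r{Q}$, the first $j-1$ columns carrying $\r{Q}$-pipes in both. I would then compare the two reading words and argue that at the first cell where they differ $\b{P}\gammainsertion_0\r{Q}$ carries an elbow~$\elbowLetter$ while the other term carries a cross~$\crossLetter$: placing $\r{Q}$ entirely on the left lets its pipes exit through the earliest columns and drives the unavoidable $\b{P}$-over-$\r{Q}$ crossings as far down and to the right as possible in the reading order, keeping the earlier columns elbow-heavier. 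Since $\elbowLetter<\crossLetter$, this yields $\b{P}\gammainsertion_0\r{Q}\ltlex\b{P}\star_s\r{Q}$ for every $s\neq s_0$.

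I expect the cell-by-cell comparison in the core step to be the main obstacle: one must track, through the insertion rules, exactly which cells of the leftmost band differ between the two pipe dreams and verify that the very first discrepancy replaces a cross by an elbow. One could try to organize this as a sequence of local exchanges transporting $s$ to $s_0$ by swapping an adjacent $\b{p}$-block/$\r{q}$-block pair, each swap strictly decreasing $\columnReading$; here one must additionally check that the intermediate words remain valid $\b{P}/\r{Q}$-shuffles, i.e.\ that the new block positions still fall at global descents of $\omega_{\b{P}}$ and $\omega_{\r{Q}}$, which is delicate and may require choosing the moves carefully. I would therefore favor the direct comparison of $\b{P}\gammainsertion_0\r{Q}$ with an arbitrary term, where the global descent at the relevant gap guarantees the crossing pattern in the affected columns; either way the geometric bookkeeping of the forced crossings is the heart of the matter, and once the first difference is pinned down as $\elbowLetter$ versus $\crossLetter$, strict minimality and hence the coefficient~$1$ follow immediately.
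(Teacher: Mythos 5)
Your global strategy is exactly the paper's: order the terms of $\b{P} \product \r{Q}$ by $\ltlex$, show that the term $\b{P} \gammainsertion_0 \r{Q}$ attached to the shuffle $s_0 = \r{q^n}\b{p^m}$ is \emph{strictly} smaller than every other term $\b{P} \star_s \r{Q}$, and deduce both the leading pipe dream and the coefficient~$1$. The gap is that your ``core step'' is precisely the statement to be proved, and what you offer for it (``placing $\r{Q}$ entirely on the left \dots drives the unavoidable crossings as far down and to the right as possible, keeping the earlier columns elbow-heavier'') is a heuristic, not an argument --- as you yourself flag. Worse, your setup points the comparison at the wrong place. You locate the first position $j \le \r{n}$ with $s_j = \b{p}$ and suggest that the decision happens around column $j$, the first $j-1$ columns ``carrying $\r{Q}$-pipes in both.'' But carrying pipes of the same origin does not make those columns equal as fillings: in $\b{P} \star_s \r{Q}$ they contain extra crosses coming from the pipes of the later pieces of $\r{Q}$, and, more to the point, the two column reading words always differ already in column~$0$, which is read first, so that is where the lexicographic comparison is decided.

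The missing idea, which is what makes the paper's short proof work, is that the whole comparison can be carried out inside the leftmost column, whose content is governed by \emph{rows}, i.e.\ by the reversed word $\overleftarrow{s}$: in $\b{P} \star_s \r{Q}$, column $0$ shows the entries of $\b{P}$'s column $0$ in order at the positions of the letters $\b{p}$ of $\overleftarrow{s}$, and a cross $\crossLetter$ at every $\r{q}$ having a $\b{p}$ to its right (these are the crosses filling the ``new rows'' in the insertion). For $s_0$ one has $\overleftarrow{s_0} = \b{p^m}\r{q^n}$, so column $0$ of $\b{P} \gammainsertion_0 \r{Q}$ begins exactly with $\b{P}$'s column $0$; for $s \ne s_0$ at least one cross is interleaved among those entries. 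Because $\b{P}$'s column $0$ terminates with the elbow on the anti-diagonal (position $(\b{m},0)$), the first discrepancy between the two words occurs at the first elbow of $\b{P}$'s column $0$ weakly after the first interleaved cross: there $\b{P} \gammainsertion_0 \r{Q}$ has $\elbowLetter$ while $\b{P} \star_s \r{Q}$ has $\crossLetter$, giving strict minimality (and hence coefficient $1$). Your alternative plan --- transporting $s$ to $s_0$ by adjacent block swaps --- indeed breaks down for the reason you note: after a swap the $\r{q}$-blocks mark different gaps of $\omega_{\b{P}}$, which need not be global descents, so the intermediate words need not be valid $\b{P}/\r{Q}$-shuffles. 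Without the column-$0$/mirror-word analysis (or something equivalent), the proposal does not yet prove the lemma.
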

  
\begin{proof}
It is enough to consider the leftmost column of every term in the product~${\b{P} \product \r{Q}}$. Given a $\b{P}/\r{Q}$-shuffle~$s$, let~$\overleftarrow{s}$ be the mirror of~$s$, that is, read from right to left. Observe that the leftmost column of the term~$\b{P} \star_{s} \r{Q}$ is structured as follows:
\begin{itemize}
\item There is an elbow~\elbow{} in position~$(0,0)$.
\item The entries of the leftmost column of~$\b{P}$ in rows~$1, 2, \dots, n$, appear in the positions of the letters~$\b{p}$ in~$\overleftarrow{s}$. The rightmost~$\b{p}$ in~$\overleftarrow{s}$ corresponds to an elbow~\elbow{}.
\item There is a cross~\cross{} in the positions corresponding to each letter~$\r{q}$ in $\overleftarrow{s}$ with at least one letter~$\b{p}$ to its right.
\end{itemize} 
The $\b{P}/\r{Q}$-shuffle corresponding to~${\b{P} \gammainsertion_0 \r{Q}}$ is~$s_0 = \r{q^n} \b{p^m}$. Hence $\overleftarrow{s}_0 = \b{p^m} \r{q^n}$ and we have that
$\columnReading({\b{P} \gammainsertion_0 \r{Q}}) = \elbowLetter \b{p_{1,0} \cdots p_{n-1,0}} \elbowLetter \cdots$ where $\columnReading(\b{P}) = \elbowLetter \b{p_{1,0} \cdots p_{n-1,0}}\elbowLetter \cdots$. Now for any distinct $\b{P}/\r{Q}$-shuffle~$s \ne s_0$, the mirror~$\overleftarrow{s}$ will have at least one letter~$\r{q}$ with a letter~$\b{p}$ to its right. In this case $\columnReading(\b{P} \star_{s} \r{Q}) = \elbowLetter \b{p_{1,0} \cdots} \crossLetter \b{\cdots p_{n-1,0}} \elbowLetter \cdots$ with a $\crossLetter$ for every letter~$\r{q}$ with a letter~$\b{p}$ to its right in~$\overleftarrow{s}$. Thus $\columnReading({\b{P} \gammainsertion_0 \r{Q}}) \ltlex \columnReading(\b{P} \star_{s} \r{Q})$, and therefore~$(\b{P} \gammainsertion_0 \r{Q}) \ltlex (\b{P} \star_{s} \r{Q})$ for any~$s \ne s_0$. This shows that $\leadingTerm({\b{P} \product \r{Q}}) = {\b{P} \gammainsertion_0 \r{Q}}$.
\end{proof}  

\begin{example} 
Consider the six terms~$P_1, P_2, \dots, P_6$ in the expansion of the product in~\fref{fig:productPipeDreams}, in order of appearance. Look at their leftmost column and consider the corresponding $\b{P}/\r{Q}$-shuffles. The pipe dream~$P_1$ is the leading term obtained with~$s_0 = \r{qq}\b{pppp}$. In the table below, we give for each term~$P_i$ the corresponding $\b{P}/\r{Q}$-shuffle~$s$, its reverse~$\overleftarrow{s}$ and the initial subword of the column reading~$\columnReading(P_i)$ up to the position of the last~$\b{p}$ in~$\overleftarrow{s}$. We see that $P_1 \ltlex P_i$ is decided in the leftmost column before the last~$\b{p}$ in~$\overleftarrow{s}$.
\medskip
\[
\begin{array}{c|c|c|l}
	\text{term} & s & \overleftarrow{s} & \qquad \columnReading(P_i) \\
	\hline
	P_1 & \r{qq}\b{pppp} 			& \b{pppp}\r{qq} 			& \elbowLetter\b{\crossLetter\crossLetter\elbowLetter\elbowLetter}\cdots				\\
	P_2 & \r{q}\b{pp}\r{q}\b{pp} 	&\b{pp}\r{q}\b{pp}\r{q} 	& \elbowLetter\b{\crossLetter\crossLetter}\r{\crossLetter}\b{\elbowLetter\elbowLetter}\cdots		\\
	P_3 & \r{q}\b{pppp}\r{q} 		&\r{q}\b{pppp}\r{q} 		& \elbowLetter\r{\crossLetter}\b{\crossLetter\crossLetter}\b{\elbowLetter\elbowLetter}\cdots		\\
	P_4 & \b{pp}\r{qq}\b{pp} 		&\b{pp}\r{qq}\b{pp}			& \elbowLetter\b{\crossLetter\crossLetter}\r{\crossLetter\crossLetter}\b{\elbowLetter\elbowLetter}\cdots		\\
	P_5 & \b{pp}\r{q}\b{pp}\r{q} 	&\r{q}\b{pp}\r{q}\b{pp} 	& \elbowLetter\r{\crossLetter}\b{\crossLetter\crossLetter}\r{\crossLetter}\b{\elbowLetter\elbowLetter}\cdots	\\
	P_5 & \b{pppp}\r{qq} 			&\r{qq}\b{pppp}				& \elbowLetter\r{\crossLetter\crossLetter}\b{\crossLetter\crossLetter\elbowLetter\elbowLetter}\cdots			\\
\end{array}
\]
\smallskip
\end{example}

\begin{lemma}
\label{lem:order_and_zeroinsert}
For~$\b{P},\b{P'}\in\pipeDreams_{\b{m}}$ and~$\r{Q},\r{Q'}\in\pipeDreams_{\r{n}}$, if~$\b{P} \ltlex \b{P'}$ and~$\r{Q} \ltlex \r{Q'}$, then
\[
(\b{P} \gammainsertion_0 \r{Q}) \ltlex (\b{P'} \gammainsertion_0 \r{Q'}).
\]
\end{lemma}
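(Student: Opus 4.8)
The plan is to deduce the inequality transitively from two one-sided monotonicity statements: that $\b{P} \ltlex \b{P'}$ forces $\b{P} \gammainsertion_0 \r{Q} \ltlex \b{P'} \gammainsertion_0 \r{Q}$ (for a fixed $\r{Q}$), and that $\r{Q} \ltlex \r{Q'}$ forces $\b{P} \gammainsertion_0 \r{Q} \ltlex \b{P} \gammainsertion_0 \r{Q'}$ (for a fixed $\b{P}$); chaining these through the intermediate term $\b{P'} \gammainsertion_0 \r{Q}$ then yields the claim. Both one-sided statements will be read off a single explicit description of the column reading word of $\b{P} \gammainsertion_0 \r{Q}$ in terms of the column reading words of $\b{P}$ and $\r{Q}$.

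The core step is to establish this description, refining the leftmost-column computation already carried out in Lemma~\ref{lem:LTproductroduct}. Writing $P^{(j)}$ for the word obtained by reading the $j$th column of a pipe dream $P$ from top to bottom, I claim that the columns of $R \eqdef \b{P} \gammainsertion_0 \r{Q} \in \pipeDreams_{\b{m}+\r{n}}$ are as follows: (i) column $0$ of $R$ is $\b{P}^{(0)}$ immediately followed by $\r{Q}^{(0)}$ with its leading elbow deleted; (ii) column $c$ of $R$, for $1 \le c \le \r{n}$, is a block of $\b{m}$ crosses followed by $\r{Q}^{(c)}$; and (iii) column $\r{n}+j$ of $R$, for $1 \le j \le \b{m}$, is exactly $\b{P}^{(j)}$. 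Equivalently, $\columnReading(R)$ is assembled by writing $\b{P}^{(0)}$, then splicing in all of $\r{Q}$'s columns (the column $\r{Q}^{(0)}$ right after $\b{P}^{(0)}$, and each later $\r{Q}^{(c)}$ prefixed by the fixed block of $\b{m}$ connecting crosses), and finally appending the remaining columns $\b{P}^{(1)},\dots,\b{P}^{(\b{m})}$ verbatim. This is exactly what the insertion procedure produces at the corner gap $0$: the $\b{m}\cdot\r{n}$ connecting crosses occupy the rectangle in rows $0,\dots,\b{m}-1$ and columns $1,\dots,\r{n}$, and apart from this shift the diagrams $\b{P}$ and $\r{Q}$ are copied unchanged.

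Granting this description, each one-sided statement is a first-difference comparison. For fixed $\r{Q}$: if $\b{P}$ and $\b{P'}$ already differ within column $0$, that difference sits at the front of both $\columnReading(R)$ and $\columnReading(R')$ and decides the comparison with the orientation of $\b{P} \ltlex \b{P'}$; otherwise column $0$ coincides (since $\b{P}$ and $\b{P'}$ share their zeroth column) and columns $1,\dots,\r{n}$ coincide (since by (ii) they do not involve $\b{P}$ at all), so the first discrepancy appears among the trailing columns $\b{P}^{(1)},\dots,\b{P}^{(\b{m})}$, again with the correct orientation. For fixed $\b{P}$: column $0$'s $\b{P}$-part and the trailing columns $\b{P}^{(j)}$ are identical for the two products, while the only $\r{Q}$-dependent entries, namely $\r{Q}^{(0)},\dots,\r{Q}^{(\r{n})}$, appear in their original relative order, merely shifted and interspersed with the fixed cross blocks; hence the first position at which $\r{Q}$ and $\r{Q'}$ differ maps to the first position at which the two products differ, preserving orientation. (The leading elbow of $\r{Q}^{(0)}$ that gets deleted in (i) is an elbow for every $\r{Q}$, so its removal never affects the comparison.)

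The only genuine work is the column-by-column description of the previous paragraph; everything after it is bookkeeping with the lexicographic order. The leftmost column is essentially the content of Lemma~\ref{lem:LTproductroduct}, but I expect the main obstacle to be verifying the two remaining points: that $\b{P}$'s higher columns reappear unchanged as the last $\b{m}$ columns of $R$, and that the connecting crosses really form the clean $\b{m}\times\r{n}$ rectangle claimed, so that each middle column is a pure block of crosses followed by a single column of $\r{Q}$ with no interference from $\b{P}$. This is precisely where it matters that the insertion is performed at the gap $0$, i.e.\ at the extreme corner, and it is the step demanding the most careful reading of the insertion definition.
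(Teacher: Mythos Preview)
Your proposal is correct and follows essentially the same approach as the paper: both hinge on writing out the column reading word of $\b{P} \gammainsertion_0 \r{Q}$ explicitly as $\b{P}^{(0)}$ followed by the $\r{Q}$-columns (each prefixed by a block of $\b{m}$ crosses) followed by $\b{P}^{(1)},\dots,\b{P}^{(\b{m})}$, and then reading off the lexicographic comparison. The paper does the final comparison in one stroke (the first differing position is either a $\b{P}$-position or a $\r{Q}$-position, hence inherits the correct orientation), while you split it into two one-sided monotonicity statements chained by transitivity; this is a cosmetic reorganization, not a different argument. Your remark that the leading elbow of $\r{Q}^{(0)}$ must be dropped in column~$0$ is a small precision the paper's formula glosses over.
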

  
\begin{proof}
Let 
\[
\begin{array}{l@{\qquad}l}
\columnReading(\b{P}) = \b{P_0 \cdots P_m}, & \columnReading(\b{P'}) = \b{P'_0 \cdots P'_m}, \\
\columnReading(\r{Q}) = \r{Q_0 \cdots Q_n}, & \columnReading(\r{Q'}) = \r{Q'_0 \cdots Q'_n},
\end{array}
\]
where~$\b{P_j} \eqdef \b{p_{0,j} \, p_{1,j} \cdots p_{m-j,j}}$ denote the top to bottom reading of the $j$th column of~$P$ (and similarly for~$\b{P'_j}$, $\r{Q_j}$ and~$\r{Q'_j}$).
Then we have
\begin{align*}
\columnReading({\b{P} \gammainsertion_0 \r{Q}}) & = \b{P_0} \r{Q_0} \, | \, \crossLetter^{\b{m}} \r{Q_1} \, | \, \crossLetter^{\b{m}} \r{Q_2} \, | \, \cdots \, | \, \crossLetter^{\b{m}} \r{Q_n}  \, | \,\b{P_1} \, | \, \cdots \, | \, \b{P_m}, \\
\columnReading({\b{P'} \gammainsertion_0 \r{Q'}}) & = \b{P'_0} \r{Q'_0} \, | \, \crossLetter^{\b{m}} \r{Q'_1} \, | \, \crossLetter^{\b{m}} \r{Q'_2} \, | \, \cdots \, | \, \crossLetter^{\b{m}} \r{Q'_n} \, | \, \b{P'_1} \, | \, \cdots \, | \, \b{P'_m},
\end{align*}
where~$\crossLetter^{\b{m}} = \crossLetter\crossLetter \cdots \crossLetter$ repeated~$\b{m}$ times and these~$\crossLetter$'s correspond to the rectangle of crosses~\cross{} of~${\b{P} \gammainsertion_0 \r{Q}}$ (or~${\b{P'} \gammainsertion_0 \r{Q'}}$) in rows~$0 \le i \le m-1$ and columns~$1 \le j \le n$. We clearly see that when comparing~$\columnReading({\b{P} \gammainsertion_0 \r{Q}})$ and~$\columnReading({\b{P'} \gammainsertion_0 \r{Q'}})$ in lexicographic order the decision will be done for the leftmost entry that is different in both words. That will be either the leftmost entry that is different in~$\columnReading(\b{P})$ and~$\columnReading(\b{P'})$ or the leftmost entry that is different in~$\columnReading(\r{Q})$ and~$\columnReading(\r{Q'})$. The result follows.
\end{proof}

Combining Lemmas~\ref{lem:LTproductroduct} and~\ref{lem:order_and_zeroinsert} we obtain the following result.

\begin{proposition}
\label{prop:LT}
For~$\b{\Phi}, \r{\Psi} \in \HP$, we have
\[
\leadingPipeDream({\b{\Phi} \product \r{\Psi}}) = \leadingPipeDream(\b{\Phi}) \gammainsertion_0 \leadingPipeDream(\r{\Psi})
\qquad\text{and}\qquad
\leadingCoefficient({\b{\Phi} \product \r{\Psi}}) = \leadingCoefficient(\b{\Phi}) \leadingCoefficient(\r{\Psi}).
\]
\end{proposition}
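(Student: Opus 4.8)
The plan is to reduce the statement to the two preceding lemmas via bilinearity, so I may assume that $\b{\Phi} \in \HP_{\b{m}}$ and $\r{\Psi} \in \HP_{\r{n}}$ are homogeneous (the setting in which $\leadingTerm$, $\leadingCoefficient$ and $\leadingPipeDream$ are defined). Write $\b{\Phi} = \sum_{i=1}^{a} b_i \b{P_i}$ and $\r{\Psi} = \sum_{j=1}^{c} c_j \r{Q_j}$ with the terms listed in increasing $\ltlex$-order, so that $\leadingPipeDream(\b{\Phi}) = \b{P_1}$, $\leadingPipeDream(\r{\Psi}) = \r{Q_1}$, $\leadingCoefficient(\b{\Phi}) = b_1 \ne 0$ and $\leadingCoefficient(\r{\Psi}) = c_1 \ne 0$. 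Bilinearity of $\product$ gives $\b{\Phi} \product \r{\Psi} = \sum_{i,j} b_i c_j (\b{P_i} \product \r{Q_j})$, and the entire proof then amounts to locating the $\ltlex$-minimal pipe dream that survives with nonzero coefficient in this double sum, and checking that it is produced exactly once.

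First I would apply Lemma~\ref{lem:LTproductroduct} inside each summand: the leading term of $\b{P_i} \product \r{Q_j}$ is $\b{P_i} \gammainsertion_0 \r{Q_j}$ (with coefficient $1$), and every other pipe dream appearing in $\b{P_i} \product \r{Q_j}$ is strictly $\ltlex$-larger. Next I would order these per-summand minima among themselves using Lemma~\ref{lem:order_and_zeroinsert}: since $\b{P_1} \ltlex \b{P_i}$ for $i > 1$ and $\r{Q_1} \ltlex \r{Q_j}$ for $j > 1$, it gives $\b{P_1} \gammainsertion_0 \r{Q_1} \ltlex \b{P_i} \gammainsertion_0 \r{Q_j}$ whenever $(i,j) \ne (1,1)$. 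Chaining the two estimates, any pipe dream $R$ occurring in a summand with $(i,j) \ne (1,1)$ satisfies $\b{P_1} \gammainsertion_0 \r{Q_1} \ltlex \b{P_i} \gammainsertion_0 \r{Q_j} \preceq R$, hence lies strictly above $\b{P_1} \gammainsertion_0 \r{Q_1}$, whereas within the summand $\b{P_1} \product \r{Q_1}$ the pipe dream $\b{P_1} \gammainsertion_0 \r{Q_1}$ is the unique $\ltlex$-minimizer, realized only by the shuffle $s_0 = \r{q^n}\b{p^m}$. Thus $\b{P_1} \gammainsertion_0 \r{Q_1}$ is the strict $\ltlex$-minimum of the whole expansion, it is contributed only by $(i,j)=(1,1)$, and it survives with coefficient $b_1 c_1$; since $b_1 c_1 \ne 0$ this yields $\leadingPipeDream(\b{\Phi} \product \r{\Psi}) = \b{P_1} \gammainsertion_0 \r{Q_1}$ and $\leadingCoefficient(\b{\Phi} \product \r{\Psi}) = b_1 c_1$, exactly the two asserted identities.

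The one genuinely delicate point I anticipate is the mixed boundary cases $i=1,\,j>1$ and $i>1,\,j=1$, where one input to $\gammainsertion_0$ is equal rather than strictly smaller, while Lemma~\ref{lem:order_and_zeroinsert} is stated with both hypotheses strict. I would resolve this by observing that the reading-word comparison in that lemma's proof is in fact monotone: the lexicographic decision is made at the first position where $\columnReading(\b{P})$ and $\columnReading(\b{P'})$ differ or where $\columnReading(\r{Q})$ and $\columnReading(\r{Q'})$ differ, so a single strict input already forces a strict output and two equal inputs force equality. This upgrades the lemma to strict monotonicity of $\gammainsertion_0$ in the product order, which is precisely what the chaining above requires and what guarantees that no summand other than $(1,1)$ can reproduce $\b{P_1} \gammainsertion_0 \r{Q_1}$ — thereby securing both the identity of the leading pipe dream and the multiplicativity of the leading coefficient.
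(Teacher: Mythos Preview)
Your proof is correct and follows exactly the approach the paper intends: the paper's entire proof is the sentence ``Combining Lemmas~\ref{lem:LTproductroduct} and~\ref{lem:order_and_zeroinsert} we obtain the following result,'' and your expansion via bilinearity plus per-summand application of those two lemmas is precisely how that combination is meant to go. Your handling of the boundary cases $i=1,\,j>1$ and $i>1,\,j=1$ is more careful than the paper's terse treatment; your observation that the proof of Lemma~\ref{lem:order_and_zeroinsert} actually yields strict monotonicity of $\gammainsertion_0$ in each argument separately is correct and is the detail the paper leaves implicit.
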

 

\subsection{(Free) generators of~$\HP$}
\label{subsec:generators}

We are now in position to show that~$\HP$ is free and to give a first characterization of its free generators.

\begin{definition}
A pipe dream~$P \in \pipeDreams_n$ is \defn{$\gammainsertion_0$-decomposable} if it can be written as~$P = Q \gammainsertion_0 R$ for some~${Q \in \pipeDreams_r}$ and~$R \in \pipeDreams_{n-r}$ where~${0 < r < n}$. Otherwise, we say~$P$ is \defn{$\gammainsertion_0$-indecomposable}. We let
\[
\indecomposablePipeDreams_n \eqdef \set{P \in \pipeDreams_n}{\text{$P$ is $\gammainsertion_0$-indecomposable}}
\qquad\text{and}\qquad
\indecomposablePipeDreams \eqdef \bigcup_{n \in \N} \indecomposablePipeDreams_n.
\]
\end{definition}
 
\begin{proposition}
\label{prop:decomposition}
A pipe dream~$P \in\pipeDreams_n$ is $\gammainsertion_0$-decomposable if and only if there is~$0 < r < n$ such that~$P$ has crosses~\cross{} in the rectangle of rows~$0 \le i \le r-1$ and columns~$1 \le j \le n-r$. In that case, $P = Q \gammainsertion_0 R$ where~$Q \otimes R = \coproduct_{r,n-r}(P)$.
\end{proposition}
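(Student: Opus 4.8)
The statement is an equivalence together with an identification of the two factors, so the plan is to prove the two implications separately and then read off $Q$ and $R$ from the packing operations of Section~\ref{subsec:packings}. The forward direction is a direct reading of the definition of $\gammainsertion_0$; the substance is the converse, where the all-cross rectangle must be shown to force a global descent (making $Q$ and $R$ honest pipe dreams) and to pin down $P$ among all tanglings of $Q$ and $R$.

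\emph{Forward direction.} Assume $P = Q \gammainsertion_0 R$ with $Q \in \pipeDreams_r$ and $R \in \pipeDreams_{n-r}$. I would simply unwind the definition of $\gammainsertion_0$ from Section~\ref{subsec:productPipeDreams}: with $\gamma = 0$ it inserts $n-r$ new columns immediately after column~$0$, places $R$ in the complementary top-right triangular region, and fills the new columns with crosses wherever they are not occupied by $R$. Tracking the relabelling of rows and columns, the boxes that receive a forced cross \cross{} are exactly those in rows $0 \le i \le r-1$ and columns $1 \le j \le n-r$ (in the smallest nontrivial case $r=2$, $n-r=1$, these are the two boxes $(0,1)$ and $(1,1)$). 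Hence $P$ carries the asserted rectangle of crosses; the only thing to verify is that no tile of $R$ and no nontrivial tile of $Q$ falls inside this rectangle, which is immediate since $R$ sits strictly to the right of column $n-r$ in these rows and the tiles of $Q$ lie in column~$0$ or weakly below row~$r-1$.

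\emph{Converse, Step 1 (a global descent is forced).} Assume every box of rows $0 \le i \le r-1$ and columns $1 \le j \le n-r$ is a cross. Since this rectangle contains no elbow, no pipe can turn inside it, so every pipe meeting the rectangle traverses it as either a full horizontal segment occupying one of the rows $0,\dots,r-1$ or a full vertical segment occupying one of the columns $1,\dots,n-r$. This yields $r$ pairwise distinct horizontal pipes $H_0,\dots,H_{r-1}$ and $n-r$ pairwise distinct vertical pipes $V_1,\dots,V_{n-r}$; the $V_c$ are precisely the pipes exiting columns $1,\dots,n-r$, while the $H_i$ exit the remaining columns $n-r+1,\dots,n$. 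Now $H_i$ and $V_c$ cross at the box $(i,c)$, and the exit column of $H_i$ exceeds that of $V_c$; by the standard planarity crossing rule for reduced pipe dreams (of two crossing pipes, the one entering the higher row exits the smaller column) the entry row of $V_c$ must exceed that of $H_i$, for \emph{every} pair $i,c$. Thus all entry rows of the $H_i$ lie below all entry rows of the $V_c$, and since these $n$ pipes use every entry row $1,\dots,n$ we conclude that the $V_c$ enter exactly the rows $r+1,\dots,n$. Equivalently $\omega_P(\{1,\dots,n-r\}) = \{r+1,\dots,n\}$, so $\omega_P$ has a global descent separating the two blocks; the coproduct term $\coproduct_{r,n-r}(P) = Q \otimes R$ is therefore nonzero, its two tensor factors being the vertical and horizontal packings of $P$.

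\emph{Converse, Step 2 (reassembly) and the main obstacle.} It remains to prove $Q \gammainsertion_0 R = P$. By the forward direction $Q \gammainsertion_0 R$ carries the same rectangle of crosses as $P$, and by Lemma~\ref{lem:gbinsertion} it has the same exiting permutation $\omega_P$. The remaining work is to check equality tile by tile, which amounts to showing that the packings exactly undo the column/row insertions and contractions performed by $\gammainsertion_0$: deleting the straight pipes $V_1,\dots,V_{n-r}$ and contracting recovers one factor, deleting $H_0,\dots,H_{r-1}$ and contracting recovers the other, and re-inserting via $\gammainsertion_0$ restores every box of $P$, refilling the rectangle with crosses. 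I would formalize this as a bijection between the boxes of $P$ lying outside the rectangle and the boxes of $Q$ and of $R$ induced by these insert/contract operations, and check that it preserves the elbow/cross labels; equivalently, one shows $\gammainsertion_0$ is injective with the packings as a left inverse, and that a pipe dream carrying the rectangle is determined by its two packings. This last bookkeeping — that untangling followed by the specific re-tangling $\gammainsertion_0$ is the identity precisely when the tangling is the trivial one encoded by the all-cross rectangle — is the only delicate point; everything else is a direct reading of the definitions.
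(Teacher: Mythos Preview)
Your proof is correct and follows the same route as the paper's, which dismisses the whole statement in one line (``It follows directly from the description of the insertion~$Q \gammainsertion_0 R$''). You simply supply the details the paper leaves implicit: the forward direction is the definition, and for the converse you check that the all-cross rectangle forces the required global descent (your crossing-rule argument is correct and is the cleanest way to see it) and that the packings invert~$\gammainsertion_0$. One small remark: your Step~1 actually exhibits the global descent at position~$n-r$ (since you show $\omega_P(\{1,\dots,n-r\})=\{r+1,\dots,n\}$), whereas you then invoke $\coproduct_{r,n-r}$; this index mismatch is inherited from the paper's own notation and does not affect the substance of your argument.
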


\begin{proof}
It follows directly from the description of the insertion~$Q \gammainsertion_0 R$.
\end{proof}

\begin{lemma}
\label{lem:decomposition}
For any~$P\in\pipeDreams_n$, there is a unique $\gammainsertion_0$-decomposition~${P = P_1 \gammainsertion_0 P_2 \gammainsertion_0 \cdots \gammainsertion_0 P_\ell}$ of~$P$ into $\gammainsertion_0$-indecomposable pipe dreams.
\end{lemma}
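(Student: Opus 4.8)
The plan is to treat $\gammainsertion_0$ as an associative, graded, one-sidedly cancellative operation and then run the standard unique-factorization argument, the only genuinely combinatorial input being an equidivisibility (``nesting of prefixes'') statement proved from the crosses-rectangle criterion of Proposition~\ref{prop:decomposition}. First I would record that $\gammainsertion_0$ is associative: applying Lemma~\ref{lem:rulesInsertion}\,\eqref{eq:ruleInsertionB} with $\gamma = \nu = 0$ (both always global descents) yields $(P_1 \gammainsertion_0 P_2) \gammainsertion_0 P_3 = P_1 \gammainsertion_0 (P_2 \gammainsertion_0 P_3)$, so an iterated product $P_1 \gammainsertion_0 \cdots \gammainsertion_0 P_\ell$ is unambiguous. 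I would also note that $\gammainsertion_0$ is recoverable in each grading: by Proposition~\ref{prop:decomposition}, if $P = Q \gammainsertion_0 R$ with $Q \in \pipeDreams_r$, then $Q \otimes R = \coproduct_{r,n-r}(P) = \lmir_r(P) \otimes \lrot_{n-r}(P)$, so $Q$ and $R$ are determined by $P$ together with the integer $r$; in particular, for fixed sizes, $A \gammainsertion_0 B = A' \gammainsertion_0 B$ forces $A = A'$.

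Existence is then an immediate induction on $n$: if $P$ is $\gammainsertion_0$-indecomposable we are done; otherwise Proposition~\ref{prop:decomposition} gives $P = Q \gammainsertion_0 R$ with $0 < r < n$, and concatenating the (inductively obtained) decompositions of $Q$ and $R$, legitimized by associativity, decomposes $P$. The heart of the matter is uniqueness, for which I would establish the equidivisibility statement: if $A \gammainsertion_0 B = A' \gammainsertion_0 B'$ with $|A| \le |A'|$, then there is $C$ with $A' = A \gammainsertion_0 C$ and $B = C \gammainsertion_0 B'$. This splits into two directions. The easy one (``a split of a prefix lifts to the whole''): any decomposition of $A = \lmir_r(P)$ transports through associativity to a decomposition of $P = A \gammainsertion_0 B$ at the same point. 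The hard one (``a split of $P$ restricts to a prefix''): if $r < r'$ are both $\gammainsertion_0$-split points of $P$, then $A' = \lmir_{r'}(P)$ is itself $\gammainsertion_0$-decomposable at $r$.

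Granting equidivisibility, uniqueness follows by induction on $n$. Given two factorizations into indecomposables with leading factors $A$ and $A'$, say with $|A| \le |A'|$, equidivisibility produces $C$ with $A' = A \gammainsertion_0 C$; since each indecomposable factor has size at least $1$, indecomposability of $A'$ forces $|C| = 0$, whence $A = A'$. Cancellation in the fixed grading then gives $B = B'$, and applying the induction hypothesis to $B$ completes the argument.

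I expect the main obstacle to be the hard direction of equidivisibility, which is really a crosses-rectangle bookkeeping problem under the vertical packing $\lmir$. At the level of permutations it is transparent: from $P = A' \gammainsertion_0 B'$ we get $\omega_P = \omega_{B'} \gdproduct \omega_{A'}$ (Lemma~\ref{lem:gbinsertion} with $\gamma = 0$), so $\omega_{A'} \in \fS_{r'}$ inherits the global descent corresponding to the split at $r$. The work is to upgrade this to the full all-crosses rectangle required by Proposition~\ref{prop:decomposition}: one must check that the all-crosses rectangle witnessing the split of $P$ at $r$ (rows $0 \le i \le r-1$, columns $1 \le j \le n-r$) survives the deletion of the pipes exiting in columns $r'+1, \dots, n$ and the contraction of their crossed vertical steps, producing the all-crosses rectangle (rows $0 \le i \le r-1$, columns $1 \le j \le r'-r$) that witnesses the split of $\lmir_{r'}(P)$ at $r$. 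Tracking exactly which cells are contracted, in the spirit of the proof of Lemma~\ref{lem:horizontalPacking}, should make this routine, but it is where the real verification lies.
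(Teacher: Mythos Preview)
Your proposal is correct and follows essentially the same route as the paper: compare the sizes $r, r'$ of the leading factors and use the crosses-rectangle criterion of Proposition~\ref{prop:decomposition} to show that $r < r'$ would force $P'_1$ to be $\gammainsertion_0$-decomposable, contradicting its indecomposability. The paper's version is slightly more direct---it reads the needed rectangle in $P'_1$ straight off the explicit placement of $P'_1$'s cells inside $P = P'_1 \gammainsertion_0 Q'$ rather than tracking cells through the packing $\lmir_{r'}$, and it skips the full equidivisibility statement in favor of the bare contradiction---but the substance is the same.
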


\begin{proof}
To prove existence, we decompose~$P$ repeatedly until we only have $\gammainsertion_0$-indecomposable pipe dreams. To prove uniqueness, we proceed by induction on~$n$. For~$n = 0$ or $1$ the statement is immediate. Assume now that we have two decompositions
\[
P = P_1 \gammainsertion_0 P_2\gammainsertion_0 \cdots \gammainsertion_0 P_\ell  = P'_1 \gammainsertion_0 P'_2 \gammainsertion_0 \cdots \gammainsertion_0 P'_{\ell'}
\]
with~$\ell \le \ell'$. If~$\ell = 1$, then we must have~$\ell' = 1$ for otherwise it would contradict the fact that~$P_1$ is $\gammainsertion_0$-indecomposable. The uniqueness follows in this case. If~$\ell > 1$, then $P = P_1 \gammainsertion_0 Q = P'_1\gammainsertion_0 Q'$ where~$P_1 \in \pipeDreams_r$ and~$P'_1 \in \pipeDreams_{r'}$ for~$0 < r, r' < n$. By Proposition~\ref{prop:decomposition}, $P$ has crosses~\cross{} in the rectangle of rows~${0 \le i \le r-1}$ and columns~${1 \le j \le n-r}$ and also in the rectangle of rows~$0 \le i \le r'-1$ and columns~$1 \le j \le n-r'$. Moreover~${P_1 \otimes Q = \coproduct_{r,n-r}(P)}$ and~${P'_1 \otimes Q' = \coproduct_{r',n-r'}(P)}$. If~$r < r'$, then we would get a rectangle of crosses~\cross{} in rows~${0 \le i \le r'-r-1}$ and columns~${1 \le j \le r}$ of~$P'_1$. Proposition~\ref{prop:decomposition} would then imply that~$P'_1$ is $\gammainsertion_0$-decomposable, a contradiction. So we must have~$r \ge r'$. By symmetry, we also have~$r \le r'$, and hence~$r = r'$. This gives
\[
P_1 \otimes Q = \coproduct_{r,n-r}(P) = \coproduct_{r',n-r'}(P) = P'_1 \otimes Q'.
\]
We can now conclude that~$P_1 = P'_1$ and~$Q = Q'$. The induction hypothesis on~$Q$ gives us that~$\ell = \ell'$ and~$P_i = P'_i$ for all~$1 \le i \le \ell$. 
\end{proof}

\begin{corollary}
\label{cor:indecomp}
For~$P\in\pipeDreams_n$ and its unique decomposition~$P = P_1 \gammainsertion_0 P_2 \gammainsertion_0 \cdots \gammainsertion_0 P_\ell$ into $\gammainsertion_0$-indecomposable pipe dreams, we define
\[
\Psi_P = P_1 \product P_2 \product \dots \product P_\ell.
\]
Then the set~$\bigset{\Psi_P}{P \in \pipeDreams_n}$ is a linear basis of~$\HP_n$.
\end{corollary}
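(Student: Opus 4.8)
The plan is to show that the family $\{\Psi_P : P \in \pipeDreams_n\}$ expands \emph{unitriangularly} on the standard basis $\pipeDreams_n$ with respect to the order $\ltlex$, which immediately forces it to be a basis. First I would record that $P \mapsto \Psi_P$ is well-defined and indexed precisely by $\pipeDreams_n$: by Lemma~\ref{lem:decomposition} each $P$ has a \emph{unique} $\gammainsertion_0$-decomposition ${P = P_1 \gammainsertion_0 \cdots \gammainsertion_0 P_\ell}$ into $\gammainsertion_0$-indecomposables, so $\Psi_P = P_1 \product \cdots \product P_\ell$ is unambiguous. In particular $\bigset{\Psi_P}{P \in \pipeDreams_n}$ has exactly $|\pipeDreams_n| = \dim \HP_n$ elements, so it suffices to establish linear independence.

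The core of the argument is the computation of the leading pipe dream of $\Psi_P$. Iterating Proposition~\ref{prop:LT} across the product $P_1 \product \cdots \product P_\ell$ (each factor is a single pipe dream, so $\leadingPipeDream(P_i) = P_i$ and $\leadingCoefficient(P_i) = 1$), and using that $\gammainsertion_0$ is associative---which is exactly the case $\gamma = \nu = 0$ of Lemma~\ref{lem:rulesInsertion}\,\eqref{eq:ruleInsertionB}, legitimate since $0$ is always a global descent---I would obtain
\[
\leadingPipeDream(\Psi_P) = P_1 \gammainsertion_0 P_2 \gammainsertion_0 \cdots \gammainsertion_0 P_\ell = P
\qquad\text{and}\qquad
\leadingCoefficient(\Psi_P) = 1.
\]
Thus, expanding $\Psi_P$ in the standard basis, its $\ltlex$-minimal term is $P$ with coefficient $1$, i.e.\ $\Psi_P = P + \sum_{Q \,:\, P \ltlex Q} c_{P,Q}\, Q$.

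Since a pipe dream is determined by its column reading word $\columnReading(P)$, the relation $\ltlex$ is a total order on the finite set $\pipeDreams_n$. Listing $\pipeDreams_n = \{Q_1 \ltlex \cdots \ltlex Q_N\}$, the transition matrix $M$ defined by $\Psi_{Q_i} = \sum_j M_{ij} Q_j$ is therefore upper unitriangular, hence invertible, and $\bigset{\Psi_P}{P \in \pipeDreams_n}$ is a basis of $\HP_n$. I expect no serious obstacle: the only step requiring care is the iterated leading-pipe-dream computation, where associativity of $\gammainsertion_0$ is what allows the unbracketed product $P_1 \gammainsertion_0 \cdots \gammainsertion_0 P_\ell$ to be identified with the decomposition supplied by Lemma~\ref{lem:decomposition}; the totality of $\ltlex$ and the passage from unitriangularity to basis are routine.
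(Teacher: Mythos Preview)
Your proposal is correct and follows essentially the same approach as the paper's proof: compute $\leadingTerm(\Psi_P) = P$ via Proposition~\ref{prop:LT} and conclude by triangularity together with Lemma~\ref{lem:decomposition}. Your write-up is simply more explicit about the intermediate steps (well-definedness, associativity of~$\gammainsertion_0$ via Lemma~\ref{lem:rulesInsertion}\,\eqref{eq:ruleInsertionB}, totality of~$\ltlex$), all of which the paper leaves implicit in its two-line proof.
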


\begin{proof}
From Proposition~\ref{prop:LT} we have
\[
\leadingTerm(\Psi_P) = P_1 \gammainsertion_0 P_2 \gammainsertion_0 \cdots \gammainsertion_0 P_\ell = P.
\]
We get the result by triangularity and Lemma~\ref{lem:decomposition}.
\end{proof}

\begin{theorem}
The algebra~$\HP$ is free with generators~$\indecomposablePipeDreams = \set{P \in \pipeDreams}{\text{$P$ is $\gammainsertion_0$-indecomposable}}$.
\end{theorem}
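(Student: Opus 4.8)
The plan is to show that the canonical algebra morphism $\phi$ from the free associative algebra on the set $\indecomposablePipeDreams$ to $\HP$, which sends each generator to the corresponding $\gammainsertion_0$-indecomposable pipe dream, is an isomorphism. Concretely, this amounts to proving that the products $P_1 \product P_2 \product \cdots \product P_\ell$, where $\ell \ge 0$ and each $P_i \in \indecomposablePipeDreams$, form a linear basis of $\HP$.

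First I would record the bijection between finite sequences of $\gammainsertion_0$-indecomposable pipe dreams and arbitrary pipe dreams. By Lemma~\ref{lem:decomposition}, every pipe dream $P$ has a unique factorization $P = P_1 \gammainsertion_0 \cdots \gammainsertion_0 P_\ell$ with each $P_i \in \indecomposablePipeDreams$; conversely, any sequence $(P_1, \dots, P_\ell)$ of indecomposables is itself the unique $\gammainsertion_0$-decomposition of the pipe dream $P_1 \gammainsertion_0 \cdots \gammainsertion_0 P_\ell$. Hence the sequences of indecomposables are in bijection with $\pipeDreams$, and the associated products $P_1 \product \cdots \product P_\ell$ are exactly the elements $\Psi_P$ of Corollary~\ref{cor:indecomp}.

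The freeness then follows directly. Corollary~\ref{cor:indecomp} asserts that $\{\Psi_P : P \in \pipeDreams\}$ is a linear basis of $\HP$, and by the previous paragraph this set is precisely the collection of all products of generators, indexed bijectively and without repetition by the words in $\indecomposablePipeDreams$. A family of products of generators indexed bijectively by all words and forming a basis is exactly the statement that $\phi$ is an isomorphism, that is, that $\HP$ is free on $\indecomposablePipeDreams$. If one prefers to argue from scratch rather than cite the corollary, the same conclusion is reached by triangularity: by Proposition~\ref{prop:LT} the leading pipe dream of $P_1 \product \cdots \product P_\ell$ is $P_1 \gammainsertion_0 \cdots \gammainsertion_0 P_\ell$ with leading coefficient $1$, and these leading pipe dreams are pairwise distinct for distinct words by the uniqueness in Lemma~\ref{lem:decomposition}; thus the products are linearly independent, while surjectivity is immediate since each basis element $\Psi_P$ is itself such a product.

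The genuinely substantive content has already been established in Proposition~\ref{prop:LT} and Lemma~\ref{lem:decomposition}, so I do not expect a real obstacle at this stage. The only point requiring care is the identification in the second paragraph: one must check that \emph{every} word in the generators arises as the canonical decomposition of some pipe dream, so that no word is missing and none is counted twice. This is exactly the two-sided content of the uniqueness in Lemma~\ref{lem:decomposition}, and with that bookkeeping in place the theorem is an immediate repackaging of Corollary~\ref{cor:indecomp}.
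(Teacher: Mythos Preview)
Your proposal is correct and follows essentially the same approach as the paper: both deduce the theorem directly from Corollary~\ref{cor:indecomp}, which in turn rests on Proposition~\ref{prop:LT} and Lemma~\ref{lem:decomposition}. Your write-up is simply more explicit than the paper's one-line proof, spelling out both the generation and the independence parts of freeness and the bijection between words in the generators and pipe dreams.
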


\begin{proof}
Corollary~\ref{cor:indecomp} implies that the $\gammainsertion_0$-indecomposable pipe dreams are algebraically independent.
\end{proof}


\subsection{Better description of $\gammainsertion_0$-indecomposable pipe dreams}
\label{subsec:betterDescriptionIndecomposables}

In Proposition~\ref{prop:decomposition} we provided a characterization of $\gammainsertion_0$-indecomposable pipe dreams. For~$P \in \pipeDreams_n$, this characterization requires to check for all~$0 < r < n$ if~$P$ has crosses~\cross{} in a large rectangle. In this section we  show that this can be done more efficiently by checking the state of very few positions.

\begin{theorem}
\label{thm:bestindecomposable}
Let~$P \in \pipeDreams_n$ and let~$\omega_P = \omega_1 \gdproduct \omega_2 \gdproduct \cdots \gdproduct \omega_{\ell}$ be the unique factorization of $\omega_P$ into atomic permutations, with $\omega_i \in \fS_{n_i}$. Then $P$ is $\gammainsertion_0$-indecomposable if and only if~$P$ has crosses~\cross{} in positions~$(n_\ell, 0),\ (n_\ell + n_{\ell-1}, 0),\ \ldots,\ (n_\ell + n_{\ell-1} + \dots + n_2, 0)$.
\end{theorem}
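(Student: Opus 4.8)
The plan is to deduce the statement from the rectangle characterization of $\gammainsertion_0$-decomposability in Proposition~\ref{prop:decomposition}, by translating ``a full rectangle of crosses'' into ``a single prescribed entry of the leftmost column.'' Throughout, write $g \eqdef n-r$ and let $\mathcal{R}_r$ denote the rectangle of rows $0\le i\le r-1$ and columns $1\le j\le n-r$, so that by Proposition~\ref{prop:decomposition} the pipe dream $P$ is $\gammainsertion_0$-decomposable if and only if $\mathcal{R}_r$ is entirely filled with crosses~\cross{} for some $0<r<n$.

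First I would record that only the indices appearing in the statement are relevant. If $\mathcal{R}_r$ is full, then $P=Q\gammainsertion_0 R$ with $Q\in\pipeDreams_r$ and $R\in\pipeDreams_{n-r}$ (Proposition~\ref{prop:decomposition}), so Lemma~\ref{lem:gbinsertion} applied with $\gamma=0$ gives $\omega_P=\omega_R\gdproduct\omega_Q$, which has a global descent at position $n-r=g$. Hence a full rectangle can occur only when $g$ is a global descent of $\omega_P$, that is, when $r\in\{n_\ell,\ n_\ell+n_{\ell-1},\ \dots,\ n_\ell+\dots+n_2\}$; these are exactly the row indices of the prescribed positions. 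It therefore suffices to prove, for each global descent $g=n-r$, the equivalence that $\mathcal{R}_r$ is full of crosses if and only if $(r,0)$ is an elbow~\elbow{}. Granting this, $P$ is decomposable iff some prescribed position $(r,0)$ is an elbow, i.e. iff not all of them are crosses, which is the contrapositive of the theorem.

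The easy direction ($\mathcal{R}_r$ full $\Rightarrow(r,0)$ elbow) follows from the insertion structure: writing $P=Q\gammainsertion_0 R$ as above, the description of $\columnReading(Q\gammainsertion_0 R)$ in the proof of Lemma~\ref{lem:order_and_zeroinsert} shows that the leftmost column of $P$ is the leftmost column of $Q$ stacked on top of that of $R$; thus $(r,0)$ is the bottom entry of $Q$'s leftmost column, which lies on the antidiagonal of $Q\in\pipeDreams_r$ and is therefore an elbow. The substance is the converse, which is the main obstacle. Here I would argue geometrically, using that in a reduced pipe dream every pipe is a monotone staircase running up and to the right from its entry on the left edge to its exit on the top edge. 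Color red the pipes $1,\dots,r$, which (since $g$ is a global descent) enter in rows $1,\dots,r$ and exit in columns $g+1,\dots,n$, and blue the pipes $r+1,\dots,n$, which enter in rows $r+1,\dots,n$ and exit in columns $1,\dots,g$. By monotonicity red pipes never leave rows $0,\dots,r$ and blue pipes never leave columns $0,\dots,g$, so every red-blue crossing lies in the box $\mathcal{B}$ of rows $0,\dots,r$ and columns $0,\dots,g$; and because $g$ is a global descent, each of the $r\cdot g$ red-blue pairs crosses exactly once, giving exactly $r\cdot g$ such crossings.

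The hypothesis that $(r,0)$ is an elbow is then used twice to push all these crossings off the border of $\mathcal{B}$ into $\mathcal{R}_r$. On one hand, the elbow at $(r,0)$ diverts to the east any pipe that reaches it from below, so no blue pipe can rise past row $r$ in column $0$; consequently every vertical strand of column $0$ in rows $0,\dots,r$ belongs to one of the pipes $0,\dots,r$, and $(r,0)$ itself carries no crossing, whence no red-blue crossing lies in column $0$. On the other hand, the elbow at $(r,0)$ makes pipe $r$ turn north, so by monotonicity no red pipe occupies row $r$ beyond column $0$; the crosses of row $r$ in columns $1,\dots,g$ are thus blue-blue, and no red-blue crossing lies in row $r$. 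Therefore all $r\cdot g$ red-blue crossings lie in $\mathcal{B}$ minus its left column and bottom row, which is exactly $\mathcal{R}_r$; since $\mathcal{R}_r$ has exactly $r\cdot g$ cells, it is completely filled with crosses, as desired. I expect the bookkeeping of this monotone-path/coloring argument---in particular verifying that red pipes cannot reappear in row $r$ and that no blue strand sneaks above row $r$ in column $0$---to be the delicate part; everything else is a direct application of the quoted results.
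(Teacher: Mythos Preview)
Your proof is correct and takes a genuinely different route from the paper.

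The paper argues via the chute-move poset~$(\pipeDreams(\omega),\prec)$ of Theorem~\ref{thm:BB}: it shows that the set~$A_\omega$ of $\gammainsertion_0$-decomposable pipe dreams is an upper ideal, and then follows a chute-move path from~$\Ptop_\omega$ down to an indecomposable~$P$, arguing that the first time such a path removes a cross from the rectangle~$\mathcal{R}_r$ it necessarily deposits a cross at position~$(r,0)$ which then never moves again. Your argument bypasses the poset entirely: for each nontrivial global descent you prove the local equivalence ``$\mathcal{R}_r$ full $\Leftrightarrow$ $(r,0)$ is an elbow'' directly. The forward implication follows from the explicit shape of~$Q\gammainsertion_0 R$, and the backward implication is a clean pigeonhole: the $r\cdot g$ red--blue crossings (one per pair, by reducedness and the global-descent hypothesis) are confined to the box~$\mathcal{B}$ by monotonicity, and the elbow at~$(r,0)$ forbids them from column~$0$ (all vertical strands there in rows~$\le r$ carry red pipes) and from row~$r$ (pipe~$r$ has already turned north, and smaller red pipes live in strictly higher rows), forcing them to saturate the $r\cdot g$ cells of~$\mathcal{R}_r$.

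What each approach buys: the paper's proof highlights that the obstruction set~$B_\omega$ is a lower ideal of the chute-move order, a structural fact that fits naturally with Section~\ref{subsec:betterDescriptionIndecomposables}. Your approach is more self-contained---it uses neither Theorem~\ref{thm:BB} nor any analysis of chute moves---and makes the equivalence at each individual global descent completely explicit, rather than coupling all rectangles together through a single path from~$\Ptop_\omega$. The ``delicate bookkeeping'' you flag is in fact routine: monotonicity of pipes immediately gives that red pipes~$1,\dots,r-1$ never visit row~$r$, and the elbow at~$(r,0)$ handles pipe~$r$ and the column-$0$ blue strands in one stroke.
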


To prove this result, we need to recall a result of~\cite{BergeronBilley} about the generation of all pipe dreams of a given permutation as a partial order. See also~\cite{PilaudStump-ELlabelings} for more details about the structure of a related partial order.

Given a permutation~$\omega \in \fS_n$, we let~$I(\omega) = (i_1, i_2, \dots, i_{n-1})$ be the \defn{Lehmer code} of~$\omega$, \ie the sequence counting the number of inversions of each value:
\[
i_k = \big| \set{j \in [n]}{k < j \text{ and } \omega(k) > \omega(j)} \big|.
\]
It is well-known that~$I(\omega)$ characterizes~$\omega$. Given~$I(\omega) = (i_1, i_2, \dots, i_{n-1})$, we define the pipe dream~$\Ptop_\omega$ by putting crosses~\cross{} in the top~$i_k$ rows of the $k$th column. We leave it to the reader to check that~$\Ptop_\omega$ is a reduced pipe dream with~$\omega_{\Ptop_\omega} = \omega$.

A \defn{chute move} is a flip where an elbow is exchanged with a cross in the next row. See \fref{fig:chuteMove}.

\begin{figure}[ht]
	\capstart
	\centerline{
		\input{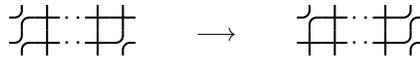}
	}
	\caption{A chute move.}
	\label{fig:chuteMove}
\end{figure}

\noindent
Note that the chute move graph is acyclic since any chute move decreases the sum of the row indices of the elbows of the pipe dream.
We denote by~$(\pipeDreams(\omega), \prec)$ the poset defined by \ie~$Q \prec P$ if we can go from~$P$ to~$Q$ by a sequence of chute moves.

\begin{example}
For~$\omega = 321$, we have~$I(\omega) = (2, 1)$. \fref{fig:PosetPipeDreams} depicts the full poset~$(\pipeDreams(\omega),\prec)$ with the pipe dream~$\Ptop_\omega$ on top.

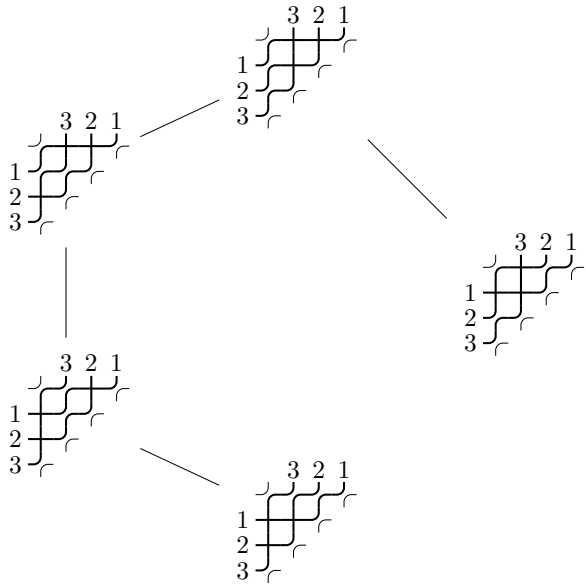
\begin{figure}[h]
	\capstart
	\centerline{
		\begin{tikzpicture}[scale=2.1,baseline=.5cm]
	\node (top) at (0,3) {
		\pipeDreamMonoColor{black}{black}{
			{n,n,3,2,1},
			{n,t,c,c,b},
			{1,e,c,b},
			{2,e,b},
			{3,b},
		}};
	\node (10) at (1.5,1.5) {
		\pipeDreamMonoColor{black}{black}{
			{n,n,3,2,1},
			{n,t,c,e,b},
			{1,c,c,b},
			{2,e,b},
			{3,b},
		}};
	\node (01) at (-1.5,2.3) {
		\pipeDreamMonoColor{black}{black}{
			{n,n,3,2,1},
			{n,t,c,c,b},
			{1,e,e,b},
			{2,c,b},
			{3,b},
		}};
	\node (02) at (-1.5,.7) {
		\pipeDreamMonoColor{black}{black}{
			{n,n,3,2,1},
			{n,t,e,c,b},
			{1,c,e,b},
			{2,c,b},
			{3,b},
		}};
	\node (bot) at (0,0) {
		\pipeDreamMonoColor{black}{black}{
			{n,n,3,2,1},
			{n,t,e,e,b},
			{1,c,c,b},
			{2,c,b},
			{3,b},
		}};
	\draw (top) -- (10); 
	\draw (top) -- (01); 
	\draw (01)  -- (02); 
	\draw (02)  -- (bot); 
\end{tikzpicture}
	}
	\caption{The poset of chute moves on pipe dreams for $\omega=321$.}
	\label{fig:PosetPipeDreams}
\end{figure}
\end{example}
    
\begin{theorem}[\cite{BergeronBilley}]
\label{thm:BB}
The poset~$(\pipeDreams_\omega, \prec)$ is connected with unique maximum~$\Ptop_\omega$.
\end{theorem}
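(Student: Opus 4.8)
My plan is to use the acyclicity already observed to reduce the statement to two local claims about \emph{inverse} chute moves, and then to recognize $\Ptop_\omega$ as the unique pipe dream of $\omega$ whose crosses are justified to the top of each column. Write $s(P)$ for the sum of the row indices of the elbows of $P$, so that every chute move strictly decreases $s$ and hence every inverse chute move (a flip applied in the $s$-increasing direction, i.e.\ a cover relation of $\prec$ read upwards) strictly increases it. Since $\pipeDreams(\omega)$ is finite and $\Ptop_\omega \in \pipeDreams(\omega)$ is given, it suffices to prove: (i) $\Ptop_\omega$ admits no inverse chute move; and (ii) every $P \in \pipeDreams(\omega)$ with $P \neq \Ptop_\omega$ admits an inverse chute move. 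Granting (i) and (ii), starting from any $P$ and repeatedly applying inverse chute moves strictly increases the bounded integer $s$, so the process terminates, and by (ii) it can stop only at $\Ptop_\omega$. Hence $P \prec \Ptop_\omega$ or $P = \Ptop_\omega$ for every $P$, which shows simultaneously that $\Ptop_\omega$ is the unique maximum and that the cover graph is connected, since every vertex is linked to $\Ptop_\omega$.

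To set up (i) and (ii), I would first record that the Lehmer code gives a bijection between permutations and the pipe dreams whose crosses occupy, in each column $k$, exactly the top $i_k$ cells; thus $\Ptop_\omega$ is characterized as the unique element of $\pipeDreams(\omega)$ that is \emph{top-justified}, meaning the crosses of every column form a contiguous block starting from the top row. The key structural observation for (i) is that any inverse chute move acts on a rectangle whose lower-left corner is a cross lying immediately below an elbow of the same column. In a top-justified pipe dream a cross in row $i+1$ of a column forces a cross in row $i$ of that column, so no elbow ever lies directly above a cross; therefore $\Ptop_\omega$ admits no inverse chute move, which is exactly (i).

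For (ii), if $P \neq \Ptop_\omega$ then some column fails to be top-justified, so there is a cross at a position $(r+1,c)$ with an elbow at $(r,c)$ directly above it. I would let $c'$ be the first column to the right of $c$ for which row $r$ carries an elbow, and argue that rows $r$ and $r+1$ between columns $c$ and $c'$ form a chutable rectangle: elbows at $(r,c)$, $(r+1,c')$ and $(r,c')$, crosses in all intermediate cells, so that the two pipes through the elbow $(r,c')$ cross precisely at $(r+1,c)$ and the associated flip is an inverse chute move, raising $s$. \textbf{The main obstacle is precisely this local verification}: one must use the reducedness of $P$ to show that the intermediate cells of \emph{both} rows are genuinely crosses and that the corner $(r+1,c')$ is an elbow, so that the rectangle is clean and the resulting flip yields a reduced pipe dream. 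This is the combinatorial heart of the Bergeron--Billey argument, and the delicate point is ruling out a premature elbow in row $r+1$ strictly before column $c'$; I expect to control this by tracing the horizontal pipe through the cross $(r+1,c)$ and by choosing the defect with $r$ minimal. Since flips preserve the underlying permutation by definition, the result stays in $\pipeDreams(\omega)$, completing (ii) and hence the proof.

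As a robustness check, one can recast the same reduction as an explicit sorting algorithm that top-justifies the columns of $P$ from left to right by inverse chute moves while preserving the already-processed columns, which yields connectivity and uniqueness of the maximum in one stroke; the bottleneck remains the identical rectangle verification, so I would present the argument in the cleaner extremal form above.
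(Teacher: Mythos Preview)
The paper does not prove this theorem at all: it is quoted verbatim from~\cite{BergeronBilley} and used as a black box in the proof of Theorem~\ref{thm:bestindecomposable}. There is therefore no proof in the paper to compare your proposal against. Your outline is essentially the original Bergeron--Billey argument: reduce via the potential~$s$ to the two local claims, dispatch~(i) by top-justification, and handle~(ii) by an extremal choice plus reducedness.

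One concrete correction on your sketch of~(ii): taking $r$ minimal is not the extremal hypothesis that makes the rectangle clean. With only $r$ minimal, nothing prevents $(r+1,c')$ from being a cross, since that would merely create \emph{another} elbow-over-cross defect at column~$c'$ in the \emph{same} row~$r$, contradicting no minimality on~$r$. The choice that works is to take $c$ \emph{rightmost} among defect columns in row~$r$: then a cross at $(r+1,c')$ would force a defect strictly to the right of~$c$, which is impossible. The remaining verification, that $(r+1,k)$ is a cross for every $c<k<c'$, does follow from reducedness exactly as you indicate: if $k$ were the first elbow in row~$r+1$ to the right of~$c$, the horizontal pipe through $(r+1,c)$ would turn north at $(r+1,k)$ and recross at $(r,k)$ the pipe that already crossed it at $(r+1,c)$. (The same reducedness trace also forces $c'<n-r$, so $(r+1,c')$ lies inside the staircase.) With that adjustment your argument is complete.
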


Note that the poset~$\pipeDreams_\omega$ has a nicer structure if one also considers ladder moves (transpose of chute moves) and more generally all other flips. 
For example, the poset of all increasing flips on~$\pipeDreams(\omega)$ is shown to be shellable in~\cite{PilaudStump-ELlabelings}. 
In this section, we restrict to chute moves as it simplifies the proof of our main theorem.

\begin{proof}[Proof of Theorem~\ref{thm:bestindecomposable}]
If~$\ell = 1$, then~$\omega_P$ is atomic and Lemma~\ref{lem:gbinsertion} shows that~$P$ cannot be $\gammainsertion_0$-decomposable. The theorem thus holds: $P$ is indecomposable and there is no restriction on the crosses~\cross{} in the leftmost column.

We now assume that~$\ell > 1$. Let~$\omega = \omega_P = \omega_1 \gdproduct \omega_2 \gdproduct \cdots \gdproduct \omega_{\ell}$ with~$\omega_i \in \fS_{n_i}$. Consider the set~$A_\omega = \set{P \in \pipeDreams_\omega}{P \text{ is $\gammainsertion_0$-decomposable}}$. If~$P \in A_\omega$, then Proposition~\ref{prop:decomposition} ensures that there is~$0 < r < n$ such that~$P$ has crosses~\cross{} in the rectangle of rows~$0 \le i \le r-1$ and columns $1 \le j \le n-r$. Moreover, Lemma~\ref{lem:gbinsertion} implies that~$n - r = n_1 + n_2 + \dots + n_k$ for some~$1 \le k < \ell$. Thus~$r = n_{k+1} + n_{k+2} + \dots + n_\ell$. If we perform any inverse chute moves on~$P$, the resulting pipe dream will still have crosses~\cross{} in the rectangle of rows~$0 \le i \le r-1$ and columns~$1 \le j \le n-r$. This implies that~$Q \in A_\omega$ for all~$Q \succ P$. In other words, $A_\omega$ is an upper ideal in $(\pipeDreams_\omega,\prec)$.
In particular, the set $B_\omega = \pipeDreams_\omega \ssm A_\omega = \set{P \in \pipeDreams_\omega}{P \text{ is $\gammainsertion_0$-indecomposable}}$ is a lower ideal in $(\pipeDreams_\omega,\prec)$.

For any~$P \in B_\omega$ there is a sequence of chute moves from~$\Ptop_\omega$ to $P$. Observe that~$\Ptop_\omega$ has crosses~\cross{} in all the rectangles of rows~$0 \le i \le r-1$ and columns~$1 \le j \le n-r$ where~$n - r = n_1 + n_2 + \dots + n_k$ and~$1 \le k < \ell$. Every one of these rectangles must have at least one less cross~\cross{}. In the sequence of chute moves from~$\Ptop_\omega$ to $P$, consider a move that removes a cross~\cross{} from a rectangle for the first time. We must remove one cross~\cross{} from the rectangle of rows~$0 \le i \le r-1$ and columns~$1 \le j \le n-r$ by that move. Assume that the chute move transports a cross~\cross{} from~$(i,j)$ to~$(i+1,j')$. We have that~$(i,j)$ must be in the rectangle and we must have an elbow~\elbow{} in position~$(i+1,j)$. This can only happen if~$i = r-1$. Moreover,~$j$ cannot be the column of another rectangle, hence this will remove a cross~\cross{} in a single rectangle and not in any other one. Now, we must also have an elbow~\elbow{} in positions~$(r-1,j')$ and~$(r,j')$. This implies that~$j' = 0$. This chute move will move a cross~\cross{} from~$(r-1,j)$ to the position~$(r,0)$.

To summarize, a sequence of chute moves from~$\Ptop_\omega$ to~$P$ must remove at least one cross~\cross{} of each rectangle. The first time it does for each rectangle, it puts a cross~\cross{} in position~$(r,0)$ and does not affect the other rectangles. The cross~\cross{} in position~$(r,0)$ will not move in any subsequent chute move. This shows that~$P \in B_\omega$ if and only if $P$ has crosses~\cross{} in positions~$(n_\ell, 0),\ (n_\ell+n_{\ell-1}, 0),\ \ldots,\ (n_\ell+n_{\ell-1}+\cdots+n_2, 0).$
\end{proof}

 
\subsection{(Free) generators of $\HP^*$}
\label{subsec:generatorsDual}

We now show that~$\HP$ is cofree and show that the cogenerators are also the $\gammainsertion_0$-indecomposable pipe dreams. To this end, we work in the graded dual~$\HP^*$ of~$\HP$. The product in~$\HP^*$ is given by the dual of the coproduct~$\coproduct$ in~$\HP$. Namely, for~$\b{P} \in \pipeDreams_{\b{m}}$ and $\r{Q} \in \pipeDreams_{\r{n}}$ we let~$\b{P^*}$ and~$\r{Q^*}$ denote the dual basis element in~$\HP^*$. The product is now defined by
\[
\b{P^*} \product \r{Q^*} = \sum_{R \in \pipeDreams_{\b{m}+\r{n}} \atop \coproduct_{\b{m},\r{n}}(R) = \b{P} \otimes \r{Q}} R^*.
\]
Using the same order on pipe dreams as in Section~\ref{subsec:order} we obtain the following statement.

\begin{lemma}
For $\b{P}\in\pipeDreams_n$ and  $\r{Q}\in\pipeDreams_m$ we have
\[
\leadingTerm({\b{P^*} \product \r{Q^*}})={(\b{P} \gammainsertion_0 \r{Q})^*}.
\]
\end{lemma}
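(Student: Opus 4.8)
The plan is to repeat, on the dual side, the leftmost-column analysis used for Lemma~\ref{lem:LTproductroduct}. Unwinding the definition of the product on $\HP^*$, the element $\b{P^*}\product\r{Q^*}$ is the sum of the vectors $R^*$ over all pipe dreams $R\in\pipeDreams_{n+m}$ lying in the fiber $\coproduct_{n,m}^{-1}(\b{P}\otimes\r{Q})$, each occurring with coefficient $1$. As distinct pipe dreams yield distinct dual basis vectors, the leading coefficient is automatically $1$, so the whole statement reduces to identifying the $\ltlex$-minimal element of this fiber and checking that it is the claimed zero-insertion. My first step is to confirm that the zero-insertion does lie in the fiber. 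Since insertion at a gap and untangling at the corresponding global descent are mutually inverse operations, Lemma~\ref{lem:gbinsertion} together with Proposition~\ref{prop:codescentproduct} guarantees that the zero-insertion untangles back to $\b{P}\otimes\r{Q}$ at the relevant global descent, placing it in $\coproduct_{n,m}^{-1}(\b{P}\otimes\r{Q})$.

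Next I would prove minimality by comparing column reading words. Every $R$ in the fiber is a reduced pipe dream of one and the same permutation $\omega_\b{P}\gdproduct\omega_\r{Q}$ (by Proposition~\ref{prop:codescentproduct}), and the $\gdproduct$-structure forces exactly $nm$ crossings between the block of pipes coming from $\b{P}$ and the block coming from $\r{Q}$. Using the explicit shape of the column reading word of the zero-insertion recorded in Lemma~\ref{lem:order_and_zeroinsert}, these forced crossings get packed into the top rows of the newly inserted columns, leaving the very first column free of any tangling cross~\cross{}. The heart of the argument is then to show that no other fiber element can do at least as well: any $R$ different from the zero-insertion must exhibit a tangling cross~\cross{} strictly earlier in its column reading word, whence $\columnReading(\text{zero-insertion})\ltlex\columnReading(R)$ and therefore $\leadingPipeDream(\b{P^*}\product\r{Q^*})$ equals the zero-insertion.

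The main obstacle is precisely this minimality step, and it is genuinely harder than in Lemma~\ref{lem:LTproductroduct}. There the terms of the product are indexed explicitly by $\b{P}/\r{Q}$-shuffles, which makes the leftmost-column bookkeeping immediate; here the fiber is instead the set of all reduced pipe dreams of a fixed permutation subject to a prescribed packing, with no comparable combinatorial indexing. To handle it I would bring in the chute-move poset $(\pipeDreams_\omega,\prec)$ and its connectivity (Theorem~\ref{thm:BB}): moving from an arbitrary fiber element toward the zero-insertion by chute moves, I would track how each move displaces a cross~\cross{} in the column reading word and verify that the decisive cross can only be pushed to an earlier reading position, never a later one, thereby pinning the zero-insertion as the lexicographic minimum. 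As a cleaner alternative I would explore transporting the result directly from Lemma~\ref{lem:LTproductroduct} through the adjunction between the insertion $\gammainsertion_0$ and the untangling $\coproduct$, or through the transpose involution, which might sidestep the chute-move analysis altogether.
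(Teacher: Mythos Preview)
Your proposal is correct and follows essentially the same route as the paper: show that $(\b{P}\gammainsertion_0\r{Q})^*$ is a term of the product (via $\coproduct_{n,m}(\b{P}\gammainsertion_0\r{Q})=\b{P}\otimes\r{Q}$), observe that every fiber element $R$ has the same permutation $\omega_\b{P}\gdproduct\omega_\r{Q}$, and then use the chute-move machinery behind Theorem~\ref{thm:BB}/Theorem~\ref{thm:bestindecomposable} to compare leftmost-column reading words.

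One refinement worth noting: the paper does not actually track chute moves ``from an arbitrary fiber element toward the zero-insertion'' as you suggest. Instead it invokes directly the structural conclusion already extracted in the proof of Theorem~\ref{thm:bestindecomposable}: if $R\ne\b{P}\gammainsertion_0\r{Q}$ then $R$ is missing at least one cross from the rectangle of rows $0,\dots,n-1$ and columns $1,\dots,m$, and that argument shows any such $R$ must carry a cross~\cross{} at position $(n,0)$ in the leftmost column. Since the pipes meeting at $(n,0)$ are a $\b{P}$-pipe and a $\r{Q}$-pipe, the horizontal packing giving back $\b{P}$ forces the leftmost column of $R$ to read $\elbowLetter\, p_{1,0}\cdots p_{i,0}\,\crossLetter\cdots$ for some $i<n$, which is lexicographically larger than the leftmost column $\elbowLetter\, p_{1,0}\cdots p_{n-1,0}\,\elbowLetter\cdots$ of $\b{P}\gammainsertion_0\r{Q}$. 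This is slightly cleaner than tracking moves step by step, and it avoids the issue that the fiber itself need not be closed under chute moves.
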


\begin{proof}
First observe that Proposition~\ref{prop:decomposition} gives that for~$\b{P} \gammainsertion_0 \r{Q}$ we have~$\coproduct_{n,m}(\b{P} \gammainsertion_0 \r{Q}) = \b{P}\otimes \r{Q}$. Hence~$(\b{P} \gammainsertion_0 \r{Q})^*$ is a term of~${\b{P^*} \product \r{Q^*}}$. The crosses~\cross{} in the rectangle in rows~$0 \le i \le \b{m}-1$ and columns~$1 \le j \le \r{n}$ of $\b{P} \gammainsertion_0 \r{Q}$ are exactly the crosses that are ignored in the computation of~$\coproduct_{\b{m},\r{n}}(\b{P} \gammainsertion_0 \r{Q}) = \b{P} \otimes \r{Q}$. They are the crosses between the pipes of~$\b{P}$ and the pipes of~$\r{Q}$. Recall that $\columnReading({\b{P} \gammainsertion_0 \r{Q}}) = \elbowLetter \b{p_{1,0}\cdots p_{n-1,0}} \elbowLetter \cdots$ where~$\columnReading(\b{P} ) = \elbowLetter \b{p_{1,0}\cdots p_{n-1,0}} \elbowLetter\cdots$. Using Lemma~\ref{lem:gbinsertion}, let $\omega = \omega_{(\b{P} \gammainsertion_0 \r{Q})} = \omega_{\b{P}}\gdproduct\omega_{\r{Q}}$.
  
Now for any other term~$R^*$ in~${\b{P^*} \product \r{Q^*}}$ we have that~$\coproduct_{\b{m},\r{n}}(R) = \b{P}\otimes \r{Q}$. By Proposition~\ref{prop:codescentproduct}, we have that~$\omega_R = \omega_P \gdproduct \omega_Q = \omega$. Hence, all~$R^*$ appearing in the product~${\b{P^*} \product \r{Q^*}}$ are such that~$R \in \pipeDreams_\omega$. If~$R$ has crosses~\cross{} in the rectangle in rows~$0 \le i \le \b{m}-1$ and columns~$1 \le j \le \r{n}$, then it must be~$R = \b{P} \gammainsertion_0 \r{Q}$. If~$R \ne \b{P} \gammainsertion_0 \r{Q}$, then some of the crosses~\cross{} in the rectangle must be moved out. As in the proof of Theorem~\ref{thm:bestindecomposable}, this implies that~$R$ has a cross in position~$(\b{m},0)$. At position $(\b{m},0)$ the pipes $\b{m}<\r{n}$ cross. Since $\coproduct_{\b{m},\r{n}}(R) = \b{P}\otimes \r{Q}$ and looking back at how~$P$ is obtained from~$R$, we must have~$\columnReading(R) = \elbowLetter \b{p_{1,0}\cdots p_{i,0}} \crossLetter \crossLetter \cdots \crossLetter \cdots$ for some~$i < \b{m}$. The crosses~$\crossLetter$ of~$\columnReading(R)$ in positions~$i+1,\ldots,n$ correspond to the crosses~\cross{} of~$\b{P} \gammainsertion_0 \r{Q}$ in column~$0$ that involves the pipe~$m$. It is clear now that~$\columnReading({\b{P} \gammainsertion_0 \r{Q}}) \ltlex \columnReading(R)$ and the lemma follows.
\end{proof}

Following exactly the same analysis as before we conclude with the following theorem. Its proof is left to the reader.

\begin{theorem}
The dual Hopf algebra~$\HP^*$ is free with generators 
\[
\set{P^*}{P\in\pipeDreams \text{ and $P$ is $\gammainsertion_0$-indecomposable}}.
\]
\end{theorem}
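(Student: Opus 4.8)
The plan is to transport the entire freeness argument of Section~\ref{subsec:generators} to the dual side, replacing the product of~$\HP$ by the dual product on~$\HP^*$ and the basis~$\set{P}{P \in \pipeDreams}$ by the dual basis~$\set{P^*}{P \in \pipeDreams}$, ordered by declaring~$\b{P^*} \ltlex \r{Q^*}$ if and only if~$\b{P} \ltlex \r{Q}$. The crucial input, namely the dual analogue of Lemma~\ref{lem:LTproductroduct}, has already been established in the preceding lemma as~$\leadingTerm(\b{P^*} \product \r{Q^*}) = (\b{P} \gammainsertion_0 \r{Q})^*$ with leading coefficient~$1$. Since this order on~$\HP^*$ is induced termwise from~$\ltlex$ on pipe dreams, Lemma~\ref{lem:order_and_zeroinsert} applies verbatim; combining it with the preceding lemma exactly as Proposition~\ref{prop:LT} was deduced from Lemmas~\ref{lem:LTproductroduct} and~\ref{lem:order_and_zeroinsert} yields the dual statement that for~$\b{\Phi}, \r{\Psi} \in \HP^*$,
\[
\leadingPipeDream(\b{\Phi} \product \r{\Psi}) = \leadingPipeDream(\b{\Phi}) \gammainsertion_0 \leadingPipeDream(\r{\Psi})
\qquad\text{and}\qquad
\leadingCoefficient(\b{\Phi} \product \r{\Psi}) = \leadingCoefficient(\b{\Phi}) \leadingCoefficient(\r{\Psi}).
\]

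Next I would fix~$P \in \pipeDreams_n$ with its unique~$\gammainsertion_0$-decomposition~$P = P_1 \gammainsertion_0 \cdots \gammainsertion_0 P_\ell$ into indecomposables (Lemma~\ref{lem:decomposition}) and set~$\Psi_P^* \eqdef P_1^* \product \cdots \product P_\ell^*$. Iterating the dual Proposition~\ref{prop:LT} along this product, and using that each single dual basis element~$P_i^*$ has leading pipe dream~$P_i$ and leading coefficient~$1$, gives
\[
\leadingTerm(\Psi_P^*) = (P_1 \gammainsertion_0 \cdots \gammainsertion_0 P_\ell)^* = P^*.
\]
As~$P$ ranges over~$\pipeDreams_n$, the leading pipe dreams~$P^*$ exhaust the dual basis of~$\HP^*_n$, each with leading coefficient~$1$; hence the family~$\set{\Psi_P^*}{P \in \pipeDreams_n}$ is unitriangular with respect to~$\set{P^*}{P \in \pipeDreams_n}$ and therefore a linear basis of~$\HP^*_n$, in perfect parallel with Corollary~\ref{cor:indecomp}.

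Finally, the unique factorization of Lemma~\ref{lem:decomposition} puts sequences~$(P_1, \dots, P_\ell)$ of~$\gammainsertion_0$-indecomposable pipe dreams in bijection with all of~$\pipeDreams$, so the products~$P_1^* \product \cdots \product P_\ell^*$ of dual indecomposable generators form a basis of~$\HP^*$. This is precisely the assertion that~$\set{P^*}{P \text{ is } \gammainsertion_0\text{-indecomposable}}$ is algebraically independent and generating, i.e.\ that~$\HP^*$ is free on these cogenerators. The honest assessment is that the substantive obstacle was overcome in proving the preceding leading-term lemma, where one must show that moving a cross out of the separating rectangle forces a cross into position~$(\b{m},0)$ (mirroring the mechanism of Theorem~\ref{thm:bestindecomposable}); once that is in hand, the remaining steps are routine. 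The only point that still deserves care is the triangularity bookkeeping: one must check that~$\ltlex$ descends to a well-defined total order on each graded dual basis and that the bijection of Lemma~\ref{lem:decomposition} matches the leading terms of the iterated dual products. Since each~$\HP^*_n$ is finite-dimensional, unitriangularity in each degree suffices, and no genuinely new difficulty arises beyond the one already resolved on the primal side.
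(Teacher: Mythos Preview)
Your proposal is correct and follows precisely the approach the paper itself indicates: the paper states that ``following exactly the same analysis as before we conclude with the following theorem'' and leaves the proof to the reader, and you have faithfully carried out that parallel analysis, transporting Lemmas~\ref{lem:LTproductroduct} and~\ref{lem:order_and_zeroinsert}, Proposition~\ref{prop:LT}, and Corollary~\ref{cor:indecomp} to the dual side via the preceding leading-term lemma. Your identification of the one substantive step (already handled in the preceding lemma) and of the remaining triangularity bookkeeping as routine is accurate.
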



\part{Some relevant Hopf subalgebras}

In this part, we study some  interesting Hopf subalgebras of~$(\HP, \product, \coproduct)$ arising when we restrict either the atom sets of the permutations (Sections~\ref{sec:subalgebrasAtoms} and~\ref{sec:matomic}), or the pipe dreams to be acyclic (Section~\ref{sec:acyclicPipeDreams}).


\section{Some Hopf subalgebras of~$\HP$ from restricted atom sets}
\label{sec:subalgebrasAtoms}

Recall from Section~\ref{sec:HopfAlgebraPermutations} that a permutation~$\omega \in \fS$ has a unique factorization~${\omega = \nu_1 \gdproduct \nu_2 \gdproduct \cdots \gdproduct \nu_\ell}$ into atomic permutations and that we denote by~$\omega^\gdproduct \eqdef \{\nu_1, \nu_2, \dots, \nu_\ell\}$ the set of atomic permutations that appear in its factorization.

Given a subset~$S$ of atomic permutations, we define
\[
\fS_n \langle S \rangle \eqdef \set{\omega \in \fS_n}{\omega^{\gdproduct} \subseteq S}
\qquad\text{and}\qquad
\fS \langle S \rangle \eqdef \bigsqcup_{n \in \N} \fS_n \langle S \rangle,
\]
from which we derive
\[
\pipeDreams_n \langle S \rangle \eqdef \set{P \in \pipeDreams_n}{\omega_P \in \fS_n \langle S \rangle}
\qquad\text{and}\qquad
\pipeDreams \langle S \rangle \eqdef \bigsqcup_{n \in \N} \pipeDreams_n \langle S \rangle.
\]
The following statement is immediate from Proposition~\ref{prop:descentproduct}.

\begin{theorem}
For any set~$S$ of atomic permutations,
\begin{itemize}
\item the subspace~$\HS \langle S \rangle$ defines a Hopf subalgebra of~$(\HS, \gdshuffle, \gddeconcat)$,
\item the subspace~$\HP \langle S \rangle$ defines a Hopf subalgebra of~$(\HP, \product, \coproduct)$.
\end{itemize}
\end{theorem}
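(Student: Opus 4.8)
The plan is to verify that each of the two subspaces is a graded connected sub-bialgebra of its ambient Hopf algebra; since both $(\HS,\gdshuffle,\gddeconcat)$ and $(\HP,\product,\coproduct)$ are graded and connected, closure under the product and the coproduct automatically forces closure under the antipode (the antipode of a graded connected bialgebra is determined by the structure maps through Takeuchi's recursion), so it suffices to check that each subspace is closed under multiplication, closed under comultiplication, and contains the unit. The one point to keep in mind throughout is that $\HS\langle S\rangle$ and $\HP\langle S\rangle$ are by definition the $\bk$-spans of \emph{subsets} of the permutation basis and of the pipe dream basis respectively; they are not defined as preimages under $\omega$, and in fact $\HP\langle S\rangle$ is strictly smaller than $\omega^{-1}(\HS\langle S\rangle)$ in general (any difference $P-P'$ of two distinct pipe dreams with the same exiting permutation outside $\fS\langle S\rangle$ lies in the preimage but not in $\HP\langle S\rangle$). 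Hence I cannot simply invoke ``the preimage of a Hopf subalgebra is a Hopf subalgebra'': I must argue at the level of basis elements, exploiting that both structure maps expand a basis element into a genuine, cancellation-free sum of basis elements.

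For the first bullet, the decisive feature is that the product $\gdshuffle$ and the coproduct $\gddeconcat$ act on the unique atomic factorization $\omega = \nu_1 \gdproduct \cdots \gdproduct \nu_\ell$. Each term of $\pi \gdshuffle \omega$ is obtained by interleaving the atomic factorizations of $\pi$ and $\omega$, so its own atomic factorization consists precisely of the atomic factors of $\pi$ and of $\omega$; hence its atom set is contained in $\pi^\gdproduct \cup \omega^\gdproduct$. Likewise, every term of $\gddeconcat(\omega)$ is a tensor $(\nu_1\gdproduct\cdots\gdproduct\nu_i)\otimes(\nu_{i+1}\gdproduct\cdots\gdproduct\nu_\ell)$, and the atom set of each tensor factor is a subset of $\omega^\gdproduct$. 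Consequently, if $\pi^\gdproduct,\omega^\gdproduct\subseteq S$ then every term of $\pi\gdshuffle\omega$ lies in $\fS\langle S\rangle$ and every tensor factor of $\gddeconcat(\omega)$ lies in $\fS\langle S\rangle$; since the empty permutation $\epsilon$ has empty atom set it lies in $\fS\langle S\rangle$ as well. This shows that $\HS\langle S\rangle$ is a sub-bialgebra, hence a Hopf subalgebra.

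For the second bullet I transfer everything through the morphism $\omega$. Let $P,Q\in\pipeDreams$ with $\omega_P,\omega_Q\in\fS\langle S\rangle$. The product $P\product Q=\sum_s P \star_s Q$ is a sum of pipe dreams, and the proof of Proposition~\ref{prop:descentproduct} shows that each $\b{P}/\r{Q}$-shuffle $s$ contributes a single pipe dream $P \star_s Q$ whose exiting permutation $\omega_{P \star_s Q}$ is exactly one term of $\omega_P\gdshuffle\omega_Q$. By the first bullet that term lies in $\fS\langle S\rangle$, so $P \star_s Q\in\pipeDreams\langle S\rangle$ for every $s$, whence $P\product Q\in\HP\langle S\rangle$. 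For the coproduct, each nonzero term of $\coproduct(P)$ has the form $P_1\otimes P_2=\coproduct_{\gamma,n-\gamma}(P)$ for a global descent $\gamma$ of $\omega_P$ (with $P\in\pipeDreams_n$), and Proposition~\ref{prop:codescentproduct} gives $\omega_P=\omega_{P_1}\gdproduct\omega_{P_2}$. The concatenation of the atomic factorizations of $\omega_{P_1}$ and $\omega_{P_2}$ is therefore the atomic factorization of $\omega_P$, so $\omega_{P_1}^\gdproduct$ and $\omega_{P_2}^\gdproduct$ are both contained in $\omega_P^\gdproduct\subseteq S$; hence $P_1,P_2\in\pipeDreams\langle S\rangle$ and $\coproduct(P)\in\HP\langle S\rangle\otimes\HP\langle S\rangle$. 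Finally, the unit $\elbow\in\pipeDreams_0$ has exiting permutation $\epsilon\in\fS\langle S\rangle$, so it belongs to $\HP\langle S\rangle$.

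The argument carries no serious obstacle, and the only thing demanding care is the remark in the first paragraph. Because the product and coproduct on pipe dreams are monomial-positive — they expand a basis pipe dream as a sum, with no cancellation, of honest pipe dreams, and likewise into sums of tensors of pipe dreams — verifying closure on individual basis elements genuinely establishes closure of the spanned subspace. This is precisely where the hypothesis that $\omega$ is simultaneously an algebra morphism and a coalgebra morphism (Propositions~\ref{prop:descentproduct} and~\ref{prop:codescentproduct}) does all the work, by pinning down the exiting permutation of each individual term produced by $\product$ and by $\coproduct$. The antipode then comes for free from gradedness and connectedness.
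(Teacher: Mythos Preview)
Your proof is correct and, in one respect, more careful than the paper's. The paper disposes of the second bullet in a single sentence: ``$\HP\langle S\rangle$ is the inverse image of $\HS\langle S\rangle$ via the Hopf morphism $\omega$.'' You correctly flag that this is not literally true: the linear preimage $\omega^{-1}(\HS\langle S\rangle)$ contains the full kernel of~$\omega$, hence every difference $P-P'$ with $\omega_P=\omega_{P'}\notin\fS\langle S\rangle$, and is strictly larger than the span of $\pipeDreams\langle S\rangle$. So the preimage argument, taken at face value, proves that a \emph{larger} subspace is a Hopf subalgebra, not $\HP\langle S\rangle$ itself. Your route --- working term by term on basis elements, using that $\product$ and $\coproduct$ expand a single pipe dream as a cancellation-free sum of pipe dreams, and invoking Propositions~\ref{prop:descentproduct} and~\ref{prop:codescentproduct} to pin down the exiting permutation of each such term --- is precisely what closes this gap. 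For the first bullet the two arguments agree (the paper's ``by definition'' is your observation that $\gdshuffle$ and $\gddeconcat$ only rearrange atomic factors); the genuine difference is in the second bullet, where your extra care is warranted.
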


\begin{proof}
By definition, $\HS \langle S \rangle$ is preserved by the product~$\gdshuffle$ and coproduct~$\gddeconcat$. Now~$\HP \langle S \rangle$ is the inverse image of $\HS \langle S\rangle$ via the Hopf morphism $\omega \colon \HP \to \HS$ (see Proposition~\ref{prop:descentHopf}).
\end{proof}

All results of Section~\ref{sec:freeness} restrict to the Hopf subalgebra $\HP \langle S \rangle$ without any difficulties. In particular we have the following statement for any subset~$S$ of atomic permutations.

\begin{theorem}
\label{thm:freenessofOmega}
The Hopf subalgebra~$\HP \langle S \rangle$ is free and cofree. The generators and cogenerators of~$\HP \langle S \rangle$ are exactly the $\gammainsertion_0$-indecomposible pipe dreams in~$\pipeDreams \langle S \rangle$.
\end{theorem}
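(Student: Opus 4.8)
The plan is to transfer, essentially verbatim, the entire machinery developed in Section~\ref{sec:freeness} to the restricted setting, using the fact that $\HP\langle S\rangle$ is a graded connected Hopf \emph{sub}algebra of $\HP$ (by the preceding theorem) and that all the relevant operations stay within $\pipeDreams\langle S\rangle$. First I would check that the operation $\gammainsertion_0$ preserves the class $\pipeDreams\langle S\rangle$: if $Q\in\pipeDreams_r\langle S\rangle$ and $R\in\pipeDreams_{n-r}\langle S\rangle$, then by Lemma~\ref{lem:gbinsertion} (with $\gamma=0$, so that $\omega_Q=\epsilon\gdproduct\omega_Q$) we get $\omega_{(Q\gammainsertion_0 R)}=\omega_R\gdproduct\omega_Q$, hence $(Q\gammainsertion_0 R)^{\gdproduct}=\omega_R^{\gdproduct}\cup\omega_Q^{\gdproduct}\subseteq S$, so $Q\gammainsertion_0 R\in\pipeDreams\langle S\rangle$. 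Symmetrically, if $P=Q\gammainsertion_0 R\in\pipeDreams\langle S\rangle$, then $\omega_P=\omega_R\gdproduct\omega_Q$ forces both $\omega_Q^{\gdproduct}$ and $\omega_R^{\gdproduct}$ into $S$, so the two $\gammainsertion_0$-factors of any element of $\pipeDreams\langle S\rangle$ again lie in $\pipeDreams\langle S\rangle$.

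With this closure property in hand, the key observation is that the unique $\gammainsertion_0$-decomposition of Lemma~\ref{lem:decomposition} of any $P\in\pipeDreams\langle S\rangle$ into $\gammainsertion_0$-indecomposables $P=P_1\gammainsertion_0\cdots\gammainsertion_0 P_\ell$ has \emph{all} its factors $P_i$ in $\pipeDreams\langle S\rangle$: this follows by iterating the observation above. Consequently the indecomposables of $\pipeDreams\langle S\rangle$ are exactly the $\gammainsertion_0$-indecomposable pipe dreams that already belong to $\pipeDreams\langle S\rangle$, and an element $P\in\pipeDreams\langle S\rangle$ is $\gammainsertion_0$-indecomposable in $\pipeDreams\langle S\rangle$ if and only if it is so in $\pipeDreams$. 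I would then replay Corollary~\ref{cor:indecomp}: for $P\in\pipeDreams_n\langle S\rangle$ set $\Psi_P=P_1\product\cdots\product P_\ell$, note by Proposition~\ref{prop:LT} that $\leadingPipeDream(\Psi_P)=P$, and observe that each $\Psi_P$ lies in $\HP\langle S\rangle$ since $\HP\langle S\rangle$ is closed under $\product$. The leading-term triangularity (identical to the unrestricted proof) shows that $\bigset{\Psi_P}{P\in\pipeDreams_n\langle S\rangle}$ is a basis of $\HP_n\langle S\rangle$, yielding freeness with the stated generators. Cofreeness follows by the dual argument in $\HP\langle S\rangle^*$ exactly as in Section~\ref{subsec:generatorsDual}.

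The only point requiring genuine care, and thus the main obstacle, is verifying that the restricted structure is not ``too small'' to support the triangularity argument: one must confirm that the $\Psi_P$ indexed by $\pipeDreams_n\langle S\rangle$ span a subspace of the correct dimension, i.e.\ that no element $\Psi_P$ for $P\in\pipeDreams\langle S\rangle$ picks up leading or lower-order contributions from pipe dreams outside $\pipeDreams\langle S\rangle$. This is guaranteed because the product $\product$ preserves $\HP\langle S\rangle$ (so every term of $\Psi_P$ lies in $\pipeDreams\langle S\rangle$), and because the lexicographic order restricted to $\pipeDreams_n\langle S\rangle$ is still a total order with $\leadingPipeDream(\Psi_P)=P$; thus the change-of-basis matrix from $\{\Psi_P\}$ to $\{P\}$ indexed over $\pipeDreams_n\langle S\rangle$ is unitriangular, hence invertible over $\bk$. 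Everything else is a direct restriction of the already-established statements, so I would simply remark that the arguments of Section~\ref{sec:freeness} apply \emph{mutatis mutandis}, pausing only to record the closure of $\gammainsertion_0$ and $\product$ under the atom-set constraint.
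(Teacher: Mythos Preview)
Your proposal is correct and is precisely the approach the paper intends: the paper does not give an explicit proof at all, merely stating that ``all results of Section~\ref{sec:freeness} restrict to the Hopf subalgebra~$\HP\langle S\rangle$ without any difficulties,'' and your write-up is a careful verification of exactly that claim. The only additional points worth recording are the two closure facts you identify --- that $\gammainsertion_0$ and its inverse (via Proposition~\ref{prop:decomposition} and Lemma~\ref{lem:gbinsertion}) preserve $\pipeDreams\langle S\rangle$, and that for the cofreeness side every $R$ with $\coproduct_{m,n}(R)=P\otimes Q$ for $P,Q\in\pipeDreams\langle S\rangle$ automatically lies in $\pipeDreams\langle S\rangle$ since $\omega_R=\omega_P\gdproduct\omega_Q$ --- both of which you handle.
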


In the following we let
\[
\indecomposablePipeDreams_n \langle S \rangle \eqdef \set{P \in \pipeDreams_n \langle S \rangle}{\text{$P$ is $\gammainsertion_0$-indecomposable}}.
\]


\subsection{The Loday--Ronco Hopf algebra on complete binary trees $\HP \langle 1 \rangle$}
\label{subsec:LodayRonco}

We say that a pipe dream~$P \in \pipeDreams_n$ is \defn{reversing} if it completely reverses the order of its relevant pipes, \ie if~${\omega_P = [n, n-1, \dots, 1]= 1 \gdproduct 1 \gdproduct \cdots \gdproduct 1}$.
In other words, the reversing pipe dreams are those of~$\HP \langle 1 \rangle$.
As observed by different authors~\cite{Woo, PilaudPocchiola, Pilaud-these, Stump}, the reversing pipe dreams are enumerated by the Catalan numbers and are in bijection with various Catalan objects. 
\fref{fig:bijection} illustrates explicit bijections between the reversing pipe dreams with $n$ relevant pipes, the complete binary trees with $n$ internal nodes, and the triangulations of a convex $(n+2)$-gon.

\begin{figure}[h]
	\capstart
	\centerline{\includegraphics[scale=1.3]{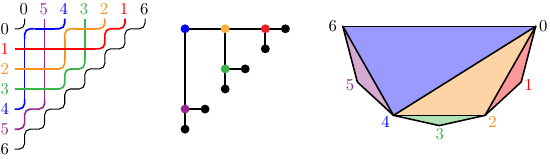}}
	\caption{The bijection between reversing pipe dreams (left), complete binary trees (middle) and triangulations (right).}
	\label{fig:bijection}
\end{figure}

More precisely, the map which sends an elbow~\elbow{} in row~$i$ and column~$j$ of the triangular shape to the diagonal~$[i,n+1-j]$ of the~$(n+2)$-gon provides the following correspondence:
\centerline{
\begin{tabular}{r@{$\quad\longleftrightarrow\quad$}l}
pipe dream~$P \in \HP_n \langle 1 \rangle$ & triangulation~$P\duality$ of the $(n+2)$-gon, \\
$i$th pipe of~$P$ & $i$th triangle of~$P\duality$ (with central vertex~$i$), \\
elbows of~$P$ & diagonals of~$P\duality$ (including boundary edges of the polygon), \\
crosses of~$P$ & common bisectors between triangles of~$P\duality$, \\
elbow flips in~$P$ & diagonal flips in~$P\duality$.
\end{tabular}
}

The complete binary tree dual to the triangulation~$P\duality$ can also be easily described from the pipe dream $P$.
Each elbow in $P$ is replaced by a node in the tree, a node is connected with the next node below it (if any) and with the next node to its right (if any). 
The induced map, which we denote by $\Psi$, provides the following correspondence:

\begin{center}
\begin{tabular}{r@{$\quad\longleftrightarrow\quad$}l}
pipe dream~$P \in \HP_n \langle 1 \rangle$ & complete binary tree $\Psi(P)$ with $n$ internal nodes, \\
elbow flips in~$P$ & tree rotations in~$\Psi(P)$.
\end{tabular}
\end{center}

In~\cite{LodayRonco}, J.~L.~Loday and M.~Ronco introduced a Hopf algebra structure on complete binary trees 
which has been widely studied in the litarature, see for instance~\cite{AguiarSottile-LodayRonco, HivertNovelliThibon-algebraBinarySearchTrees}. 
Interestingly, this Hopf algebra structure is equivalent to the Hopf subalgebra of reversing pipe dreams. 

\begin{proposition}
\label{prop:reversingLodayRonco}
The map $\Psi$ is a Hopf algebra isomorphism between the Hopf subalgebra~$\HP \langle 1 \rangle$ of reversing pipe dreams
and the Loday--Ronco Hopf algebra on complete binary trees.
\end{proposition}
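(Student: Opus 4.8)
The plan is to establish that $\Psi$ is simultaneously a bijection (which has essentially already been set up in the correspondence tables above), an algebra morphism, and a coalgebra morphism. Since the Loday--Ronco Hopf algebra is defined on complete binary trees via its own product and coproduct, the real content is to verify that $\Psi$ intertwines the pipe dream operations $\product$ and $\coproduct$ with the Loday--Ronco operations. Because $\HP\langle 1\rangle$ is already known to be free and cofree by Theorem~\ref{thm:freenessofOmega}, and the same is classically true of the Loday--Ronco algebra, I would exploit this rigidity rather than grind through both structure maps independently.

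First I would pin down the bijection carefully. The reversing pipe dreams in $\pipeDreams_n\langle 1\rangle$ are exactly those with $\omega_P = 1\gdproduct 1 \gdproduct \cdots \gdproduct 1$, so every gap $0,1,\dots,n$ is a global descent. This is the crucial combinatorial feature: it means the coproduct $\coproduct$ splits at \emph{every} position, exactly mirroring the way the Loday--Ronco coproduct deconcatenates a binary tree at each of its $n+1$ ``leaf-gaps''. Under the map $\Psi$, the horizontal and vertical packings $\lrot_k(P)$ and $\lmir_k(P)$ restrict a reversing pipe dream to a smaller reversing pipe dream, and I would check that these packings correspond precisely to the two subtrees obtained by cutting $\Psi(P)$ along a leaf. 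Thus $\coproduct$ transports to the Loday--Ronco coproduct term by term; this is the cleaner of the two verifications.

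For the product, I would compare $\product$ on $\HP\langle 1\rangle$ with the Loday--Ronco product. The key observation is that $\b{P}/\r{Q}$-shuffles for reversing pipe dreams are unconstrained: since every gap of both $\omega_\b{P}$ and $\omega_\r{Q}$ is a global descent, \emph{every} word $s$ on $\{\b{p},\r{q}\}$ is a valid $\b{P}/\r{Q}$-shuffle. Consequently $\b{P}\product\r{Q}$ is a genuine (unrestricted) shuffle sum of insertions, which is exactly the shape of the Loday--Ronco product as a sum over shuffles of tree-graftings. I would match each shuffle term $\b{P}\star_s\r{Q}$ with the corresponding grafted tree, using the insertion rules of Lemma~\ref{lem:rulesInsertion} to confirm associativity-compatible behaviour and the leading-term analysis of Lemma~\ref{lem:LTproductroduct} to identify $\gammainsertion_0$ with the ``leftmost'' grafting. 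Since $\Psi$ sends $\gammainsertion_0$-indecomposables to the Loday--Ronco generators, and both algebras are free on these, it then suffices to check the product on generators, which reduces the verification to a finite, structural comparison.

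\textbf{The main obstacle} I anticipate is not the algebra or coalgebra compatibility individually, but making the dictionary between the geometric insertion/untangling operations on pipe dreams and the grafting/cutting operations on binary trees fully precise, so that the shuffle indexing sets literally coincide. In particular I would need to verify that the relabeling of pipes forced by the packings matches the canonical relabeling of leaves in tree deconcatenation, and that no spurious or missing terms arise in the product sum. The cleanest route to sidestep a long case analysis is to invoke freeness and cofreeness on both sides: once $\Psi$ is shown to be a bijection carrying $\gammainsertion_0$-indecomposables to Loday--Ronco generators, and once compatibility is verified on generators for the product and on the whole space for the (term-by-term splitting) coproduct, the general compatibility follows by the universal property of free algebras, completing the proof that $\Psi$ is a Hopf algebra isomorphism.
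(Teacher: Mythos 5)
Your central verification plan --- showing that $\Psi$ commutes with the horizontal and vertical packings (so that the coproduct transports term by term), and that each shuffle term $\b{P}\star_s\r{Q}$ corresponds to a cut-and-graft operation on trees, using the key observation that for reversing pipe dreams \emph{every} word in $\{\b{p},\r{q}\}$ is a valid $\b{P}/\r{Q}$-shuffle --- is sound, and it is essentially the paper's own argument. The difference is one of packaging: the paper does not prove Proposition~\ref{prop:reversingLodayRonco} directly, but deduces it as the special case $\nu=(NE)^n$ of Theorem~\ref{thm:dominantPipeDreamsVSNuTrees}, which asserts that $\Psi$ is a Hopf isomorphism between the dominant pipe dream algebra $\HPdom$ and the $\nu$-tree algebra $\HT$. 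There the bijection and its compatibility with flips are imported from the literature (Proposition~\ref{prop:bijectionDominantPipeDreamsNuTrees}), and the Hopf compatibility consists precisely of your two checks: $\Psi$ commutes with packings, and the insertion/tangling operations on pipe dreams are parallel to the cut-and-glue operations on trees. Working directly in the $\langle 1\rangle$ case loses nothing for this proposition.

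There is, however, a genuine gap in the freeness/cofreeness ``shortcut'' with which you propose to finish. The universal property of a free algebra produces a \emph{unique algebra morphism} $\tilde\Psi$ extending the values of $\Psi$ on the $\gammainsertion_0$-indecomposables; it does not tell you that $\tilde\Psi$ coincides with your pre-defined linear bijection $\Psi$. To identify the two maps you must compare them on a spanning set, and the natural one is the basis $\bigset{\Psi_P}{P \in \pipeDreams_n}$ of Corollary~\ref{cor:indecomp}, consisting of products $P_1\product\cdots\product P_\ell$ of indecomposables --- whose images under $\Psi$ you cannot evaluate without already knowing that $\Psi$ intertwines the \emph{full} products (all shuffle terms, for arbitrary arguments), which is the statement being proved. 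Put differently, multiplicativity of a given linear map checked only on pairs of generators does not propagate: already for a free algebra on one generator $x$, a linear bijection fixing $x$ and $x^2$ may send $x^3$ to $x^3+x^2$, and being a coalgebra morphism does not repair this, since non-isomorphic Hopf algebras can share the same underlying coalgebra. So the term-by-term matching of shuffle terms with graftings for \emph{arbitrary} reversing pipe dreams is not a case analysis you can sidestep; it \emph{is} the proof (and it is what the paper does, via the parallelism of definitions). A minor further point: your claim that the images of the $\gammainsertion_0$-indecomposables (the right-tilting trees) are free generators of the Loday--Ronco algebra is true but convention-dependent, and would need a citation or argument if you kept any part of the freeness route.
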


This proposition is a particular case\footnote{The map $\Psi$ in Theorem~\ref{thm:dominantPipeDreamsVSNuTrees} is a bit more general as it is defined for dominant pipe dreams. Its restriction to reversing pipe dreams is what we use as the map $\Psi$ in Proposition~\ref{prop:reversingLodayRonco}.} of a stronger result (Theorem~\ref{thm:dominantPipeDreamsVSNuTrees}) which will be discussed in Section~\ref{sec:nuTamari}.

\begin{remark}
The dimension of~$\HP_n \langle 1 \rangle$ is the number of binary trees with~$n$ internal nodes, that is the $n$th Catalan number~$C_n \eqdef \frac{1}{n+1} \binom{2n}{n}$. From Theorem~\ref{thm:freenessofOmega} we obtain that the generators of degree~$n$ in~$\HP \langle 1 \rangle$ are the elements~$P \in \indecomposablePipeDreams_n \langle 1 \rangle$. According to Theorem~\ref{thm:bestindecomposable}, $P \in \indecomposablePipeDreams_n \langle 1 \rangle$ if and only if~$P$ has crosses~\cross{} in the positions~$(1,0), (2,0), \dots, (n-1,0)$. This allows us to give a bijection $\indecomposablePipeDreams_n \langle 1 \rangle \to \pipeDreams_{n-1} \langle 1 \rangle$. The bijection is simply $P \mapsto P'$ where we remove the leftmost column of~$P$ to get~$P'$. Hence the number of free generators of degree~$n$ for~$\HP \langle 1 \rangle$ is $C_{n-1}$ the $(n-1)$st Catalan number. Remark that the generating function of Catalan numbers is
\[
c(t) = \sum_{n\ge 0} C_n t^n = \frac{1-\sqrt{1-4t}}{2t} = \frac{2}{1+\sqrt{1-4t}} = \frac{1}{1-tc(t)}\,.
\]
The last expression shows that indeed we need~$C_{n-1}$ free generators for this free algebra.
Note that~$P \in \indecomposablePipeDreams_n \langle 1 \rangle$ if and only if its binary tree~$\Psi(P)$ is right-tilting, meaning that its left child is empty.
\end{remark}


\subsection{The Hopf algebra $\HP \langle 12 \rangle$}
\label{sec:LR12}

It is interesting to look at other subalgebras generated by single atomic permutations. As a warm up, we now look at~$\HP \langle 12 \rangle$. Using Theorem~\ref{thm:freenessofOmega}, the generetors of degree~$n$ of~$\HP \langle 12 \rangle$ are exactly the pipe dreams~$P \in \indecomposablePipeDreams_n \langle 12 \rangle$. Observe that~$\pipeDreams_n \langle 12 \rangle$ is empty unless~$n = 2k$ for~$k \ge 0$. From Theorem~\ref{thm:bestindecomposable}, $P \in \indecomposablePipeDreams_{2k} \langle 12 \rangle$ if and only if~$P$ has crosses~\cross{} in  the leftmost column at the coordinates~$(2,0),\ (4,0),\ \ldots,\ (2k-2,0)$.
We now claim that there is a bijection~$\indecomposablePipeDreams_{2k} \langle 12 \rangle \to \pipeDreams_{2k-1} \langle 1 \rangle$. This will be done in Section~\ref{sec:matomic} for a more general case. This will give us that the number of generators of degree~$n$ is~$C_{2k-1}$ if~$n = 2k$, and~$0$ otherwise. The Hilbert series~$h_{12}(t)$ of the subalgebra~$\HP \langle 12 \rangle$ is then
\[
h_{12}(t) = \frac{1}{1 - t \Big( \frac{c(t)-c(-t)}{2} \Big)}\,.
\]
This is exactly the generating series of the number of planar trees on~$2k$ edges with every subtree at the root having an even number of edges~\href{https://oeis.org/A066357}{\cite[A066357]{OEIS}}. One could be interested in constructing an explicit bijection between these trees and the set~$\HP \langle 12 \rangle$. We leave this problem open to the interested reader.
 

\subsection{The Hopf algebra $\HP \langle (m-1) \cdots 2 1 m \rangle$}
\label{sec:matomic}

We now look at a generalization of Sections~\ref{subsec:LodayRonco} and~\ref{sec:LR12}. Consider the atomic permutation~$(m-1)\cdots 2 1 m$ and the pipe dreams~${\pipeDreams \langle (m-1)\cdots 2 1 m \rangle}$. To count the number of generators we need the following proposition.

\begin{proposition}
There is a bijection~$\indecomposablePipeDreams_{km} \langle (m-1) \cdots 2 1 m \rangle \to \pipeDreams_{km-1} \langle 1 \rangle$.
Thus, the number of generators of degree $n$ in $\HP \langle (m-1) \cdots 2 1 m \rangle$ is the Catalan number~$C_{km-1}$ if~$n = km$, and~$0$ otherwise.
\end{proposition}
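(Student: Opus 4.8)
The plan is to construct an explicit bijection $\Phi \colon \indecomposablePipeDreams_{km} \langle (m-1) \cdots 2 1 m \rangle \to \pipeDreams_{km-1} \langle 1 \rangle$ by analyzing the forced structure of the leftmost columns of a generator, and then to read off the enumeration from Theorem~\ref{thm:freenessofOmega}. By Theorem~\ref{thm:bestindecomposable}, a pipe dream $P \in \pipeDreams_{km} \langle (m-1) \cdots 2 1 m \rangle$ is $\gammainsertion_0$-indecomposable exactly when it has crosses~\cross{} in positions $(m, 0),\ (2m, 0),\ \dots,\ ((k-1)m, 0)$ of the leftmost column, since here every atomic factor $\omega_i$ has the same size $n_i = m$. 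First I would pin down what the atomic factorization $\omega_P = \nu_1 \gdproduct \cdots \gdproduct \nu_k$ with each $\nu_i = (m-1)\cdots 2 1 m$ forces on the whole pipe dream, not just the first column: the permutation $(m-1)\cdots 2 1 m$ is very close to the reversing permutation, and its pipe dreams are essentially those of $\HP\langle 1\rangle$ of size $m$ with the last pipe threaded through trivially.

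The key steps, in order, are as follows. I would first establish that a reduced pipe dream with exiting permutation $(m-1)\cdots 2 1 m$ is determined by a reversing pipe dream of size $m-1$ together with a ``trivial'' extra pipe, giving a local model of each atomic block. Next, I would describe $\Phi$ concretely: the forced crosses in positions $(m,0), (2m,0), \dots, ((k-1)m,0)$ together with the indecomposability constraint rigidly lock the leftmost column and the boundaries between the $k$ atomic blocks, so that deleting one distinguished column (the analogue of the leftmost-column deletion $P \mapsto P'$ used in the Remark for $\HP\langle 1\rangle$) collapses the structure of size $km$ onto a reversing pipe dream of size $km-1$. I would verify that $\Phi$ lands in $\pipeDreams_{km-1}\langle 1\rangle$ by checking the exiting permutation becomes $[km-1, \dots, 1]$, using Lemma~\ref{lem:gbinsertion} to track how the atomic factors merge under the deletion. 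Finally, I would construct the inverse map, reinserting the deleted column and the forced crosses, and check the two maps are mutually inverse, which establishes the bijection. The enumeration is then immediate: $|\indecomposablePipeDreams_{km}\langle(m-1)\cdots 2 1 m\rangle| = |\pipeDreams_{km-1}\langle 1\rangle| = C_{km-1}$ by the Catalan count of reversing pipe dreams recalled in Section~\ref{subsec:LodayRonco}, and by Theorem~\ref{thm:freenessofOmega} these are exactly the free generators in degree $km$, with no generators in degrees not divisible by $m$ since $\pipeDreams_n\langle(m-1)\cdots 2 1 m\rangle$ is empty unless $m \mid n$.

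The main obstacle I expect is making the bijection $\Phi$ genuinely well-defined and invertible rather than merely a count-matching heuristic: the delicate point is showing that after deleting the distinguished column, the crossings among the remaining pipes in a size-$km$ indecomposable generator reorganize precisely into a valid reduced reversing pipe dream of size $km-1$, with no ``stray'' crossings created or lost, and that every reversing pipe dream arises uniquely this way. Concretely, one must control how the extra non-reversing pipe in each atomic block $(m-1)\cdots 2 1 m$ interacts across block boundaries once the forced column-$0$ crosses are present; the indecomposability hypothesis is exactly what rules out degenerate reassemblies, so the proof that $\Phi$ is a bijection will hinge on using Theorem~\ref{thm:bestindecomposable} in both directions. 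Since the statement explicitly signals that this case generalizes the $m=1$ bijection of Section~\ref{subsec:LodayRonco} and the $m=2$ claim of Section~\ref{sec:LR12}, I would model $\Phi$ directly on the column-deletion map $P \mapsto P'$ already shown to work there, and the bulk of the work is checking that the same recipe survives the presence of the extra fixed-point-like pipe $m$ in each atomic factor.
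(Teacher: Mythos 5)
Your enumerative frame is fine: Theorem~\ref{thm:bestindecomposable} (with all $n_i = m$) does identify the source as the pipe dreams of $(m-1)\cdots 21m \gdproduct \cdots \gdproduct (m-1)\cdots 21m$ having crosses at $(m,0), \dots, ((k-1)m,0)$, Theorem~\ref{thm:freenessofOmega} converts the bijection into the statement about generators, and degrees not divisible by $m$ are empty. But the bijection itself --- the entire content of the proposition --- is where your proposal fails. Your map $\Phi$ is column deletion, and it works for $m=1$ only because there the \emph{whole} leftmost column is forced (an elbow, then $n-1$ crosses, then an elbow), so deleting it loses no information and disconnects no pipe. For $m \ge 2$ this premise is false: only the $k-1$ cells $(im,0)$ are forced, and the remaining cells of column $0$ genuinely vary over $\indecomposablePipeDreams_{km} \langle (m-1) \cdots 2 1 m \rangle$. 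Concretely, for $m=k=2$ (permutation $3412 = 12 \gdproduct 12$) there are $\gammainsertion_0$-indecomposable pipe dreams with a cross at $(1,0)$ and others with an elbow at $(1,0)$: pipes $1$ and $3$ must cross somewhere, but they may touch at an elbow in $(1,0)$ and cross later in row $0$. So indecomposability does not ``rigidly lock'' the leftmost column, and deleting a column containing elbows does not even produce a pipe dream. A crossing count makes the failure quantitative: every source pipe dream has exactly $k-1$ more crosses than every target pipe dream (compare $\ell(\omega)$ for the two permutations), so a correct bijection must destroy exactly $k-1$ crossings; but an indecomposable pipe dream can have more than $k-1$ crosses in column $0$ (the $3412$ example above has two), and for such a pipe dream column deletion removes too many crossings to land in $\pipeDreams_{km-1}\langle 1 \rangle$. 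Your ``local model'' of the blocks is also misleading: pipes in different atomic factors all cross one another, so the pipe dream is a genuine tangling of the $k$ blocks, not a juxtaposition of $k$ copies of the one-block picture.

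What is missing is a mechanism that removes crossings one at a time, and this is exactly what the paper supplies. It interpolates through a chain $\Gamma_k \to \Gamma_{k-1} \to \cdots \to \Gamma_1$, where $\Gamma_i$ consists of the pipe dreams of the permutation $\omega_i$ (defined by $\omega_1 = (km-1)(km-2)\cdots 21(km)$ and $\omega_{i+1} = (im \; km)\,\omega_i$) having the forced crosses at $(m,0), \dots, ((i-1)m,0)$. Here $\Gamma_k$ is your source by Theorem~\ref{thm:bestindecomposable}, while $\Gamma_1 \simeq \pipeDreams_{km-1}\langle 1\rangle$ because in $\omega_1$ the last pipe crosses nobody and hence hugs the boundary as a diagonal of elbows; deleting that \emph{diagonal}, not a column, is the correct analogue of your $m=1$ move. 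Each step $\Gamma_{i+1} \to \Gamma_i$ uncrosses the pipes $im$ and $km$ by a bumping procedure in the spirit of the Monk's rule insertion of~\cite{BergeronBilley}: the crossing to be removed is traded for an elbow, the resulting defect is bumped through the pipe dream (cases (A), (B1), (B2) in the paper's proof), moving strictly leftwards so the process terminates, and each step is reversible, so the composite is a bijection. This chain of one-crossing-at-a-time bijections is what must replace your column deletion; nothing in your proposal supplies it, and the $m=1$ recipe you extrapolate from cannot be repaired to do the job.
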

 
\begin{proof}[Proof (sketch)] 
For the sake of space, we only sketch a proof of the bijection.
Fix~$m \ge 1$ and~$k \ge 1$.
Consider the sequence of permutations~$(\omega_i)_{i \in [k]}$ defined by
\[
\omega_1 \eqdef (km-1) (km-2) \cdots 3 2 1 (km)
\qquad\text{and}\qquad
\omega_{i+1} \eqdef (im \;\, km) \, \omega_i,
\]
and the sets of pipe dreams
\[
\Gamma_i \eqdef \set{P \in \pipeDreams(\omega_i)}{P \text{ has crosses at positions } ((i-1)m,0), \dots, (m,0)}.
\]
Observe that
\begin{itemize}
\item $\pipeDreams_{km-1} \langle 1 \rangle$  can be identified with~$\Gamma_1$ by adding or deleting a diagonal of elbows~$\elbow$,
\item $\indecomposablePipeDreams_{km} \langle (m-1) \cdots 2 1 m \rangle$ is precisely~$\Gamma_k$ by application of Theorem~\ref{thm:bestindecomposable} to the permutation
\[
(m-1) \cdots 2 1 m \gdproduct \dots \gdproduct (m-1) \cdots 2 1 m = (m \;\, 2m \;\, \dots \;\, km) \, \omega_1 = \omega_k.
\]
\end{itemize}
We claim that there is a simple bijection from~$\Gamma_{i+1}$ to~$\Gamma_i$.
Consider a pipe dream~$P \in \Gamma_{i+1}$.
Since~$\omega_i = (im \;\, km) \, \omega_{i+1}$, we just need to uncross the pipes~$im$ and~$km$ of~$P$ to transform it into a pipe dream of~$\Gamma_i$.
To achieve this, the naive approach is to just replace the unique crossing between the pipes~$im$ and~$km$ of~$P$ by an elbow.
However, we need to keep track of the position of the replacement so that the map we define is invertible, and this will also make sure that resulting paths do not cross twice.
For that, we consider that the pipe~$im$ has extra crossing~$(x,y)$ with a pipe~$\ell$ (initially, $\ell = km$), and will successively \emph{bump out} this extra cross at~$(x,y)$ until we reach a reduced pipe dream.
Note that this process is very similar to the insertion algorithm used to prove Monk's rule in~\cite{BergeronBilley}.
More precisely, there are three possible situations:

\medskip
\noindent{\bf (A)} Suppose there is no elbow $\elbow$ to the left of~$(x,y)$ in~$P$. In this case, we must have~$x = im$ so that changing the crossing at~$(x,y)$ in~$P$ for an elbow results in a reduced pipe dream:

\medskip
\centerline{
	\pipeDreamBiColor{black}{red}{black}{
	{n,n,n,n,n,n,n,y},
	{},
	{x = im,n,n,c/l/l,c/l/n,\;.\,.\;,c/l/n,c/r/r,\;.\,.\;},
	{n,n,n,n,n,n,n,\g{\ell}},
}
$\qquad\longleftrightarrow\qquad$
\pipeDreamBiColor{black}{red}{black}{
	{n,n,n,n,n,n,n,y},
	{},
	{x = im,n,n,c/l/l,c/l/n,\;.\,.\;,c/l/n,e/r/r,\;.\,.\;},
	{n,n,n,n,n,n,n,\g{\ell}},
}
}
\medskip

In the other two cases, we assume there are elbows in row~$x$ to the left of position~$(x,y)$. We locate the largest~$z<y$ such that there is an~$\elbow$ in position~$(x,z)$. This elbow involves the pipe~$im$ and a pipe~$j$ as pictured below 

\medskip
\centerline{
	\pipeDreamBiColor{black}{red}{black}{
	{n,n,n,z,n,n,n,y},
	{},
	{x,n,\g{\ell'},e/l/l,c/l/n,\;.\,.\;,c/l/n,c/r/r,\;.\,.\;},
	{n,n,n,\g{im},n,n,n,\g{\ell}},
}
}
\medskip

There are two cases to consider. 

\medskip
\noindent{\bf (B1)} If~$j < im$, then the pipe~$j$ must cross the pipe~$\ell$ at some position~$(x',y')$ for~$x' < x$ and~$y' \ge y$. This is guarantied since~$j$ and~$\ell$ are inverted in~$\omega_{i+1}$ and this must happen strictly above row~$x$ and weakly to the right of column~$y$.

\medskip
\centerline{
	\pipeDreamBiColor{black}{red}{black}{
	{n,n,n,z,n,n,n,y,n,y'},
	{},
	{x',n,n,n,n,n,e/n/l,\;.\,.\;,\;.\,.\;,c/l/l},
	{n,n,n,e/n/l,c/l/n,\;.\,.\;,e/l/n,n,n,:},
	{n,n,n,:,n,n,n,e/n/l,\;.\,.\;,e/l/n},
	{x,n,\g{\ell'},e/l/l,c/l/n,\;.\,.\;,c/l/n,c/r/r,\;.\,.\;},
	{n,n,n,\g{im},n,n,n,\g{\ell}},
}
$\qquad\longleftrightarrow\qquad$
\pipeDreamBiColor{black}{red}{black}{
	{n,n,n,z,n,n,n,y,n,y'},
	{},
	{x',n,n,n,n,n,e/n/l,\;.\,.\;,\;.\,.\;,c/r/r},
	{n,n,n,e/n/l,c/l/n,\;.\,.\;,e/l/n,n,n,:},
	{n,n,n,:,n,n,n,e/n/l,\;.\,.\;,e/l/n},
	{x,n,\g{\ell'},c/l/l,c/l/n,\;.\,.\;,c/l/n,e/r/r,\;.\,.\;},
	{n,n,n,\g{im},n,n,n,\g{\ell}},
}
}
\medskip

We can now continue the process with~$(x',y')$ with~$\ell'$.
Remark that a portion of the pipe~$im$ is replaced by a portion strictly left.

\medskip
\noindent{\bf (B2)} If~$j > im$, then the pipes~$j$ and~$im$ must have crossed at a position~$(x',y')$ where~$x' > x$ and~$y' < z < y$.

\medskip
\centerline{
	\pipeDreamBiColor{black}{red}{black}{
	{n,n,n,y',n,n,n,z,n,n,n,y},
	{},
	{x,n,n,n,n,e/n/l,\;.\,.\;,e/l/l,c/l/n,\;.\,.\;,c/l/n,c/r/r,\;.\,.\;},
	{n,n,n,e/n/l,\;.\,.\;,e/l/n,n,:,n,n,n,\g{\ell}},
	{n,n,n,:,n,e/n/l,\;.\,.\;,e/l/n,n,n,n},
	{x',n,\g{im\ },c/l/l,c/l/n,e/l/n,n,n,n,n,n},
	{n,n,n,\g{\ell'}},
}
$\qquad\longleftrightarrow\qquad$
\pipeDreamBiColor{black}{red}{black}{
	{n,n,n,y',n,n,n,z,n,n,n,y},
	{},
	{x,n,n,n,n,e/n/l,\;.\,.\;,c/l/l,c/l/n,\;.\,.\;,c/l/n,e/r/r,\;.\,.\;},
	{n,n,n,e/n/l,\;.\,.\;,e/l/n,n,:,n,n,n,\g{\ell}},
	{n,n,n,:,n,e/n/l,\;.\,.\;,e/l/n,n,n,n},
	{x',n,\g{im\ },c/r/r,c/l/n,e/l/n,n,n,n,n,n},
	{n,n,n,\g{\ell'}},
}
}
\medskip

If there are no elbows to the left of~$(x',y')$ we are in case~(A) and we stop. If not, we continue the process with~$(x',y')$ and~$\ell'$. 
Again we remark that a portion of the pipe~$im$ is replaced by a portion strictly left. The process in~(B1) and~(B2) will eventually stop as we replace the pipe~$im$ with a pipe strictly to the left.  The pipe~$im$ will eventually interact with a pipe that has a cross in row~$x = im$ and step~(A) will be used and it stops. 
We finally get a reduced pipe dream in~$\Gamma_{i+1}$.
The process is invertible, hence the map is a bijection from~$\Gamma_{i+1}$ to~$\Gamma_i$.
Composing all these bijections, we get a bijection from~$\Gamma_k = \indecomposablePipeDreams_{km} \langle (m-1) \cdots 2 1 m \rangle$ to~$\Gamma_i \simeq \pipeDreams_{km-1} \langle 1 \rangle$ as desired.
\end{proof}

\begin{remark}
We can also in this case find an explicit formula for the Hilbert series of the generators and for the Hopf algebra.
Recall that we denote by~$c(t)$ the generating series of the Catalan numbers~$C_n$. The generating series of generators here is
\[
h_{(m-1) \cdots 2 1 m}^0(t) = \frac{t}{m} \sum_{k=0}^m e^{\frac{2k\pi i}{m}} c(e^{\frac{2k\pi i}{m}}t)
\]
and the Hilbert series of the Hopf algebra is
\[
h_{(m-1)\cdots 2 1 m}(t) = \frac{1}{1-h_{(m-1)\cdots 2 1 m}^0(t)}\,.
\]
\end{remark}


\section{The Hopf subalgebra $\HP \langle 1, 12, 123, \dots \rangle$ and lattice walks on the quarter plane}
\label{sec:matomic}


\subsection{Conjectured bijection between pipe dreams and walks}
\label{subsec:conjecturedBijection}

Let us consider one last example of a Hopf subalgebra obtained from a set of atomic permutations.
We consider the infinite set consisting of all identity permutations of any size
\[
\Sid \eqdef \{1, 12, 123, \ldots\}.
\] 
Experimental computations show that the dimensions of the graded components of this Hopf subalgebra are given by the sequence 
\[1, 1, 3, 12, 57, 301, 1707, 10191, 63244, 404503, 2650293, .\dots\]
which coincides with the sequence determined by the number of walks in a special family of walks in the quarter plane~\href{https://oeis.org/A151498}{\cite[A151498]{OEIS}} considered by M.~Bousquet-M\'elou and M.~Mishna in~\cite{BousquetMelouMishna}, see \fref{fig:walks}. We propose the following conjecture.

\begin{conjecture}
\label{conj:walks1}
The dimension of $\HP_n \langle 1, 12, 123, \dots \rangle$ is equal the number of walks in the quarter plane (within~$\mathbb{N}^2\subset \mathbb{Z}^2$) starting at $(0,0)$, ending on the horizontal axis, and consisting of $2n$ steps taken from $\{(-1, 1), (1, -1), (0, 1)\}$.
\end{conjecture}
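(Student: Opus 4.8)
The plan is to use the freeness of $\HP\langle\Sid\rangle$ (Theorem~\ref{thm:freenessofOmega}) to reduce the statement to a count of free generators, and then to match that count with the walks through a common functional equation rather than a closed form. Set $H(t) \eqdef \sum_{n} \dim \HP_n\langle\Sid\rangle\, t^n$ and $G(t) \eqdef \sum_n |\indecomposablePipeDreams_n\langle\Sid\rangle|\, t^n$; freeness forces the identity $H(t) = 1/(1 - G(t))$. Let $W(t)$ be the generating function of the walks in the statement, graded so that a walk of length $2n$ contributes $t^n$ (note that ending on the horizontal axis already forces the length to be even, since each step changes the height by $\pm 1$ with zero net change). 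Because $H$ is constrained to factor as $1/(1-G)$, proving Conjecture~\ref{conj:walks1} is equivalent to proving $G(t) = 1 - 1/W(t)$, so the coefficient of $t^n$ in $1 - 1/W(t)$ must enumerate the $\gammainsertion_0$-indecomposable pipe dreams of $\pipeDreams_n\langle\Sid\rangle$; heuristically this should be a distinguished family of \emph{primitive} walks.

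First I would make the generator side explicit. By Theorem~\ref{thm:bestindecomposable}, a pipe dream $P \in \pipeDreams_n\langle\Sid\rangle$ with exit permutation $\omega_P = \mathrm{id}_{a_1}\gdproduct\cdots\gdproduct\mathrm{id}_{a_\ell}$ is $\gammainsertion_0$-indecomposable exactly when it carries crosses at the prescribed positions of its leftmost column, which reduces the enumeration of $\indecomposablePipeDreams_n\langle\Sid\rangle$ to a concrete sum over compositions $(a_1,\dots,a_\ell)$ of $n$, refined by the block structure of $\omega_P$. The purpose of this step is to isolate a \emph{catalytic} statistic on pipe dreams that will play the role of the terminal abscissa of a walk, so that a refined two-variable series $H(t;x)$ can be defined with $H(t;1) = H(t)$.

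For the walk side I would set up the standard kernel method for the step set $\{(-1,1),(1,-1),(0,1)\}$ as in~\cite{BousquetMelouMishna}: letting $Q(x,y;t)$ record walks in $\N^2$ from the origin by (half-)length and endpoint, the kernel $1 - t\,(x^{-1}y + xy^{-1} + y)$ produces a functional equation in the two boundary unknowns $Q(x,0;t)$ and $Q(0,y;t)$, from which $W(t) = Q(1,0;t)$. The core of the argument would then be to show that the refined pipe dream series $H(t;x)$ obeys the same functional equation with the same initial condition, under the dictionary: terminal abscissa of a walk $\leftrightarrow$ catalytic statistic above; first-return (arch) decomposition of walks $\leftrightarrow$ free decomposition $H = 1/(1-G)$; and the wall $x \ge 0$ $\leftrightarrow$ the forced crosses in the leftmost column of $\gammainsertion_0$-indecomposables. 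Uniqueness of the formal power-series solution of the kernel equation would finish the proof, with no need to solve it in closed form.

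The main obstacle is exactly the phenomenon that makes $W$ conjecturally non-D-finite. Under a naive first-return decomposition the excursions of a walk are \emph{not} independent: an excursion leaving the axis at abscissa $x_{i-1}$ sees the wall $x \ge 0$ differently depending on $x_{i-1}$, so $W$ does not factor as $1/(1 - (\text{primitive walks}))$ by any elementary argument, whereas the algebra side factors cleanly by freeness. Reconciling the two pictures forces one to pin down the correct catalytic statistic on pipe dreams and to prove that the free grading of $\HP\langle\Sid\rangle$ secretly records the wall-constrained arch structure of the walks. I expect this identification, rather than either functional equation on its own, to be the crux; its difficulty is consistent with the absence of any known closed form for $W$.
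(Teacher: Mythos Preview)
This statement is a \emph{conjecture} in the paper, not a theorem: the paper offers no proof, only reformulations and partial results. Your proposal is likewise not a proof but an outline of a strategy, and you are candid about this, so there is no error to flag---only a gap you have already named yourself.

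It is still worth comparing your route to the one the paper pursues. The paper does not attack the conjecture analytically at all. Instead it reformulates both sides bijectively: Lemma~\ref{lem:bouncepairs} identifies the pipe dreams in $\pipeDreams_n\langle\Sid\rangle$ with nested pairs $(\pi_1,\pi_2)$ of Dyck paths where $\pi_1$ is a \emph{bounce} path, while Lemma~\ref{lem:coloredDyckpaths} identifies the walks with nested pairs $(\pi_1',\pi_2')$ where $\pi_2'$ is a \emph{steep} path. Conjecture~\ref{conj:walks1} (and its refinement Conjecture~\ref{conj:walks2}) then becomes the purely combinatorial Steep--Bounce Conjecture~\ref{conj:SteepBounce}, for which the paper gives evidence via the zeta map (Proposition~\ref{prop:bijectionBounceSteep}) and proves the cases $k=1,2,n-1,n$ by direct counting. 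Freeness and $\gammainsertion_0$-indecomposables play no role in the paper's attack.

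Your kernel-method plan is a genuinely different line, and your diagnosis of the obstacle is accurate: the step set $\{(-1,1),(1,-1),(0,1)\}$ is one of the non-D-finite models of~\cite{BousquetMelouMishna,MishnaRechnitzer}, so the kernel equation for $Q(x,y;t)$ does not close up under the group of the walk, and there is no reason to expect $W(t)$ to satisfy an autonomous relation of the shape $W=1/(1-G)$ with $G$ independently identifiable. Matching a catalytic statistic on pipe dreams to the terminal abscissa of the walk is an interesting idea, but as you say, that identification is the entire content of the conjecture and nothing in your outline supplies it. If you want to push further in this direction, the paper's bijective dictionary (bounce pairs on the algebra side, steep pairs on the walk side) is probably a more tractable place to look for the missing catalytic parameter than the raw kernel equation.
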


\begin{remark}
\label{rem:latticewalksEnumeration}
In~\cite{BousquetMelouMishna}, Bousquet-M\'elou and Mishna considered 79 different models of lattice walks with small steps in the quarter plane. 
The special family we consider is one among them, and was proven to be non $D$-finite by M.~Mishna and A.~Rechnitzer in~\cite{MishnaRechnitzer}, see also~\cite{MelczerMishna}. 
This means that the generating function for these walks does not satisfy any linear differential equation with polynomial coefficients. 
In~\cite{MishnaRechnitzer, MelczerMishna}, the authors give a complicated expression for  the generating function using a variant of the kernel method, 
and show that the number of such walks behaves asymptotically as $\alpha \frac{3^n}{\sqrt{n}}(1+o(1))$ 
for a constant $0\leq \alpha \leq \sqrt{\frac{3}{\pi}}$~\cite[Prop.~16]{MishnaRechnitzer}.
\end{remark}

The dimension of $\HP_n \langle 1, 12, 123, \dots \rangle$ is counted by the number of pipe dreams $P$ such that $\omega_P\in \fS_n$ is a permutation whose factorization into atomics consists of identity permutations of arbitrary size. The refined counting of such pipe dreams considering the number of atomic parts gives rise to the following stronger conjecture which implies Conjecture~\ref{conj:walks1}.

\begin{conjecture}
\label{conj:walks2}
The following two families have the same cardinality:
\begin{enumerate}
\item Pipe dreams $P$ such that $\omega_P\in \fS_n$ is a permutation whose factorization into atomics consists of $k$ identity permutations.
\item Walks in the quarter plane starting at $(0,0)$ and ending on the horizontal axis, consisting of $2n$ steps taken from $\{(-1, 1), (1, -1), (0, 1)\}$ from which $k$ of them are $(0,1)$. 
\end{enumerate}
\end{conjecture}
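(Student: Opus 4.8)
The plan is to prove Conjecture~\ref{conj:walks2} by constructing an explicit bijection between the two families, since Conjecture~\ref{conj:walks1} follows from it by summing over~$k$. The target family consists of walks of~$2n$ steps from~$\{(-1,1),(1,-1),(0,1)\}$ in the quarter plane~$\N^2$, starting at~$(0,0)$ and ending on the horizontal axis, with exactly~$k$ steps of type~$(0,1)$. I would set up the bijection by exploiting the structure already established: a pipe dream~$P$ counted in family~(1) has~$\omega_P = \mathrm{Id}_{a_1} \gdproduct \mathrm{Id}_{a_2} \gdproduct \cdots \gdproduct \mathrm{Id}_{a_k}$ with~$a_1+\cdots+a_k = n$, and by the freeness results (Theorem~\ref{thm:freenessofOmega}) together with the~$\gammainsertion_0$-decomposition of Lemma~\ref{lem:decomposition}, such a~$P$ decomposes canonically into~$\gammainsertion_0$-indecomposable pieces. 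The first step is therefore to record how the identity-atom constraint forces a rigid shape on~$P$ and to read off from it a sequence of~$2n$ combinatorial moves.

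\textbf{Key steps.}
First I would characterize~$\indecomposablePipeDreams\langle \Sid\rangle$ explicitly: by Theorem~\ref{thm:bestindecomposable} the~$\gammainsertion_0$-indecomposable pipe dreams whose exiting permutation is a single identity~$\mathrm{Id}_a$ are governed by the forced crosses in the leftmost column, which (as in the Catalan cases of Sections~\ref{subsec:LodayRonco} and~\ref{sec:LR12}) should biject with smaller reversing-type data. Second, I would translate the reading of a pipe dream into a lattice path: scanning the pipe dream column by column (using the column reading word~$\columnReading(P)$ from Section~\ref{subsec:order}), each elbow~\elbow{} and cross~\cross{} is converted into one of the three step types, with the three step directions encoding respectively ``cross between pipes of different atomic blocks,'' ``cross within a block,'' and ``the boundary marker~$(0,1)$ separating two consecutive identity atoms.'' The crucial design requirement is that the~$k$ atomic factors correspond exactly to the~$k$ steps~$(0,1)$, which is what makes the refined statistic match. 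Third, I would verify that the staying-in-$\N^2$ condition corresponds precisely to the reducedness condition on pipe dreams (two pipes cross at most once), since both are ``ballot-type'' constraints: a pipe dream is reduced iff at every prefix of the reading word the number of crosses does not exceed the number of available elbow slots, which is exactly the condition that the walk never leaves the quarter plane and returns to the axis.

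\textbf{Main obstacle.}
The hardest part will be establishing that the map is well defined and bijective with the correct endpoint condition, rather than merely producing a walk with the right step multiset. Specifically, proving that reduced pipe dreams in~$\pipeDreams_n\langle\Sid\rangle$ correspond bijectively to walks that \emph{return to the horizontal axis} (as opposed to ending anywhere) requires matching the global balancing between~$(-1,1)$ and~$(1,-1)$ steps to a conservation law on the crossings of~$P$, and I expect this to hinge on a careful analysis of how the horizontal and vertical packings (Lemma~\ref{lem:horizontalPacking}) interact with the identity-atom factorization. A secondary difficulty is surjectivity: showing that \emph{every} such walk arises, which would require an inverse construction building a reduced pipe dream from a walk while respecting the at-most-one-crossing constraint; I suspect this is where the non-$D$-finiteness of the family (Remark~\ref{rem:latticewalksEnumeration}) manifests as genuine combinatorial rigidity, and it is the reason the statement is posed as a conjecture rather than a theorem. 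Absent a clean bijection, a fallback would be to compare generating functions directly, matching the recursive decomposition of~$\HP\langle\Sid\rangle$ coming from freeness against the functional equation for the walks obtained by the kernel method in~\cite{MishnaRechnitzer}.
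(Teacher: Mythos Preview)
The statement you are attempting to prove is labeled a \emph{conjecture} in the paper, and indeed the paper does \emph{not} prove it in general. What the paper does is: (a) reformulate the problem via two bijections (Lemma~\ref{lem:bouncepairs} sends the pipe dreams in family~(1) to nested pairs~$(\pi_1,\pi_2)$ of Dyck paths with~$\pi_1$ a bounce path of~$k$ parts; Lemma~\ref{lem:coloredDyckpaths} sends the walks in family~(2) to nested pairs~$(\pi'_1,\pi'_2)$ with~$\pi'_2$ a steep path), thereby recasting Conjecture~\ref{conj:walks2} as the Steep--Bounce Conjecture~\ref{conj:SteepBounce}; and (b) verify the claim only for the extreme values~$k=1,2,n-1,n$, by direct enumeration of both sides in each case (computing closed forms such as~$2^n-2$ and~$(n+1)C_n-C_{n+1}$). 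There is therefore no ``paper's own proof'' of the full statement to compare against.

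Your proposed route is a direct bijection via the column reading word, and it has a concrete gap beyond the difficulties you already flag. A pipe dream in~$\pipeDreams_n$ has~$\binom{n+2}{2}$ cells, not~$2n$; so reading off one step per cell of~$\columnReading(P)$ cannot produce a walk of length~$2n$, and you give no mechanism for compressing~$\binom{n+2}{2}$ symbols down to~$2n$ steps while simultaneously recording which of the three step types to use. Your assertion that ``a pipe dream is reduced iff at every prefix of the reading word the number of crosses does not exceed the number of available elbow slots'' is also not correct as stated: reducedness is the condition that no two specific pipes cross twice, which is not a prefix-ballot condition on the global cross/elbow word (for instance, many non-reduced pipe dreams satisfy any such count-based prefix inequality). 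Finally, your claim that the~$k$ steps~$(0,1)$ will arise as ``boundary markers separating consecutive identity atoms'' yields only~$k-1$ markers, not~$k$, so even the refined statistic does not line up under your encoding.

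In short, the paper's own approach passes through nested Dyck pairs and leaves the full bijection open; your outline does not close that gap, and the specific encoding you sketch cannot work for the cardinality reason above.
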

 
\begin{example}
For $n=3$, there are four permutations whose factorization into atomics consists of identity permutations:
\[
321=1\bullet 1\bullet 1, \quad 312=1\bullet 12, \quad 231=12\bullet 1, \quad 123 = 123. 
\]
Their corresponding $12 = 5+3+3+1$ pipe dreams are illustrated in the four columns of the left hand side of \fref{fig:pipe_walks}, respectively.
 
The number of desired lattice walks with $2n = 6$ steps is also given by $12 = 5+3+3+1$, where the refinement is determined by the number of north steps~$(0,1)$. 
These are illustrated on the right hand side of \fref{fig:pipe_walks}, using an alternative model which gives some insight of how the bijection could look like in general. 
The objects in consideration are colored Dyck paths of size~$n = 3$.
\end{example}

\begin{figure}[ht]
	\capstart
	\centerline{\input{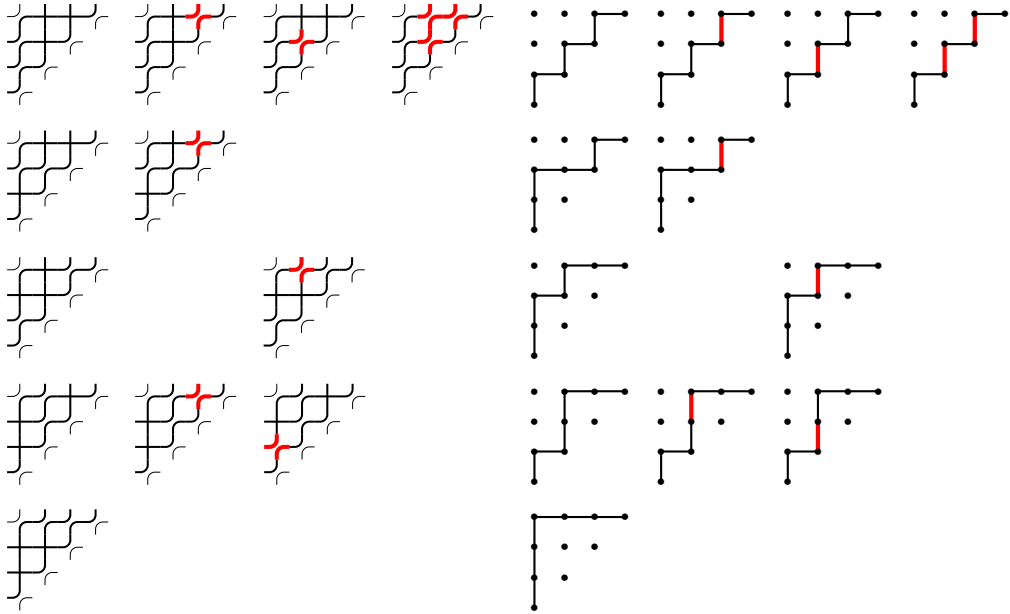} \quad $
\begin{array}{cccc}
    \begin{tikzpicture}[scale=.4, baseline=.5cm]
    	\draw [thick] (0,0)--(0,1); 
    	\draw [thick] (0,1)--(1,1); 
    	\draw [thick] (1,1)--(1,2); 
    	\draw [thick] (1,2)--(2,2); 
    	\draw [thick] (2,2)--(2,3); 
    	\draw [thick] (2,3)--(3,3); 
 		\foreach \x in {0, ..., 3}
    		\foreach \y in {\x, ..., 3}
    			\node (\x,\y) at (\x,\y) [inner sep=0pt] {$\scriptscriptstyle \bullet$};
    \end{tikzpicture} 
    &
    \begin{tikzpicture}[scale=.4, baseline=.5cm]
    	\draw [thick] (0,0)--(0,1); 
    	\draw [thick] (0,1)--(1,1); 
    	\draw [thick] (1,1)--(1,2); 
    	\draw [thick] (1,2)--(2,2); 
    	\draw [ultra thick, color=red] (2,2)--(2,3); 
    	\draw [thick] (2,3)--(3,3); 
 		\foreach \x in {0, ..., 3}
    		\foreach \y in {\x, ..., 3}
    			\node (\x,\y) at (\x,\y) [inner sep=0pt] {$\scriptscriptstyle \bullet$};
    \end{tikzpicture} 
    &
    \begin{tikzpicture}[scale=.4, baseline=.5cm]
    	\draw [thick] (0,0)--(0,1); 
    	\draw [thick] (0,1)--(1,1); 
    	\draw [ultra thick, color=red] (1,1)--(1,2); 
    	\draw [thick] (1,2)--(2,2); 
    	\draw [thick] (2,2)--(2,3); 
    	\draw [thick] (2,3)--(3,3); 
 		\foreach \x in {0, ..., 3}
    		\foreach \y in {\x, ..., 3}
    			\node (\x,\y) at (\x,\y) [inner sep=0pt] {$\scriptscriptstyle \bullet$};
    \end{tikzpicture} 
    &
    \begin{tikzpicture}[scale=.4, baseline=.5cm]
    	\draw [thick] (0,0)--(0,1); 
    	\draw [thick] (0,1)--(1,1); 
    	\draw [ultra thick, color=red] (1,1)--(1,2); 
    	\draw [thick] (1,2)--(2,2); 
    	\draw [ultra thick, color=red] (2,2)--(2,3); 
    	\draw [thick] (2,3)--(3,3); 
 		\foreach \x in {0, ..., 3}
    		\foreach \y in {\x, ..., 3}
    			\node (\x,\y) at (\x,\y) [inner sep=0pt] {$\scriptscriptstyle \bullet$};
    \end{tikzpicture} 
    \\[.8cm]
    \begin{tikzpicture}[scale=.4, baseline=.5cm]
    	\draw [thick] (0,0)--(0,1); 
    	\draw [thick] (0,1)--(0,2); 
    	\draw [thick] (0,2)--(1,2); 
    	\draw [thick] (1,2)--(2,2); 
    	\draw [thick] (2,2)--(2,3); 
    	\draw [thick] (2,3)--(3,3); 
 		\foreach \x in {0, ..., 3}
    		\foreach \y in {\x, ..., 3}
    			\node (\x,\y) at (\x,\y) [inner sep=0pt] {$\scriptscriptstyle \bullet$};
    \end{tikzpicture} 
    &
    \begin{tikzpicture}[scale=.4, baseline=.5cm]
    	\draw [thick] (0,0)--(0,1); 
    	\draw [thick] (0,1)--(0,2); 
    	\draw [thick] (0,2)--(1,2); 
    	\draw [thick] (1,2)--(2,2); 
    	\draw [ultra thick, color=red] (2,2)--(2,3); 
    	\draw [thick] (2,3)--(3,3); 
 		\foreach \x in {0, ..., 3}
    		\foreach \y in {\x, ..., 3}
    			\node (\x,\y) at (\x,\y) [inner sep=0pt] {$\scriptscriptstyle \bullet$};
    \end{tikzpicture} 
    &
    &
    \\[.8cm]
    \begin{tikzpicture}[scale=.4, baseline=.5cm]
    	\draw [thick] (0,0)--(0,1); 
    	\draw [thick] (0,1)--(0,2); 
    	\draw [thick] (0,2)--(1,2); 
    	\draw [thick] (1,2)--(1,3); 
    	\draw [thick] (1,3)--(2,3); 
    	\draw [thick] (2,3)--(3,3); 
 		\foreach \x in {0, ..., 3}
    		\foreach \y in {\x, ..., 3}
    			\node (\x,\y) at (\x,\y) [inner sep=0pt] {$\scriptscriptstyle \bullet$};
    \end{tikzpicture} 
    &
    &
    \begin{tikzpicture}[scale=.4, baseline=.5cm]
    	\draw [thick] (0,0)--(0,1); 
    	\draw [thick] (0,1)--(0,2); 
    	\draw [thick] (0,2)--(1,2); 
    	\draw [ultra thick, color=red] (1,2)--(1,3); 
    	\draw [thick] (1,3)--(2,3); 
    	\draw [thick] (2,3)--(3,3); 
 		\foreach \x in {0, ..., 3}
    		\foreach \y in {\x, ..., 3}
    			\node (\x,\y) at (\x,\y) [inner sep=0pt] {$\scriptscriptstyle \bullet$};
    \end{tikzpicture} 
    &
    \\[.8cm]
    \begin{tikzpicture}[scale=.4, baseline=.5cm]
    	\draw [thick] (0,0)--(0,1); 
    	\draw [thick] (0,1)--(1,1); 
    	\draw [thick] (1,1)--(1,2); 
    	\draw [thick] (1,2)--(1,3); 
    	\draw [thick] (1,3)--(2,3); 
    	\draw [thick] (2,3)--(3,3); 
 		\foreach \x in {0, ..., 3}
    		\foreach \y in {\x, ..., 3}
    			\node (\x,\y) at (\x,\y) [inner sep=0pt] {$\scriptscriptstyle \bullet$};
    \end{tikzpicture} 
    &
    \begin{tikzpicture}[scale=.4, baseline=.5cm]
    	\draw [thick] (0,0)--(0,1); 
    	\draw [thick] (0,1)--(1,1); 
    	\draw [thick] (1,1)--(1,2); 
    	\draw [ultra thick, color=red] (1,2)--(1,3); 
    	\draw [thick] (1,3)--(2,3); 
    	\draw [thick] (2,3)--(3,3); 
 		\foreach \x in {0, ..., 3}
    		\foreach \y in {\x, ..., 3}
    			\node (\x,\y) at (\x,\y) [inner sep=0pt] {$\scriptscriptstyle \bullet$};
    \end{tikzpicture} 
    &
    \begin{tikzpicture}[scale=.4, baseline=.5cm]
    	\draw [thick] (0,0)--(0,1); 
    	\draw [thick] (0,1)--(1,1); 
    	\draw [ultra thick, color=red] (1,1)--(1,2); 
    	\draw [thick] (1,2)--(1,3); 
    	\draw [thick] (1,3)--(2,3); 
    	\draw [thick] (2,3)--(3,3); 
 		\foreach \x in {0, ..., 3}
    		\foreach \y in {\x, ..., 3}
    			\node (\x,\y) at (\x,\y) [inner sep=0pt] {$\scriptscriptstyle \bullet$};
    \end{tikzpicture} 
    &
    \\[.8cm]
    \begin{tikzpicture}[scale=.4, baseline=.5cm]
    	\draw [thick] (0,0)--(0,1); 
    	\draw [thick] (0,1)--(0,2); 
    	\draw [thick] (0,2)--(0,3); 
    	\draw [thick] (0,3)--(1,3); 
    	\draw [thick] (1,3)--(2,3); 
    	\draw [thick] (2,3)--(3,3); 
 		\foreach \x in {0, ..., 3}
    		\foreach \y in {\x, ..., 3}
    			\node (\x,\y) at (\x,\y) [inner sep=0pt] {$\scriptscriptstyle \bullet$};
    \end{tikzpicture} 
\end{array}
$}
	\caption{For $n=3$, the 12 pipe dreams whose permutations are factored into identity atomic permutations, and the corresponding 12 colored Dyck paths.}
	\label{fig:pipe_walks}
\end{figure}

In the following two sections we describe alternative combinatorial models to the considered families of pipe dreams and lattice walks, which will be used to prove Conjecture~\ref{conj:walks2} for the special values $k=1,2,n-1$, and $n$.


\subsection{Pipe dreams and bounce Dyck paths}
\label{subsec:bouncepairs}

The family of pipe dreams related to lattice walks on the quarter plane is also related to a special family of pairs of Dyck paths.
We say that a Dyck path $\pi$ is a \defn{bounce path} if it is of the form~$N^{i_1}E^{i_1} N^{i_2}E^{i_2} \dots N^{i_k}E^{i_k}$ for arbitrary positive integers $i_1, \dots, i_k$.
The number of parts of a bounce path is the number~$k$ of smaller paths~$N^{i_j}E^{i_j}$ that are being concatenated. 
A pair of Dyck paths $(\pi_1,\pi_2)$ is said to be \defn{nested} if $\pi_1$ is weakly below $\pi_2$.

\begin{lemma}
\label{lem:bouncepairs}
The following two families are in bijective correspondence:
\begin{enumerate}
\item Pipe dreams $P$ such that $\omega_P\in \fS_n$ is a permutation whose factorization into atomics consists of $k$ identity permutations.
\item Nested pairs of Dyck paths $(\pi_1,\pi_2)$ of size $n$ such that $\pi_1$ is a bounce path with $k$ parts.
\end{enumerate}
\end{lemma}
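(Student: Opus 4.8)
The plan is to parameterize both families by a common piece of data — a composition of $n$ — and then to establish the correspondence one composition at a time.

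First I would make the translation on family (1) explicit. A permutation $\omega\in\fS_n$ whose atomic factorization $\omega=\nu_1\gdproduct\cdots\gdproduct\nu_k$ uses only identities from $\Sid$ is completely determined by the sizes $(a_1,\dots,a_k)$ of its blocks: it is the \emph{layered} permutation obtained by writing $k$ increasing runs of lengths $a_1,\dots,a_k$ so that earlier runs carry larger values (e.g.\ $(1,2)\mapsto 312$ and $(2,1)\mapsto 231$). Thus $\omega\mapsto(a_1,\dots,a_k)$ is a bijection between the permutations occurring in family~(1) with $k$ atomic parts and the compositions of $n$ into $k$ positive parts. On the side of family~(2), a bounce path with $k$ parts is by definition $N^{a_1}E^{a_1}\cdots N^{a_k}E^{a_k}$ and is likewise determined by the composition. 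Hence, writing $\omega_c$ for the layered permutation and $\pi_1(c)=N^{a_1}E^{a_1}\cdots N^{a_k}E^{a_k}$ for the associated bounce path, the lemma reduces to producing, for each fixed composition $c$ of $n$, a bijection
\[
\pipeDreams(\omega_c)\ \longrightarrow\ \set{\pi_2}{\pi_2 \text{ is a Dyck path of size } n \text{ weakly below which lies } \pi_1(c)}.
\]

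The structural fact driving this is that in any reduced pipe dream of $\omega_c$ two pipes cross if and only if they belong to different blocks: the inversions of $\omega_c$ are exactly the pairs of positions in distinct blocks, and in a reduced pipe dream pipes cross precisely at inversions. I would use this to read a Dyck path $\pi_2$ off a pipe dream $P\in\pipeDreams(\omega_c)$, in the spirit of the elbow reading that identifies reversing pipe dreams with Dyck paths in Section~\ref{subsec:LodayRonco} (the case $c=(1,\dots,1)$, where $\pi_1(c)=(NE)^n$ is the lowest path and every Dyck path occurs). The forced cross-block crossings then become a lower bound: the path read from $P$ never drops below $\pi_1(c)$, and as $P$ ranges over $\pipeDreams(\omega_c)$ the path ranges over exactly the Dyck paths above $\pi_1(c)$. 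To organize bijectivity I would compare the two sides as posets: by Theorem~\ref{thm:BB}, $\pipeDreams(\omega_c)$ is connected under chute moves with maximum $\Ptop_{\omega_c}$, which I expect to correspond to the top path $N^nE^n$ and the chute-minimal pipe dreams to $\pi_1(c)$, so that a chute move corresponds to an elementary move on Dyck paths and covering relations match.

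The main obstacle is precisely this last step: checking that the lattice path extracted from a reduced pipe dream of $\omega_c$ is a genuine Dyck path, that it never falls below $\pi_1(c)$, and that the assignment is a bijection rather than merely a surjection onto an order filter. The cleanest route I can see is to build the inverse map directly — given $\pi_2\ge\pi_1(c)$, reconstruct the unique pipe dream of $\omega_c$ whose crossing pattern has $\pi_2$ as boundary, placing the forced cross-block crossings block by block — and then verify the two maps are mutually inverse. An alternative that sidesteps the geometry is induction peeling off the first block: untangling $P$ at the first global descent of $\omega_c$ via $\coproduct_{a_1,n-a_1}$ (Proposition~\ref{prop:codescentproduct}) splits off a trivial factor and a pipe dream of the shorter layered permutation, but one must still record the crossings of the first block against the rest; matching this extra data with the way a Dyck path above $\pi_1(c)$ behaves before its first return to the diagonal after $N^{a_1}E^{a_1}$ would yield the same recursion on both sides and can be upgraded to the explicit bijection.
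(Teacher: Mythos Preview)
Your plan is sound and would eventually yield a correct proof, but the paper takes a much shorter route that you did not spot: the layered permutations~$\omega_c$ are \emph{dominant}. The paper simply records that Lemma~\ref{lem:bouncepairs} is a special case of Corollary~\ref{cor:dominantPairsDyckPaths}, which gives a bijection~$P \mapsto \big(\pi_{\omega_P}, \phi_{\omega_P}(P)\big)$ between all dominant pipe dreams of size~$n$ and all nested pairs of Dyck paths. By Proposition~\ref{prop:gdproductDominant}, the Dyck path~$\pi_\omega$ of a dominant permutation~$\omega = \nu_1 \gdproduct \cdots \gdproduct \nu_k$ is the concatenation~$\pi_{\nu_1} \gdproduct \cdots \gdproduct \pi_{\nu_k}$, and for~$\nu_i = 1_{a_i}$ one has~$\pi_{\nu_i} = N^{a_i}E^{a_i}$; hence~$\pi_{\omega_c}$ is precisely the bounce path~$\pi_1(c)$. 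Restricting the general bijection to those dominant~$\omega$ whose Dyck path is a bounce path immediately gives the lemma.

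What each approach buys: your direct construction is self-contained and makes the combinatorics of the bijection visible for this special family, at the cost of redoing by hand a case of a known result (the map~$\phi_\omega$ of Proposition~\ref{prop:bijectionNuTreesDyckIntervals}, cited from~\cite{SerranoStump, CeballosPadrolSarmiento-nuTamariSubwordComplexes}, is exactly the ``elbow reading'' you are groping toward). The paper's route is a one-line specialization once the dominance observation is made, and it situates the lemma inside the broader~$\nu$-Tamari framework used later. The gap in your proposal is not technical but structural: you never identify~$\omega_c$ as dominant, so you miss that the bijection you are trying to build is already a named map in the paper and that your ``main obstacle'' (bijectivity, Dyck condition, lower bound~$\pi_1(c)$) is already handled by Proposition~\ref{prop:bijectionNuTreesDyckIntervals}.
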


This lemma is a special case of a stronger result (Corollary~\ref{cor:dominantPairsDyckPaths}) relating a bigger family of pipe dreams with nested pairs of Dyck paths which is presented in Section~\ref{sec:nuTamari}.
This lemma enables to translate Conjecture~\ref{conj:walks2} into the following refined enumeration of lattice paths.

\begin{corollary}
\label{cor:walksEnumerationDeterminants}
If Conjecture~\ref{conj:walks2} holds, the number of walks in the quarter plane starting at $(0,0)$ and ending on the horizontal axis, consisting of $2n$ steps taken from $\{(-1, 1), (1, -1), (0, 1)\}$, is a refined sum of $2^{n-1}$ determinants. 
\end{corollary}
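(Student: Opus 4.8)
The plan is to assume Conjecture~\ref{conj:walks2}, use it together with Lemma~\ref{lem:bouncepairs} to replace the walks by nested pairs of Dyck paths with a bounce lower path, and then evaluate the number of admissible upper paths by the Lindström--Gessel--Viennot lemma.

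First I would translate the enumeration. By Conjecture~\ref{conj:walks2}, the number of walks in the quarter plane with $2n$ steps from $\{(-1,1),(1,-1),(0,1)\}$ starting at $(0,0)$, ending on the horizontal axis, and using exactly $k$ north steps $(0,1)$, equals the number of pipe dreams $P$ with $\omega_P\in\fS_n$ whose atomic factorization consists of $k$ identity permutations. Applying Lemma~\ref{lem:bouncepairs}, this count equals the number of nested pairs $(\pi_1,\pi_2)$ of Dyck paths of size $n$ in which $\pi_1$ is a bounce path with $k$ parts lying weakly below $\pi_2$. Summing over $k$ identifies the total number of walks with the number of nested pairs $(\pi_1,\pi_2)$ where $\pi_1$ is an arbitrary bounce path of size $n$, the grading by $k$ being exactly the grading by the number of north steps.

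Next I would reorganize this sum by fixing the lower path first. A bounce path of size $n$ is precisely a word $N^{i_1}E^{i_1}\cdots N^{i_k}E^{i_k}$ with $i_1+\cdots+i_k=n$ and all $i_j>0$, so bounce paths are in bijection with compositions of $n$; there are exactly $2^{n-1}$ of them, and the composition with $k$ parts yields the bounce path with $k$ parts. Hence
\[
\#\{\text{walks}\} \;=\; \sum_{\pi_1 \text{ bounce path of size } n} \#\set{\pi_2 \text{ Dyck path of size } n}{\pi_1 \text{ weakly below } \pi_2},
\]
a sum over the $2^{n-1}$ compositions of $n$, naturally refined by the number of parts $k$. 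For each fixed $\pi_1$ the inner quantity is a single number, so it remains to show it is a determinant. Since $\pi_1$ is itself a Dyck path, the only binding constraint on $\pi_2$ is to stay weakly above $\pi_1$, the above-diagonal condition being automatic. Reading off the lattice points where $\pi_1$ turns, namely the diagonal contacts $(0,0),(i_1,i_1),(i_1+i_2,i_1+i_2),\dots,(n,n)$ together with the intermediate peaks, one realizes each lattice path weakly above $\pi_1$ as a family of non-intersecting monotone paths anchored at these corners, and the Lindström--Gessel--Viennot lemma then expresses the inner count as a determinant of binomial coefficients built from the partial sums $i_1+\cdots+i_j$. Carrying this out for every composition yields the claimed refined sum of $2^{n-1}$ determinants.

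The main obstacle will be the last step: setting up the Lindström--Gessel--Viennot configuration so that the lattice paths weakly above the bounce path correspond bijectively to non-intersecting path families, with the correct source and sink data read off from the corners of $\pi_1$, so that only the identity matching contributes and the determinant equals the count with no residual sign cancellation or spurious intersecting families. Once this non-intersecting model is pinned down, the identification of the binomial entries and the determinant itself is mechanical; by contrast, the first two steps are direct applications of Conjecture~\ref{conj:walks2} and Lemma~\ref{lem:bouncepairs} together with the elementary bijection between bounce paths and compositions.
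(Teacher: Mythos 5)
Your proposal follows the paper's proof essentially step for step: Conjecture~\ref{conj:walks2} together with Lemma~\ref{lem:bouncepairs} reduces the walk count to nested pairs $(\pi_1,\pi_2)$ with $\pi_1$ a bounce path, there are $2^{n-1}$ bounce paths (compositions of $n$), and for each fixed $\pi_1$ the number of Dyck paths weakly above it is a determinant. The only difference is that the paper simply cites a result of Kreweras for this last determinant (via \cite[Sect.~5]{CeballosGonzalezDLeon}), whereas you sketch its standard derivation by the Lindstr\"om--Gessel--Viennot lemma, which is precisely the classical proof of that cited result.
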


\begin{proof}
By Conjecture~\ref{conj:walks2} and Lemma~\ref{lem:bouncepairs} the desired number of walks is equal to the number of nested pairs of Dyck paths $(\pi_1,\pi_2)$ of size $n$ such that $\pi_1$ is a bounce path. 
For a fixed $\pi_1$, a result of Kreweras~\cite{Kreweras} implies that number of such pairs is a determinant whose entries depend on the partition bounded above $\pi_1$ (see for instance~\cite[Sect.~5]{CeballosGonzalezDLeon} for more details).
As there are $2^{n-1}$ possible bounce paths $\pi_1$, we get a sum of $2^{n-1}$ determinants.
\end{proof}


\subsection{Lattice walks, colored Dyck paths, and steep Dyck paths}
\label{sec:coloredDyckpaths}

A \defn{colored Dyck path} of size $n$ is a Dyck path of size $n$ with some red colored north steps, such that at each step of the path the number of red north steps is less than or equal to the number of horizontal steps. A Dyck path is \defn{steep} if it contains no consecutive east steps $EE$, except at the end on top of the grid. 
The following statement is illustrated on \fref{fig:walks}.

\begin{lemma}
\label{lem:coloredDyckpaths}
The following three families are in bijective correspondence:
\begin{enumerate}
\item Walks in the quarter plane starting at $(0,0)$ and ending on the horizontal axis, consisting of $2n$ steps taken from $\{(-1, 1), (1, -1), (0, 1)\}$ from which $k$ of them are $(0,1)$. 
\item Colored Dyck paths of size $n$ with $k$ black north steps.
\item Nested pairs of Dyck paths $(\pi_1,\pi_2)$ of size $n$ such that $\pi_2$ is a steep path with $n-k$ isolated east steps strictly below the top.
\end{enumerate}
\end{lemma}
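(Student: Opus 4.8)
The plan is to prove the two bijections (1)$\leftrightarrow$(2) and (2)$\leftrightarrow$(3) separately and then compose them. The first is a direct letter-by-letter translation, and the second rests on the standard conjugacy between the ``north'' and ``east'' profiles of a Dyck path.

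\textbf{The bijection (1)$\leftrightarrow$(2).} First I would read a colored Dyck path of size $n$ from left to right and send each east step to the walk step $(1,-1)$, each black north step to $(0,1)$, and each red north step to $(-1,1)$. After a prefix containing $a$ east steps, $b$ black north steps and $c$ red north steps, the walk sits at the lattice point $(a-c,\,b+c-a)$. Hence staying inside the quarter plane $\mathbb{N}^2$ amounts to the two prefix inequalities $c\le a$ and $a\le b+c$: the inequality $c\le a$ is exactly the defining condition of a colored Dyck path (red north steps never outnumber east steps), while $a\le b+c$ is exactly the Dyck condition (east steps never outnumber north steps). A colored Dyck path with $k$ black north steps has $n$ east steps, $k$ black and $n-k$ red north steps, so the associated walk has $2n$ steps, ends at $(k,0)$ on the horizontal axis, and uses exactly $k$ steps $(0,1)$. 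The assignment is manifestly invertible, which gives (1)$\leftrightarrow$(2) together with the matching of the parameter $k$.

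\textbf{The bijection (2)$\leftrightarrow$(3).} For a Dyck path $\pi$ of size $n$, let $e_1\le\cdots\le e_n$ be its \emph{east heights}, where $e_i$ is the number of north steps preceding the $i$-th east step. This sequence determines $\pi$; one has $\pi\le\pi'$ if and only if $e_i\le e'_i$ for all $i$; and $\pi$ is steep with below-top east steps at heights $h_1<\cdots<h_j$ (and the remaining $k=n-j$ east steps on top) if and only if $e=(h_1,\dots,h_j,n,\dots,n)$ with $1\le h_1<\cdots<h_j\le n-1$. Given a colored Dyck path I would take $\pi_1$ to be its underlying uncolored Dyck path, read off the red north steps as the $i_1<\cdots<i_j$-th north steps, set $h_r\eqdef i_r-1$, and let $\pi_2$ be the steep path with below-top east heights $h_1<\cdots<h_j$. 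The crux is to check that the coloring condition matches exactly the conditions defining family (3): writing $c_i$ for the number of east steps preceding the $i$-th north step, the coloring condition is tightest right after each red north step and thus reads $c_{i_r}\ge r$ for all $r$, and the conjugacy identity $c_i=\#\{\,l : e_l<i\,\}$ turns this into $e_r<i_r$, that is $i_r\ge e_r+1$, equivalently $h_r\ge e_r$. This single chain of equivalences delivers everything at once: the $h_r$ are strictly increasing integers in $\{1,\dots,n-1\}$ (note $i_1\ge e_1+1\ge 2$, so $h_1\ge 1$), so $\pi_2$ is a genuine steep path with $j=n-k$ isolated east steps below the top; the inequalities $h_r\ge e_r$ (for $r\le j$, and trivially for $r>j$) say exactly that $e_r$ is at most the $r$-th east height of $\pi_2$, hence $\pi_1\le\pi_2$; and the count $j$ of red north steps equals the count of below-top east steps, so the parameter $k$ is preserved.

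I expect the main obstacle to be the honest justification of the conjugacy step: that the prefix inequality ``$\#\text{red}\le\#\text{east}$'' is equivalent to testing only the prefixes ending at red north steps, and the verification of the identity $c_i=\#\{\,l : e_l<i\,\}$ relating the two profiles. Once this translation is secured, the shift $h_r=i_r-1$ is a transparent bijection, and the inverse is read off directly: a nested pair $(\pi_1,\pi_2)$ with $\pi_2$ steep maps to the colored path whose underlying path is $\pi_1$ and whose red north steps are the $(h_r+1)$-th north steps, the nesting $\pi_1\le\pi_2$ being precisely what guarantees $h_r\ge e_r$ and hence the validity of this coloring. Composing the two bijections then proves the lemma.
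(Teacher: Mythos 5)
Your proof is correct and uses exactly the same two bijections as the paper: the letter-by-letter translation $(1,-1)\leftrightarrow E$, $(0,1)\leftrightarrow$ black $N$, $(-1,1)\leftrightarrow$ red $N$ for (1)$\leftrightarrow$(2), and the map sending red north steps to the isolated east steps of a steep path $\pi_2$ over the underlying path $\pi_1$ for (2)$\leftrightarrow$(3). The only difference is that you spell out the verifications (the prefix inequalities and the conjugacy identity between the north and east profiles) that the paper dismisses as "easily obtained" and "clearly invertible," which makes your write-up a more rigorous version of the same argument.
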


\begin{proof}
The bijection between (1) and (2) is easily obtained by mapping steps of the walk to colored steps of a Dyck path as follows: $(0,1)$ to uncolored $N$, $(1,-1)$ to $E$, and $(-1,1)$ to red $\r{N}$.
The condition on the number of red north steps being less than or equal to the number of east steps, and that the resulting path is a Dyck path, come from the fact that the walk lies inside the quarter plane and finishes on the $x$-axis. 

The bijection between (2) and (3) is obtained by mapping a colored Dyck path $\pi$ to the pair $(\pi_1,\pi_2)$ where $\pi_1$ is the Dyck path $\pi$ forgetting the colors, and $\pi_2$ is the steep Dyck path with single east steps in the rows given by 
the red  $\r{N}$ steps of $\pi$. The condition on the coloring of $\pi$ guaranties that $\pi_1$ and $\pi_2$ are nested. This procedure is clearly invertible (see Figure~\ref{fig:walks})
\begin{figure}[t]
	\capstart
	\centerline{\includegraphics[scale=.4]{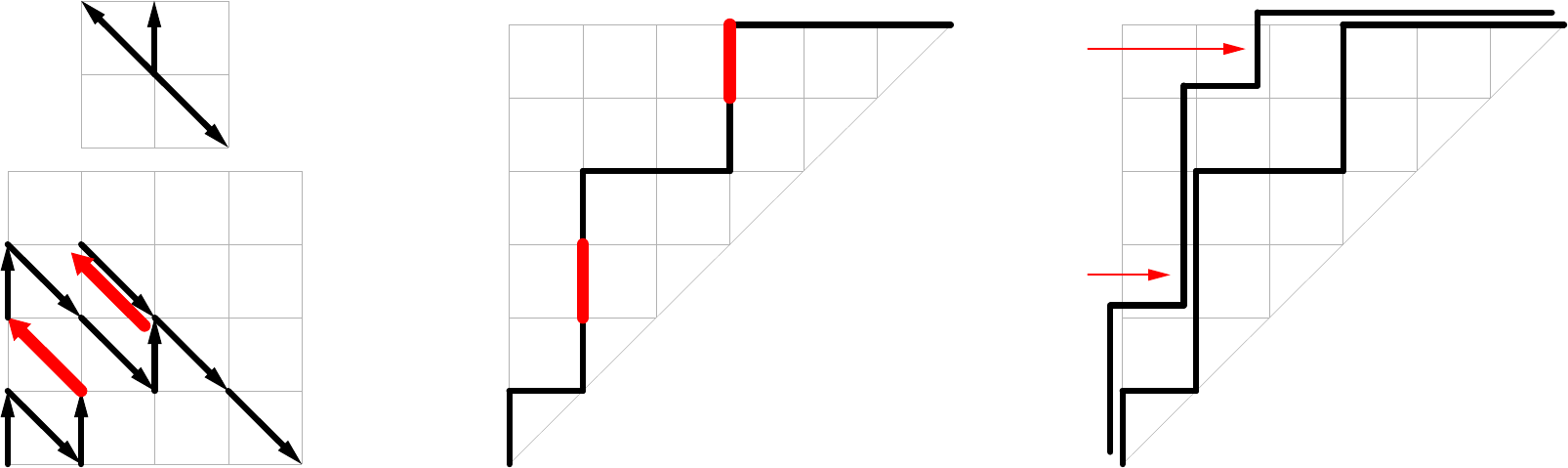}}
	\caption{(Left) A walk in the quarter plane starting at $(0,0)$, ending on the horizontal axis, and consisting of steps taken from $\{(-1, 1), (1, -1), (0, 1)\}$. (Middle) A colored Dyck path such that at each step, the number of red north steps is at most the number of horizontal steps. (Right) A nested pair of Dyck paths~$(\pi_1,\pi_2)$ where~$\pi_2$ is steep. The three objects are connected by Lemma~\ref{lem:coloredDyckpaths}: the $(-1,1)$ steps in the walk become red north steps in the colored Dyck path, which correspond to east steps in~$\pi_2$.}
	\label{fig:walks}
\end{figure}
\end{proof}


\subsection{The Steep-Bounce Conjecture}
\label{sec_steepbounce_conjecture}

\enlargethispage{.5cm}
Using Lemmas~\ref{lem:bouncepairs} and~\ref{lem:coloredDyckpaths}, Conjecture~\ref{conj:walks2} can now be reformulated as follow\footnote{Since the completion of this paper, Conjecture~\ref{conj:SteepBounce} has been solved in~\cite{CeballosFangMuhle}.}.

\begin{conjecture}
\label{conj:SteepBounce}
For any $k\le n$, there is a bijection between the following two sets
\begin{enumerate}
\item Nested pairs of Dyck paths $(\pi_1,\pi_2)$ of size $n$ such that $\pi_1$ is a bounce path with $k$ parts.
\item Nested pairs of Dyck paths $(\pi'_1,\pi'_2)$ of size $n$ such that $\pi'_2$ is a steep path with~$k$ east steeps on top of the grid.
\end{enumerate}
\end{conjecture}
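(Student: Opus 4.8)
The plan is to construct an explicit bijection by means of a sweep map (a \emph{zeta}-type map, in the spirit of the zeta map of $q,t$-Catalan combinatorics~\cite{Haglund-qt-catalan}) acting not on a single Dyck path but on the whole nested pair, and to show that it exchanges the bounce condition on one boundary for the steep condition on the other while tracking the parameter~$k$. A remark guiding the design is that the bijection cannot be a product bijection matching individual bounce paths with individual steep paths: already for $n=3$ and $k=2$ the two bounce paths of size~$3$ each have $3$ Dyck paths weakly above them, whereas the two steep paths with one isolated east step below the top have $4$ and $2$ Dyck paths weakly below them; since the multisets $\{3,3\}$ and $\{4,2\}$ differ, the map must genuinely mix the pairs, so it has to operate on the region enclosed between $\pi_1$ and $\pi_2$ rather than on the two boundaries separately.

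First I would encode a nested pair $(\pi_1,\pi_2)$ by the skew region between the two paths, equivalently by the interleaved word of steps of the two boundaries together with their levels, where the level of a step is its value of $\#N-\#E$ along the boundary carrying it. I would then define the sweep by scanning this data in increasing order of level (the analogue of the classical sweep of a line of slope~$1$) and reassembling the recorded steps into a new nested pair $(\pi'_1,\pi'_2)$, possibly composed with the anti-diagonal reflection so that the distinguished boundary moves from $\pi_1$ to $\pi'_2$. Bijectivity on the set of \emph{all} nested pairs of size~$n$ would be established by writing down the inverse sweep exactly as in the single-path case: the level statistic is preserved, so the sorted list of swept steps can be unsorted in a unique way.

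The heart of the argument is the structural lemma asserting that this map sends $\{\,(\pi_1,\pi_2) : \pi_1 \text{ is a bounce path with } k \text{ parts}\,\}$ onto $\{\,(\pi'_1,\pi'_2) : \pi'_2 \text{ is steep with } n-k \text{ isolated east steps below the top}\,\}$. The idea I would pursue is that the $k$ points where a bounce path $N^{i_1}E^{i_1}\cdots N^{i_k}E^{i_k}$ returns to the diagonal are the lowest-level features of the region and that the sweep carries them to the $k$ consecutive east steps sitting at the top of the new upper path, while each non-returning east step of $\pi_1$ contributes exactly one isolated east step below the top of $\pi'_2$; conversely, steepness of $\pi'_2$ should be equivalent to the absence of two east steps of $\pi_1$ at equal level strictly between consecutive diagonal returns. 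Verifying that nesting is preserved throughout, and that the number of top east steps is exactly~$k$, is the step I expect to be the main obstacle: it is precisely this delicate interaction between the two boundaries under the sweep, which has no counterpart in the classical single-path zeta map, that makes the statement a conjecture rather than a routine corollary.

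As a complementary route, and as a way to certify the plan at the level of cardinalities, I would transport both sides through the intermediate models already available. Lemma~\ref{lem:bouncepairs} and Lemma~\ref{lem:coloredDyckpaths} reduce the two families to pipe dreams and to colored Dyck paths, and Theorem~\ref{thm:dominantPipeDreamsVSNuTrees} supplies a tree model for dominant pipe dreams; carrying the bounce-to-steep map through these equivalences could either yield a cleaner recursive description, obtained by peeling off the first part $N^{i_1}E^{i_1}$ of the bounce path together with the corresponding top east steps, or, failing an explicit bijection, reduce the identity to the equality of the two refined sums of Kreweras determinants underlying Corollary~\ref{cor:walksEnumerationDeterminants}~\cite{Kreweras}, which one could then attack by a generating-function manipulation.
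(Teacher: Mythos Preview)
The statement you are attempting to prove is explicitly labeled a \emph{conjecture} in the paper, and the paper does \emph{not} prove it. There is therefore no ``paper's own proof'' to compare against. What the paper does provide is partial evidence: Proposition~\ref{prop:bijectionBounceSteep} shows that the inverse zeta map bijects bounce paths with $k$ parts to steep paths with $n-k$ isolated east steps; the following proposition shows that the inverse zeta map also bijects the ``extremal'' pairs $(\pi_{\bounce},\pi)$ to $(\pi',\pi'_{\steep})$; a remark observes (exactly as you do) that neither of the two naive product-bijection strategies works; and Remark~\ref{rem_step_zeta_map} suggests that a ``steep-zeta map'' on pairs, generalizing the classical zeta map, ought to exist. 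Separately, the paper establishes the equinumerosity for $k=1,2,n-1,n$ by direct counting.

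Your proposal is not a proof but a research plan, and you say as much when you identify the ``structural lemma'' about preservation of nesting and the exact count of top east steps as ``the main obstacle'' that ``makes the statement a conjecture rather than a routine corollary.'' That is an accurate self-assessment: the sweep you describe is not actually defined (you say only that you would scan ``in increasing order of level'' and reassemble, without specifying the tie-breaking rule or how the two interleaved boundaries are separated after sweeping), and the key structural claim is asserted as an ``idea I would pursue'' rather than established. Your direction is well aligned with the paper's own Remark~\ref{rem_step_zeta_map}, and your observation that the bijection cannot be a product bijection matches the paper's remark on the failure of naive approaches. But as it stands, this is a plausible outline of an attack on an open problem, not a proof; the genuine gap is that the central bijection is neither constructed nor shown to have the required properties.
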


Before giving some evidence supporting this conjecture let us recall the famous zeta map in $q,t$-Catalan combinatorics.
The $q,t$-Catalan polynomials $C_n(q,t)$ are certain polynomials in two variables whose evaluations at $(q,t)=(1,1)$ recover the Catalan numbers.
They appeared as the bivariate Hilbert series of diagonal harmonic alternants in the theory of diagonal harmonics~\cite{Haglund-qt-catalan} 
(see more details in Section~\ref{subsec:definitionsMultivariateDiagonalHarmonics}).
Finding an explicit description of these polynomials has been a particularly difficult problem.
An explicit rational expression for $C_n(q,t)$ conjectured by A.~Garsia and M.~Haiman~\cite{GarsiaHaiman-remarkableCatalanSequence} 
follows from M.~Haiman's work~\cite{Haiman-vanishingTheorems}. A combinatorial interpretation of this rational expression was conjectured by 
Haglund~\cite{haglund_conjectured_2003} and proved by A.~Garsia and J.~Haglund~\cite{garsia_proof_2002} using plethystic  machinery developed by F.~Bergeron~et~al.~\cite{bergeron_identities_1999}. 
J.~Haglund's combinatorial interpretation for the $q,t$-Catalan polynomial, which uses a pair of statistics on Dyck paths known as $\area$ and $\bounce$, was discovered after prolonged investigations of tables of $C_n(q,t)$. Shortly after, M.~Haiman announced another combinatorial interpretation in terms of two statistics $\area$ and $\dinv$. Unexpectedly, these two pairs of statistics were different but gave rise to the same expression:
\[
C_n(q,t) = \sum_{\text{Dyck paths }\pi} q^{\area(\pi)} t^{\bounce(\pi)}
= \sum_{\text{Dyck paths }\pi} q^{\dinv(\pi)} t^{\area(\pi)}.
\]
The zeta map $\zeta$ is a bijection on Dyck paths that explains this phenomenon by sending the pair of statistics~$(\area,\dinv)$ to the pair of statistics~$(\bounce,\area)$.

The inverse of the zeta map appeared first in the work of G.~Andrews, C.~Krattenthaler, L.~Orsina and P.~Papi~\cite{AndrewsKrattenthalerOrsinaPapi} in connection with nilpotent ideals in certain Borel subalgebra of $sl(n)$, and was rediscovered by J.~Haglund and M.~Haiman in their study of diagonal harmonics and $q,t$-Catalan numbers~\cite{Haglund-qt-catalan}.
We refer to~\cite[Proof of Theorem~3.15]{Haglund-qt-catalan} for a precise description of the zeta map and its inverse, and to~\cite[Chapt.~3]{Haglund-qt-catalan} for more details about the $\bounce$ and $\dinv$ statistics.

Now we connect the zeta map to the Steep-Bounce Conjecture~\ref{conj:SteepBounce}.
A bounce path $\pi$ with~$k$ parts is determined by the sequence $0=b_0<b_1<\dots<b_{k-1}<b_k=n$ where $(b_i,b_i)$ are the diagonal points in $\pi$. 
We denote by $B(\pi) \eqdef \{b_1,b_2,\dots,b_{k-1}\} \subseteq [n-1]$ the set of $b_i$'s excluding the values $b_0=0$ and $b_k=n$. 
Similarly, a steep path $\pi'$ with $n-k$ isolated east steps strictly below the top is determined by the sequence $1\leq b_1' < b_2' < \dots < b_{n-k}' \leq n-1$ 
of the $y$-coordinates of the isolated east steps, and we denote
$B'(\pi') \eqdef \{b_1',b_2',\dots,b_{n-k}'\} \subseteq [n-1]$. 

\begin{proposition}
\label{prop:bijectionBounceSteep}
The inverse of the zeta map is a bijection between:
\begin{enumerate}
\item Bounce paths $\pi$ with $k$ parts.
\item Steep paths $\pi'$ with $n-k$ isolated east steps strictly below the top.
\end{enumerate}
Moreover, $\pi'=\zeta^{-1}(\pi)$ if and only if $B'(\pi')=[n-1]\setminus B(\pi)$.
\end{proposition}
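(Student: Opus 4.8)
The plan is to reduce everything to the \emph{area sequence} of a Dyck path and to compute $\zeta$ directly on the relevant sequences. Recall that a Dyck path of size $n$ is encoded by its area sequence $(a_1,\dots,a_n)$, where $a_i$ is the number of cells between the path and the diagonal in its $i$th row from the bottom; these are exactly the sequences with $a_1=0$ and $a_{i+1}\le a_i+1$. The number of east steps immediately preceding the $(i+1)$st north step equals $a_i+1-a_{i+1}$, and an isolated east step occurring there has $y$-coordinate $i$. Using Haglund's description of $\zeta$ as the diagonal reading of the area sequence~\cite[Proof of Thm~3.15]{Haglund-qt-catalan} --- for $d=0,1,2,\dots$, scan $i=1,\dots,n$ and output $N$ whenever $a_i=d$ and $E$ whenever $a_i=d-1$ --- I would carry out the argument in the following steps.

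First, I would translate both families into conditions on area sequences. A path is steep (all east steps below the top isolated) if and only if $a_i+1-a_{i+1}\le 1$ for every $i$, that is, if and only if its area sequence is weakly increasing; in that case the isolated east steps strictly below the top sit exactly at the indices $i$ with $a_i=a_{i+1}$, so $B'(\pi')=\set{i\in[n-1]}{a_i=a_{i+1}}$. Dually, a bounce path $N^{i_1}E^{i_1}\cdots N^{i_k}E^{i_k}$ is precisely the path whose area sequence is the concatenation of staircases $0,1,\dots,i_1-1,\,0,1,\dots,i_2-1,\,\dots$, and its diagonal contacts are $B(\pi)=\{i_1,\,i_1+i_2,\,\dots,\,i_1+\cdots+i_{k-1}\}$.

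The heart of the proof is a direct computation of $\zeta$ on a steep path. Writing a weakly increasing area sequence in blocks of constant value, where the value $d$ is attained on a contiguous interval of length $c_d\ge 1$ for $d=0,\dots,M$ with $M=a_n$, the diagonal reading produces, diagonal by diagonal, the blocks $N^{c_0}$, then $E^{c_{d-1}}N^{c_d}$ for $d=1,\dots,M$, and finally $E^{c_M}$ (the scan order places each $E$-block before the following $N$-block because smaller values occur at smaller indices). Concatenating gives $\zeta(\pi')=N^{c_0}E^{c_0}N^{c_1}E^{c_1}\cdots N^{c_M}E^{c_M}$, a bounce path with $k=M+1$ parts. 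Hence $\zeta$ maps steep paths to bounce paths, equivalently $\zeta^{-1}$ maps bounce paths with $k$ parts to steep paths with $M=k-1$ ascents, that is with $(n-1)-(k-1)=n-k$ isolated east steps below the top, matching the stated counts.

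Finally I would read off the complementation. The partial sums $c_0,\,c_0+c_1,\,\dots,\,c_0+\cdots+c_{M-1}$ defining $B(\zeta(\pi'))$ are exactly the indices $i$ at which the steep area sequence strictly increases ($a_{i+1}=a_i+1$), while $B'(\pi')$ collects the indices at which it stays flat; since a weakly increasing sequence with steps $0$ or $+1$ is flat or increasing at every $i$, these two sets partition $[n-1]$, giving $B'(\pi')=[n-1]\setminus B(\zeta(\pi'))$. Bijectivity then follows because $\zeta$ is already a bijection on Dyck paths and we have shown it carries steep paths onto bounce paths. The main obstacle I anticipate is purely bookkeeping: fixing the convention for $\zeta$ (several incompatible normalizations appear in the literature) and getting the scan order in the diagonal reading exactly right, since it is precisely this order that forces the output into the interleaved bounce pattern. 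Once the convention is aligned with~\cite{Haglund-qt-catalan}, the remaining steps are routine.
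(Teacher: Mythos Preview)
Your proposal is correct and follows essentially the same approach as the paper: both arguments rest on the area-sequence description of the zeta map from~\cite{Haglund-qt-catalan} and on the observation that steep paths are exactly those with weakly increasing area sequence. The only cosmetic difference is the direction of computation---the paper applies $\zeta^{-1}$ to a bounce path and reads off its (weakly increasing) area sequence directly, whereas you apply $\zeta$ to a steep path via the diagonal reading and obtain the bounce form $N^{c_0}E^{c_0}\cdots N^{c_M}E^{c_M}$; the complementation $B'(\pi')=[n-1]\setminus B(\pi)$ then falls out identically in both.
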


\begin{proof}
Let $\pi$ be a bounce path with $k$ parts such that $B(\pi) \eqdef \{b_1,b_2,\dots,b_{k-1}\}$. As above, we take $b_0=0$ and $b_k=n$. 
The area sequence of a Dyck path $\pi'$ of size $n$ is defined as the $n$-tuple whose $i$th entry counts the number of boxes in row $i$ that are between $\pi'$ and the main diagonal of the grid.
By the description of the inverse zeta map in~\cite[Proof of Theorem~3.15]{Haglund-qt-catalan}, the path $\pi'=\zeta^{-1}(\pi)$ is the path whose area sequence consists of $b_1-b_0$ zeros, followed by $b_2-b_1$ ones, followed by $b_3-b_2$ twos, and so on until $b_k-b_{k-1}$ values $k-1$. 
Therefore, the path $\pi'$ can be obtained from the diagonal path $(NE)^n$ by contracting the east steps with $y$-coordinates in $B(\pi)$. 
As a consequence, $\pi'$ is a steep path satisfying $B'(\pi')= [n-1]\setminus B(\pi)$. Since bounce paths and steep paths area completely characterized by their associated sets $B$ and $B'$, the result follows. 
\end{proof}

Given a Dyck path $\pi$ there is a unique nested pair $(\pi_{\bounce},\pi)$ such that $\pi_{\bounce}$ is a bounce path and 
the set $B(\pi_{\bounce}) = \{b_1,b_2,\dots,b_{k-1}\}$ is such that $b_1$ is maximal, and among those $b_2$ is maximal, and so
on until $b_{k-1}$. The bounce statistic of $\pi$ is then defined to be ${\bounce(\pi) \eqdef \sum(n-b_i)}$.
Similarly, given a Dyck path $\pi'$ there is a unique nested pair  $(\pi',\pi'_{\steep})$ such that 
$\pi'_{\steep}$ is a steep path and the set $B'(\pi'_{\steep})=\{b_1',b_2',\dots,b_{n-k}'\}$ 
is such that $b_1'$ is minimal, and among those $b_2'$ is minimal, and so on until $b_{n-k}'$.
The following proposition is a stronger evidence for Conjecture~\ref{conj:SteepBounce}.

\begin{proposition}
The inverse of the zeta map induces a bijection (determined by $\pi'=\zeta^{-1}(\pi)$) between:
\begin{enumerate}
\item Pairs $(\pi_{\bounce},\pi)$ such that $\pi_{\bounce}$ has $k$ parts.
\item Pairs $(\pi',\pi'_{\steep})$ such that $\pi'_{\steep}$ has $n-k$ isolated east steps strictly below the top.
\end{enumerate}
\end{proposition}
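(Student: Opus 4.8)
The plan is to reduce the statement to a single numerical identity and then verify it by analysing area sequences on each side. Observe first that a pair $(\pi_{\bounce},\pi)$ as in (1) is completely determined by its second coordinate $\pi$, since $\pi_{\bounce}$ is recovered from $\pi$ by the greedy (maximal) bounce construction; likewise a pair $(\pi',\pi'_{\steep})$ as in (2) is determined by $\pi'$. Hence the sets (1) and (2) are indexed by Dyck paths $\pi$ (resp.\ $\pi'$) of size $n$, subject only to a condition on the number of parts of $\pi_{\bounce}$ (resp.\ the number of isolated east steps of $\pi'_{\steep}$). Since $\pi\mapsto\pi'=\zeta^{-1}(\pi)$ is a bijection on all Dyck paths of size $n$, it suffices to prove the equivalence: the greedy bounce path of $\pi$ has exactly $k$ parts if and only if the greedy steep path of $\pi'=\zeta^{-1}(\pi)$ has exactly $n-k$ isolated east steps strictly below the top. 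I emphasise that, contrary to what one might first guess, the first coordinate $\pi'_{\steep}$ is \emph{not} $\zeta^{-1}(\pi_{\bounce})$: it is the greedy steep path of $\pi'$ itself, and only the \emph{count} of its isolated east steps will be seen to match.

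Both conditions will be controlled by a single quantity attached to $\sigma\eqdef\zeta^{-1}(\pi)$, namely $M\eqdef\max_i g_i$, where $g=(g_1,\dots,g_n)$ is the area sequence of $\sigma$. On the steep side I would first record the elementary facts that a Dyck path is steep exactly when its area sequence is weakly increasing, and that the number of its isolated east steps strictly below the top equals the number of indices $i\in[n-1]$ with $g_{i+1}=g_i$. The greedy (minimal) steep path above $\sigma$ is then the pointwise-lowest weakly increasing sequence dominating $g$, which is precisely the running maximum $h_i\eqdef\max(g_1,\dots,g_i)$; this is a valid area sequence, it simultaneously minimises every $b_i'$ (so it is the greedy steep path), and since $h$ increases from $0$ to $M$ in unit steps it has exactly $(n-1)-M$ flats, hence $(n-1)-M$ isolated east steps strictly below the top. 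This disposes of the steep side by direct bookkeeping on area sequences.

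On the bounce side I would use the standard interaction between the zeta map and the bounce path: the greedy bounce path of $\zeta(\sigma)=\pi$ has parts equal to the multiplicities $m_v\eqdef|\{i:g_i=v\}|$ of the successive values $v=0,1,\dots,M$ in the area sequence of $\sigma$, so that its diagonal touch points are $b_j=|\{i:g_i<j\}|$. Since $g_1=0$ and $g_{i+1}\le g_i+1$, every value $0,1,\dots,M$ occurs, and the number of parts is therefore $M+1$. This refines Proposition~\ref{prop:bijectionBounceSteep}, which is the special case where $g$ is constant on runs (the source $\sigma$ already steep); the general statement follows from the description of $\zeta^{-1}$ in~\cite[Proof of Theorem~3.15]{Haglund-qt-catalan} applied to arbitrary paths. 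Combining the two sides yields $(\text{parts of }\pi_{\bounce})+(\text{isolated steps of }\pi'_{\steep})=(M+1)+((n-1)-M)=n$, so the number of parts is $k$ if and only if the number of isolated east steps is $n-k$, which is exactly the required equivalence; the bijection between (1) and (2) then follows from the bijectivity of $\zeta^{-1}$.

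The main obstacle is the bounce-side claim that the number of \emph{parts} of the bounce path of $\zeta(\sigma)$ equals $M+1=\max_i g_i+1$. The steep-side computation and the final combination are elementary, but relating the number of parts (a statistic finer than $\bounce$ itself) to the maximal area coordinate of $\zeta^{-1}(\pi)$ requires the behaviour of the zeta map on \emph{all} Dyck paths, not merely on bounce paths as in Proposition~\ref{prop:bijectionBounceSteep}. I would either invoke Haglund's diagonal description of $\zeta^{-1}$ directly, or re-derive the needed equivariance by tracking how the successive diagonals of the area sequence of $\sigma$ are read off, level by level, into the bounce decomposition of $\pi$.
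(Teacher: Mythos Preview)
Your argument is correct and follows essentially the same route as the paper's proof. Both reduce the statement to the equivalence: the bounce path of $\pi$ has $k$ parts if and only if the area sequence of $\pi'=\zeta^{-1}(\pi)$ attains exactly the values $0,1,\dots,k-1$ (i.e., has maximum $k-1$), and then check that this is equivalent to the greedy steep path above $\pi'$ having $n-k$ isolated east steps. You spell out more of the mechanics, in particular the running-maximum description of $\pi'_{\steep}$ and the count $(n-1)-M$ of its flats, whereas the paper compresses this into a single sentence; but the key external input (Haglund's description of $\zeta^{-1}$ relating the bounce decomposition of $\pi$ to the level multiplicities in the area sequence of $\pi'$) is the same in both.
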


\begin{proof}
By the description of the inverse zeta map in~\cite[Proof of Theorem~3.15]{Haglund-qt-catalan}, 
$\pi_{\bounce}$ has $k$ parts if and only if the area sequence of $\pi'=\zeta^{-1}(\pi)$ contains all values $0,1,2,\dots, k-1$ (with repetitions in some order). 
It is not difficult to see that the second statement holds if and only if $\pi'_{\steep}$ has has $n-k$ isolated east steps strictly below the top.
\end{proof}

\begin{remark}
\label{rem_bouncek_heightk}
As pointed out in our previous proof, the inverse zeta map gives a bijection between paths $\pi$ whose bounce path has $k$ parts and paths $\pi'$ whose area sequence contains all values $0,1,2,\dots, k-1$. Such paths are commonly referred to as Dyck paths of height $k$.
This observation was noticed in current work in progress by M. Kallipoliti, R. Sulzgruber and E. Tzanaki in the context of pattern avoidance in Shi tableaux.
\end{remark}

\begin{remark}
It would be interesting to define a steep path statistic of a Dyck path $\pi'$ in terms of the pair $(\pi',\pi'_{\steep})$, whose marginal distribution is equal to the marginal distribution of the area statistic on Dyck paths. This could be useful towards a combinatorial proof of the $q,t$-symmetry of the Catalan numbers~\cite[Open~Problem~3.11]{Haglund-qt-catalan}.
\end{remark}

\begin{remark}
We want to emphasize that the following two naive approaches to Conjecture~\ref{conj:SteepBounce} already fail for~$n = 4$.
\begin{itemize}
\item For a bounce path~$\pi$ and its corresponding steep path~$\pi' = \zeta^{-1}(\pi)$ (see Proposition~\ref{prop:bijectionBounceSteep}), the number of Dyck paths above~$\pi$ does not coincide with the number of Dyck paths below~$\pi'$. In fact, the distributions of the number of Dyck paths above a bounce path and the number of Dyck paths below a steep path do not coincide. Therefore, we cannot use a bijection between bounce paths and steep paths to construct a bijection for Conjecture~\ref{conj:SteepBounce}.
\item For a Dyck path~$\pi$, and its corresponding Dyck path~$\pi' = \zeta^{-1}(\pi)$, the number of bounce paths below~$\pi$ does not coincide with the number of steep paths above~$\pi'$. In fact, the distributions of the number of bounce paths below a Dyck path and the number of steep paths above a Dyck path do not coincide. Therefore, we cannot use a bijection between Dyck paths to construct a bijection for Conjecture~\ref{conj:SteepBounce}.
\end{itemize}
\end{remark}

\begin{remark}
\label{rem_step_zeta_map}
Our intuition for a proof of Conjecture~\ref{conj:SteepBounce} is that there should be a ``steep-zeta map'' $Z$ on the pairs $(\pi_1',\pi_2')$ satisfying 
\[
Z(\pi',\pi'_{\steep}) = (\pi_{\bounce},\pi)
\]
with $\pi=\zeta(\pi')$. The map $Z$ can therefore be thought as a generalization of the zeta map on pairs of Dyck paths that acts on $\pi_1'$ depending on its relative position with respect to the steep path $\pi_2'$.
\end{remark}


\subsection{The cases $k=1,2,n-1,n$}

We now prove Conjecture~\ref{conj:walks2} in four special cases.

\begin{proposition}
Conjecture~\ref{conj:walks2} holds in the special cases $k=1,2, n-1,n$.
\end{proposition}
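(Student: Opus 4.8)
The plan is to prove the proposition through the reformulation already set up in this section: by Lemmas~\ref{lem:bouncepairs} and~\ref{lem:coloredDyckpaths}, Conjecture~\ref{conj:walks2} for a given $k$ is equivalent to the instance of the Steep-Bounce Conjecture~\ref{conj:SteepBounce}, so it suffices to show that for $k \in \{1,2,n-1,n\}$ the set of nested pairs $(\pi_1,\pi_2)$ with $\pi_1$ a bounce path with $k$ parts has the same cardinality as the set of nested pairs $(\pi_1',\pi_2')$ with $\pi_2'$ a steep path with $n-k$ isolated east steps strictly below the top. I would compute both cardinalities by summing, over each fixed bounce path $\pi_1$ (resp. steep path $\pi_2'$), the number of Dyck paths weakly above $\pi_1$ (resp. weakly below $\pi_2'$). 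Throughout I would encode Dyck paths by their area sequences $(g_1,\dots,g_n)$ with $g_1=0$ and $0 \le g_{i+1}\le g_i+1$, so that nesting becomes the componentwise order, reading off the area sequences of the relevant bounce and steep paths directly from the description in the proof of Proposition~\ref{prop:bijectionBounceSteep}.

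The extreme cases $k=1$ and $k=n$ are immediate. For $k=1$ the unique bounce path with one part is $N^nE^n$ and the unique steep path with $n-1$ isolated east steps below the top is the diagonal; each forces the second path of the pair, so both families have cardinality $1$. Dually, for $k=n$ the only bounce path is the diagonal and the only steep path is $N^nE^n$, and in each case the other path of the pair is arbitrary, so both families have cardinality $C_n$.

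For $k=2$ the bounce paths are $\pi_1=N^bE^bN^{n-b}E^{n-b}$ for $1\le b\le n-1$. A Dyck path weakly above $\pi_1$ must begin with $N^b$ and must reach the top row by abscissa $b$, so counting them reduces to counting lattice paths from $(0,b)$ to $(b,n)$ that stay weakly above the diagonal; since only $b$ east steps are available starting from height $b$, the diagonal restriction is vacuous and the count is $\binom{n}{b}$, giving $\sum_{b=1}^{n-1}\binom{n}{b}=2^n-2$. On the steep side the matching paths have area sequence $(0^b,1^{n-b})$, below which there are exactly $2^{n-b}$ Dyck paths (the last $n-b$ entries of the area sequence being free binary choices), so $\sum_{b=1}^{n-1}2^{n-b}=2^n-2$ as well.

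For $k=n-1$ the bounce paths have area sequence $e_{a+1}$ (a single $1$ in position $a+1$) for $1\le a\le n-1$, so the number of Dyck paths weakly above is $C_n-C_aC_{n-a}$ (removing those through $(a,a)$); summing and using $\sum_{a=0}^n C_aC_{n-a}=C_{n+1}$ gives $(n+1)C_n-C_{n+1}$. The matching steep paths have area sequence $(0,1,\dots,n-1)$ with the value $a-1$ repeated, below which the number of Dyck paths is $C_n-D_{\ge a+1}$, where $D_{\ge a+1}$ counts Dyck paths beginning with at least $a+1$ north steps; summing and using $\sum_{\text{Dyck}}(\text{initial north run})=C_{n+1}-C_n$ (hence $\sum_a D_{\ge a+1}=C_{n+1}-2C_n$) yields the same total $(n+1)C_n-C_{n+1}$. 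I expect this last case to be the main obstacle: unlike $k=2$, the per-path counts on the two sides do not agree term by term, so the equality only emerges after the two summations and hinges on correctly identifying the steep area sequences together with the two Catalan identities above.
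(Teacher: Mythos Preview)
Your approach is correct and essentially coincides with the paper's own proof. Both arguments reduce to showing $A_{n,k}=B_{n,k}$ for the four values of~$k$, with the bounce side computed exactly as you do; the only cosmetic difference is that the paper counts the other side using colored Dyck paths (model~(2) of Lemma~\ref{lem:coloredDyckpaths}) while you use the equivalent steep-path model~(3), leading to the same per-path counts $2^{n-b}$ for $k=2$ and $C_n-D_{\ge a+1}$ for $k=n-1$, and the same Catalan identity $\sum_{i\ge 0} D_{\ge i}=C_{n+1}$ to finish.
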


\begin{proof}
Our proof is rather enumerative than bijective. 
Let $A_{n,k}$ denote the number of pipe dreams~$P$ such that $\omega_P\in \mathfrak{S}_n$ factorizes into $k$ identity permutations.
We also denote by $B_{n,k}$ the number of colored Dyck paths of size $n$ with $k$ black north steps (and $n-k$ red north steps). 
We will show that:

\begin{enumerate}
\item $A_{n,1}=B_{n,1}=1$,
\item $A_{n,2}=B_{n,2}=2^n-2$,
\item $A_{n,n-1}=B_{n,n-1}=(n+1)C_n-C_{n+1}$,
\item $A_{n,n}=B_{n,n}=C_n$.
\end{enumerate}
Where $C_n \eqdef \frac{1}{n+1}{2n \choose n}$ denotes the $n$th Catalan number.

The first equality $(1)$ follows from the fact that there is only one pipe dream $P$ such that $\omega_P=12\dots n$ ($P$ consists only of elbows), and only one colored Dyck path $D$ with one black north step and $n-1$ red north steps ($D$ is the diagonal path with all but the first north steps colored red).

Equality $(4)$ follows from the correspondence between pipe dreams with permutation ${w = n \dots 2 1}$ and complete binary trees with $n$ internal nodes, as mentioned in Section~\ref{subsec:LodayRonco}. 
These are counted by the Catalan number $C_n$ and are in bijection with Dyck paths of size $n$, which are equivalently colored Dyck paths with all $n$ north steps colored black.

For equalities $(2)$ and $(3)$ we use the alternative models in Sections~\ref{subsec:bouncepairs} and~\ref{sec:coloredDyckpaths}.  
Let us start considering the case $k=2$. By Proposition~\ref{lem:bouncepairs}, $A_{n,2}$ is equal to the number of pairs of nested Dyck paths $(\pi_1,\pi_2)$ of size $n$ such that $\pi_1$ is a bounce path with two parts. If we denote by $A_{n,2}^i$ the number of such pairs such that $\pi_1=(N^iE^i)(N^{n-i}E^{n-i})$ for $1\leq i \leq n-1$, then $A_{n,2}^i={n \choose i}$. Therefore 
\[
A_{n,2}= \sum_{i=1}^{n-1} A_{n,2}^i =  \sum_{i=1}^{n-1} {n \choose i} = 2^n-2.
\]
On the other hand, $B_{n,2}$ is equal to the number of colored Dyck paths of size $n$ with $2$ black north steps and $n-2$ red north steps. 
The first north step of any colored Dyck path is always forced to be black. If we denote by $B_{n,2}^i$ the number of such colored Dyck paths where the second black north step is in row $i$ for $2\leq i \leq n$, then it is not hard to check that $B_{n,2}^i=2^{n-i+1}$ (because every north step in a row $j\geq i$ has two possible placements whereas all others only have one). Therefore, 
\[
B_{n,2}= \sum_{i=2}^{n} B_{n,2}^i =  \sum_{i=2}^{n} 2^{n-i+1} = 2^n-2.
\]

Finally, let us consider the case $k=n-1$. By Proposition~\ref{lem:bouncepairs}, $A_{n,n-1}$ is equal to the number of pairs of nested Dyck paths $(\pi_1,\pi_2)$ of size $n$ such that $\pi_1$ is a bounce path with $n-1$ parts. Exactly one of these parts is of the form $N^2E^2$ and all others are just $NE$.
Let $A_{n,n-1}^i$ be the number of such nested pairs $(\pi_1,\pi_2)$ such that $\pi_1$ is a bounce path whose $i$th part is $N^2E^2$ for some $1\leq i \leq n-1$.
We also denote $A_{n,n-1}^0=A_{n,n-1}^n=0$ for convenience. The only paths $\pi_2$ that do not contribute to $A_{n,n-1}^i$ are the ones touching the diagonal of the grid at the point $(i,i)$. Therefore, we have
\[
A_{n,n-1}^i = C_n - \tilde A_{n,n-1}^i,
\]
where $\tilde A_{n,n-1}^i=C_i C_{n-i}$ is the number of Dyck paths of size $n$ containing the diagonal point $(i,i)$. 
Note that this equality also holds for $i=0$ and $i=n$.
Summing over $i$ we obtain
\[
A_{n,n-1} = \sum_{i=0}^{n} A_{n,n-1}^i = 
\sum_{i=0}^{n} (C_n - \tilde A_{n,n-1}^i) = 
(n+1)C_n - C_{n+1}. 
\]
On the other hand, let $B_{n,n-1}$ the number of colored Dyck paths of size $n$ with $n-1$ black north steps and $1$ red north step. 
Denote by $B_{n,n-1}^i$ the number of such colored Dyck paths such the the red north step is in row $i$ for $2\leq i\leq n$. 
We also denote $B_{n,n-1}^0=B_{n,n-1}^1=0$ for convenience.
The only paths whose north step at row $i$ can not be colored red are exactly those containing the point $(0,i)$. Therefore,
\[
B_{n,n-1}^i = C_n - \tilde B_{n,n-1}^i,
\]
where $\tilde B_{n,n-1}^i$ is the number of Dyck paths of size $n$ containing the point $(0,i)$. 
Note that this equality also holds for $i=0$ and $i=1$.
Summing over $i$ we obtain
\[
B_{n,n-1} = \sum_{i=0}^{n} B_{n,n-1}^i = 
\sum_{i=0}^{n} (C_n - \tilde B_{n,n-1}^i) = 
(n+1)C_n - C_{n+1}. 
\]
The last equality follows from $C_{n+1}=\sum_{i=0}^{n} \tilde B_{n,n-1}^i$, which is obtained as the refined counting of Dyck paths of size $n+1$ according 
to when the path leaves the $y$-axis. 
\end{proof}
 

\section{Hopf subalgebra of acyclic pipe dreams}
\label{sec:acyclicPipeDreams}

We conclude this part with a last family of Hopf subalgebras of the Hopf algebra on pipe dreams.
The \defn{contact graph} of a pipe dream~$P$ is the directed graph~$P\contact$ with one node for each pipe of~$P$ 
and one arc for each elbow of~$P$ connecting the south-east pipe to the north-west pipe of the elbow. 
A pipe dream~$P$ is called \defn{acyclic} when its contact graph~$P\contact$ is acyclic (no oriented cycle). 
For~$\omega \in \fS$, we denote by~$\acyclicPipeDreams(\omega)$ the set of acyclic pipe dreams of~$\pipeDreams(\omega)$, and we let~$\acyclicPipeDreams_n \eqdef \bigsqcup_{\omega \in \fS_n} \acyclicPipeDreams(\omega)$ and~$\acyclicPipeDreams \eqdef \bigsqcup_{n \in \N} \acyclicPipeDreams_n$.

\begin{remark}
A special class of acyclic pipe dreams is the collection of reversing pipe dreams considered in Section~\ref{subsec:LodayRonco}. 
The contact graph~$P\contact$ of a reversing pipe dream~$P$ is equal to its corresponding complete binary tree~$\Psi(P)$ (see Figure~\ref{fig:bijection}) after removing all leaves and orienting all edges towards the root.
Such oriented binary trees are clearly acyclic.
\end{remark}

\begin{lemma}
\label{lem:singleSource}
The contact graph~$P\contact$ of an acyclic pipe dream~$P \in \acyclicPipeDreams_n$ has a unique sink.
\end{lemma}

\begin{proof}
Observe first that any pipe of~$P$ which is different than~$0$ must be the south-east pipe of at least one elbow.
Otherwise, such a pipe would bound a rectangle~$R$.
Since~$P$ is reduced, all pipes entering~$R$ from south should leave $R$ towards north, which is impossible since pipe~$0$ uses the first way out.
This proves that~$P\contact$ has a unique sink corresponding to the pipe~$0$ which passes through the top left elbow.
\end{proof}

\begin{lemma}
\label{lem:packingAcyclic}
For any acyclic pipe dream~$P \in \acyclicPipeDreams_n$ and any~$k \in \{0, \dots, n\}$, the horizontal packing~$\r{\lrot_k(P)}$ and the vertical packing~$\b{\lmir_k(P)}$ are both acyclic.
\end{lemma}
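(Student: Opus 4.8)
The plan is to reduce to a single case via the transpose involution and then to show that the contact graph of the packing \emph{refines the reachability order} of $P\contact$. First I would dispose of the vertical packing by symmetry. The transpose $\transpose$ reflects a pipe dream across its anti-diagonal, exchanging entering rows with exiting columns and swapping the south-east and north-west paths at every elbow; hence $\transpose(P)\contact$ is the opposite digraph of $P\contact$, and $\transpose$ intertwines the two packings, $\lmir_k(P) = \transpose\big(\lrot_k(\transpose(P))\big)$. Since a digraph is acyclic precisely when its opposite is (and acyclicity is preserved under $\transpose$), the statement for $\lmir_k$ follows from the statement for $\lrot_k$. It therefore suffices to treat the horizontal packing.

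Because $P$ is acyclic, the reachability relation on $P\contact$ (declare $a \le b$ when there is a directed path from $a$ to $b$) is a partial order. Let $R$ denote the set of red pipes kept by $\lrot_k$, namely those entering rows $0,1,\dots,k$. The heart of the argument is the following claim: for every elbow of $\lrot_k(P)$, whose south-east and north-west pipes are red pipes $a$ and $b$, there is a directed path from $a$ to $b$ in $P\contact$. Granting this, acyclicity is immediate: every arc $a \to b$ of $\lrot_k(P)\contact$ satisfies $a < b$ in the reachability order of $P\contact$, so a directed cycle $a_1 \to a_2 \to \cdots \to a_1$ in $\lrot_k(P)\contact$ would force $a_1 < a_2 < \cdots < a_1$, which is impossible in a partial order. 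Thus $\lrot_k(P)\contact$ is acyclic, and $\lrot_k(P)$, which lies in $\pipeDreams_k$ by Lemma~\ref{lem:horizontalPacking}, is in fact in $\acyclicPipeDreams_k$.

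To prove the claim I would use the local description of the contraction from the proof of Lemma~\ref{lem:horizontalPacking}. Full red elbows are preserved, so there $a$ and $b$ already share the elbow in $P$ and the required path is the single arc $a \to b$. The only other elbows of $\lrot_k(P)$ arise from merging a maximal run of consecutive bicolored elbows in one row, and for these $a$ and $b$ are distinct red pipes that do not touch in $P$: they are separated by the blue pipes crossing the contracted horizontal segment, together with the blue pipes of the two bicolored elbows bounding the run. Listing these intervening blue pipes in order as $\beta_1,\dots,\beta_s$, I would verify that consecutive pipes in the list $a,\beta_1,\dots,\beta_s,b$ meet at an elbow of $P$ whose south-east/north-west roles are oriented so that the arcs concatenate into a directed path $a \to \beta_1 \to \cdots \to \beta_s \to b$.

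The main obstacle is exactly this last verification: confirming that the south-east/north-west orientations along the run are mutually compatible, so that the arcs chain into one directed path rather than pointing inconsistently. This is precisely where the acyclicity of $P$ is genuinely used, since an inconsistent run would close up into a directed cycle already present in $P\contact$; it amounts to a careful bookkeeping of which pipe plays the lower-right and which the upper-left role at each bicolored elbow and at each crossing of the contracted segment. Once this orientation analysis is settled, the path exists, the claim follows, and with it the lemma.
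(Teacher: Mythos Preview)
Your approach is essentially the paper's: reduce to horizontal packing by symmetry, then observe that every arc of $\lrot_k(P)\contact$ is realised by a directed walk in $P\contact$, whence acyclicity transfers. The paper phrases the merged case as ``two arcs'' (one at each bicolored elbow), while you allow a longer chain through intervening blue pipes; your formulation is in fact the more careful one, since blue--blue elbows may sit between the two bicolored elbows, in which case the walk in $P\contact$ has length greater than two. Two small inaccuracies in your write-up are worth correcting, though neither breaks the argument. First, the transpose reflects across the \emph{main} diagonal (anti-diagonal reflection would not preserve the triangular shape); under it the south-east and north-west arcs of an elbow are \emph{not} exchanged, so $\transpose(P)\contact$ is isomorphic to $P\contact$ via the relabeling $p\mapsto\omega_P^{-1}(p)$ rather than being the opposite digraph---still enough for your reduction. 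Second, the orientation compatibility along the run is automatic from the local geometry (at every elbow the horizontal pipe entering from the west is NW and the pipe entering from the south is SE, so the arcs chain as $a\to\beta_s\to\cdots\to\beta_1\to b$ without further hypothesis); acyclicity of $P$ is used only at the final step, exactly where you first invoked it.
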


\begin{proof}
As explained in the proof of Lemma~\ref{lem:horizontalPacking}, each elbow~$\elbowBiColor{red}{red}$ in the horizontal packing~$\r{\lrot_k(P)}$ either corresponds to two consecutive bicolored elbows~$\elbowBiColor{red}{blue,dashed} \cdots \elbowBiColor{blue,dashed}{red}$ in the pipe dream~$P$ that where merged together, or to a red contact~$\elbowBiColor{red}{red}$ in~$P$. In other words, each arc of the contact graph~$\r{\lrot_k(P)}\contact$ corresponds to either two arcs or just one arc in the contact graph~$P\contact$. Therefore, the acyclicity of~$P\contact$ implies the acyclicity of~$\r{\lrot_k(P)}\contact$. The proof for the vertical packing follows by symmetry.
\end{proof}

\begin{lemma}
\label{lem:starAcyclic}
For any acyclic pipe dreams~$\b{P}, \r{Q}$ and any $\b{P}/\r{Q}$-shuffle~$s$, the pipe dream~$\b{P} \star_{s} \r{Q}$ is acyclic.
\end{lemma}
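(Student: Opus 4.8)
The plan is to peel the statement down to a single insertion. By definition, $\b{P} \star_s \r{Q}$ is obtained by first untangling $\r{Q}$ at the gaps marked by the $\b{p}$-blocks of $s$ into pieces $\r{Q_1}, \dots, \r{Q_\ell}$, and then inserting these pieces successively into $\b{P}$ at the positions of the $\r{q}$-blocks. The untangling is carried out by the horizontal and vertical packings, so Lemma~\ref{lem:packingAcyclic} guarantees that every piece $\r{Q_i}$ is again acyclic. Hence, arguing by induction on the number $\ell$ of insertions, it suffices to establish the single step: if $\b{P}$ and $\r{Q}$ are acyclic and $\gamma$ is a global descent of $\omega_\b{P}$, then the insertion $\b{P} \gammainsertion_\gamma \r{Q}$ is acyclic.

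To prove this step, I would read off the contact graph $(\b{P} \gammainsertion_\gamma \r{Q})\contact$ from those of $\b{P}$ and $\r{Q}$. The crucial point is that the insertion only places a copy of $\r{Q}$ in the inserted triangle and fills every remaining new cell with a cross~\cross{}; in particular no new elbow is created where a pipe of $\b{P}$ meets a pipe of $\r{Q}$, since these pipes only cross (once, vertically or horizontally, as $\gamma$ is a global descent). Consequently every elbow of $\b{P} \gammainsertion_\gamma \r{Q}$ is either an elbow of $\b{P}$, joining two pipes of $\b{P}$, or an elbow of $\r{Q}$, joining two pipes of $\r{Q}$. The two families of pipes overlap in exactly one pipe, the pipe $0$ of $\r{Q}$, which is prolonged into a pipe of $\b{P}$ across the corner of the inserted triangle; coloring that pipe with $\b{P}$ and every other pipe by its origin, each arc of the contact graph becomes monochromatic, except for the single contact at that corner, which runs from a pipe of $\r{Q}$ to the $\b{P}$-colored pipe.

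Acyclicity then follows at once. A hypothetical directed cycle cannot be monochromatic, for the monochromatic arcs of each color form a subgraph of $\b{P}\contact$ or of $\r{Q}\contact$, both acyclic by hypothesis; and it cannot use the bichromatic arc either, because all bichromatic arcs point from the $\r{Q}$-side to the $\b{P}$-side, so once a cycle leaves the $\r{Q}$-side it can never return. Equivalently, $(\b{P} \gammainsertion_\gamma \r{Q})\contact$ is the disjoint union $\b{P}\contact \sqcup \r{Q}\contact$ in which the pipe $0$ of $\r{Q}$---a sink of $\r{Q}\contact$, as it occupies only its corner elbow---is identified with a vertex of $\b{P}\contact$, and gluing a sink of one acyclic digraph to a vertex of another disjoint acyclic digraph cannot create an oriented cycle.

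The main obstacle is the bookkeeping in the middle paragraph: one has to check carefully that all the interaction cells produced by $\gammainsertion_\gamma$ are crosses (so that pipes of $\b{P}$ and of $\r{Q}$ never form a contact), and to pin down the unique prolongation of pipes occurring at the corner of the inserted $\r{Q}$, together with its orientation and the fact that the pipe it meets is a sink. This is exactly the content of the observation following the definition of $\gammainsertion_\gamma$, namely that because $\gamma$ is a global descent the pipes of $\r{Q}$ cross those of $\b{P}$ exactly once; this is what forbids any further bichromatic contact, and in particular any arc directed from the $\b{P}$-side to the $\r{Q}$-side.
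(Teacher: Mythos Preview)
Your argument is correct and follows essentially the same route as the paper. Both proofs untangle $\r{Q}$ into acyclic pieces via Lemma~\ref{lem:packingAcyclic} and then observe that the contact graph of the result is obtained from $\b{P}\contact$ and the $\r{Q_i}\contact$'s by identifying pipe~$0$ of each $\r{Q_i}$ with a pipe of~$\b{P}$; you package this as an induction on single insertions while the paper states the global description in one shot, but the content is identical. One small remark: your identification of pipe~$0$ as a \emph{sink} of $\r{Q}\contact$ is the correct one under the paper's orientation convention (arcs go from the south-east pipe to the north-west pipe), whereas the paper's own proof writes ``source'' --- either way the gluing argument goes through, but your version is the accurate reading.
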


\begin{proof}
Following the description of~$\b{P} \star_{s} \r{Q}$, let~$\r{Q_1}, \dots, \r{Q_\ell}$ denote the pipe dreams obtained by untangling~$\r{Q}$ at the gaps of~$\omega_\r{Q}$ marked by $\b{p}$-blocks in~$s$. Then the contact graph~$(\b{P} \star_{s} \r{Q})\contact$ is obtained by relabeling the contact graphs~$\b{P}\contact$, $\r{Q_1}\contact, \dots, \r{Q_\ell}\contact$ and connecting the sink vertex of each~$\r{Q_i}\contact$ to a vertex of~$\b{P}\contact$. Since~$\b{P}\contact$ and $\r{Q_1}\contact, \dots, \r{Q_\ell}\contact$ are acyclic (by Lemma~\ref{lem:packingAcyclic}), we obtain that~$(\b{P} \star_{s} \r{Q})\contact$ is acyclic. 
\end{proof}

\begin{proposition}
The subspace~$\HA$ of acyclic pipe dreams is a Hopf subalgebra of~$\HP$.
\end{proposition}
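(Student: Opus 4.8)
The plan is to assemble the two lemmas just established, reducing the statement to three routine closure checks. Since $\HA$ is by definition the $\bk$-span of the acyclic pipe dreams inside $\HP$, it suffices to verify that the unit lies in $\HA$, that the product $\product$ maps $\HA \otimes \HA$ into $\HA$, and that the coproduct $\coproduct$ maps $\HA$ into $\HA \otimes \HA$; the counit $\epsilon$ and the grading then restrict with no further work. The unit is immediate: the only pipe dream in $\pipeDreams_0$ is $\elbow$, whose contact graph is a single vertex with no arcs and is therefore acyclic, so $\elbow \in \HA$.

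For closure under the product, recall that $P \product Q = \sum_{s} P \star_{s} Q$, where $s$ ranges over all $P/Q$-shuffles. If $P$ and $Q$ are acyclic, then Lemma~\ref{lem:starAcyclic} guarantees that every summand $P \star_{s} Q$ is acyclic, so $P \product Q$ is a linear combination of acyclic pipe dreams and hence lies in $\HA$; extending bilinearly shows that $\product$ restricts to a map $\HA \otimes \HA \to \HA$. For closure under the coproduct, recall that for $P \in \pipeDreams_n$ we have $\coproduct(P) = \sum_{\gamma} \lmir_\gamma(P) \otimes \lrot_{n-\gamma}(P)$, the sum being over the global descents $\gamma$ of $\omega_P$. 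By Lemma~\ref{lem:packingAcyclic}, if $P$ is acyclic then both the vertical packing $\lmir_\gamma(P)$ and the horizontal packing $\lrot_{n-\gamma}(P)$ are acyclic, so each tensor term lies in $\HA \otimes \HA$, and therefore $\coproduct(P) \in \HA \otimes \HA$.

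Combining these three facts exhibits $\HA$ as a graded connected sub-bialgebra of $\HP$, and since $\HP$ is a graded connected Hopf algebra (Proposition~\ref{prop:HopfAlgebraFixedDepth}), its antipode is determined recursively by the bialgebra structure and automatically restricts to any graded connected sub-bialgebra; hence $\HA$ is a Hopf subalgebra. I do not expect a genuine obstacle in this final step: all of the real content has already been extracted into Lemmas~\ref{lem:packingAcyclic} and~\ref{lem:starAcyclic}, whose proofs are the only delicate point, since they must track how arcs of the contact graph $P\contact$ merge under packing and how the contact graphs of the pieces glue together under tangling. Given those lemmas, the proposition follows purely by inspecting the defining formulas for $\product$ and $\coproduct$.
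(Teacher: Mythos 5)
Your proof is correct and follows essentially the same route as the paper, which likewise deduces the proposition directly from Lemmas~\ref{lem:packingAcyclic} and~\ref{lem:starAcyclic}; the paper simply states this in one line, whereas you additionally spell out the routine points (unit, counit, and the automatic restriction of the antipode to a graded connected sub-bialgebra), which are fine and standard.
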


\begin{proof}
Lemmas~\ref{lem:packingAcyclic} and~\ref{lem:starAcyclic} immediately imply that acyclic pipe dreams are stable by product and coproduct.
\end{proof}

Merging the results of Sections~\ref{sec:subalgebrasAtoms} and~\ref{sec:acyclicPipeDreams}, we obtain the following general statement.

\begin{corollary}
For any set~$S$ of atomic permutations, the subspace~$\HA \langle S \rangle$ of acyclic pipe dreams with restricted atom sets contained in $S$ is a Hopf subalgebra of~$(\HP, \product, \coproduct)$.
\end{corollary}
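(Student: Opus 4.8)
The plan is to realize $\HA\langle S\rangle$ as the intersection $\HA \cap \HP\langle S\rangle$ of the two Hopf subalgebras established in the two preceding sections, and to verify that this intersection is again closed under product and coproduct. Concretely, $\HA\langle S\rangle$ is the linear span of $\acyclicPipeDreams\langle S\rangle \eqdef \acyclicPipeDreams \cap \pipeDreams\langle S\rangle$, the set of pipe dreams that are simultaneously acyclic and have all atomic factors of their exiting permutation lying in $S$. All the subspaces in play ($\HA$, $\HP\langle S\rangle$ and $\HA\langle S\rangle$) are \emph{monomial}, meaning they are spanned by subsets of the distinguished pipe dream basis of $\HP$, so that $\HA \cap \HP\langle S\rangle = \HA\langle S\rangle$ holds at the level of graded vector spaces. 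The only thing left to check is stability under $\product$ and $\coproduct$, and I would do this term by term on basis elements.

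First I would verify stability under the product. Let $P, Q \in \acyclicPipeDreams\langle S\rangle$. Writing $P \product Q = \sum_s P \star_s Q$ over all $P/Q$-shuffles $s$, each summand $P \star_s Q$ is acyclic by Lemma~\ref{lem:starAcyclic}. Moreover, by Proposition~\ref{prop:descentproduct}, the permutation $\omega_{P \star_s Q}$ is a single term of the shuffle $\omega_P \gdshuffle \omega_Q$; from the recursive definition of $\gdshuffle$ on atomic factorizations, the atomic factors of any such term are exactly the atomic factors of $\omega_P$ together with those of $\omega_Q$ (interleaved), and hence all belong to $S$. Therefore every summand lies in $\acyclicPipeDreams\langle S\rangle$, so $P \product Q \in \HA\langle S\rangle$.

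Next I would verify stability under the coproduct. Let $P \in \acyclicPipeDreams\langle S\rangle$ with $\omega_P \in \fS_n$, and let $\gamma$ be a global descent of $\omega_P$. The two tensor factors of $\coproduct_{\gamma,n-\gamma}(P) = \lmir_\gamma(P) \otimes \lrot_{n-\gamma}(P)$ are both acyclic by Lemma~\ref{lem:packingAcyclic}. By Proposition~\ref{prop:codescentproduct} we have $\omega_P = \omega_{\lmir_\gamma(P)} \gdproduct \omega_{\lrot_{n-\gamma}(P)}$, so by uniqueness of the atomic factorization the atomic factors of each of $\omega_{\lmir_\gamma(P)}$ and $\omega_{\lrot_{n-\gamma}(P)}$ form a subset of those of $\omega_P$, and hence lie in $S$. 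Thus each tensor factor belongs to $\acyclicPipeDreams\langle S\rangle$, and $\coproduct(P) \in \HA\langle S\rangle \otimes \HA\langle S\rangle$.

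These two verifications show that $\HA\langle S\rangle$ is a graded connected sub-bialgebra of $\HP$, and therefore a Hopf subalgebra. I do not expect a genuine obstacle here: the statement is a routine merging of the two independent stability results, one controlling acyclicity and one controlling the atom set, and the two constraints do not interfere. The only point requiring a moment's care is precisely why I would argue term by term rather than simply invoking a generic ``intersection of Hopf subalgebras is a Hopf subalgebra''. The latter slogan is false for arbitrary subcoalgebras, since $(U \otimes U) \cap (V \otimes V)$ need not equal $(U \cap V) \otimes (U \cap V)$ for arbitrary subspaces $U,V$. In our situation the desired identity does hold, because every subspace involved is monomial in the tensor-product basis $\{P \otimes Q\}$ of $\HP \otimes \HP$; the explicit basis-element computation above makes this transparent and sidesteps the subtlety entirely.
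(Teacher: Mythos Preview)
Your proof is correct and follows exactly the approach the paper intends: the paper gives no explicit argument for this corollary, merely stating that it follows by ``merging the results'' of the two preceding sections, and your term-by-term verification is precisely the natural way to flesh this out.

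One minor remark on your closing aside: your caution about intersections of subcoalgebras is in fact unnecessary over a field. For subspaces $U,V\subseteq W$ one always has $(U\otimes U)\cap(V\otimes V)=(U\cap V)\otimes(U\cap V)$, since tensoring with a fixed vector space is exact (so $(U\otimes W)\cap(V\otimes W)=(U\cap V)\otimes W$, and one applies this twice). Hence the intersection of two Hopf subalgebras of a Hopf algebra over a field is automatically a Hopf subalgebra, and you could have concluded immediately from $\HA\langle S\rangle=\HA\cap\HP\langle S\rangle$. Your explicit term-by-term argument is of course still valid and arguably more transparent.
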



\part{The dominant pipe dream algebra and its connections to multivariate diagonal harmonics}
\label{part:multivariateDiagonalHarmonics}

In the final part of this paper, we study a Hopf subalgebra obtained from dominant permutations. 
This Hopf algebra contains all Hopf algebras arising from atom sets described above. 
In Section~\ref{sec:nuTamari}, we will show that it is isomorphic to a generalization of the Loday--Ronco Hopf algebra on a special 
family of binary trees called $\nu$-trees, which are related to the $\nu$-Tamari lattices recently introduced 
by L.-F.~Pr\'eville-Ratelle and X.~Viennot in~\cite{PrevilleRatelleViennot}. In Section~\ref{sec:multivariateDiagonalHarmonics}, 
we will present applications of this Hopf algebra to the theory of multivariate diagonal harmonics~\cite{Bergeron-multivariateDiagonalCoinvariantSpaces}.


\section{Hopf subalgebra from dominant permutations and $\nu$-Tamari lattices}
\label{sec:nuTamari}


\subsection{The Hopf algebras of dominant permutations and dominant pipe dreams}
\label{subsec:dominantPermutationsAndDominantPipeDreams}

Recall that the \defn{Rothe diagram} of a permutation~$\omega \in \fS_n$ is the set
\[
R_\omega \eqdef \set{\big( \omega(i), j \big)}{i > j \text{ and } \omega(i) < \omega(j)}.
\]
If we represent this diagram in matrix notation (\ie the box~$(i,j)$ appears in row~$i$ and column~$j$), then the Rothe diagram of~$\omega$ is 
the set of boxes which are not weakly below or weakly to the right of a box~$\big( \omega(i), i \big)$ for all~$i \in [n]$. See \fref{fig:dominantPermutation}.

A permutation~$\omega \in \fS_n$ is \defn{dominant} if its Rothe diagram~$R_\omega$ is a partition containing the top-left corner. Such a permutation is uniquely determined by its Rothe diagram~$R_\omega$, or equivalently by the Dyck path~$\pi_\omega$ delimiting the boundary of its Rothe diagram. See \fref{fig:dominantPermutation}.

\begin{figure}[ht]
	\capstart
	\centerline{\includegraphics[scale=.8]{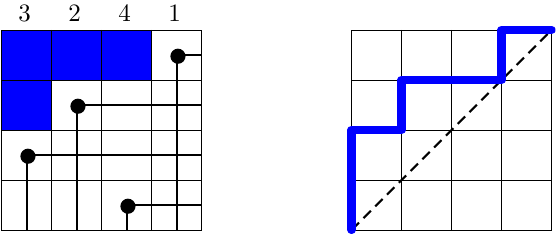}}
	\caption{The Rothe diagram~$R_\omega$ and the Dyck path~$\pi_\omega$ of the dominant permutation $\omega = 3241$.}
	\label{fig:dominantPermutation}
\end{figure}

We denote the set of dominant permutations by
\[
\Sdom_n \eqdef \set{\omega \in \fS_n}{\omega \text{ is dominant}}
\qquad\text{and}\qquad
\Sdom \eqdef \bigsqcup_{n \in \N} \Sdom_n.
\]
The next statement describes the product~$\gdproduct$ of Section~\ref{subsec:globalDescents} on dominant permutations.

\begin{proposition}
\label{prop:gdproductDominant}
The product~$\gdproduct$ on dominant permutations corresponds to the concatenation on the corresponding Dyck paths. More precisely,
\begin{enumerate}[(i)]
\item $\omega \in \Sdom$ has a global descent at~$\gamma$ if an only if~$\pi_\omega$ contains the diagonal point~$(\gamma, \gamma)$,
\item for~$\mu, \nu \in \Sdom$, the Dyck path~$\pi_{\mu \gdproduct \nu}$ is the concatenation~$\pi_\mu \gdproduct \pi_\nu$ of the Dyck paths~$\pi_\mu$ and~$\pi_\nu$,
\item for~$\omega \in \Sdom$, the unique atomic factorization~$\omega = \omega_1 \gdproduct \omega_2 \gdproduct \cdots \gdproduct \omega_\ell$ corresponds to the unique decomposition of~$\pi_\omega = \pi_{\omega_1} \gdproduct \pi_{\omega_2} \gdproduct \cdots \gdproduct \pi_{\omega_\ell}$ into Dyck paths not returning to the diagonal.
\end{enumerate}
\end{proposition}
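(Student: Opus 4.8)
The plan is to translate every statement through the Rothe diagram $R_\omega$. The key observation, which I would first isolate as a lemma valid for an arbitrary $\omega \in \fS_n$, is that $\gamma$ is a global descent of $\omega$ if and only if $R_\omega$ contains the full rectangle of rows $1, \dots, n-\gamma$ and columns $1, \dots, \gamma$. To see this, recall that a box $(r,c)$ lies in $R_\omega$ exactly when $r < \omega(c)$ and $\omega^{-1}(r) > c$; demanding that every box with $r \le n-\gamma$ and $c \le \gamma$ satisfies this forces $\omega(c) > n-\gamma$ for each $c \le \gamma$, and since $\omega([\gamma])$ and $\{n-\gamma+1, \dots, n\}$ both have size $\gamma$ this is equivalent to $\omega([\gamma]) = [n]\ssm[n-\gamma]$, i.e.\ to a global descent at $\gamma$. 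Now assume $\omega \in \Sdom$, so that $R_\omega$ is a partition $\lambda$ contained in the staircase, forcing $\lambda_{n-\gamma} \le \gamma$. The rectangle above sits in $\lambda$ iff $\lambda_{n-\gamma} \ge \gamma$, hence iff $\lambda_{n-\gamma} = \gamma$; this equality says exactly that the boundary path $\pi_\omega$ turns at the diagonal cell $(n-\gamma,\gamma)$, that is, $\pi_\omega$ contains the diagonal point $(\gamma,\gamma)$. This proves (i).

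For (ii) I would compute $R_{\mu \gdproduct \nu}$ directly. Writing $\mu \in \fS_a$, $\nu \in \fS_b$ and $\sigma \eqdef \mu \gdproduct \nu$, the inversions of $\sigma$ fall into three groups: those internal to the first block, which reproduce the inversions of $\mu$; those internal to the second block, which reproduce the inversions of $\nu$; and all $a \cdot b$ crossing pairs $(i,j)$ with $j \le a < i$, which are automatically inversions because the first block carries the largest values. Recording the corresponding boxes $(\sigma(i),j)$ shows that $R_\sigma$ is the superposition of a full $b \times a$ rectangle in the top-left corner, a copy of $R_\nu$ in rows $1,\dots,b$ and columns $a+1,\dots,a+b$, and a copy of $R_\mu$ in rows $b+1,\dots,a+b$ and columns $1,\dots,a$. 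Reading off rows gives the partition $\lambda^\sigma = (a+\lambda^\nu_1, \dots, a+\lambda^\nu_b, \lambda^\mu_1, \dots, \lambda^\mu_a)$, which is weakly decreasing because $\lambda^\mu_1 \le a-1 < a$. The boundary of this partition is precisely $\pi_\mu$ followed by $\pi_\nu$: the block carrying $R_\mu$ contributes $\pi_\mu$ as the initial segment reaching the diagonal point $(a,a)$, and the block carrying $R_\nu$ contributes $\pi_\nu$ as the final segment, the two being glued at $(a,a)$ --- consistent with (i), since $\sigma$ has a global descent at $a$.

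Finally, (iii) would follow formally from (i) and (ii). By definition $\omega$ is atomic iff its only global descents are $0$ and $n$, which by (i) means that $\pi_\omega$ meets the diagonal only at its endpoints, i.e.\ $\pi_\omega$ does not return to the diagonal. Applying (ii) repeatedly to the atomic factorization $\omega = \omega_1 \gdproduct \cdots \gdproduct \omega_\ell$ yields $\pi_\omega = \pi_{\omega_1} \gdproduct \cdots \gdproduct \pi_{\omega_\ell}$, a concatenation of paths none of which returns to the diagonal; since the decomposition of any Dyck path at its successive diagonal contacts into such \emph{primitive} pieces is unique, this is the unique such decomposition of $\pi_\omega$. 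I expect the only genuine work to lie in (ii): pinning down the three-block decomposition of $R_{\mu \gdproduct \nu}$ and, above all, matching it to the paper's precise drawing convention for $\pi_\omega$ so that the orientation comes out right (in particular that $\pi_\mu$, and not $\pi_\nu$, is the initial segment). By contrast (i) is a short unwinding of definitions and (iii) is pure bookkeeping.
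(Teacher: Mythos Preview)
Your proposal is correct and follows essentially the same route as the paper: all three parts are argued by translating through the Rothe diagram, and in particular your three-block decomposition of $R_{\mu \gdproduct \nu}$ in~(ii) is exactly the paper's argument. The only minor organizational difference is in~(iii), where you invoke the uniqueness of the primitive decomposition of a Dyck path directly, while the paper instead reconstructs a dominant permutation from the Dyck path pieces and appeals to the uniqueness of the atomic factorization on the permutation side.
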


\begin{proof}
For~(i), observe that, by definition of dominant permutations, the box at row~$x$ and column~$y$ belongs to the Rothe diagram~$R_\omega$ if and only if for all~$i \in [n]$, either~$\omega(i) > x$ or~$i > y$. In particular, $(\gamma, \gamma) \in R_\omega$ if and only if for all~$i \in [n]$, either~$\omega(i) > \gamma$ or~$i > \gamma$, \ie $\gamma$ is a global descent of~$\omega$.

For~(ii), consider~$\omega = \mu \gdproduct \nu$ for some~$\mu \in \fS_m$ and~$\nu \in \fS_n$. Then the entries~$\big( \omega(i), i \big)$ appear in two diagonal blocks, 
namely~$\big( \mu(i) + n, i \big)$ and~$\big( \nu(i), i + m \big)$. 
Therefore, the Rothe diagram~$R_\omega$ can be obtained by gluing the rectangle~$[n] \times [m]$ with~$R_\mu$ below and~$R_\nu$ on the right. It follows that the Dyck path~$\pi_\omega$ is the concatenation of~$\pi_\mu$ and~$\pi_\nu$.

Finally, consider~$\omega \in \Sdom$ with factorization~$\omega = \omega_1 \gdproduct \cdots \gdproduct \omega_\ell$. By~(i), all global descents of~$\omega$ correspond to diagonal points of~$\pi_\omega$. Therefore, they decompose~$\pi_\omega$ into~$\pi_\omega = \pi_1 \gdproduct \cdots \gdproduct \pi_\ell$. Now these Dyck paths define atomic dominant permutations~$\omega'_1, \dots, \omega'_\ell$. By~(ii), the product~$\omega' = \omega'_1 \gdproduct \cdots \gdproduct \omega'_\ell$ is dominant and has Dyck path~$\pi_{\omega'} = \pi_1 \gdproduct \cdots \gdproduct \pi_\ell = \pi_\omega$. Therefore, $\omega = \omega'$ and by uniqueness of the decomposition, $\omega_i = \omega'_i$ for all~$i \in [\ell]$.
\end{proof}

\begin{corollary}
The subspace~$\HSdom$ defines a Hopf subalgebra of~$(\HS, \gdshuffle, \gddeconcat)$. The dimension of the homogeneous component~$\HSdom_n$ is the Catalan number~$C_n \eqdef \frac{1}{n+1} \binom{2n}{n}$.
\end{corollary}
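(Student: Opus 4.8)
The plan is to deduce both claims directly from Proposition~\ref{prop:gdproductDominant}, which has just been established. For the Hopf subalgebra assertion, I would first let $S$ denote the set of \emph{atomic} dominant permutations and argue that $\Sdom = \fS\langle S \rangle$. The inclusion $\Sdom \subseteq \fS\langle S \rangle$ is exactly Proposition~\ref{prop:gdproductDominant}(iii): the atomic factors of a dominant permutation are again dominant, hence lie in $S$. For the reverse inclusion, if every atomic factor of $\omega$ is dominant then $\omega$ is a $\gdproduct$-product of dominant permutations, which is itself dominant by Proposition~\ref{prop:gdproductDominant}(ii); thus $\fS\langle S \rangle \subseteq \Sdom$. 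With this identification $\HSdom = \HS\langle S \rangle$ in hand, the Hopf subalgebra structure is immediate from the theorem of Section~\ref{sec:subalgebrasAtoms} applied to the atom set $S$.

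Alternatively, one can verify closure by hand, which is equally short and again rests only on Proposition~\ref{prop:gdproductDominant}. For the coproduct, each term of $\gddeconcat(\omega) = \sum_i (\omega_1 \gdproduct \cdots \gdproduct \omega_i) \otimes (\omega_{i+1} \gdproduct \cdots \gdproduct \omega_\ell)$ has both tensor factors equal to a $\gdproduct$-product of the (dominant) atomic factors of $\omega$, hence dominant by part~(ii). For the product, every term of $\pi \gdshuffle \omega$ is a $\gdproduct$-product, in some interleaved order, of the atomic factors of $\pi$ and of $\omega$; these factors are dominant by part~(iii), so each shuffle term is dominant by part~(ii). Either route establishes that $\HSdom$ is stable under $\gdshuffle$ and $\gddeconcat$, and it is connected and graded as a subspace of $\HS$, so it is a Hopf subalgebra.

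For the dimension, I would use the map $\omega \mapsto \pi_\omega$ sending a dominant permutation to the Dyck path bounding its Rothe diagram. Since a dominant permutation is uniquely determined by its Rothe diagram (recalled before the corollary), and the Rothe diagrams occurring are precisely the partitions contained in the staircase $(n-1, n-2, \dots, 1, 0)$, this map is a bijection between $\Sdom_n$ and the Dyck paths of size $n$. As $\dim \HSdom_n = |\Sdom_n|$ and Dyck paths of size $n$ are counted by $C_n$, the dimension formula follows.

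The heart of the argument is Proposition~\ref{prop:gdproductDominant}, so given it the corollary is largely bookkeeping. The one point deserving genuine care is the surjectivity of $\omega \mapsto \pi_\omega$ onto all Dyck paths, equivalently that every partition inside the staircase arises as the Rothe diagram of a dominant permutation. This is the classical characterization of dominant permutations already recalled in Section~\ref{subsec:dominantPermutationsAndDominantPipeDreams}, so it poses no real obstacle, and I would simply invoke it rather than reprove the bijection between staircase partitions and Dyck paths.
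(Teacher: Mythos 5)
Your proposal is correct and takes essentially the same approach as the paper: the paper's own proof is exactly your ``alternative'' closure argument (both operations only involve $\gdproduct$ and atomic factorizations, so Proposition~\ref{prop:gdproductDominant} gives stability), while your primary route via the identification $\HSdom = \HS\langle S^{\operatorname{dom}}\rangle$ with $S^{\operatorname{dom}}$ the atomic dominant permutations is precisely the paper's remark immediately following the corollary. The dimension count via the bijection $\omega \mapsto \pi_\omega$ between dominant permutations and Dyck paths is the same (standard) reasoning the paper leaves implicit.
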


\begin{proof}
It immediately follows from Proposition~\ref{prop:gdproductDominant} as the definitions of the product~$\gdshuffle$ and the coproduct~$\gddeconcat$ only involve the product~$\gdproduct$ and the factorization into atomic permutations.
\end{proof}

\begin{remark}
It is natural to transport the product~$\gdshuffle$ and the coproduct~$\gddeconcat$ on dominant permutations to the corresponding Dyck paths:
\begin{itemize}
\item The coproduct~$\gddeconcat$ of a Dyck path~$\pi$ is given by
\[
\gddeconcat(\pi) \eqdef \sum_{i=0}^\ell (\pi_1 \gdproduct \cdots \gdproduct \pi_i) \otimes (\pi_{i+1} \gdproduct \cdots \gdproduct \pi_\ell),
\]
where~$\pi$ factorizes into~$\pi = \pi_1 \gdproduct \pi_2 \gdproduct \cdots \gdproduct \pi_\ell$ with~$\pi_i$ not returning to the diagonal.
See \fref{fig:coproductDyckPaths}.

\begin{figure}[ht]
	\capstart
	\centerline{
		$\gddeconcat\left(\;\raisebox{-0.45\height}{\includegraphics[scale=.5]{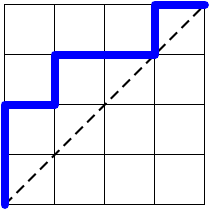}}\;\right) =
		\raisebox{-0.45\height}{\includegraphics[scale=.5]{dyckPath0}} \otimes \epsilon +
		\raisebox{-0.45\height}{\includegraphics[scale=.5]{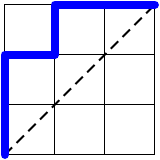}} \otimes \raisebox{-0.45\height}{\includegraphics[scale=.5]{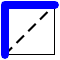}} +
		\epsilon \otimes \raisebox{-0.45\height}{\includegraphics[scale=.5]{dyckPath0}}$
	}
	\caption{The coproduct~$\gddeconcat$ on Dyck paths.}
	\label{fig:coproductDyckPaths}
\end{figure}

\item The product~$\gdshuffle$ of two Dyck paths~$\b{\rho}, \r{\tau}$ is given by~$\b{\rho} \gdshuffle \epsilon \eqdef \b{\rho}$,~$\epsilon \gdshuffle \r{\tau} \eqdef \r{\tau}$ and 
\[
\b{\rho} \gdshuffle \r{\tau} \eqdef \b{\rho_1} \gdproduct (\b{\rho_2} \gdshuffle \r{\tau}) + \r{\tau_1} \gdproduct (\b{\rho} \gdshuffle \r{\tau_2}).
\] 
if~$\b{\rho} = \b{\rho_1} \gdproduct \b{\rho_2}$ and~$\r{\tau} = \r{\tau_1} \gdproduct \r{\tau_2}$ where~$\b{\rho_1}$ and~$\r{\tau_1}$ are non-trivial Dyck paths not returning to the diagonal.
See \fref{fig:productDyckPaths}.

\begin{figure}[ht]
	\capstart
	\centerline{
		$\raisebox{-0.45\height}{\includegraphics[scale=.5]{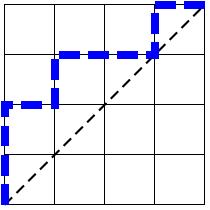}} \gdshuffle \raisebox{-0.45\height}{\includegraphics[scale=.5]{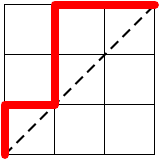}} =
		\raisebox{-0.45\height}{\includegraphics[scale=.3]{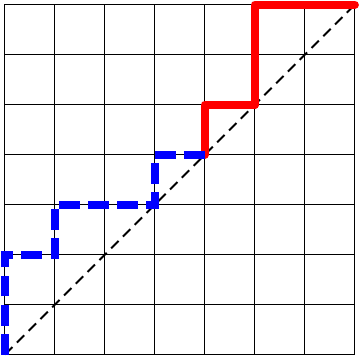}} + \raisebox{-0.45\height}{\includegraphics[scale=.3]{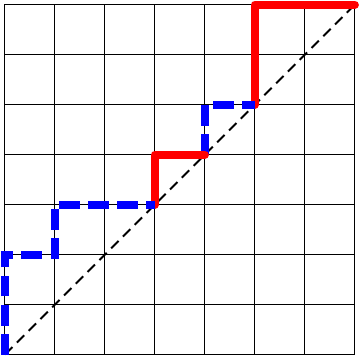}} + \dots + \raisebox{-0.45\height}{\includegraphics[scale=.3]{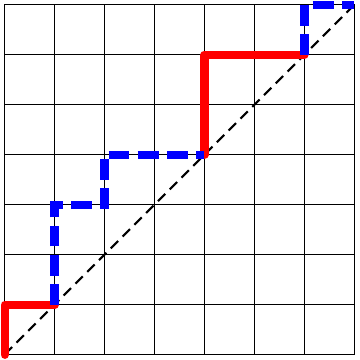}} + \dots + \raisebox{-0.45\height}{\includegraphics[scale=.3]{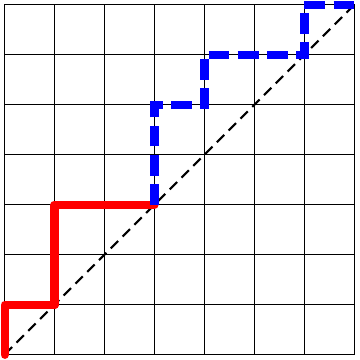}}$
	}
	\caption{The product~$\gdshuffle$ on Dyck paths.}
	\label{fig:productDyckPaths}
\end{figure}
\end{itemize}
\end{remark}

Finally, we pull back this Hopf subalgebra of dominant permutations of~$\HS$ to a Hopf subalgebra of~$\HP$ via the Hopf morphism $\omega \colon \HP \to \HS$ (see Proposition~\ref{prop:descentHopf}). 

A \defn{dominant pipe dream} is a pipe dream~$P$ whose permutation~$\omega_P$ is dominant. We denote the set of dominant pipe dreams by
\[
\pipeDreamsDom_n  \eqdef \set{P \in \pipeDreams_n}{\omega_P \in \Sdom_n}
\qquad\text{and}\qquad
\pipeDreamsDom \eqdef \bigsqcup_{n \in \N} \pipeDreamsDom_n.
\]

\begin{corollary}
The subspace~$\HPdom$ defines a Hopf subalgebra of~$(\HP, \product, \coproduct)$.
\end{corollary}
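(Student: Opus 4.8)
The plan is to mirror the argument used for restricted atom sets in Section~\ref{sec:subalgebrasAtoms}: the dominant pipe dreams are exactly those mapped by the Hopf morphism $\omega\colon(\HP,\product,\coproduct)\to(\HS,\gdshuffle,\gddeconcat)$ of Proposition~\ref{prop:descentHopf} into the Hopf subalgebra $\HSdom$, and the preceding corollary guarantees that $\HSdom$ is closed under $\gdshuffle$ and $\gddeconcat$. It therefore suffices to transport closure from $\HS$ back to $\HP$, which I would carry out directly on the pipe dream basis.

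First I would check stability under the product. Let $P,Q\in\pipeDreamsDom$ and let $s$ be a $P/Q$-shuffle. By Proposition~\ref{prop:descentproduct}, the exiting permutation $\omega_{P\star_s Q}$ is precisely the term of $\omega_P\gdshuffle\omega_Q$ indexed by $s$. Since $\omega_P,\omega_Q$ are dominant and $\HSdom$ is a $\gdshuffle$-subalgebra, $\omega_P\gdshuffle\omega_Q$ is a linear combination of dominant permutations; as these are part of the permutation basis, every term that appears is dominant. Hence $\omega_{P\star_s Q}\in\Sdom$, so $P\star_s Q\in\pipeDreamsDom$ and therefore $P\product Q\in\HPdom$.

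Next I would check stability under the coproduct. Let $P\in\pipeDreamsDom_n$, and recall that $\coproduct(P)=\sum_\gamma \lmir_\gamma(P)\otimes\lrot_{n-\gamma}(P)$, with $\gamma$ ranging over the global descents of $\omega_P$. By Proposition~\ref{prop:codescentproduct}, applying $\omega\otimes\omega$ sends the $\gamma$-summand to the $\gamma$-summand of $\gddeconcat(\omega_P)$, namely $\omega_{\lmir_\gamma(P)}\otimes\omega_{\lrot_{n-\gamma}(P)}$. As $\HSdom$ is a subcoalgebra, $\gddeconcat(\omega_P)\in\HSdom\otimes\HSdom$, and reading off a single pure tensor forces both $\omega_{\lmir_\gamma(P)}$ and $\omega_{\lrot_{n-\gamma}(P)}$ to be dominant. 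Thus each packing is a dominant pipe dream and $\coproduct(P)\in\HPdom\otimes\HPdom$. Together with the facts that the unit $\elbow$ lies in $\pipeDreamsDom$ and that $\HPdom$ is graded and connected, so that the antipode of $\HP$ restricts, this shows $\HPdom$ is a Hopf subalgebra.

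The step I expect to require the most care is justifying that this is genuinely a basis-level verification rather than a formal ``preimage of a Hopf subalgebra'' argument. The subspace-theoretic preimage $\omega^{-1}(\HSdom)$ strictly contains $\HPdom$: any difference $P_1-P_2$ of two distinct pipe dreams sharing a common non-dominant exiting permutation maps to $0\in\HSdom$, yet its coproduct need not lie in $\HPdom\otimes\HPdom$, so $\omega^{-1}(\HSdom)$ is not even a subcoalgebra. The resolution is that $\omega$ carries each basis pipe dream to a single basis permutation and that $\coproduct$ of a basis pipe dream is a sum of pure tensors of pipe dreams; this is exactly what lets dominance be read off factor by factor in the two steps above, and it is the reason the verification must be performed on the basis $\pipeDreamsDom$ itself.
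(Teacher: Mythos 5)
Your proof is correct and takes essentially the same route as the paper: the paper disposes of this corollary by pulling back $\HSdom$ along the Hopf morphism $\omega \colon \HP \to \HS$ of Proposition~\ref{prop:descentHopf}, exactly as in the theorem on restricted atom sets of Section~\ref{sec:subalgebrasAtoms} (combined with the remark that $\HPdom = \HP\langle S^{\operatorname{dom}}\rangle$ where $S^{\operatorname{dom}}$ is the set of dominant atomic permutations). Your basis-level verification of the two closures is precisely the content that makes that terse ``inverse image'' argument rigorous, and it is sound.

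One caution about your closing paragraph. To conclude that $\omega^{-1}(\HSdom)$ is not a subcoalgebra, you need an element whose coproduct escapes $\omega^{-1}(\HSdom)\otimes\omega^{-1}(\HSdom)$, not merely $\HPdom\otimes\HPdom$, and these are genuinely different conditions: if $\omega_{P_1}=\omega_{P_2}$ is a non-dominant \emph{atomic} permutation, then $\coproduct(P_1-P_2)$ has only the two trivial terms, and both of them do lie in $\omega^{-1}(\HSdom)\otimes\omega^{-1}(\HSdom)$, since $P_1-P_2\in\ker\omega$ and the empty pipe dream is dominant. Your claim is nevertheless true, but it requires a real witness; for instance, take $P_1,P_2\in\pipeDreams(35421)$ (note $35421=132\gdproduct 21$, with $132$ non-dominant) whose horizontal packings at the global descent $3$ differ, $\lrot_2(P_1)\neq\lrot_2(P_2)$ --- such pairs exist because untangling at this descent surjects onto $\pipeDreams(132)\times\pipeDreams(21)$. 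Applying to the first tensor factor of $\coproduct(P_1-P_2)$ the composite of $\omega$ with the quotient $\HS\to\HS/\HSdom$ leaves exactly one nonzero term, namely the class of $132$ tensored with $\lrot_2(P_1)-\lrot_2(P_2)$, whereas this map annihilates all of $\omega^{-1}(\HSdom)\otimes\HP$. None of this affects your main argument, which stands as written.
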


\begin{remark}
It follows from Proposition~\ref{prop:gdproductDominant} that~$\HSdom$ and~$\HPdom$ can also be viewed as Hopf subalgebras of~$\HS$ and~$\HP$ arising from restricted atom sets. Namely,~$\HSdom = \HS \langle S^{\operatorname{dom}} \rangle$ and~$\HPdom = \HP \langle S^{\operatorname{dom}} \rangle$ where~$S^{\operatorname{dom}}$ is the set of dominant atomic permutations.
\end{remark}


\subsection{The Hopf algebra of $\nu$-trees}
\label{subsec:nuTrees}

We now consider the following family of combinatorial objects defined by C.~Ceballos, A.~Padrol and C.~Sarmiento in~\cite{CeballosPadrolSarmiento-nuTamariSubwordComplexes}.
In the following, we consider a Dyck path drawn on the semi-integer lattice~$(1/2, 1/2) + \Z^2$ and points on the lattice~$\Z^2$.

\begin{definition}[\cite{CeballosPadrolSarmiento-nuTamariSubwordComplexes}]
\label{def:nuTrees}
Let~$\nu$ be a Dyck path of size~$n$ drawn on the semi-integer lattice.
\begin{enumerate}
\item Two lattice points~$p,q$ inside the $n \times n$ grid and weakly above $\nu$ are said \defn{$\nu$-incompatible} if~$p$ is located strictly southwest or northeast to~$q$, and the smallest rectangle containing~$p$ and~$q$ lies above~$\nu$. Otherwise, $p$ and $q$ are called \defn{$\nu$-compatible}.
\item A \defn{$\nu$-tree} is a maximal collection of pairwise $\nu$-compatible lattice points, called \defn{nodes}.
\item Two $\nu$-trees are related by a \defn{rotation} if they differ by only two nodes.
\end{enumerate}
We let~$\nuTrees(\nu)$ the set of $\nu$-trees and we let~$\nuTrees_n \eqdef \bigsqcup_{\nu} \nuTrees(\nu)$ and~$\nuTrees \eqdef \bigsqcup_{n \in \N} \nuTrees_n$.
\end{definition}

A $\nu$-tree $T$ can be viewed as a tree in the graph-theoretical sense by connecting each node~$p$ of~$T$ with the next node of~$T$ below it (if any), and with the next node of~$T$ to its right (if any). Since the nodes of~$T$ are pairwise $\nu$-compatible, the resulting graph is a rooted binary tree with no cycles and no crossings, with its root located at the top-left corner of the grid. For example, we obtain classical complete binary trees when~$\nu = (NE)^n$ is the staircase Dyck path. See Section~\ref{subsec:LodayRonco}. The rotation operation on $\nu$-trees is then similar to the classical rotation on binary trees. \fref{fig:nuTrees} presents some examples. The connection with dominant pipe dreams will be explained in the next section.

\begin{figure}[ht]
	\capstart
	\centerline{$\raisebox{-.5\height}{\includegraphics[scale=0.3]{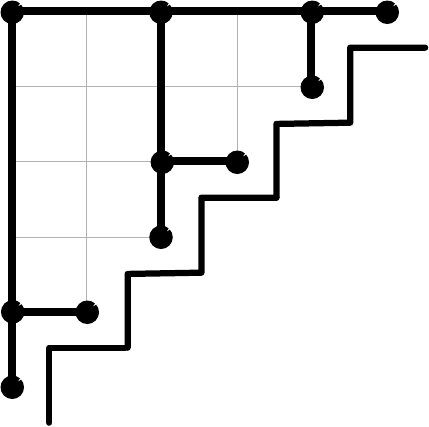}} \qquad \raisebox{-.5\height}{\includegraphics[scale=0.3]{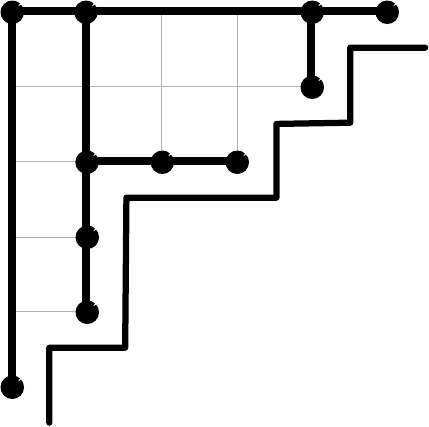}} \qquad \raisebox{-.5\height}{\includegraphics[scale=0.3]{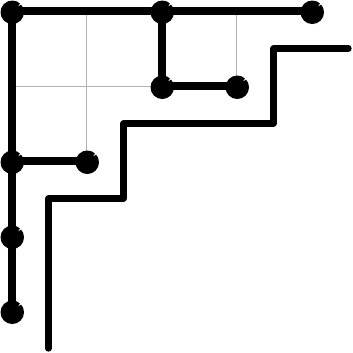}} \qquad \raisebox{-.5\height}{\includegraphics[scale=0.3]{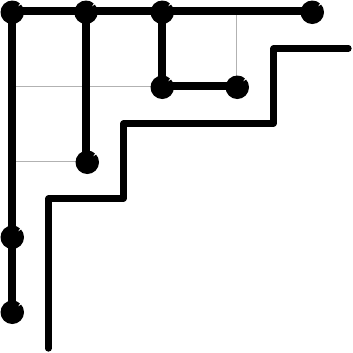}}$}
	\caption{Some $\nu$-trees for different Dyck paths~$\nu$: the first is just the binary tree of \fref{fig:bijection}, the last two are related by a rotation.}
	\label{fig:nuTrees}
\end{figure}

We consider the graded space~$\HT = \bigoplus_{n\ge 0} \HT_n$, where~$\HT_n$ is the~$\bk$-span of all $\nu$-trees with~$\nu$ varying over all Dyck paths of size~$n$.
We now define a product and coproduct on $\HT$ that endow it with a Hopf algebra structure. 
These operations are very similar to that of the Loday--Ronco Hopf algebra~\cite{LodayRonco, AguiarSottile-LodayRonco}, and we call the resulting Hopf algebra the generalized Loday--Ronco Hopf algebra on $\nu$-trees.
The connection to the Hopf algebra of dominant pipe dreams will appear in the next section. 

\subsubsection{Packings on $\nu$-trees}
\label{subsubsec:packingsNuTrees}

A leaf of a $\nu$-tree~$T$ is called a \defn{diagonal leaf} if it belongs to the main diagonal of the $n\times n$ grid. A diagonal leaf~$b$ in~$T$ divides the path~$\nu$ into two paths~$\b{\nu_\ell}$ (on the left) and~$\r{\nu_r}$ (on the right). Cutting the tree~$T$ along the path from~$b$ to its root gives rise to two trees~$\b{\tilde T_\ell}$ (on the left) and $\r{\tilde T_r}$ (on the right). We define the \defn{vertical packing}~$\b{\lmir_b(T)}$ as the $\b{\nu_\ell}$-tree obtained by contracting all vertical segments of $\b{\tilde T_\ell}$ that are above~$b$. Similarly, the \defn{horizontal packing}~$\r{\lrot_b(T)}$ is the $\r{\nu_r}$-tree obtained by contracting all horizontal segments of~$\r{\tilde T_r}$ that are on the left of~$b$. These operations are illustrated on Figures~\ref{fig:horizontalPackingNuTree} and~\ref{fig:verticalPackingNuTree}, and will correspond to the packing operations on dominant pipe dreams as described in Section~\ref{subsec:packings}

\begin{figure}[ht]
	\capstart
	\centerline{$\r{\lrot_b}\left( \raisebox{-.5\height}{\includegraphics[scale=.3]{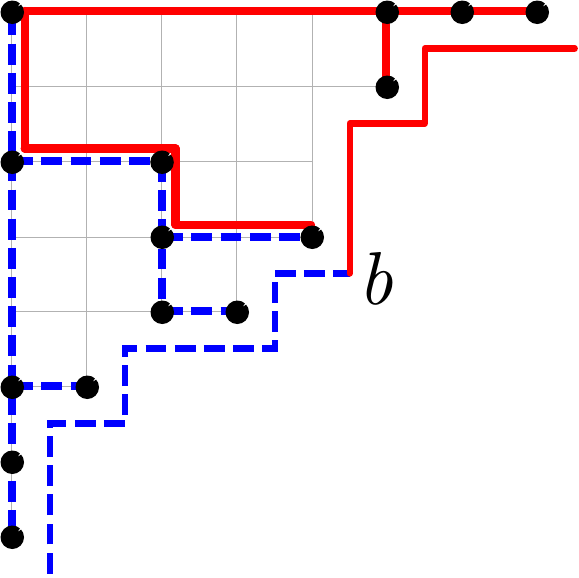}} \right) = \raisebox{-.5\height}{\includegraphics[scale=.3]{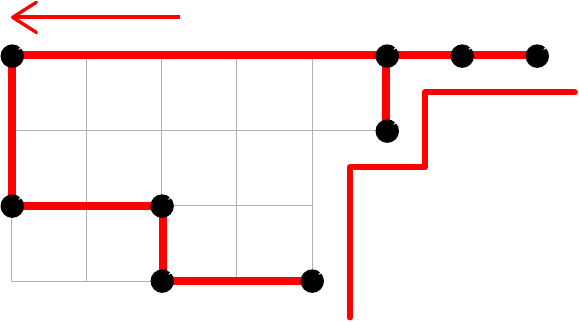}} = \raisebox{-.5\height}{\includegraphics[scale=.3]{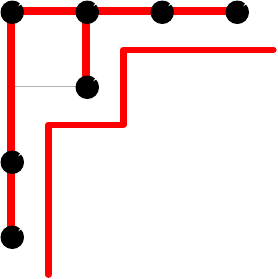}}$}
	\caption{The horizontal packing of a $\nu$-tree at a diagonal leaf~$b$.}
	\label{fig:horizontalPackingNuTree}
\end{figure}

\begin{figure}[ht]
	\capstart
	\centerline{$\b{\lmir_b}\left( \raisebox{-.5\height}{\includegraphics[scale=.3]{nuTreePacking1}} \right) = \raisebox{-.5\height}{\includegraphics[scale=.3]{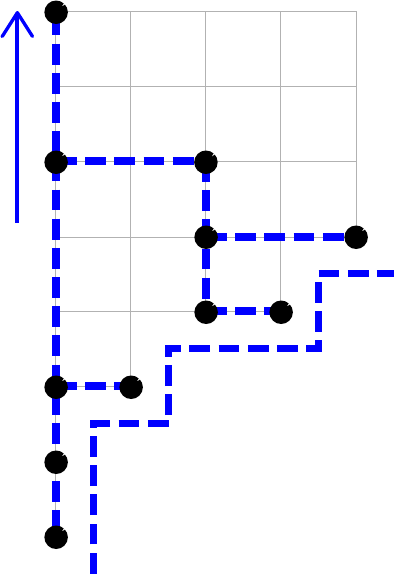}} = \raisebox{-.5\height}{\includegraphics[scale=.3]{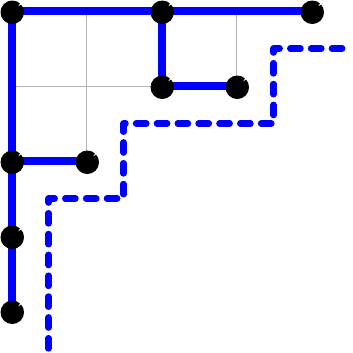}}$}
	\caption{The vertical packing of a $\nu$-tree at a diagonal leaf~$b$.}
	\label{fig:verticalPackingNuTree}
\end{figure}

\subsubsection{Coproduct on $\nu$-trees}
\label{subsubsec:coproductNuTrees}

We define the coproduct of a $\nu$-tree~$T$ as
\[
\coproduct(T)= \sum \b{\lmir_b(T)} \otimes \r{\lrot_b(T)},
\]
where the sum runs over all diagonal leaves~$b$ of~$T$. See \fref{fig:coproductNuTrees}. This operation is extended linearly to~$\HT$.

\begin{figure}[ht]
	\capstart
	\centerline{$\coproduct \left( \raisebox{-.5\height}{\includegraphics[scale=.3]{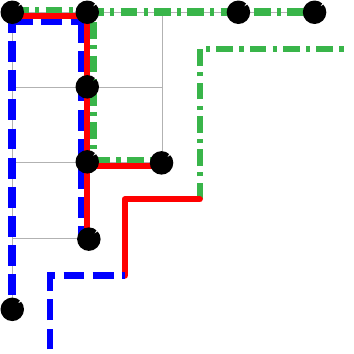}} \right) = \raisebox{-.5\height}{\includegraphics[scale=.3]{nuTreeCoproduct1}} \otimes \epsilon + \raisebox{-.5\height}{\includegraphics[scale=.3]{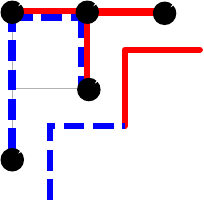}} \otimes \raisebox{-.5\height}{\includegraphics[scale=.3]{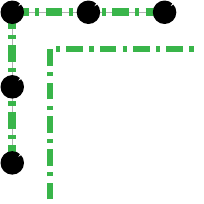}} + \raisebox{-.5\height}{\includegraphics[scale=.3]{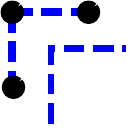}} \otimes \raisebox{-.5\height}{\includegraphics[scale=.3]{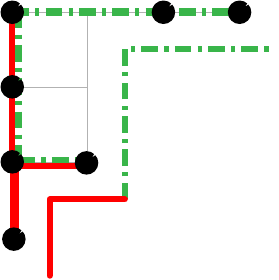}} + \epsilon \otimes \raisebox{-.5\height}{\includegraphics[scale=.3]{nuTreeCoproduct1}}$}
	\caption{The coproduct of a $\nu$-tree.}
	\label{fig:coproductNuTrees}
\end{figure}

\subsubsection{Product on $\nu$-trees}
\label{subsubsec:productNuTrees}

Let~${\bf b} = (b_1, \dots, b_{\ell-1})$ be a tuple of $\ell-1$ diagonal leaves of a $\nu$-tree~$T$ which are located in order along the main diagonal with possible repetitions. They partition~$\nu$ into~$\ell$ Dyck paths $\nu_1, \dots, \nu_\ell$. The tree~$T$ is subdivided into~$\ell$ trees~$\tilde T_1, \dots, \tilde T_\ell$ by cutting along the paths from the leaves~$b_i$ to the root. Define~$T_i$ to be the $\nu_i$-tree obtained by contracting segments of~$\tilde T_i$ that are either horizontal on the left of~$b_{i-1}$ or vertical above~$b_i$. By convention, $b_0$~and~$b_\ell$ denote two extra leaves at coordinates~$(0,0)$ and~$(n,n)$ respectively. 

Given a $\mu$-tree~$S$ and a $\nu$-tree~$T$ we will define the product~$S\cdot T$ as follows.
If~$S$ has~$\ell$ diagonal leaves, we choose a tuple~${\bf b} = (b_1, \dots, b_{\ell-1})$ of $\ell-1$ leaves of~$T$ and we ``cut''~$T$ along~$\bf b$ to produce~$\ell$ trees~$T_1, \dots, T_\ell$ as described above. See \fref{fig:cuttingNuTrees}. We then ``glue" these trees~$T_1, \dots, T_\ell$ on the~$\ell$ diagonal leaves of~$S$. The resulting tree~$S \star_{\bf b} T$ is a $\lambda$-tree for some Dyck path~$\lambda$ obtained as a shuffle of~$\mu$ and~$\nu$ with cuts at diagonal leaves, see \fref{fig:cutGlue}. 

\begin{figure}[ht]
	\capstart
	\centerline{$\raisebox{-.5\height}{\includegraphics[scale=0.3]{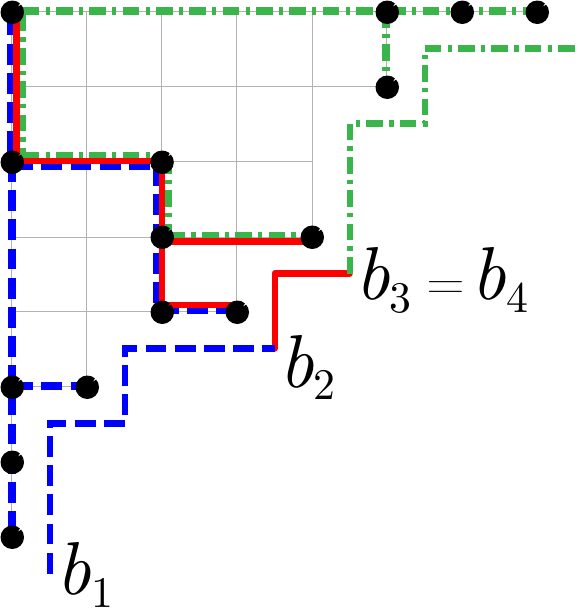}} \qquad \longrightarrow \qquad \raisebox{-.5\height}{\includegraphics[scale=0.3]{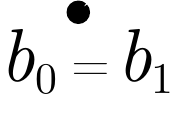}} \qquad \raisebox{-.5\height}{\includegraphics[scale=0.3]{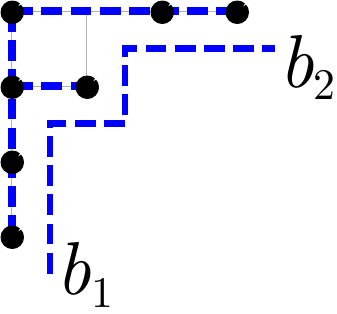}} \qquad \raisebox{-.5\height}{\includegraphics[scale=0.3]{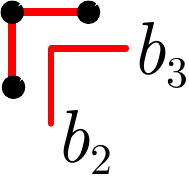}} \qquad \raisebox{-.5\height}{\includegraphics[scale=0.3]{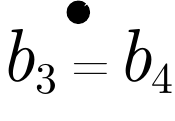}} \qquad \raisebox{-.5\height}{\includegraphics[scale=0.3]{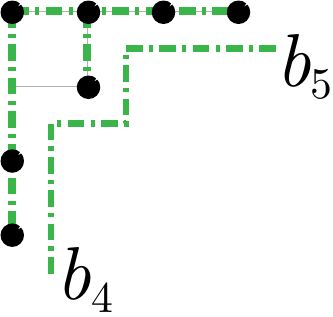}}$ }
	\caption{Cutting a $\nu$-tree into $5$ smaller trees along diagonal leaves $(b_1, b_2, b_3, b_4)$.}
	\label{fig:cuttingNuTrees}
\end{figure}

\begin{figure}[ht]
	\capstart
	\centerline{$\raisebox{-.5\height}{\includegraphics[scale=0.3]{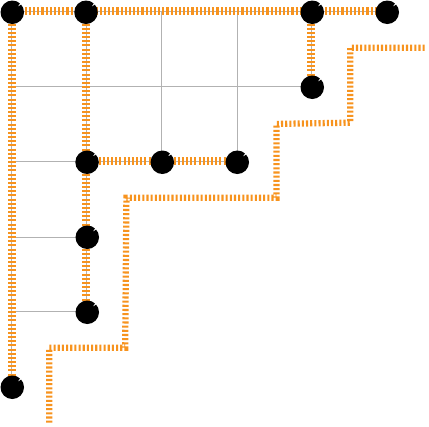}} \star_{(b_1,b_2,b_3,b_4)} \quad \raisebox{-.5\height}{\includegraphics[scale=0.3]{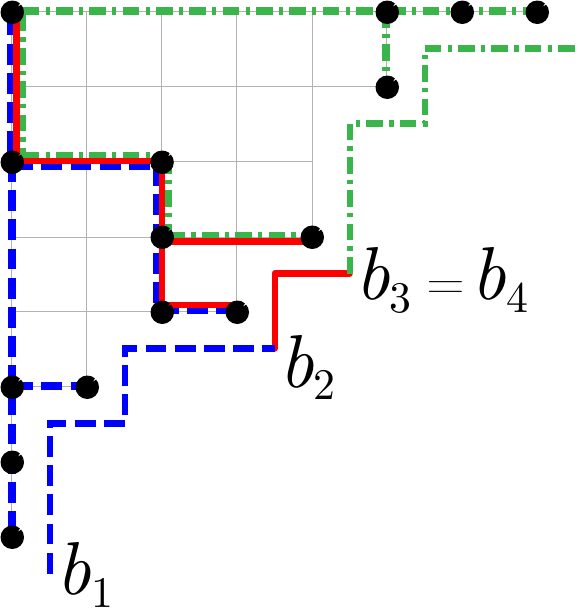}} \quad = \quad \raisebox{-.5\height}{\includegraphics[scale=0.3]{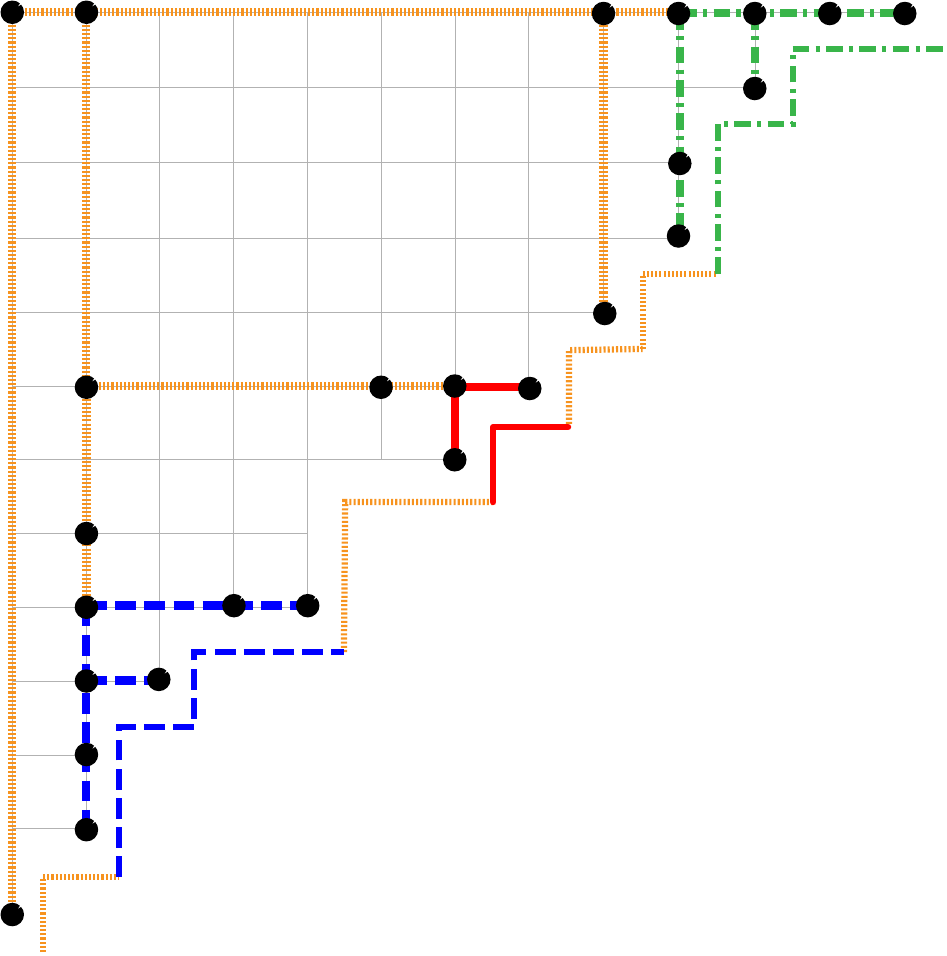}}$}
	\caption{The cut-and-glue operation~$S\star_{\bf b} T$ of a $\mu$-tree~$S$ with a $\nu$-tree~$T$ according to the diagonal leaves ${\bf b}=(b_1, b_2, b_3, b_4)$.}
	\label{fig:cutGlue}
\end{figure}

If~$S$ has~$\ell$ diagonal leaves, we define the product of~$S$ and~$T$ by
\[
S\cdot T = \sum_{\bf b} S \star_{\bf b} T,
\]
where the sum ranges over all ordered tuples~${\bf b}=(b_1,\dots ,b_{\ell-1})$ of~$\ell-1$ diagonal leaves in~$T$ with possible repetitions. An example of this product is illustrated in \fref{fig:productNuTrees}.

\begin{figure}[ht]
	\capstart
	\centerline{$
		\raisebox{-.5\height}{\includegraphics[scale=0.3]{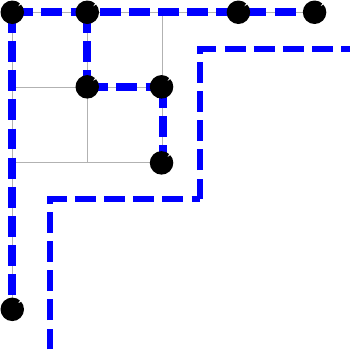}} \product \raisebox{-.5\height}{\includegraphics[scale=0.3]{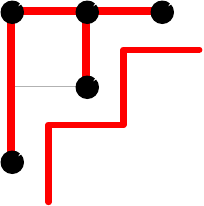}} =
		\begin{array}[t]{c}
			\; \raisebox{-.5\height}{\includegraphics[scale=0.3]{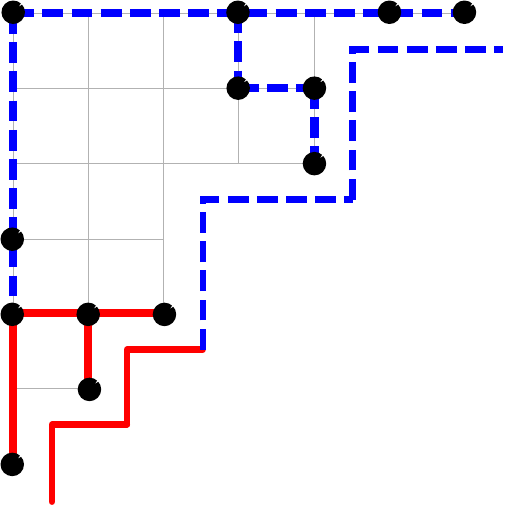}} \,+\, \raisebox{-.5\height}{\includegraphics[scale=0.3]{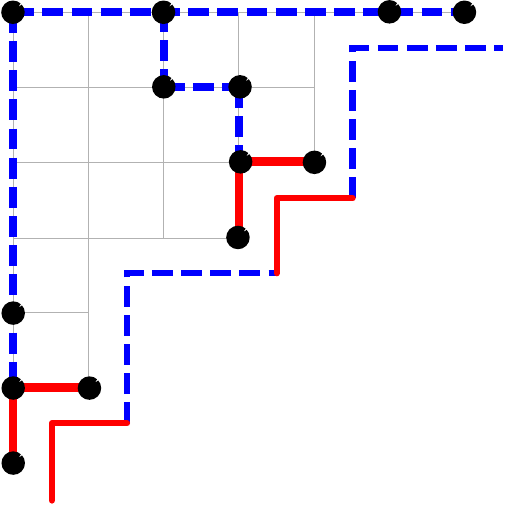}} \,+\, \raisebox{-.5\height}{\includegraphics[scale=0.3]{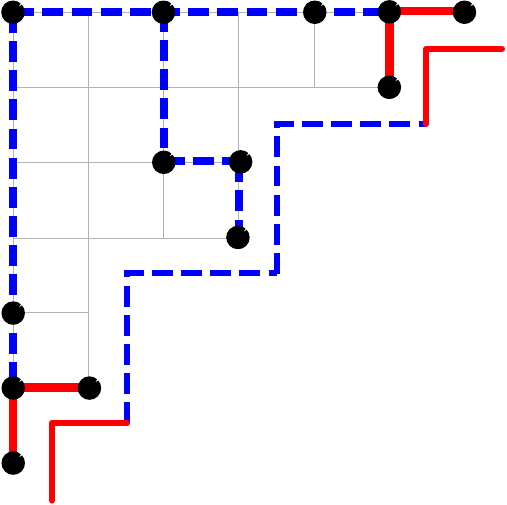}} \\ \\
			\!\!+\, \raisebox{-.5\height}{\includegraphics[scale=0.3]{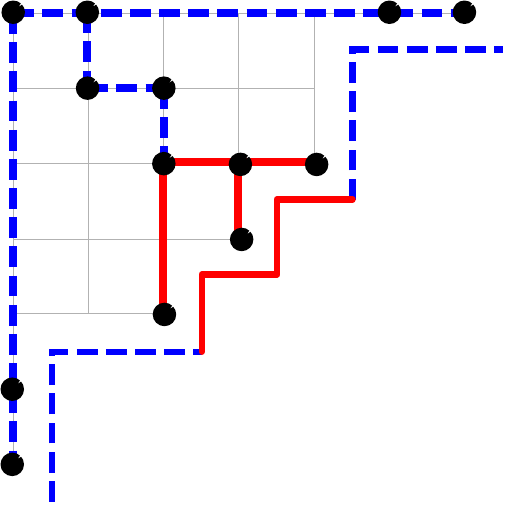}} \,+\, \raisebox{-.5\height}{\includegraphics[scale=0.3]{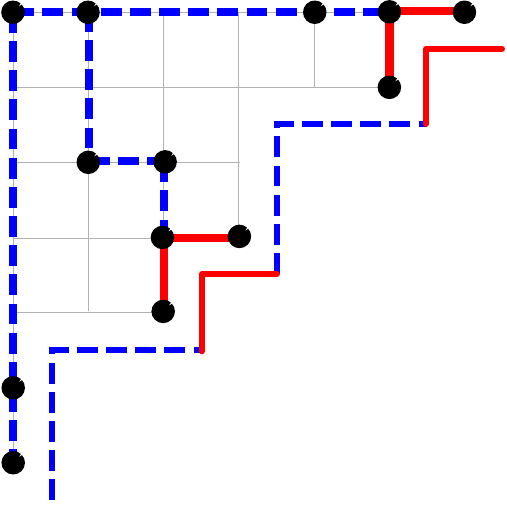}} \,+\, \raisebox{-.5\height}{\includegraphics[scale=0.3]{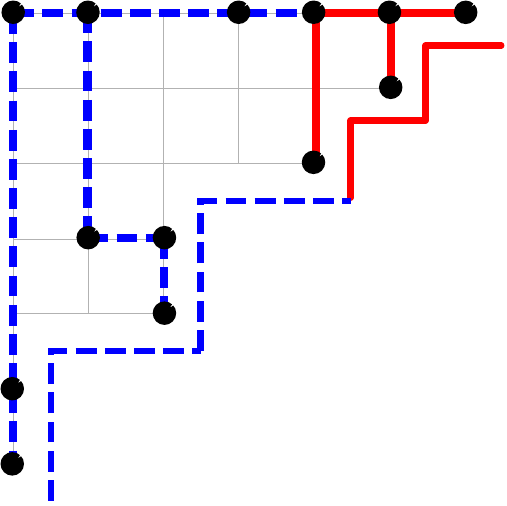}}
		\end{array}
	$}
	\caption{The product of a $\mu$-tree and a $\nu$-tree.}
	\label{fig:productNuTrees}
\end{figure}

The following statement will follow from Theorem~\ref{thm:dominantPipeDreamsVSNuTrees}.

\begin{proposition}
The product~$\product$ and coproduct~$\coproduct$ endow the family of~$\nuTrees$ of $\nu$-trees for all Dyck paths~$\nu$ with a graded connected Hopf algebra structure.
\end{proposition}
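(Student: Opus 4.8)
The plan is to deduce this from the isomorphism Theorem~\ref{thm:dominantPipeDreamsVSNuTrees} by a transfer-of-structure argument. We already know that $\HPdom$ is a graded connected Hopf subalgebra of $(\HP, \product, \coproduct)$ (this is the corollary following Proposition~\ref{prop:gdproductDominant}, and $\HP$ itself is graded and connected by Proposition~\ref{prop:HopfAlgebraFixedDepth}), so $\HPdom$ satisfies every Hopf algebra axiom. The point is that the map $\Psi$ of Theorem~\ref{thm:dominantPipeDreamsVSNuTrees} is a dictionary translating dominant pipe dreams into $\nu$-trees compatibly with every operation used to define the algebraic structure, so the axioms can simply be pushed forward along $\Psi$.

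First I would invoke Theorem~\ref{thm:dominantPipeDreamsVSNuTrees} to obtain that, in each degree $n$, the map $\Psi$ is a bijection from $\pipeDreamsDom_n$ onto $\nuTrees_n$; extending linearly yields a graded linear isomorphism $\Psi \colon \HPdom \to \HT$. The essential input from that theorem is that $\Psi$ intertwines the combinatorial operations: the vertical and horizontal packings of Section~\ref{subsec:packings} correspond under $\Psi$ to the vertical and horizontal packings on $\nu$-trees of Section~\ref{subsubsec:packingsNuTrees}. Since the coproduct on each side is assembled solely from these packings summed over the relevant diagonal cuts, and the product on each side is assembled by cutting one factor at its global descents (respectively diagonal leaves) and inserting the resulting pieces (tangling for pipe dreams, cut-and-glue for $\nu$-trees), this single compatibility upgrades to the intertwining identities
\[
\Psi(P \product Q) = \Psi(P) \product \Psi(Q)
\qquad\text{and}\qquad
(\Psi \otimes \Psi) \circ \coproduct = \coproduct \circ \Psi,
\]
valid for all dominant pipe dreams $P, Q$, where on the right-hand sides $\product$ and $\coproduct$ denote the combinatorially defined $\nu$-tree operations of Sections~\ref{subsubsec:coproductNuTrees} and~\ref{subsubsec:productNuTrees}.

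With these identities in hand the proposition is immediate. Associativity and coassociativity of $\product$ and $\coproduct$ on $\HT$ follow from those on $\HPdom$ by conjugating with $\Psi$ and $\Psi^{-1}$, and the bialgebra compatibility transfers verbatim. The unit of $\HT$ is the image under $\Psi$ of the unique degree-$0$ pipe dream, the counit is $\epsilon \circ \Psi^{-1}$, and the unit and counit axioms transfer along the isomorphism. The grading is preserved because $\Psi$ is degree-preserving, and $\HT$ is connected because its degree-$0$ component is one-dimensional (spanned by the empty $\nu$-tree). Finally, a graded connected bialgebra over a field automatically admits an antipode, so $(\HT, \product, \coproduct)$ is a graded connected Hopf algebra.

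The main obstacle is therefore not located in the present proposition but is entirely absorbed into Theorem~\ref{thm:dominantPipeDreamsVSNuTrees}: one must verify that $\Psi$ genuinely commutes with the packing operations on both sides, i.e.\ that contracting the vertical segments of a cut $\nu$-tree matches the vertical packing $\b{\lmir}$ of the corresponding dominant pipe dream, and symmetrically for the horizontal packing $\r{\lrot}$. Once that geometric compatibility is established, the product and coproduct compatibilities follow because the two structures are built from packings by identical recipes, and the Hopf axioms then transport purely formally.
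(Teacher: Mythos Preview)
Your approach is exactly the one the paper takes: it states explicitly that this proposition follows from Theorem~\ref{thm:dominantPipeDreamsVSNuTrees}, and the proof of that theorem establishes that $\Psi$ intertwines the packings (hence the product and coproduct), so the Hopf axioms transfer from $\HPdom$ to $\HT$ via the graded bijection $\Psi$. Your write-up simply makes this transfer-of-structure argument explicit.
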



\subsection{Dominant pipe dreams versus $\nu$-trees}
\label{subsec:dominantPipeDreamsVSNuTrees}

We now connect the dominant pipe dreams with the $\nu$-trees and show that the Hopf algebras considered in the previous two sections are isomorphic. 
For this, we consider the map~$\Psi$, illustrated on \fref{fig:bijectionNuTreeDominantPipeDream}, that sends a pipe dream~$P \in \pipeDreams(\omega)$ 
with dominant permutation $\omega$ to a $\nu$-tree $T$ where $\nu=\pi_\omega$ is the Dyck path associated to $\omega$.
This $\nu$-tree is defined as the set of lattice points~$\Psi(P)$ given by the elbows of~$P$ located in the topmost row or leftmost column, or inside the Rothe diagram of~$\omega_P$.
\begin{figure}[t]
	\capstart
	\centerline{\includegraphics[scale=0.4]{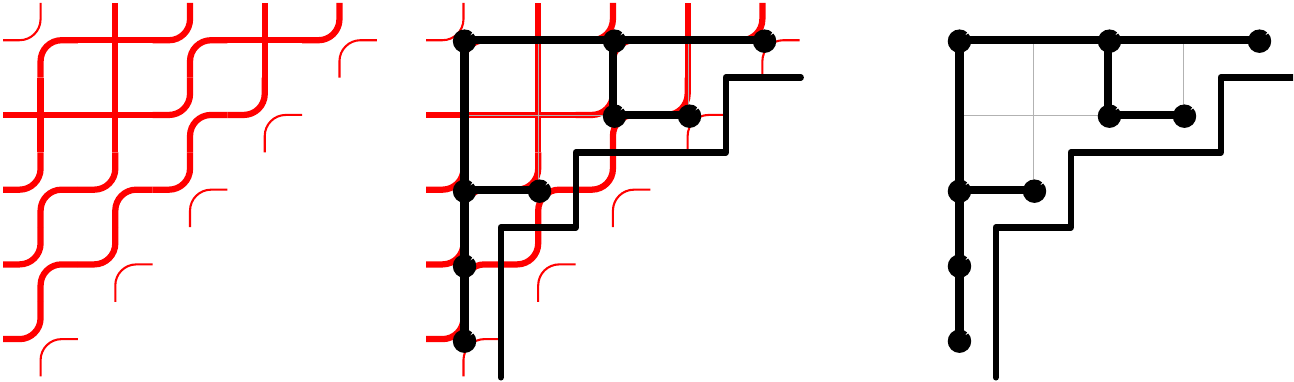}}
	\caption{The bijection between dominant pipe dreams and $\nu$-trees: replace each elbow of $P$ that are in the Rothe diagram of~$\omega$ by a node.}
	\label{fig:bijectionNuTreeDominantPipeDream}
\end{figure}

\begin{proposition}[\cite{SerranoStump, CeballosPadrolSarmiento-nuTamariSubwordComplexes}]
\label{prop:bijectionDominantPipeDreamsNuTrees}
For any dominant permutation~$\omega$ with corresponding Dyck path~$\nu=\pi_\omega$, the map~$\Psi$ is a bijection between the dominant pipe dreams in~$\pipeDreams(\omega)$ and the $\nu$-trees of~$\nuTrees(\nu)$, which sends flips in dominant pipe dreams to rotations in~$\nu$-trees.
\end{proposition}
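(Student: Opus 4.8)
The plan is to realize $\Psi$ as a dictionary between two descriptions of the same combinatorial data, and then to verify the three assertions — that $\Psi$ lands in $\nuTrees(\nu)$, that it is a bijection, and that it intertwines flips and rotations — in that order. The natural conceptual framework is that of subword complexes: both $\pipeDreams(\omega)$ and $\nuTrees(\nu)$ arise as facet sets of one and the same complex, which is exactly what \cite{CeballosPadrolSarmiento-nuTamariSubwordComplexes} exploits. I would nevertheless keep the argument concrete and pipe-dream-theoretic.

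First I would set up the dictionary. Since $\omega$ is dominant, its Rothe diagram $R_\omega$ is a partition in the northwest corner and, by Proposition~\ref{prop:gdproductDominant}, $\nu = \pi_\omega$ is precisely the lattice path bounding $R_\omega$. A standard property of dominant permutations is that the crosses \cross{} of every reduced $P \in \pipeDreams(\omega)$ are confined to the region weakly above $\nu$, equivalently every cell weakly below $\nu$ carries a forced elbow \elbow{}. I would then identify the cells of the \emph{relevant region} of $P$ — the cells in the topmost row, the leftmost column, and the interior of $R_\omega$ — with the lattice points lying weakly above $\nu$ inside the $n \times n$ grid, so that the forced boundary elbows of $P$ map to the forced nodes on $\nu$. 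Under this identification, placing a node at each lattice point carrying an elbow \elbow{} (and no node at each cross \cross{}) is exactly the map $\Psi$. Because the number of crosses is the invariant $\ell(\omega) = |R_\omega|$, the number of designated elbows — hence the number of nodes of $\Psi(P)$ — is the same for every $P \in \pipeDreams(\omega)$.

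The heart of the proof is the equivalence between pairwise $\nu$-compatibility of the designated nodes and reducedness of $P$. Given two elbows of $P$ sitting at lattice points $p$ (to the southwest) and $q$ (to the northeast) whose spanning rectangle lies above $\nu$, I would trace the pipe strands through these two elbows and show that, were $p$ and $q$ to be $\nu$-incompatible, the two pipes passing through these elbows would be forced to cross twice inside that rectangle, contradicting reducedness; conversely, any double crossing produces such an incompatible pair. This is where the rectangle-above-$\nu$ hypothesis of Definition~\ref{def:nuTrees} matches the pipe geometry, and it is the step I expect to be the main obstacle, since it requires a careful case analysis of how the pipes enter and leave the rectangle through its four sides. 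Granting this equivalence, $\Psi(P)$ is a set of pairwise $\nu$-compatible points of the fixed maximal cardinality, hence a $\nu$-tree, so $\Psi$ is well-defined.

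For bijectivity I would exhibit the inverse explicitly: given a $\nu$-tree $T$, place an elbow \elbow{} at every node of $T$, a cross \cross{} at every lattice point weakly above $\nu$ that is not a node, and an elbow at every cell weakly below $\nu$. The same compatibility-versus-reducedness equivalence shows the result is a reduced pipe dream, and one verifies directly, using Proposition~\ref{prop:gdproductDominant} to read off the diagram, that its exiting permutation is $\omega$. These two constructions are visibly mutually inverse. Finally, for the flip/rotation statement, observe that a flip in $P$ exchanges an elbow \elbow{} with a cross \cross{} on two pipes: in the dictionary this removes exactly one node and creates exactly one node, so $\Psi(P)$ and $\Psi(P')$ differ in exactly two nodes, which is a rotation by Definition~\ref{def:nuTrees}; the converse is the same statement read backwards. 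I would close by noting that flips preserve $\omega_P$ and therefore stay within $\pipeDreams(\omega)$, so that the corresponding rotations connect $\nu$-trees for the \emph{same} path $\nu$.
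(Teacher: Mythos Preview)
The paper does not give its own proof of this proposition: it is stated as a citation to \cite{SerranoStump} and \cite{CeballosPadrolSarmiento-nuTamariSubwordComplexes} and used as a black box. So there is no ``paper's proof'' to compare against; your sketch is effectively a reconstruction of the argument in those references, and it is indeed close to the subword-complex approach of \cite{CeballosPadrolSarmiento-nuTamariSubwordComplexes}.

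Your outline is sound and the four steps are the right ones. Two remarks on places that would need tightening. First, the assertion you label ``a standard property of dominant permutations'' --- that every cell outside the relevant region (row~$0$, column~$0$, and~$R_\omega$) is a forced elbow, hence all crosses live inside that region --- is the lemma on which the whole bijection rests, and it is not entirely trivial. It is true (for instance because the simple reflection attached to a cell below~$\nu$ never appears in any reduced word for~$\omega$, or by directly checking which two pipes traverse that cell and observing they are non-inverted in~$\omega$), but calling it ``standard'' and moving on hides the one place where dominance is genuinely used. Second, in the compatibility--reducedness step your heuristic ``the two pipes would be forced to cross twice inside that rectangle'' is not quite how the argument goes: two elbows in southwest--northeast position do not by themselves produce a double crossing. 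What one actually shows is that replacing either elbow by a cross yields a valid flip, and the rectangle-above-$\nu$ condition is exactly what guarantees that the two positions are exchangeable by such a flip --- i.e.\ they index the same pair of pipes. This is the subword-complex statement that two vertices are exchangeable if and only if they are $\nu$-incompatible. Once that is in place, maximality of a $\nu$-tree matches reducedness of the pipe dream, and your inverse construction and flip/rotation correspondence go through as written.
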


Using the bijection of Proposition~\ref{prop:bijectionDominantPipeDreamsNuTrees}, we derive the following statement.

\begin{theorem}
\label{thm:dominantPipeDreamsVSNuTrees}
The map~$\Psi$ is a Hopf algebra isomorphism between the Hopf algebra~$(\HPdom, \product, \coproduct)$ of dominant pipe dreams and the Hopf algebra~$(\HT, \product, \coproduct)$ of~$\nu$-trees.
\end{theorem}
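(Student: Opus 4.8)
The plan is to use the bijection $\Psi$ of Proposition~\ref{prop:bijectionDominantPipeDreamsNuTrees} as the underlying linear map and to check that it transports the product $\product$ and coproduct $\coproduct$ on dominant pipe dreams to the operations defined on $\nu$-trees in Sections~\ref{subsubsec:coproductNuTrees} and~\ref{subsubsec:productNuTrees}. Since $\omega \mapsto \pi_\omega$ is a bijection from $\Sdom_n$ to the Dyck paths of size~$n$, taking the union over all dominant $\omega\in\Sdom_n$ of the bijections $\pipeDreams(\omega)\to\nuTrees(\pi_\omega)$ identifies $\pipeDreamsDom_n$ with $\nuTrees_n$. Extending $\Psi$ linearly thus yields a graded vector space isomorphism $\Psi\colon\HPdom\to\HT$ sending $\elbow$ to the trivial tree, so the counits match. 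It then suffices to show that $\Psi$ is simultaneously an algebra and a coalgebra morphism. Note that this strategy also establishes the (still unproven) Proposition that $\nu$-trees form a Hopf algebra: once $\Psi$ is shown to intertwine the explicitly defined operations on $\HT$ with those on $\HPdom$, the structure on $\HT$ is the transport along $\Psi$ of the Hopf structure of the subalgebra $(\HPdom,\product,\coproduct)\subseteq(\HP,\product,\coproduct)$, hence is a graded connected Hopf algebra and $\Psi$ is an isomorphism.

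The technical heart is a packing compatibility statement. By Proposition~\ref{prop:gdproductDominant}(i), the global descents $\gamma$ of a dominant permutation $\omega_P$ are exactly the diagonal contact points $(\gamma,\gamma)$ of $\pi_{\omega_P}$, which are exactly the diagonal leaves $b$ of $\Psi(P)$ (the two endpoints $(0,0)$ and $(n,n)$ included). I would prove that for such a matched pair $(\gamma,b)$ one has $\Psi(\lmir_\gamma(P))=\lmir_b(\Psi(P))$ and $\Psi(\lrot_{n-\gamma}(P))=\lrot_b(\Psi(P))$, i.e. the pipe-dream packings of Section~\ref{subsec:packings} correspond to the tree packings of Section~\ref{subsubsec:packingsNuTrees}. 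This is a local statement: cutting $\Psi(P)$ along the path from $b$ to the root selects precisely the elbows of $P$ (inside the Rothe diagram, topmost row or leftmost column) that survive the packing, and the contraction of horizontal resp.\ vertical segments of the tree mirrors the contraction of crossed horizontal resp.\ vertical pipe-steps analyzed in the proof of Lemma~\ref{lem:horizontalPacking}. Granting this, coproduct compatibility $\coproduct(\Psi(P))=(\Psi\otimes\Psi)\coproduct(P)$ is immediate term by term, since both coproducts sum over the matched index sets $\{\gamma\}\leftrightarrow\{b\}$, the term at $\gamma$ being $\lmir_\gamma(P)\otimes\lrot_{n-\gamma}(P)$ and the term at $b$ being $\lmir_b(\Psi(P))\otimes\lrot_b(\Psi(P))$.

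For the product I would first match the summation indices. Writing the atomic factorizations $\omega_P=\alpha_1\gdproduct\cdots\gdproduct\alpha_a$ and $\omega_Q=\beta_1\gdproduct\cdots\gdproduct\beta_c$, a $P/Q$-shuffle $s$ is forced (its $q$-blocks sit at global descents of $\omega_P$, its $p$-blocks at global descents of $\omega_Q$) to be a shuffle of the block-sequences $(\alpha_i)_{i=1}^a$ and $(\beta_j)_{j=1}^c$; this is consistent with Proposition~\ref{prop:descentproduct}, which sends $s$ to the corresponding term of $\omega_P\gdshuffle\omega_Q$. Such a shuffle distributes the $c$ pieces of $Q$ among the $a+1=\ell$ slots located at the global descents of $\omega_P$, that is, among the $\ell$ diagonal leaves of $S=\Psi(P)$, each slot receiving a (possibly empty) consecutive run. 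This is exactly the data of a weakly increasing tuple $\mathbf b=(b_1,\dots,b_{\ell-1})$ of diagonal leaves of $T=\Psi(Q)$, repetitions recording empty slots; a count confirms both index sets have cardinality $\binom{a+c}{a}$. I would then verify $\Psi(P\star_s Q)=S\star_{\mathbf b}T$ for matched $s\leftrightarrow\mathbf b$: untangling $Q$ at the gaps marked by the $p$-blocks of $s$ produces $Q_1,\dots,Q_\ell$ with $\Psi(Q_i)=T_i$ the pieces cut from $T$ at the leaves $b_i$ (by the packing statement), and inserting these pieces into $P$ via $\gammainsertion$ at the $q$-block positions reproduces the gluing of the $T_i$ onto the diagonal leaves of $S$. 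Summing over all indices gives $\Psi(P\product Q)=\Psi(P)\product\Psi(Q)$.

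The main obstacle is the geometric verification underlying the packing statement and the "insertion equals gluing" step: one must track precisely how $\Psi$ converts the contraction of crossed pipe-steps into the contraction of tree-segments, and how a single insertion $\gammainsertion_\gamma$ reproduces the cut-and-glue operation, including the bookkeeping of repeated or empty pieces when a slot receives no factor of $Q$. Once these local correspondences are in place, the global conclusions — coalgebra morphism, algebra morphism, and hence Hopf isomorphism, together with the Hopf algebra axioms for $\HT$ — follow formally from the index-set bijections and the fact that $(\HPdom,\product,\coproduct)$ is already known to be a Hopf algebra.
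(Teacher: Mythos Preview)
Your proposal is correct and follows essentially the same approach as the paper: the key step in both is the observation that $\Psi$ intertwines the horizontal and vertical packings on dominant pipe dreams with the corresponding packings on $\nu$-trees, after which the coproduct and product compatibility follow because the two structures are defined in parallel from these packings. The paper's proof is considerably terser (it simply asserts the packing correspondence and then declares the product and coproduct definitions ``parallel''), whereas you spell out the bijection between the shuffle index set and the tuples of diagonal leaves and the insertion-equals-gluing verification; but the underlying argument is the same.
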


\begin{proof}
Observe first that contracting in a dominant pipe dream~$P$ the horizontal segments of the red pipes crossed by blue pipes as in Section~\ref{subsec:packings} 
corresponds to contracting in~$\Psi(P)$ the horizontal edges to the right of some diagonal leaf~$b$ as in Section~\ref{subsubsec:packingsNuTrees}. 
Therefore, $\Psi$ commutes with the horizontal packing (and similarly with the vertical packing). Compare Figures~\ref{fig:horizontalPackingPipeDream} and~\ref{fig:horizontalPackingNuTree} and Figures~\ref{fig:verticalPackingPipeDream} and~\ref{fig:verticalPackingNuTree}.

The result then directly follows since the definitions of the product and coproduct on pipe dreams (Sections~\ref{subsec:coproductPipeDreams} and~\ref{subsec:productPipeDreams}) are parallel to the definitions of the product and coproduct on $\nu$-trees (Sections~\ref{subsubsec:coproductNuTrees} and~\ref{subsubsec:productNuTrees}).
For example, compare the coproducts in Figures~\ref{fig:coproductPipeDreams} and~\fref{fig:coproductNuTrees} and the products in Figures~\ref{fig:productPipeDreams} and~\ref{fig:productNuTrees}.
\end{proof}


\subsection{Connection to $\nu$-Tamari lattices} 
\label{subsec:nuTamari}

To conclude, we consider the $\nu$-Tamari lattice introduced by L.-F.~Pr\'eville-Ratelle and X.~Viennot in~\cite{PrevilleRatelleViennot}.

\begin{definition}
Let~$\nu$ be a Dyck path.
\begin{enumerate}
\item A \defn{$\nu$-path} is a Dyck path lying weakly above $\nu$ with the same starting and ending points.
\item The \defn{horizontal distance} from a horizontal step~$x$ to~$\nu$ is the length of the segment between the rightmost point of~$x$ and the rightmost point of~$\nu$ on the horizontal line supporting~$x$.
\item Two $\nu$-path~$\mu, \mu'$ are related by a \defn{$\nu$-Tamari flip} if~$\mu = \mu_1 EN \mu_2 \mu_3$ while~$\mu' = \mu_1 N \mu_2 E \mu_3$ are so that the horizontal distance from the distinguished east step~$E$ to~$\nu$ coincides in~$\mu$ and~$\mu'$, and $\mu_2$ is the shortest path satisfying this condition. See \fref{fig:bijectionNuPathDominantPipeDream}\,(right).
\item the \defn{$\nu$-Tamari lattice} is the transitive closure of the (oriented) graph of $\nu$-Tamari flips.
\end{enumerate}
\end{definition}

The following statement relates the dominant pipe dreams with the $\nu$-paths. We have already seen that the collection of pipe dreams~$\pipeDreams(\omega)$ of a permutation $\omega$ can be endowed with various natural poset structures, for instance using increasing flips or chute moves. In~\cite{Rubey}, M.~Rubey considered the poset of pipe dreams induced by \defn{general chute moves}, defined as flips where the interior of the rectangle connecting the exchanged elbow and cross only contains crosses. In other words, the difference with the chute moves illustrated in \fref{fig:chuteMove} is that the rectangle may be wider than just two rows. He moreover conjectured that this poset has the structure of a lattice~\cite[Conj.~2.8]{Rubey}. For the special case of dominant permutations, it was shown in~\cite{CeballosPadrolSarmiento-nuTamariSubwordComplexes} that this poset is isomorphic to the $\nu$-Tamari lattice of~\cite{PrevilleRatelleViennot}.

Let~$\omega \in \fS_n$ be a dominant permutation and~$\nu$ denote its corresponding Dyck path. Consider the map~$\phi_\omega$ that sends a dominant pipe dream~$P \in \pipeDreams(\omega)$ to the unique lattice path~$\phi_\omega(P)$ that shares the endpoints of~$\nu$ and contains as many points at level~$i$ as there are elbows in~$P$ located at level~$i$ and either in the topmost row, or in the leftmost column or inside the Rothe diagram of~$\omega$.

\begin{proposition}[{\cite{SerranoStump, CeballosPadrolSarmiento-nuTamariSubwordComplexes}}]
\label{prop:bijectionNuTreesDyckIntervals}
For any dominant permutation~$\omega$ with corresponding Dyck path~$\nu=\pi_\omega$, the map~$\phi_\omega$ is a bijection between the dominant pipe dreams of~$\pipeDreams(\omega)$ and the $\nu$-paths, which sends general chute moves in dominant pipe dreams to $\nu$-Tamari flips in $\nu$-paths. In particular, the poset of dominant pipe dreams in $\pipeDreams(\omega)$ induced by general chute moves is isomorphic to the $\nu$-Tamari lattice.  
\end{proposition}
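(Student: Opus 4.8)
The plan is to factor $\phi_\omega$ through the bijection $\Psi$ of Proposition~\ref{prop:bijectionDominantPipeDreamsNuTrees} and a node-counting map on $\nu$-trees, and then to transport the combinatorics of general chute moves to the $\nu$-tree side, where the $\nu$-Tamari structure is available from~\cite{CeballosPadrolSarmiento-nuTamariSubwordComplexes}. The starting observation is that the elbows of a dominant pipe dream $P$ lying in the topmost row, in the leftmost column, or inside the Rothe diagram of $\omega$ are \emph{exactly} the nodes of the $\nu$-tree $\Psi(P)$. Hence, if $\eta$ denotes the map sending a $\nu$-tree $T$ to the unique lattice path having, at each level $i$, as many lattice points as $T$ has nodes at level $i$, then by the very definition of $\phi_\omega$ we have $\phi_\omega = \eta \circ \Psi$.

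First I would check that $\eta$ is well defined and bijective, i.e., that the node-count profile of a $\nu$-tree is the level profile of an honest $\nu$-path. The point is that consecutive levels of a $\nu$-tree overlap in exactly the node through which the connecting vertical edge passes, so the profile assembles into a monotone lattice path, and pairwise $\nu$-compatibility of the nodes forces this path to stay weakly above $\nu$ with the correct endpoints. This is precisely the bijection between $\nu$-trees and $\nu$-paths of~\cite{CeballosPadrolSarmiento-nuTamariSubwordComplexes} (see also~\cite{SerranoStump}), so $\eta$ is a bijection; since $\Psi$ is a bijection by Proposition~\ref{prop:bijectionDominantPipeDreamsNuTrees}, the composite $\phi_\omega$ is one as well.

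Next I would match the two dynamics. A general chute move on a dominant pipe dream exchanges an elbow and a cross whose connecting rectangle is filled with crosses; I would verify that under $\Psi$ such a move becomes a rotation of $\nu$-trees (with a definite orientation), and conversely that every $\nu$-tree rotation arises from a general chute move in this way. Feeding this through $\eta$ and invoking~\cite{CeballosPadrolSarmiento-nuTamariSubwordComplexes}, where the oriented rotation poset on $\nu$-trees is identified with the $\nu$-Tamari lattice of~\cite{PrevilleRatelleViennot}, shows that general chute moves correspond exactly to $\nu$-Tamari flips. The final assertion then follows formally: $\phi_\omega$ carries the covering relations of the general-chute-move poset bijectively to those of the $\nu$-Tamari lattice, so the two posets (the transitive closures of their Hasse diagrams) are isomorphic, and in particular the former is a lattice, confirming Rubey's conjecture~\cite[Conj.~2.8]{Rubey} in the dominant case.

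I expect the main obstacle to be this precise matching of general chute moves with $\nu$-Tamari flips, rather than the bijection itself. Two points require care. First, a flip of pipe dreams is an \emph{unoriented} exchange, while both general chute moves and $\nu$-Tamari flips are oriented; one must check that the orientation singled out by the all-crosses condition in the connecting rectangle (together with the requirement that the move be of chute type rather than ladder type) is exactly the one that becomes increasing in the $\nu$-Tamari lattice. Second, a single $\nu$-Tamari flip may move a distinguished east step past a long sub-path $\mu_2$, so the matching move on pipe dreams is genuinely a general chute move spanning many rows, not an elementary chute move as in \fref{fig:chuteMove}; one must confirm that the shortest-path condition defining $\mu_2$ translates exactly into the all-crosses condition on the connecting rectangle. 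Both points are most naturally handled through the intermediate $\nu$-tree model of~\cite{CeballosPadrolSarmiento-nuTamariSubwordComplexes}, which is why routing the argument through $\Psi$ is the right strategy.
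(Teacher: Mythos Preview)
The paper does not give its own proof of this proposition; it is quoted from~\cite{SerranoStump, CeballosPadrolSarmiento-nuTamariSubwordComplexes} and left unproved in the text. Your strategy of factoring $\phi_\omega = \eta \circ \Psi$ through the $\nu$-tree model, and then invoking the identification of the oriented rotation poset on $\nu$-trees with the $\nu$-Tamari lattice from~\cite{CeballosPadrolSarmiento-nuTamariSubwordComplexes}, is exactly the route taken in those references, so your proposal is correct and aligned with the intended source. Your two cautionary remarks---that the orientation singled out by the chute condition must match the $\nu$-Tamari orientation, and that the all-crosses rectangle must correspond to the shortest-path condition defining $\mu_2$---are precisely the points checked in~\cite{CeballosPadrolSarmiento-nuTamariSubwordComplexes}.
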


\begin{figure}[t]
	\capstart
	\centerline{$
		\input{dominantPipeDream1} \quad\longleftrightarrow\quad \input{dominantPipeDream2}
		\qquad\qquad
		\raisebox{-.5\height}{\includegraphics[scale=0.3]{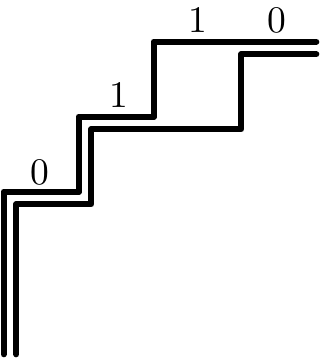}} \quad\longleftrightarrow\quad \raisebox{-.5\height}{\includegraphics[scale=0.3]{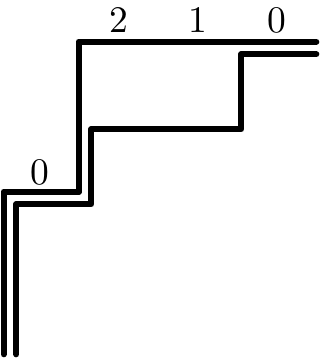}}
	$} 
	\caption{Two dominant pipe dreams connected by a general chute move (left) are sent to two $\nu$-paths connected by a $\nu$-Tamari flip (right). We have labeled the horizontal steps of the $\nu$-paths with their horizontal distance to~$\nu$.}
	\label{fig:bijectionNuPathDominantPipeDream}
\end{figure}

\begin{corollary}[{\cite{SerranoStump, CeballosPadrolSarmiento-nuTamariSubwordComplexes}}]
\label{cor:dominantPairsDyckPaths}
The map~$\Phi : P \mapsto \big( \pi_{\omega_P}, \, \phi_{\omega_P}(P) \big)$ is a bijection between dominant pipe dreams of~$\pipeDreamsDom_n$ and pairs of nested Dyck paths of size~$n$.
\end{corollary}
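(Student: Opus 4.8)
The plan is to factor $\Phi$ through the partition of dominant pipe dreams according to their exiting permutation, and then to combine the two bijections already at our disposal. On the one hand, a dominant permutation $\omega \in \Sdom_n$ is uniquely determined by the Dyck path $\pi_\omega$ bounding its Rothe diagram (as recorded in the discussion surrounding \fref{fig:dominantPermutation}), so that $\omega \mapsto \pi_\omega$ is injective from $\Sdom_n$ to the Dyck paths of size~$n$; since $|\Sdom_n| = C_n$ equals the number of such Dyck paths, this map is in fact a bijection. On the other hand, for a fixed dominant~$\omega$ with $\nu = \pi_\omega$, Proposition~\ref{prop:bijectionNuTreesDyckIntervals} provides a bijection $\phi_\omega$ from $\pipeDreams(\omega)$ onto the set of $\nu$-paths, where a $\nu$-path is by definition a Dyck path with the same endpoints as $\nu$ lying weakly above~$\nu$.

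First I would check that $\Phi$ is well defined, that is, that its image lies in nested pairs. Given $P \in \pipeDreamsDom_n$, the permutation $\omega_P$ is dominant, so $\pi_{\omega_P}$ is a Dyck path of size~$n$, and $\phi_{\omega_P}(P)$ is a $\nu$-path for $\nu = \pi_{\omega_P}$. By the very definition of a $\nu$-path, $\phi_{\omega_P}(P)$ shares its endpoints with $\pi_{\omega_P}$ and lies weakly above it; hence $\big(\pi_{\omega_P}, \phi_{\omega_P}(P)\big)$ is a nested pair of Dyck paths of size~$n$, as required.

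The key step is to construct the inverse. Given a nested pair $(\pi_1, \pi_2)$ of Dyck paths of size~$n$, with $\pi_1$ weakly below $\pi_2$, let $\omega$ be the unique dominant permutation of $\Sdom_n$ with $\pi_\omega = \pi_1$, which exists and is unique by the bijection $\omega \mapsto \pi_\omega$ above. Since $\pi_2$ lies weakly above $\nu \eqdef \pi_1 = \pi_\omega$ and shares its endpoints, $\pi_2$ is a $\nu$-path, so by Proposition~\ref{prop:bijectionNuTreesDyckIntervals} there is a unique $P \in \pipeDreams(\omega)$ with $\phi_\omega(P) = \pi_2$. This $P$ is dominant, with $\omega_P = \omega$, and satisfies $\Phi(P) = (\pi_1, \pi_2)$. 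Conversely, if $\Phi(P) = \Phi(P')$, then comparing first coordinates gives $\pi_{\omega_P} = \pi_{\omega_{P'}}$, hence $\omega_P = \omega_{P'} \eqdef \omega$ by injectivity of $\omega \mapsto \pi_\omega$, and then comparing second coordinates gives $\phi_\omega(P) = \phi_\omega(P')$, hence $P = P'$ by injectivity of $\phi_\omega$. This establishes that $\Phi$ is a bijection.

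The only point that must be matched carefully --- and the place I expect to be the crux of the argument, though it is light --- is that the constraint defining an admissible second coordinate (a Dyck path of size~$n$ nested above the first coordinate) coincides \emph{exactly} with the codomain of $\phi_\omega$, namely the set of $\nu$-paths for $\nu = \pi_\omega$. This is immediate once one recalls that $\nu$-paths are precisely the Dyck paths weakly above~$\nu$ sharing its endpoints, so no work is needed beyond unwinding the definitions of $\phi_\omega$ and of $\pi_\omega$, both supplied by Proposition~\ref{prop:bijectionNuTreesDyckIntervals} together with the Rothe-diagram description of dominant permutations.
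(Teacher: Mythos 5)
Your proof is correct and follows exactly the route the paper intends: the corollary is stated as an immediate consequence of Proposition~\ref{prop:bijectionNuTreesDyckIntervals} together with the standard bijection $\omega \mapsto \pi_\omega$ between dominant permutations and Dyck paths, and your argument simply makes this derivation explicit (fibering $\pipeDreamsDom_n$ over the exiting permutation, matching $\nu$-paths with the admissible second coordinates of nested pairs). No gaps; the well-definedness and inverse-construction steps are exactly the unwinding the paper leaves to the reader.
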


As a consequence, the Hopf algebra of dominant dreams, or equivalently, the Hopf algebra of~$\nu$-trees, can be regarded as a Hopf algebra on $\nu$-Tamari lattices.

\begin{corollary}
The dimension of the homogeneous component~$\HPdom_n$ of the dominant pipe dream algebra is the Hankel determinant of Catalan numbers
\[
\dim \big( \HPdom_n \big) = \det 
\begin{vmatrix} 
C_n & C_{n+1} \\
C_{n+1} & C_{n+2}
\end{vmatrix}.
\]
\end{corollary}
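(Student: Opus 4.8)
The plan is to combine the bijection of Corollary~\ref{cor:dominantPairsDyckPaths} with the Lindstr\"om--Gessel--Viennot (LGV) lemma. By that corollary, $\dim\big(\HPdom_n\big)$ equals the number of nested pairs of Dyck paths of size~$n$, so it suffices to prove that the number of pairs $(\pi_1,\pi_2)$ of Dyck paths of size~$n$ with $\pi_1$ weakly below $\pi_2$ equals $C_nC_{n+2}-C_{n+1}^2$. I would first record that $C_m$ is the number of lattice paths with steps $(1,1)$ and $(1,-1)$ from $(0,0)$ to $(2m,0)$ that stay weakly above the horizontal axis, and more generally that the number of such paths from $(-2i,0)$ to $(2n+2j,0)$ (staying weakly above the axis) is $C_{n+i+j}$, since such a path has $2(n+i+j)$ steps, returns to height~$0$, and stays nonnegative.

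The core step is a bijection between nested pairs and non-intersecting families of two such paths. Given $(\pi_1,\pi_2)$, set $P_1\eqdef\pi_1$, viewed as a path from $(0,0)$ to $(2n,0)$, and build an outer path $P_2$ from $(-2,0)$ to $(2n+2,0)$ whose middle portion over the abscissa range $[0,2n]$ is $\pi_2$ raised by~$2$ and whose four outermost steps are forced (up to $(0,2)$ on the left, down to $(2n+2,0)$ on the right). I would then check that the two paths are vertex-disjoint: at each integer abscissa in $[0,2n]$ the heights of $P_1$ and $P_2$ have the same parity, so since $P_2$ starts strictly above $P_1$ they can never meet without first being equal, and a parity/monotonicity argument upgrades $P_2>P_1$ to $P_2\ge P_1+2$, which is exactly the nesting $\pi_1\le\pi_2$ after subtracting the shift. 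Conversely, any vertex-disjoint pair of axis-to-axis paths with sources $\{(0,0),(-2,0)\}$ and sinks $\{(2n,0),(2n+2,0)\}$ has its outer path forced through $(0,2)$ and $(2n,2)$ by nonnegativity and disjointness, and hence comes from a unique nested pair.

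Finally I would invoke the LGV lemma with sources $A_1=(0,0)$, $A_2=(-2,0)$ and sinks $B_1=(2n,0)$, $B_2=(2n+2,0)$. The entry counting axis-constrained paths from $A_i$ to $B_j$ is $C_{n+i+j}$ by the first paragraph, and the only permutation admitting a vertex-disjoint family is the identity (the other matching forces a crossing), so the signed sum collapses to a single positive term and
\[
\dim\big(\HPdom_n\big)
= \det\begin{pmatrix} C_n & C_{n+1} \\ C_{n+1} & C_{n+2} \end{pmatrix}
= C_nC_{n+2}-C_{n+1}^2.
\]

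I expect the main obstacle to be the bijection of the second paragraph, specifically verifying that the floor (nonnegativity) constraint together with vertex-disjointness forces the outer path through the points $(0,2)$ and $(2n,2)$ and yields the height gap of exactly~$2$ needed to recover nesting; once this is in place, the LGV evaluation and the identification of the entries as Catalan numbers are routine. Alternatively, one could bypass the explicit bijection by citing the classical evaluation of this Catalan Hankel determinant as the number of non-crossing pairs of Dyck paths.
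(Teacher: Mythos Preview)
Your approach is essentially the same as the paper's: both invoke Corollary~\ref{cor:dominantPairsDyckPaths} to reduce to counting nested pairs of Dyck paths and then identify this count with the Catalan Hankel determinant via Gessel--Viennot; the paper simply cites~\cite{GesselViennot} for that step, while you spell out the LGV argument and the shift-by-two bijection explicitly, and your verification that only the identity permutation contributes is correct. One small notational slip: when you say the entry from $A_i$ to $B_j$ is $C_{n+i+j}$, your labels $A_1,A_2,B_1,B_2$ are $1$-based whereas your earlier formula used $0$-based offsets, so the entry is really $C_{n+(i-1)+(j-1)}$ --- but the final $2\times 2$ matrix you write down is correct.
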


\begin{proof}
The dimension of~$\HPdom_n$ is equal to the number of dominant pipe dreams~$P\in \pipeDreamsDom_n$, which is equal to the number of pairs of nested Dyck paths of size~$n$. Such nested tuples of Dyck paths are know to be counted by the desired determinant of Catalan numbers~\cite{GesselViennot}. 
\end{proof}


\section{Hopf chains and multivariate diagonal harmonics}
\label{sec:multivariateDiagonalHarmonics}

In this section we develop a connection between the theory of multivariate diagonal harmonics and certain chains of the Tamari lattice inspired by our work on the Hopf structure on pipe dreams.


\subsection{Multivariate diagonal harmonics}
\label{subsec:definitionsMultivariateDiagonalHarmonics}

We start with a brief introduction to the multivariate diagonal harmonic spaces, see~\cite{Bergeron-multivariateDiagonalCoinvariantSpaces} for more details. 
Let~$X=[x_{ij}]$ be a set of $nr$ variables for~$i \in [r]$ and~$j \in [n]$. 
We refer to $r$ as the number of sets of variables and to $n$ as the number of variables in each of the sets.
The symmetric group~$\fS_n$ acts on the polynomial ring~$\C[X]$ by permuting the $n$ columns of the matrix $X$. 
We consider the subring~$\Sym{n}{r}$ of polynomials~$f(X)$ that are invariant under the action of~$\fS_n$.
The \defn{multivariate diagonal harmonic space}~$\DiagHarm{n}{r}$ is the space
\[
\DiagHarm{n}{r} = \bigset{p\in \C[X]}{f(\partial X)p = 0 \text{ for all } f\in \Sym{n}{r} \text{ such that } f({\bf 0}) = 0},
\]
where $f(\partial X)$ is the partial differential operator obtained by replacing the variables $x_{ij}$ by $\frac{\partial}{\partial x_{ij}}$ in~$f(X)$.
This space is closed under the action of the symmetric group $\fS_n$, and therefore defines a representation of~$\fS_n$. 

The modules $\DiagHarm{n}{r}$ have intrigued several mathematicians for over 25 years and constitute an active area of current research. 
For small fixed values of $r$, the dimension of $\DiagHarm{n}{r}$ and 
the multiplicity of the sign representation~$\Alt(\DiagHarm{n}{r})$ satisfy beautiful formulas in terms of $n$: 

\[
\begin{array}{|c|c|c|c}
\cline{1-3}
r & \dim \big( \DiagHarm{n}{r} \big) & \dim \big( \Alt(\DiagHarm{n}{r}) \big)\\
\cline{1-3}
r = 1 & n! & 1 \\[4pt]
r = 2 & (n+1)^{n-1} & \frac{1}{n+1}{2n\choose n} \\[5pt]
r = 3 & 2^n (n+1)^{n-2} & \frac{2}{n(n+1)} {4n+1\choose n-1} & \text{(conjectured)} \\[3pt]
\cline{1-3}
\end{array}
\]

\bigskip
For~$r=2$, the Catalan number~$\frac{1}{n+1}{2n\choose n}$ is the number of Dyck paths of size $n$ (\ie elements in the $n$-Tamari lattice) and~$(n+1)^{n-1}$ is the number of parking functions. 
The last can be interpreted as the number labeled Dyck paths, which are Dyck paths whose north steps are labeled with integers~$1, 2, \dots, n$ such that the labels are increasing along each column. 
For~$r=3$, the number~$\frac{2}{n(n+1)} {4n+1\choose n-1}$ was conjectured by M.~Haiman in~\cite{Haiman-conjectures}, and F.~Bergeron noticed that this expression counts the number of intervals in the $n$-Tamari lattice~\cite{Chapoton-intervalsTamari}. 
Analogously, the number~$2^n(n+1)^{n-2}$ is the number of labeled intervals in the $n$-Tamari lattice.
Here, a labeled interval refers to a pair of Dyck paths forming an interval in the Tamari lattice where the north steps of the top path are labeled with integers~$1, 2, \dots, n$ such that the labels are increasing along each column. 
In addition, there is no known conjectured formula for the dimensions of $\DiagHarm{n}{r}$ and $\Alt(\DiagHarm{n}{r})$ for~$r > 3$.

The spaces $\DiagHarm{n}{r}$ and $\Alt(\DiagHarm{n}{r})$ can be further decomposed into homogeneous components that are invariant under the action of the symmetric group.
For a monomial~$X^A=\prod x_{ij}^{a_{ij}}$, its multi-degree is defined by
\[
\deg_{r}(X^A) = \Big( \sum_{j=1}^n a_{1j}, \sum_{j=1}^n a_{2j}, \dots, \sum_{j=1}^n a_{rj} \Big).
\]
The subspaces of $\DiagHarm{n}{r}$ and $\Alt(\DiagHarm{n}{r})$ of fixed degree are invariant under the action of~$\fS_n$.
For $r=2$, the bigraded Frobenius characteristic of $\DiagHarm{n}{2}$ in terms of Macdonald polynomials was conjectured by A.~Garsia and M.~Haiman~\cite{GarsiaHaiman-remarkableCatalanSequence}, and proved by M.~Haiman in~\cite{Haiman-vanishingTheorems}.
A combinatorial description, involving a pair of statistics $\area$ and $\dinv$ on parking functions, is described by the former shuffle conjecture of J.~Haglund et al.~\cite{HaglundHaimanLoehrRemmelUlyanov}, which was recently proved by E.~Carlsson and A.~Mellit in~\cite{CarlssonMellit} (see Formula~\eqref{eq:shuff_formula}). 
The bigraded Hilbert series of~$\Alt(\DiagHarm{n}{2})$ is the famous $q,t$-Catalan polynomial~\cite{Haglund-qt-catalan}, which is the $q,t$ counting of Dyck paths with respect to the $\area$ and $\dinv$ statistics (see Formula~\ref{eq:qt_CatalanFormula}).
For $r=3$, there is no known triple of statistics on labeled and unlabeled intervals in the Tamari lattice that would match the tri-degree of $\DiagHarm{n}{3}$ and $\Alt(\DiagHarm{n}{3})$.
F.~Bergeron and L.-F.~Pr\'eville-Ratelle conjectured in~\cite[Conj.~1]{BergeronPrevilleRatelle} that length of a longest chain in the interval and $\dinv$ are two of the statistics.
Remark that the area of a Dyck path~$\nu$ in the case~$r=2$ could also be interpreted as the length of a longest chain in the interval from the diagonal path~$(NE)^n$ to the Dyck path~$\nu$.


\subsection{Hopf chains and collar statistic}
\label{subsec:HopfChains}

Computer experimentations provide strong evidences that the dimensions of the spaces of diagonal harmonics for more sets of variables should be related to certain chains in the Tamari lattice. It is therefore very natural to study chains of paths arising from intervals in the graded dimensions of the Hopf algebra of dominant pipe dreams, where the order is induced by general chute moves as before. This motivates the following definition.

\begin{definition}
\label{def:HopfChains}
A \defn{Hopf chain} of length~$r$ and size~$n$ is a nested tuple $(\pi_1,\pi_2,\dots ,\pi_r)$ of Dyck paths of size~$n$ such that:
\begin{enumerate}
\item $\pi_1$ is the bottom diagonal path~$(NE)^n$,
\item for every~$1 \le i < j < k \le r$, the pair $(\pi_j,\pi_k)$ is an interval in the $\pi_i$-Tamari lattice.
\end{enumerate}
We denote by~$\HC{n}{r}$ the set of Hopf chains of length~$r$ and size~$n$.
\end{definition}

\begin{remark}
In Definition~\ref{def:HopfChains}, observe that
\begin{itemize}
\item Condition (2)  coincides with the following equivalent condition by Proposition~\ref{prop:bijectionNuTreesDyckIntervals}: every subtriple~$(\pi_i,\pi_j,\pi_k)$ comes from an interval of dominant pipe dreams, meaning that the two pipe dreams corresponding to~$\pi_j$ and~$\pi_k$ in~$\pipeDreams(\pi_i)$ form an interval in the poset induced by general chute moves.
\item Condition~(2) applied to the diagonal path~$\pi_i = \pi_1$, implies that every Hopf chain is a chain in the classical Tamari lattice. We denote by~$\TC{n}{r}$ the set of Tamari chains of length~$r$ and size~$n$, starting with the bottom diagonal path~$(NE)^n$.
\end{itemize}
\end{remark}

\begin{example}
For~$n \le 3$, all Tamari chains in~$\TC{n}{r}$ are Hopf chains in~$\HC{n}{r}$ and the number matches the dimension~$\dim(\Alt(\DiagHarm{n}{r}))$ computed in~\cite{Bergeron-multivariateDiagonalCoinvariantSpaces}.
In contrast, already for~$n=4$, there are Tamari chains that are not Hopf chains. All these Tamari but non-Hopf chains contain the one in \fref{fig:killer} as a subchain.
\begin{figure}[t]
	\capstart
	\centerline{\includegraphics[scale=.5]{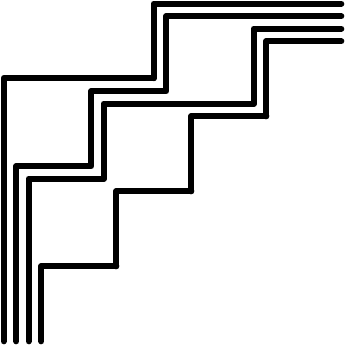}} 
	\caption{The Tamari chain that is not a Hopf chain when~$n=4$. Indeed, the paths~$\pi_4$ is not greater than the path~$\pi_3$ in the $\pi_2$-Tamari lattice.}
	\label{fig:killer}
\end{figure}
The numbers~$|\TC{4}{r}|$ of Tamari chains and~$|\HC{4}{r}|$ of Hopf chains of size~$n=4$ and different~$r$ are given by
\[
\begin{array}{c|cccccccc}
r & 1 & 2 & 3 & 4 & 5 & 6 & 7 & \dots \\
\hline
|\TC{4}{r}| & 1 & 14 & 68 & 218 & 556 & 1224 & 2429 & \dots \\
\hline
|\HC{4}{r}| & 1 & 14 & 68 & 217 & 549 & 1196 & 2345 & \dots
\end{array}
\]
Interestingly, the numbers~$|\HC{4}{r}|$ coincide with the dimensions~$\dim(\Alt(\DiagHarm{4}{r}))$ computed in~\cite{Bergeron-multivariateDiagonalCoinvariantSpaces}. Let us insist here: it is the number of Hopf chains, not the number of Tamari chains, that coincides with the dimension of the alternating component of the space of diagonal harmonics~$\DiagHarm{n}{r}$ for fixed~$n=4$ and~$r$ sets of variables.
\end{example}

We now define a new statistic associated to Hopf chains that generalizes the area for~$r = 2$ and the length of a longest chain in an interval for~$r = 3$.

\begin{definition}
Given a Hopf chain $\bpi=(\pi_1,\pi_2,\dots ,\pi_r)$, the \defn{collar} of~$\bpi$ is one plus the maximal number of distinct Dyck paths that can be inserted in~$\bpi$ strictly between~$\pi_{r-1}$ and~$\pi_r$ such that the result is a Hopf chain. That is
\[
\col(\bpi) = \max \set{\ell}{{(\pi_{r-1},\nu_1,\cdots,\nu_{\ell-1},\pi_r) \text{ {\bf strict} Tamari chain,} \hfill \atop (\pi_1,\pi_2,\dots ,\pi_{r-1},\nu_1,\cdots,\nu_{\ell-1},\pi_r) \text{ Hopf chain}}}
\]
Note that~$\col(\bpi) = 0$ if~$\pi_{r-1} = \pi_r$ by definition.
\end{definition} 


\subsection{$\dinv$-statistic and LLT-polynomials}

For a step~$x$ in a Dyck path~$\pi$, we denote by~$N(x)$ the number of north steps before~$x$ and by~$E(x)$ the number of east steps before~$x$.
Note that with these notations, the diagonal level of~$x$ is~$N(x)-E(x)$.
The \defn{diagonal order} on the steps of a Dyck path~$\pi$ is the lexicographic order on~$\big( N(x) - E(x), N(x) + E(x) \big)$.
A \defn{parking function} on a Dyck path~$\pi$ is a labeling~$\psi$ of its north steps by~$[n]$ such that the labels are increasing along columns.
The \defn{diagonal reading} of~$\psi$ is the permutation~$\delta(\psi)$ obtained by reading the labels of the north steps of~$\pi$ in reverse diagonal order.
We denote by~$\mu(\psi)$ the recoil composition of~$\delta(\psi)$ (\ie the descent composition of the inverse of~$\delta(\psi)$).
The \defn{diagonal inversions} of~$\psi$ are the pairs of values~$u < v$ such that~$u$ appears after~$v$ in~$\delta(\psi)$ and 
\begin{itemize}
\item either~$u$ and~$v$ appear along the same diagonal of~$\pi$,
\item or the diagonal of~$u$ is just below the diagonal of~$v$ and~$u$ is to the right of~$v$.
\end{itemize}
The two cases are depicted as follows:
\begin{equation}\label{eq:diag_inv}
   \begin{tikzpicture}[scale=.5,baseline=.6cm]
      \node at (2,2)  {$v$};
      \node at (0,0)  {$u$};
      \draw[thick] (1.7,1.7) -- (1.7,2.3);
      \draw[thick] (-.3,-.3) -- (-.3,.3);
      \draw[dashed,<-] (.3,.3) -- (1.7,1.7);
      \draw[dotted,-] (-.6,-.6) -- (-.3,-.3);
      \draw[dotted,-] (2.3,2.3) -- (2.6,2.6);
    \end{tikzpicture}
\qquad\qquad,\qquad\qquad
   \begin{tikzpicture}[scale=.5,baseline=.6cm]
      \node at (.2,.2)  {$v$};
      \node at (2.4,1.9)  {$u$};
      \draw[dotted,-] (.5,.5) -- (2.6,2.6);
      \draw[thick] (-.1,-.1) -- (-.1,.5);
      \draw[thick] (2.2,1.6) -- (2.2,2.2);
      \draw[dashed,<-] (-.6,-.6) -- (-.1,-.1);
      \draw[dashed,<-] (2.7,2.1) -- (3.2,2.6);
      \draw[dotted,-] (0,-.6) -- (2.2,1.6);
    \end{tikzpicture}
\end{equation}
The number of diagonal inversions of~$\psi$ is denoted by~$\dinv(\psi)$.

\begin{example}
The parking function $\psi$ given in Figure~\ref{fig:dinv} has permutation $\delta(\psi)=5643712$.
Its recoil composition is $\mu(\psi)=2113$. 
The set of diagonal inversions of the first kind is $\{(4,3)\}$, and of the second kind is $\{(6,4),(7,1)\}$.
Hence, $\dinv(\psi)=3$.

\begin{figure}[t]
	\capstart
	\centerline{
	  \begin{tikzpicture}[scale=.6,baseline=.6cm]
    	  \node at (.2,.2)  {$\scriptstyle 2$};
    	  \node at (.2,.7)  {$\scriptstyle 7$};
    	  \node at (1.2,1.2)  {$\scriptstyle 1$};
    	  \node at (1.2,1.7)  {$\scriptstyle 3$};
    	  \node at (1.2,2.2)  {$\scriptstyle 6$};
    	  \node at (2.2,2.7)  {$\scriptstyle 4$};
    	  \node at (2.2,3.2)  {$\scriptstyle 5$};
    	  \draw[thick] (0,0) -- (0,1);
    	  \draw[thick] (0,1) -- (1,1);
    	  \draw[thick] (1,1) -- (1,2.5);
    	  \draw[thick] (1,2.5) -- (2,2.5);
    	  \draw[thick] (2,2.5) -- (2,3.5);
    	  \draw[thick] (2,3.5) -- (3.5,3.5);
    	  \draw[dashed,<-] (.3,.3) -- (1,1);
     	  \draw[dotted,-] (1.4,1.4) -- (3.5,3.5);
     	  \draw[dashed,<-] (.3,.8) -- (1,1.5);
     	  \draw[dotted,-] (2.4,2.9) -- (3,3.5);
     	  \draw[dashed,<-] (1.3,1.8) -- (2.,2.5);
     	  \draw[dashed,<-] (1.3,2.3) -- (2.,3.);
	  \draw[dashed,<-] (2.3,3.3) -- (2.9,3.9);
   	 \end{tikzpicture}
 } 
	\caption{The north steps of the Dyck path above are labeled by the numbers $\{1,2,3,4,5,6,7\}$ and the labels are increasing up along each column. This labeling~$\psi$ is an example of parking function. The permutation $\delta(\psi)=5643712$.}
	\label{fig:dinv}
\end{figure}
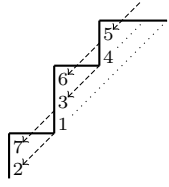
\end{example}

The \defn{LLT-polynomial} of a Dyck path~$\pi$ is the quasi-symmetric function
\[
\LLT{\pi}{t} \eqdef \sum_{\psi} t^{\dinv(\psi)} F_{\mu(\psi)},
\]
where the sum runs over the parking functions~$\psi$ of~$\pi$, and~$F_{\mu}$ denotes the fundamental quasi-symmetric function associated to a composition~$\mu$.
One can also express $\LLT{\pi}{t}$ in terms of monomials as in~\cite[Definition 3.2]{HHL05}.
With the monomial expression it is clear that
$\LLT{\pi}{1}=e_{\type(\pi)}$ as we ignore the $\dinv$ statistic.

These functions were originally due to A.~Lascoux, B.~Leclerc and J.-Y.~Thibon~\cite{LascouxLeclercThibon}, and were revisited in~\cite{HHL05,HaglundHaimanLoehrRemmelUlyanov}.
The former shuffle conjecture of~\cite{HaglundHaimanLoehrRemmelUlyanov}, which was proven by E.~Carlsson and A.~Mellit in~\cite{CarlssonMellit}, states that the Frobenius characteristic of~$\DiagHarm{n}{2}$, in the two variate case, is given by the following expression:  
\begin{equation}
\label{eq:shuff_formula}
\sum_{\pi} q^{\area(\pi)} {\mathbb L}_{\pi}(t),
\end{equation}
where the sum is over all Dyck paths $\pi$ of size $n$.

The $\dinv$ statistic of a Dyck path $\pi$ is a specialization of the $\dinv$ statistic on parking functions. Namely, consider $\psi_0$ the unique parking function of $\pi$ such that $\delta(\psi_0)=n\cdots 321$. We define $\dinv(\pi) \eqdef \dinv(\psi_0)$. Observe that $\mu(\psi_0)=1^n$, and so 
$t^{\dinv(\pi)}$ is the coefficient of~$s_{1^n}$ in~$\LLT{\pi}{t}$. 
The bigraded Hilbert series of~$\Alt(\DiagHarm{n}{2})$ is then the $q,t$-Catalan polynomial:
\begin{equation}
\label{eq:qt_CatalanFormula}
\sum_{\pi} q^{\area(\pi)}  t^{\dinv(\pi)},
\end{equation}
where the sum is over all Dyck paths $\pi$ of size $n$.


\subsection{The $n\le 4$ case}

We now present the main result of this section stating a strong connection between diagonal harmonics and Hopf chains.
In~\cite{Bergeron-multivariateDiagonalCoinvariantSpaces}, F.~Bergeron showed that the Schur expansion formulas for the (multi-graded) 
Frobenius characteristic of~$\DiagHarm{n}{r}$ for a fixed~$n$ stabilizes in~$r$ and can be computed as a formula in~$r$ where the coefficients are symmetric functions evaluated in~$(q_1, \dots, q_r, 0, 0, \dots)$. This expression was explicitly computed for~$n \le 5$ by F.~Bergeron and later verified and expanded for $n \le 6$ by N.~Thi\'ery~\cite{Thiery}. 
We denote by~$\Phi_{n,r}(q,t)$ the \defn{$q,t$-Frobenius characteristic} of~$\DiagHarm{n}{r}$, which is obtained by substituting the~$(q_1, \dots, q_r)$ parameters in the Frobenius characteristic of~$\DiagHarm{n}{r}$ by the $r$-tuple~$(q, t, 1, \dots, 1)$.
Here, we use Hopf chains, their collar statistics, and the LLT-polynomials to interpret this $q,t$-Frobenius characteristic.

\begin{theorem}
\label{thm:LLT}
The following two symmetric functions coincide for degree~$n \le 4$ and \textbf{any number $r$ of sets of variables}:
\begin{itemize}
\item the $q,t$-Frobenius characteristic~$\Phi_{n,r}(q,t)$ of~$\DiagHarm{n}{r}$, and
\item the sum over Hopf chains of~$\HC{n}{r}$ given by
\[
\Psi_{n,r}(q,t) \eqdef \sum_{\substack{\bpi=(\pi_1,\pi_2,\dots ,\pi_r) \\ \text{Hopf chain of } \HC{n}{r}}}  q^{\col(\bpi)} \LLT{\pi_r}{t}.
\]
\end{itemize}
\end{theorem}

\begin{remark}
The formula of Theorem~\ref{thm:LLT} specializes to the already known interpretations when we ignore the bottom diagonal path~$\pi_1$ in the Hopf chains:
\begin{itemize}
\item when~$r = 2$, the sum ranges over Dyck paths~$\pi_2$ and the collar statistic of~$(\pi_1, \pi_2)$ is just the area of~$\pi_2$. The statement is thus the shuffle formula~\eqref{eq:shuff_formula} (for $n \le 4$).
\item when~$r = 3$, the sum ranges over Tamari intervals~$[\pi_2, \pi_3]$ and the collar statistic of~$(\pi_1, \pi_2, \pi_3)$ is the length of a longest chain in the interval~$[\pi_2, \pi_3]$. The statement is thus~\cite[Conj.~1]{BergeronPrevilleRatelle} (for $n \le 4$).
\end{itemize}
\end{remark}

\enlargethispage{-.5cm}
\begin{proposition}
\label{prop:FrobeniousCharacter_n_1_4}
For~$n \le 4$, the symmetric functions~$\Psi_{n,r}(q,t)$ of Theorem~\ref{thm:LLT} are given by
\allowdisplaybreaks
\begin{align*}
\Psi_{1,r}(q,t) = & s_1 \\[.2cm]
\Psi_{2,r}(q,t) = & \big[ \textstyle ({\scriptstyle q + t}) + {r - 2 \choose 1} \big] s_{11} + s_2 \\[.2cm]
\Psi_{3,r}(q,t) = & \big[ \textstyle ({\scriptstyle q^{3} + q^{2} t + q t^{2} + t^{3} + q t}) + ({\scriptstyle q^{2} + q t + t^{2} + 2q + 2t + 1}) {r - 2 \choose 1} + ({\scriptstyle q + t + 3}) {r - 2 \choose 2} + {r - 2 \choose 3} \big] s_{111} \\[.1cm]
& + \big[ \textstyle ({\scriptstyle q^{2} + q t + t^{2} + q + t}) + ({\scriptstyle q + t + 2}) {r - 2 \choose 1} + {r - 2 \choose 2} \big] s_{21} \\[.1cm]
& + s_{3} \\[.2cm]
\Psi_{4,r}(q,t) = 
& \big[ \textstyle ({\scriptstyle q^{6} + q^{5} t + q^{4} t^{2} + q^{3} t^{3} + q^{2} t^{4} + q t^{5} + t^{6} + q^{4} t + q^{3} t^{2} + q^{2} t^{3} + q t^{4} + q^{3} t + q^{2} t^{2} + q t^{3}}) \\[-.1cm]
& \textstyle \quad + ({\scriptstyle q^{5} + q^{4} t + q^{3} t^{2} + q^{2} t^{3} + q t^{4} + t^{5} + 2 q^{4} + 3 q^{3} t + 3 q^{2} t^{2} + 3 q t^{3} + 2 t^{4}} \\[-5pt]
& \textstyle \qquad\qquad\qquad {\scriptstyle + 3 q^{3} + 5 q^{2} t + 5 q t^{2} + 3 t^{3} + 3 q^{2}+ 6 q t + 3 t^{2} + 3 q + 3 t + 1}) \binom{r-2}{1} \\[-.1cm]
& \textstyle \quad + ({\scriptstyle q^{4} + q^{3} t + q^{2} t^{2} + q t^{3} + t^{4} + 4 q^{3} + 5 q^{2} t + 5 q t^{2} + 4 t^{3} + 9 q^{2} + 12 q t + 9 t^{2} + 15 q + 15 t + 12}) \binom{r-2}{2} \\[-.1cm]
& \textstyle \quad + ({\scriptstyle q^{3} + q^{2} t + q t^{2} + t^{3} + 6 q^{2} + 7 q t + 6 t^{2} + 18 q + 18 t + 29}) \binom{r-2}{3}  \\[-.1cm]
& \textstyle \quad + ({\scriptstyle q^{2} + q t + t^{2} + 8 q + 8 t + 25}) \binom{r-2}{4} + ({\scriptstyle q + t + 9}) \binom{r-2}{5} + \binom{r-2}{6} \big] s_{1111} \\[.1cm]
& + \big[ \textstyle ({\scriptstyle q^{5} + q^{4} t + q^{3} t^{2} + q^{2} t^{3} + q t^{4} + t^{5} + q^{4} + 2q^{3} t + 2q^{2} t^{2} + 2q t^{3} + t^{4} + q^{3} + 2q^{2} t + 2q t^{2} + t^{3} + q t}) \\[-.1cm]
& \textstyle \quad + ({\scriptstyle q^{4} + q^{3} t + q^{2} t^{2} + q t^{3} + t^{4} + 3q^{3} + 4q^{2} t + 4q t^{2} + 3t^{3} + 5q^{2} + 7q t + 5t^{2} + 6q + 6t + 3}) {r - 2 \choose 1} \\[-.1cm]
& \textstyle \quad + ({\scriptstyle q^{3} + q^{2} t + q t^{2} + t^{3} + 5q^{2} + 6q t + 5t^{2} + 12q + 12t + 15}) {r - 2 \choose 2} \\[-.1cm]
& \textstyle \quad + ({\scriptstyle q^{2} + q t + t^{2} + 7q + 7t + 18}) {r - 2 \choose 3} + ({\scriptstyle q + t + 8}) {r - 2 \choose 4} + {r - 2 \choose 5} \big] s_{211} \\[.1cm]
& + \big[ \textstyle ({\scriptstyle q^{4} + q^{3} t + q^{2} t^{2} + q t^{3} + t^{4} + q^{2} t + q t^{2} + q^{2} + q t + t^{2}}) \\[-.1cm]
& \textstyle \quad + ({\scriptstyle q^{3} + q^{2} t + q t^{2} + t^{3} + 2q^{2} + 3q t + 2t^{2} + 3q + 3t + 2}) {r - 2 \choose 1} \\[-.1cm]
& \textstyle \quad + ({\scriptstyle q^{2} + q t + t^{2} + 4q + 4t + 6}) {r - 2 \choose 2} + ({\scriptstyle q + t + 5}) {r - 2 \choose 3} + {r - 2 \choose 4} \big] s_{22} \\[.1cm]
& + \big[ \textstyle ({\scriptstyle q^{3} + q^{2} t + q t^{2} + t^{3} + q^{2} + q t + t^{2} + q + t}) \\[-.1cm]
& \textstyle \quad + ({\scriptstyle q^{2} + q t + t^{2} + 2q + 2t + 3}) {r - 2 \choose 1} + ({\scriptstyle q + t + 3}) {r - 2 \choose 2} + {r - 2 \choose 3} \big] s_{31} \\[.1cm]
& + s_4
\end{align*}
\end{proposition}

\begin{remark}
\label{rem:qtsymmetry}
Observe that~$\Phi_{n,r}(q,t)$ is symmetric in~$q$ and~$t$ since the diagonal harmonics space~$\DiagHarm{n}{r}$ is symmetric on the different sets of variables.
It thus follows from Theorem~\ref{thm:LLT} that~$\Psi_{n,r}(q,t)$ is symmetric in~$q$ and~$t$ for~$n \le 4$. See the expressions of Proposition~\ref{prop:FrobeniousCharacter_n_1_4}.
\end{remark}

\begin{proof}[Proof of Proposition~\ref{prop:FrobeniousCharacter_n_1_4}]
The expressions for~$\Psi_{n,r}(q,t)$ are obtained as follows. For a fixed~$n$, consider all strict Hopf chains $(\pi_1,\pi_2,\dots ,\pi_\ell)$, that is, 
Hopf chains satisfying~${\pi_1 \ne \pi_2 \ne \dots \ne \pi_\ell}$. For any fixed~$n$ there are only finitely many strict Hopf chains. 
For a given strict Hopf chain ${\bpi = (\pi_1,\pi_2,\dots ,\pi_\ell)}$, there are~${r-1 \choose \ell-1}$ distinct Hopf chains of length~$r$ that involve 
exactly the Dyck paths~$\pi_1, \pi_2, \dots, \pi_\ell$, with possible repetitions. From those chains, 
\begin{itemize}
\item ${r-2 \choose \ell-2}$ have a single occurence of~$\pi_\ell$, hence have collar statistic $\col(\bpi)$,
\item ${r-2 \choose \ell-1}$ have the chain~$\pi_\ell$ repeated more than once, hence have collar statistic $0$.
\end{itemize}
Therefore, for any~$n$ and~$r$, the function~$\Psi_{n,r}(q,t)$ of Theorem~\ref{thm:LLT} is given by
\[
\Psi_{n,r}(q,t) = \sum_{\substack{\bpi=(\pi_1,\dots ,\pi_\ell) \\ \text{strict Hopf chain}}} \Big[ {\textstyle q^{\col(\bpi)} \binom{r-2}{\ell-2} + \binom{r-2}{\ell-1} } \Big] \LLT{\pi_\ell}{t}.
\]
The expressions in the proposition are then obtained by computer, generating all strict Hopf chains for~$n \le 4$.
This was done using the sage software~\cite{Sage,SageCombinat}.
\end{proof}

\begin{proof}[Proof of Theorem~\ref{thm:LLT}]
We obtain~$\Phi_{n,r}(q,t) = \Psi_{n,r}(q,t)$ for all~$n \le 4$ and any~$r>0$ comparing the expressions in Proposition~\ref{prop:FrobeniousCharacter_n_1_4} with the symmetric function~$\Phi_{n,r}(q,t)$ obtained in~\cite{Bergeron-multivariateDiagonalCoinvariantSpaces}.
\end{proof}

\enlargethispage{-.5cm}
We now present some immediate consequences of Theorem~\ref{thm:LLT}.

\begin{corollary}
\label{coro:bigcoro}
For degree~$n \le 4$ and \textbf{any number $r$ of sets of variables}:
\begin{enumerate}
\item \label{item:bigradedHilbertSeriesAlt}
The bigraded Hilbert series of~$\Alt(\DiagHarm{n}{r})$ with respect to two sets of variables is given by
\[
\widetilde\Psi_{n,r}(q,t) \eqdef \sum_{\substack{\bpi=(\pi_1,\pi_2,\dots ,\pi_r) \\ \text{Hopf chain of } \HC{n}{r}}}  q^{\col(\bpi)} t^{\dinv(\pi_r)}.
\]

\item \label{item:dimensionAlt}
The dimension of~$\Alt(\DiagHarm{n}{r})$ equals the number of Hopf chains of length~$r$ and size~$n$. 

\item \label{item:eExpansion}
The $q$-Frobenius characteristic~$\Phi_{n,r}(q,1)$ of~$\DiagHarm{n}{r}$ is given by
\[
\Psi_{n,r}(q,1) = \sum_{\substack{\bpi=(\pi_1,\pi_2,\dots ,\pi_r) \\ \text{Hopf chain of } \HC{n}{r}}}  q^{\col(\bpi)} e_{\type(\pi_r)},
\]
where $\type(\pi_r)$ is the partition of the connected $N$ steps lengths in $\pi_r$.

\item \label{item:dimensionDH}
The dimension of~$\DiagHarm{n}{r}$ equals the number of labelled Hopf chains of length~$r$ and size~$n$. 
\end{enumerate}
\end{corollary}

\begin{proof}
Part~\eqref{item:bigradedHilbertSeriesAlt} follows by considering the coefficient of~$s_{1^n}$ in the equality~$\Phi_{n,r}(q,t) = \Psi_{n,r}(q,t)$ of Theorem~\ref{thm:LLT} since:
\begin{itemize}
\item the bigraded Hilbert series of~$\Alt(\DiagHarm{n}{r})$ is the coefficient of~$s_{1^n}$ in~$\Phi_{n,r}(q,t)$, and
\item $t^{\dinv(\pi)}$ is the coefficient of~$s_{1^n}$ in~$\LLT{\pi}{t}$.
\end{itemize}
Part~\eqref{item:dimensionAlt} is an evaluation of Part~\eqref{item:bigradedHilbertSeriesAlt} at $q = t = 1$. \\
Part~\eqref{item:eExpansion} is the specialization of Theorem~\ref{thm:LLT} at~$t = 1$, using that~$\LLT{\pi}{1} = e_{\type(\pi)}$. \\
Part~\eqref{item:dimensionDH} is obtained by applying the map~$e_\lambda\mapsto {n\choose \lambda}$ to~$\Psi_{n,r}(1,1)$.
\end{proof}

\begin{remark}
For convenience, we include the data of Corollary~\ref{coro:bigcoro} for~$n \le 4$:
\begin{itemize}
\item $\widetilde \Phi_{n,r}(q,t)$ can already be found as the coefficient of~$s_{1^n}$ in the data of Proposition~\ref{prop:FrobeniousCharacter_n_1_4},
\item the dimensions of~$\Alt(\DiagHarm{n}{r})$ and~$\DiagHarm{n}{r}$ are given by
\[
\begin{array}{|c|l|l|}
\hline
n & \dim \big( \Alt(\DiagHarm{n}{r}) \big) = & \dim \big( \DiagHarm{n}{r} \big) = \\
& \text{number of Hopf chains} & \text{number of laballed Hopf chains}\\
\hline
n=1 & {r \choose 0} & {r+1 \choose 0}\phantom{\Big]}\\[4pt]
n=2 & {r \choose 1} & {r+1 \choose 1} \\[4pt]
n=3 &  {r \choose 1} + 3{r \choose 2} + {r \choose 3} & {r +1\choose 1} + 4{r+1 \choose 2} + {r+1 \choose 3}  \\[4pt]
n=4 &  {r \choose 1} + 12{r \choose 2} + 29{r \choose 3} & {r+1 \choose 1} + 22{r+1 \choose 2} + 56{r+1 \choose 3} \\[4pt]
&\quad\quad +25{r \choose 4} +9{r \choose 5} +{r \choose 6} &\quad\quad + 40{r +1\choose 4} + 11{r+1 \choose 5} +{r+1 \choose 6}  \\[3pt]
\hline
\end{array}
\]
\item the symmetric functions $\Psi_{n,r}(q,1)$ are given by
\begin{align*}
\Psi_{1,r}(q,1) = \; & e_1 \\
\Psi_{2,r}(q,1) = \; & \textstyle e_{11} + \big[{\scriptstyle q}+ {r-2 \choose 1}\big] e_{2} \\[3pt]
\Psi_{3,r}(q,1) = \; & \textstyle e_{111} + \big[({\scriptstyle q^2+2q})+({\scriptstyle q+3}){r -2\choose 1} +{r -2\choose 2}\big] e_{21} \\[3pt] & \textstyle + \big[{\scriptstyle q^3}+({\scriptstyle q^2+2q+1}){r -2\choose 1} + ({\scriptstyle q+3}){r-2 \choose 2} +{r-2 \choose 3}\big] e_{3} \\[3pt]
\Psi_{4,r}(q,1) = \; & \textstyle e_{1111} + \big[({\scriptstyle q^{3} + 2q^{2} + 3q})+({\scriptstyle q^{2} + 3q + 6}){r -2\choose 1} +({\scriptstyle q + 4}){r -2\choose 2}+{r -2\choose 3}\big] e_{211}
	\\[4pt]
 	& \textstyle + \big[({\scriptstyle q^{4} + q^{2}})+({\scriptstyle q^{3} + 2q^{2} + 4q + 2}){r-2\choose 1} +({\scriptstyle q^{2} + 4q + 7}){r -2\choose 2} +({\scriptstyle q + 5}){r-2\choose 3}+{r -2\choose 4}\big] e_{22}
	\\[4pt]
	& \textstyle  +\big[({\scriptstyle q^{5} + q^{4} + 2q^{3}}) + ({\scriptstyle q^{4} + 3q^{3} + 6q^{2} + 8q + 4}) {r-2 \choose 1} + ({\scriptstyle q^{3} + 5q^{2} + 13q + 18}){r-2 \choose 2} \\
	& \textstyle 	\qquad\qquad + ({\scriptstyle q^{2} + 7q + 19}){r -2\choose 3} + ({\scriptstyle q + 8}){r-2 \choose 4}+{r-2 \choose 5}\big] e_{31}
	\\[4pt]
	& \textstyle + \big[ ({\scriptstyle q^{6}})+ ({\scriptstyle q^{5} + 2q^{4} + 3q^{3} + 3q^{2} + 3q + 1}){r-2 \choose 1} + ({\scriptstyle q^{4} + 4q^{3} + 9q^{2} + 15q + 12}){r -2\choose 2} \\
	& \textstyle 	\qquad\qquad + ({\scriptstyle q^3 + 6q^2 + 18q + 29}){r-2 \choose 3} + ({\scriptstyle q^2 + 8q + 25}){r -2\choose 4}+ ({\scriptstyle  q + 9}){r -2\choose 5}+{r -2\choose 6}\big] e_{4} \,.
\end{align*}
\end{itemize}
\end{remark}

\begin{remark}
As in Remark~\ref{rem:qtsymmetry}, observe that~$\widetilde\Psi_{n,r}(q,t)$ is symmetric in~$q$ and~$t$ for~$n \le 4$.
This is surprising from the combinatorial perspective since $\dinv$ only depends on the top path~$\pi_r$ while $\col(\bpi)$ depends on the full Hopf chain~$\bpi = (\pi_1, \dots, \pi_r)$. It is an open problem to find a bijection on Hopf chains (for~$n \le 4$) that would exchange the two statistics~$\dinv$ and~$\col$. This generalizes the symmetry problem of the $q,t$-Catalan numbers~\cite[Open~Problem~3.11]{Haglund-qt-catalan}.
\end{remark}

\begin{remark}
In the case of two sets of variables, the bigraded Hilbert series of~$\Alt(\DiagHarm{n}{2})$ can be combinatorially expressed in terms of the pair of statistics $\area$ and $\dinv$ or $\area$ and $\bounce$ on Dyck paths. The zeta map sends $\dinv$ to $\area$ and $\area$ to $\bounce$. Therefore, as mentioned by D.~Armstrong, N.~A.~Loehr and G.~S.~Warrington in~\cite{ArmstrongLoehrWarrington-sweep}, one point of view is that rather than having three statistics we have one statistic $\area$ and a nice map zeta. It would be interesting to find more generalizations of the zeta map on chains of Dyck paths that allows us to describe the missing statistics in multivariate diagonal harmonics using the collar statistic. A first attempt to find such generalization could be related to the steep-zeta map suggested in Remark~\ref{rem_step_zeta_map}.
\end{remark}

\begin{remark}
We have verified that Theorem~\ref{thm:LLT} and Corollary~\ref{coro:bigcoro} still hold if we replace $\pi_r = N^{i_1} E^{j_1} \dots N^{i_p} E^{j_p}$ by its transpose~$\overline{\pi}_r \eqdef N^{j_p} E^{i_p} \dots N^{j_1} E^{i_1}$ in the formulas for $\Psi_{n,r}(q,t)$, $\widetilde \Psi_{n,r}(q,t)$, and~$\Psi_{n,r}(q,1)$.
This symmetry is clear in the two variate case (since~${\area(\pi) = \area(\overline{\pi})}$) but is not obvious in the multivariate case.
In fact, this symmetry seems a coincidence and might not hold for larger values of~$n$, nor in the more general context of rectangular diagonal harmonics.
Guided by preliminary computations performed with F.~Bergeron, we believe that the right formulas do involve the transposition of~$\pi_r$.
\end{remark}


\subsection{The $n=5$ case and further conjectures}

For~$n = 4$, we had to refine Tamari chains to Hopf chains and length of a longest chain in intervals to the collar statistic.
We will see in this Section that this should be refined even further for~$n = 5$.

In comparison to the formulas in Proposition~\ref{prop:FrobeniousCharacter_n_1_4}, the formula for $\Psi_{5,r}(q,t)$ and the Frobenius characteristic~$\Phi_{5,r}(q,t)$ of~$\DiagHarm{5}{r}$ differ by very few terms.
This means that the Hopf chains for $n=5$ are not the right subcollection of Tamari chains, and that the collar statistic is not defined on the right set.
Therefore, it makes sense to set $q=t=1$ first:
\begin{align*}
\Psi_{5,r}(1,1) = \; & \textstyle e_{11111}
	\\
	& \textstyle + {\scriptstyle \big[10{r-1 \choose 1} +10{r-1 \choose 2} +5{r -1\choose 3}+{r-1 \choose 4}\big] } e_{2111} 
	\\[4pt]
	& + {\scriptstyle  \big[10{r-1 \choose 1} +48{r -1\choose 2} +65{r-1 \choose 3}+40{r -1\choose 4} } {\scriptstyle  +11{r -1\choose 5} +  {r -1\choose 6}\big] } e_{221}
	\\[4pt]
    & +{\scriptstyle \big[10{r-1 \choose 1} +62{r -1\choose 2} +115{r-1 \choose 3}+107{r -1\choose 4}+52{r -1\choose 5}+12{r -1\choose 6}+{r -1\choose 7}\big]} e_{311}
	\\[4pt]
	& \textstyle + {\scriptstyle  \big[5{r-1 \choose 1} + 68{r-1\choose 2} + 218{r-1 \choose 3}+297{r-1\choose 4}+208{r-1\choose 5}+77{r-1\choose 6}+14{r-1\choose 7}+{r-1\choose 8}\big] }e_{32}
   	\\[4pt]
 	& \textstyle + {\scriptstyle   \big[5{r-1 \choose 1} +88{r -1\choose 2} +360{r-1 \choose 3}  +652{r -1\choose 4}  +638{r -1\choose 5} +354{r -1\choose 6}+109{r -1\choose 7}+17{r -1\choose 8}+{r -1\choose 9}\big]} e_{41} \\[4pt]
  	& \textstyle + {\scriptstyle   \big[{r-1 \choose 1} +40{r -1\choose 2} +276{r-1 \choose 3}+763{r -1\choose 4}+1097{r -1\choose 5} +909{r -1\choose 6}+444{r -1\choose 7}+124{r -1\choose 8}+18{r -1\choose 9}+{r -1\choose 10}\big]} e_{5} \,.
\end{align*}
We can write the difference of the two symmetric functions as
\[
\Psi_{5,r}(1,1) - \Phi_{5,r}(1,1) = \textstyle {r+4 \choose 8} e_{41} + {r+4 \choose 9} e_{5} \,. 
\]

This computation suggests that among the Hopf chains surviving in~$\Psi_{5,r}(1,1) - \Phi_{5,r}(1,1)$ there should be some Hopf chains $\bpi=(\pi_1,\pi_2,\pi_3,\pi_4)$ with~$\type(\pi_4)=[4,1]$ that we need to eliminate.
To be more precise, we look for a set~$O$ of strict Hopf chains~$\bpi$, such that the set of Hopf chains that contain at least one subchain in~$O$ is precisely enumerated by the difference~${\Psi_{5,r}(1,1) - \Phi_{5,r}(1,1)}$. 
For instance, forbidding any of the three patterns in Figure~\ref{fig:potentialInvalidKiller} would kill the given excess.
Even more, the formula of Theorem~\ref{thm:LLT} restricted to the remaining chains and considering the restricted collar statistic would coincide with~$\Phi_{5,r}(q,t)$.
Computer exploration should give us candidates for the obstruction sets~$O$ and we look for a set~$S$ of surviving chains (that do not contain any subchains in the set of obstructions~$O$) satisfying the properties in the following question.

\begin{figure}[t]
	\capstart
	\centerline{
		\includegraphics[scale=0.5]{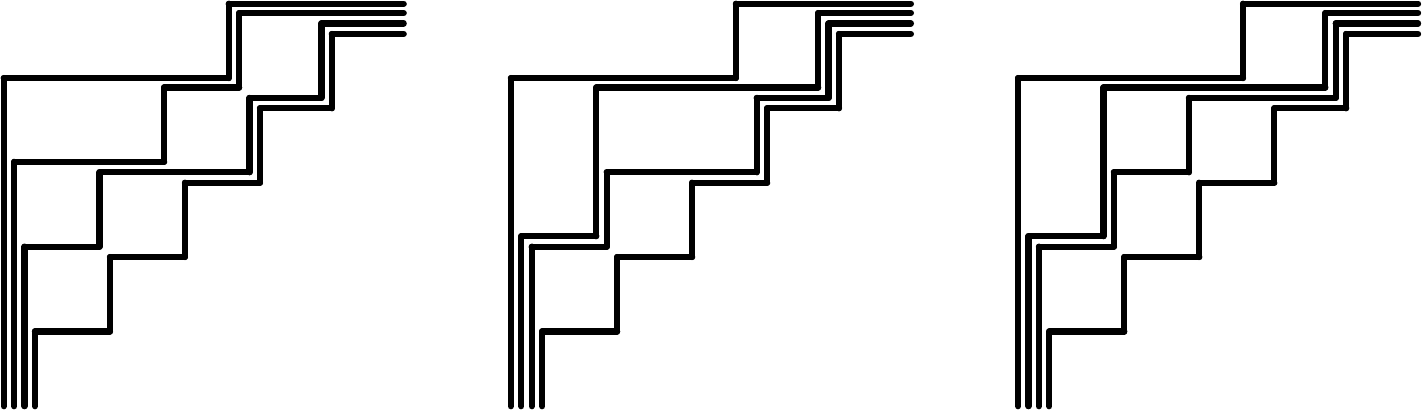}
	}
	\caption{Three potential killers for $n=5$.}
	\label{fig:potentialInvalidKiller}
\end{figure}

\begin{question}
\label{conj:Sconjecture}
Is there a subfamily~$S$ of Hopf chains such that the Frobenius characteristic $\Phi_{n,r}$ is equal to
\[
\Psi_{S,n,r}(q,t) \eqdef \sum_{\bpi=(\pi_1,\dots ,\pi_\ell)\in S} q^{\col_S(\bpi)} \LLT{\pi_\ell}{t},
\]
where~$\col_S(\pi)$ denotes the $S$-collar statistic
\[
\col_S(\bpi) = \max \set{\ell}{{(\pi_{r-1},\nu_1,\cdots,\nu_{\ell-1},\pi_r) \text{ {\bf strict} Tamari chain,} \hfill \atop (\pi_1,\pi_2,\dots ,\pi_{r-1},\nu_1,\cdots,\nu_{\ell-1},\pi_r) \in S}}.
\]
\end{question}

\begin{remark}
\label{rem:bigcoro}
A positive answer to Question~\ref{conj:Sconjecture} would have the following consequences similar to Corollary~\ref{coro:bigcoro}:
\begin{enumerate}
\item the bigraded Hilbert series of~$\Alt(\DiagHarm{n}{r})$ with respect to two sets of variables would be given by
\[
\widetilde\Psi_{S,n,r}(q_1,q_2) \eqdef \sum_{\bpi=(\pi_1,\pi_2,\dots ,\pi_r) \in S}  q_1^{\col_S(\bpi)} q_2^{\dinv(\pi_r)}.
\]
\item The dimension of~$\Alt(\DiagHarm{n}{r})$ equals the number of chains in~$S$ of length~$r$ and size~$n$. 

\item \label{item:eExpansion2}
The $q$-Frobenius characteristic~$\Phi_{n,r}(q,1)$ of~$\DiagHarm{n}{r}$ would be given by
\[
\Psi_{S,n,r}(q,1) \eqdef \sum_{\bpi=(\pi_1,\dots ,\pi_\ell)\in S} q^{\col_S(\bpi)} e_{\type(\pi_\ell)},
\]
where $\type(\pi_r)$ is the partition of the connected $N$ steps lengths in $\pi_r$.

\item The dimension of~$\DiagHarm{n}{r}$ equals the number of labelled chains in~$S$ of length~$r$ and size~$n$. 
\end{enumerate}
\end{remark}

Note that Part~\eqref{item:eExpansion2} of Remark~\ref{rem:bigcoro} is closely related to the following $e$-positivity conjecture, made by F.~Bergeron in 2011 and shared with us by personal communication~\cite{Bergeron-ePositivityConjecture}.

\begin{conjecture}[\cite{Bergeron-ePositivityConjecture}]
The Frobenius characteristic of~$\DiagHarm{n}{r}$ is $e$-positive for any fixed~$n$ and~$r$ when the parameter~$q_r=1$.
\end{conjecture}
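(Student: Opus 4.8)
The plan is to prove the statement in its representation-theoretic guise. Write $\operatorname{ch}$ for the Frobenius characteristic and $\omega$ for the fundamental involution of the ring of symmetric functions, the one exchanging $e_\lambda$ and $h_\lambda$ (not to be confused with the permutation morphism of Part~1). Since $\omega(\operatorname{ch} M) = \operatorname{ch}(M \otimes \mathrm{sgn})$ and $\omega(e_\lambda) = h_\lambda = \operatorname{ch}\big( \mathrm{Ind}_{\fS_\lambda}^{\fS_n} \mathrm{triv}\big)$, the desired $e$-positivity of $\operatorname{ch}(\DiagHarm{n}{r})$ at $q_r = 1$ is equivalent to the assertion that the multigraded (in the surviving parameters $q_1, \dots, q_{r-1}$) $\fS_n$-module $\DiagHarm{n}{r}\otimes\mathrm{sgn}$, evaluated at $q_r = 1$, is a non-negative graded direct sum of Young permutation modules $M^\lambda = \mathrm{Ind}_{\fS_\lambda}^{\fS_n}\mathrm{triv}$. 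First I would reduce to fixed $n$ via F.~Bergeron's stabilization: for fixed $n$ the characteristic is a polynomial in $r$ whose coefficients are symmetric functions in $(q_1, \dots, q_r)$, so it suffices to exhibit such a decomposition compatibly for all $r$.

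The core of the approach is to build an explicit $\fS_n$-equivariant combinatorial model for these graded pieces, generalizing the Hopf-chain description of Theorem~\ref{thm:eExpansion}. Concretely, I would look for a family $S$ of chains in the Tamari lattice, as in Question~\ref{conj:Sconjecture}, together with an $\fS_n$-equivariant basis or cell decomposition of $\DiagHarm{n}{r}$ in which each chain $\bpi = (\pi_1, \dots, \pi_\ell) \in S$ indexes a summand isomorphic to the sign-induced module $\mathrm{Ind}_{\fS_{\type(\pi_\ell)}}^{\fS_n}(\mathrm{sgn})$, graded by an appropriate collar-type statistic $\col_S$. Summing over $S$ would then give, in the single-variable shadow $(q,1,\dots,1)$, the identity $\sum_{\bpi \in S} q^{\col_S(\bpi)} e_{\type(\pi_\ell)}$, which is manifestly $e$-positive because the coefficient of each $e_\lambda$ counts, with non-negative monomial weights, the chains whose top path has type $\lambda$; upgrading the single collar grading to the full multigrading in $q_1,\dots,q_{r-1}$ leaves this positivity mechanism untouched. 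For $n \le 4$ this is exactly Theorem~\ref{thm:eExpansion} with $S$ the set of Hopf chains, so those cases are already settled; the genuine content is the module-level isomorphism, not the matching of graded dimensions.

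The main obstacle is twofold. First, one must identify the correct family $S$: the excess $\operatorname{Excess}_{5,r} = \binom{r+4}{8} e_{41} + \binom{r+4}{9} e_5$ shows that plain Hopf chains already overcount for $n = 5$, and the naive killers such as the pattern in \fref{fig:potentialInvalidKiller} repair the $q=1$ count while destroying the refined collar and $\dinv$ enumeration. So the first hard step is to pin down $S$ and the statistic $\col_S$ \emph{simultaneously}, ideally by extracting them from an actual geometric realization, for instance an affine paving of a variety whose coordinate ring or cohomology carries $\DiagHarm{n}{r}$, rather than by combinatorial guesswork. Second, and more seriously, matching dimensions and gradings does not suffice: one must prove that each cell genuinely carries $\mathrm{Ind}_{\fS_\lambda}^{\fS_n}(\mathrm{sgn})$ as an $\fS_n$-representation. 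I expect this equivariance to be the crux, and I would attack it by constructing an explicit $\fS_n$-stable filtration of $\DiagHarm{n}{r}$ whose subquotients are the sign-induced permutation modules, with the associated graded controlled by the chain data.

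A complementary route exploits the special role of $q_r = 1$. Setting the last grading parameter to $1$ amounts to summing the Hilbert series over the $r$-th set of variables; if one can exhibit a suitable free action or an $\fS_n$-equivariant contracting homotopy in that single direction, the characteristic at $q_r = 1$ should collapse onto a lower-complexity object built from $r-1$ sets of variables, permitting an induction on $r$ whose base case $r = 2$ is supplied by the shuffle theorem and the operator $\nabla$. Combining this inductive collapse with the chain model of the previous paragraph would, I expect, be the cleanest path to the full conjecture; the inductive step, however, again hinges on controlling the $\fS_n$-module structure under the specialization, which returns us to the same equivariance obstacle.
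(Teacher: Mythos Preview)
This statement is recorded in the paper as an open \emph{conjecture} of F.~Bergeron, communicated personally; the paper offers no proof and does not claim one. So there is no ``paper's own proof'' to compare against, and your task was in some sense ill-posed from the start.

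Your proposal is honest about this: it is not a proof but a research programme, and you flag the two genuine obstacles yourself. The first obstacle --- identifying the correct family~$S$ of chains and the correct statistic~$\col_S$ --- is precisely the content of the paper's Question~\ref{conj:Sconjecture}, which the authors leave open after noting that plain Hopf chains already overcount at~$n=5$ and that the naive killer of \fref{fig:potentialInvalidKiller} fails the refined~$q$-enumeration. The second obstacle --- upgrading a numerical match to an actual $\fS_n$-equivariant filtration with sign-induced permutation subquotients --- is not addressed in the paper at all and is, as you say, the crux. Your inductive-on-$r$ route via a ``collapse in the $q_r$-direction'' is an attractive heuristic, but you give no mechanism for the equivariant contracting homotopy, and the base case~$r=2$ (the shuffle theorem) gives $e$-positivity only after the same~$q_r=1$ specialization, so it does not obviously seed an induction on the module level.

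In short: there is no gap to name because there is no argument, only a plan whose hard steps coincide with what the paper explicitly leaves open. If you want to turn this into progress, the concrete first target is to find, for~$n=5$, an obstruction set~$O$ such that the surviving chains match~$\Phi_{5,r}(q)$ \emph{with} the collar grading --- the paper reports that this computation was underway but had not yet produced a candidate.
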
 

In fact, Part~\eqref{item:eExpansion2} of Remark~\ref{rem:bigcoro} suggests that the expansion of $\Phi_{n,r}$ is not only $e$-positive, but for a fixed~$n$ the coefficient of~$e_\lambda$ is a polynomial in~$r$ that has a $q$-positive expansion in the binomials~${r-2\choose *}$. Remark that this polynomial in $r$ is not a $q$-positive expansion in the binomials~${r-1 \choose *}$. 

To conclude, we discuss one more evidence for the connection between Hopf chains and multivariate diagonal harmonics. Namely, the following conjecture was verified by computer up to~$n = 6$.

\begin{conjecture}
\label{conj:structureTopCoeff}
For any fixed partition~$\lambda \vdash n$, the coefficient~$c_{n,r}^\lambda(q)$ of~$e_\lambda$ in~$\Phi_{n,r}(q,1)$ 
is a polynomial in~$r$ that has a $q$-positive expansion in the binomials~${r-2\choose *}$. Moreover, it is of the form
\[
c_{n,r}^\lambda(q) = \textstyle \binom{r-2}{m} + (q+b_\lambda) \textstyle \binom{r-2}{m-1} + \text{lower terms,}
\]
where~$b_\lambda$ is a positive integer and~$m = \sum_{i=1}^n (n-i+1)(\lambda_i-1)$ is the maximal area of a Dyck path with connected $N$ steps of type~$\lambda$. In the formula for~$m$, we assume~$\lambda_i = 0$ for $i$ larger than the length of~$\lambda$.
\end{conjecture}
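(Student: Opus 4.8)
The statement bundles three assertions about the coefficient $c_\lambda(q,r)$ of $e_\lambda$ in $\Phi_{n,r}(q)$: that it is a polynomial in $r$, that this polynomial is $q$-positive in the shifted binomial basis $\binom{r-2}{*}$, and that its top two binomial coefficients have the prescribed shape with $m$ the maximal area of a type-$\lambda$ Dyck path. The plan is to reduce all three to a single combinatorial input, namely a surviving chain model as in Question~\ref{conj:Sconjecture}, and then to extract the leading-term structure by analysing the longest chains. Polynomiality in $r$ is not new: it is exactly F.~Bergeron's stabilization of the Frobenius characteristic of $\DiagHarm{n}{r}$ in the number $r$ of sets of variables~\cite{Bergeron-multivariateDiagonalCoinvariantSpaces}, so I would cite it directly and treat $c_\lambda(q,r) \in \Z[q][r]$ as established. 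For $n \le 4$ the remaining two assertions are already consequences of Theorem~\ref{thm:eExpansion}: collecting the closed form from the proof of Proposition~\ref{prop:FrobeniousCharacter_n_1_4} by $e_\lambda$,
\[
c_\lambda(q,r) = \sum_{\substack{\bpi=(\pi_1,\dots,\pi_\ell) \text{ strict Hopf chain} \\ \type(\pi_\ell)=\lambda}} \left[ q^{\col(\bpi)} \binom{r-2}{\ell-2} + \binom{r-2}{\ell-1} \right]
\]
exhibits $c_\lambda(q,r)$ as a nonnegative $\Z[q]$-combination of the $\binom{r-2}{k}$, which is $q$-positivity on the nose.

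For general $n$ the first step is therefore to replace strict Hopf chains by the conjectural surviving family $S$ of Question~\ref{conj:Sconjecture}, for which the identical counting (each strict chain of $S$ of length $\ell$ producing $\binom{r-2}{\ell-2}$ chains that carry its collar and $\binom{r-2}{\ell-1}$ chains with collar $0$) yields
\[
c_\lambda(q,r) = \sum_{\substack{\bpi=(\pi_1,\dots,\pi_\ell) \in S \\ \type(\pi_\ell)=\lambda}} \left[ q^{\col_S(\bpi)} \binom{r-2}{\ell-2} + \binom{r-2}{\ell-1} \right].
\]
Granting this, $q$-positivity in $\binom{r-2}{*}$ is immediate, and the coefficient of $\binom{r-2}{k}$ is the number of length-$(k{+}1)$ chains of $S$ ending in type $\lambda$ plus the $\col_S$-weighted count of the length-$(k{+}2)$ such chains. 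The leading-term analysis then becomes a question about the longest chains of $S$ ending at a type-$\lambda$ path.

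The core combinatorial claim I would prove is that the longest strictly increasing chain from the diagonal $(NE)^n$ to a Dyck path $\pi$ in the Tamari lattice has exactly $\area(\pi)$ cover steps; this is forced by the $r=2$ specialization, where the collar is the area and thus equals the length of a longest chain from the diagonal (see the remark after Theorem~\ref{thm:eExpansion}). Hence among type-$\lambda$ paths the longest chains have $m$ steps, so length $\ell=m+1$, and they terminate at the unique maximal-area path $\pi^\star_\lambda$ of type $\lambda$. Assuming every saturated Tamari chain to $\pi^\star_\lambda$ survives in $S$, uniqueness of the length-$(m{+}1)$ chain gives the coefficient $1$ of $\binom{r-2}{m}$; that chain is saturated, so nothing can be inserted strictly below its top and $\col_S=1$, contributing the $q$ in $(q+b)$; and the integer $b$ counting the length-$m$ chains ending in type $\lambda$ is positive because at least one such chain exists, e.g. the one obtained by deleting a single intermediate path from the longest chain (a subchain of a chain of $S$, which stays in $S$ provided $S$ is closed under deletion of interior nodes). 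Together this would pin down $c_\lambda(q,r) = \binom{r-2}{m} + (q+b)\binom{r-2}{m-1} + \text{(lower binomials)}$ with $b \ge 1$, as claimed.

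The main obstacle is the very first reduction. For $n \ge 5$ the family $S$ of Question~\ref{conj:Sconjecture} is not known to exist --- there are not even candidates yet --- and the coarser $e$-positivity of $\Phi_{n,r}$ at $q_r=1$ (F.~Bergeron's conjecture) is itself open, so the binomial $q$-positivity has no representation-theoretic substitute at present. A fallback avoiding $S$ would attack the three assertions directly from the stabilized characteristic: polynomiality and the degree identity $\deg_r c_\lambda(q,r)=m$ from the $r\to\infty$ growth of $\DiagHarm{n}{r}$, and $e$-positivity from a (still missing) filtration of $\DiagHarm{n}{r}|_{q_r=1}$ whose subquotients are sign representations induced from Young subgroups, whose Frobenius characteristics are precisely the $e_\lambda$. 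Either route ultimately hinges on structural information about $\DiagHarm{n}{r}$ for unbounded $n$ that is currently inaccessible; the realistic near-term target is to make the argument above unconditional for $n\le 6$, matching the computer verification, by exhibiting $S$ in those cases and checking the chain-length and uniqueness claims there.
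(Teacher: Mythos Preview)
The statement is labelled a \emph{conjecture} in the paper and is not proved there. The paper's contribution is twofold: it reports computer verification up to $n\le 6$, and it observes (immediately after Theorem~\ref{thm:maxchain}) that a positive answer to Question~\ref{conj:Sconjecture} together with Theorem~\ref{thm:maxchain} would imply the conjecture. Your proposal follows exactly this conditional route --- reduce to the existence of a surviving family $S$, then read off the top two binomial coefficients from the unique longest chain to the maximal-area path $\pi^\star_\lambda$ --- and you correctly flag the existence of $S$ as the genuine obstruction. In that sense your outline matches the paper's own heuristic.

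There is, however, a technical slip you should fix. Your ``core combinatorial claim'' that the longest strictly increasing \emph{Tamari} chain from $(NE)^n$ to $\pi$ has exactly $\area(\pi)$ cover steps is false as stated: the Tamari lattice is not graded, and Tamari covers need not increase area by~$1$. The $r=2$ specialization you invoke says that the collar (a statement about \emph{Hopf} chains) equals the area, not that arbitrary Tamari chains realise the area as their length. What the paper actually proves (Theorem~\ref{thm:maxchain}) is the Hopf-chain version, and only for Dyck paths that leave the diagonal at most once --- precisely the paths $\pi^\star_\lambda$ you need. You should invoke that theorem directly rather than a general Tamari claim.

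Finally, note that your argument silently assumes extra structure on $S$ beyond Question~\ref{conj:Sconjecture}: that $S$ contains the specific saturated chain to $\pi^\star_\lambda$ constructed in Theorem~\ref{thm:maxchain}, and that $S$ is closed under deleting interior paths (to produce a length-$m$ witness for $b\ge 1$). These are reasonable desiderata for $S$ but are not part of the question as posed, so even a bare positive answer to Question~\ref{conj:Sconjecture} would not quite close the gap without them.
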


The next statement is the counterpart of the last conjecture for Hopf chains.
We say that a path \defn{leaves the diagonal at most once} if it is of the form~$(NE)^a \rho (NE)^b$ where~$a,b \in \N$ and~$\rho$ is a Dyck path whose endpoints are the only diagonal points.

\begin{theorem}
\label{thm:maxchain}
For any Dyck path~$\pi$ that leaves the diagonal at most once, there is a unique strict Hopf chain~$(\pi_1,\ldots,\pi_m)$ of maximal length where $m = area(\pi)+1$ and~$\pi_m = \pi$.
\end{theorem}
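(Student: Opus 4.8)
The plan is to combine a length bound coming from the classical Tamari lattice with an induction on the area that pins the chain down one step at a time.

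First I would observe that any strict Hopf chain $(\pi_1,\dots,\pi_m)$ with $\pi_m=\pi$ is, in particular, a strict chain $\pi_1<\pi_2<\cdots<\pi_m$ in the classical Tamari lattice: applying condition~(2) of Definition~\ref{def:HopfChains} to the triples $(\pi_1,\pi_j,\pi_k)$ and using that $\pi_1=(NE)^n$ is the bottom element gives $\pi_j\le_{\mathrm{Tam}}\pi_k$ for all $j<k$. By the interpretation of the area as the length of a longest chain from $(NE)^n$ to $\pi$ recalled in Section~\ref{subsec:definitionsMultivariateDiagonalHarmonics}, this forces $m-1\le\area(\pi)$, with equality exactly when every cover $\pi_i\lessdot\pi_{i+1}$ raises the area by one. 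Hence a maximal strict Hopf chain has length $m=\area(\pi)+1$ and consists of area--one Tamari covers; it remains to prove existence and uniqueness of such a chain that also satisfies the full Hopf condition.

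Next I would set up the induction on $\area(\pi)$, the base case $\area(\pi)=0$ (so $\pi=(NE)^n$ and the chain is $(\pi)$) being trivial. Both the Hopf condition and the maximal-length condition are hereditary for prefixes containing $\pi_1$: if $(\pi_1,\dots,\pi_m)$ is a maximal strict Hopf chain then so is $(\pi_1,\dots,\pi_{m-1})$, and the latter ends at a path with area $\area(\pi)-1$. Thus uniqueness of the whole chain will follow from the inductive hypothesis once I establish the key step: for $\pi$ leaving the diagonal at most once there is a \emph{unique} Tamari predecessor $\pi^{-}\lessdot\pi$ raising the area by one such that $\pi^{-}$ again leaves the diagonal at most once and the extension by $\pi$ of the (inductively unique) maximal Hopf chain of $\pi^{-}$ is still a Hopf chain, and conversely every maximal strict Hopf chain ending at $\pi$ has $\pi_{m-1}=\pi^{-}$.

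The heart of the argument is therefore this predecessor lemma. I would first give the local description of area--one covers in the Dyck-path model: writing $\pi=\mu_1\,NE\,\mu_3$ at a peak and swapping to $\mu_1\,EN\,\mu_3$ yields a genuine cover only when the horizontal-distance condition defining $\nu$-Tamari flips (Section~\ref{subsec:nuTamari}) is satisfied with empty middle block, which restricts the admissible peaks. Using that $\pi=(NE)^a\rho(NE)^b$ has a single bump $\rho$, I would show that contracting the appropriate corner of $\rho$ is the only admissible contraction preserving the single-bump form, and that any other admissible area--one contraction produces a path incomparable, in the relevant $\pi_i$-Tamari lattice, to some later member of the chain, hence violating a triple $(\pi_i,\pi_j,\pi)$ of the Hopf condition. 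This is where I expect the main difficulty: translating the triple conditions $\pi_j\le_{\pi_i}\pi_k$ into concrete obstructions requires controlling the $\nu$-Tamari order for the intermediate paths $\pi_i$ — which is \emph{not} the restriction of the classical Tamari order — and I would do this through the bijection of Proposition~\ref{prop:bijectionNuTreesDyckIntervals} and Theorem~\ref{thm:dominantPipeDreamsVSNuTrees}, reading each comparison off the corresponding dominant pipe dreams and $\nu$-trees.

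Finally, existence of the canonical chain is obtained by iterating the predecessor construction downward from $\pi$ until reaching $(NE)^n$: the single-bump property is preserved at each stage by the lemma, the area drops by one each time, and the triples created at each extension are exactly those verified in the lemma, so the resulting chain of length $\area(\pi)+1$ is a genuine Hopf chain ending at $\pi$. Combining existence with inductive uniqueness yields the statement. I would take particular care with the subtle point that the chosen predecessor really does leave the diagonal at most once — a corner contraction could a priori create a new return to the diagonal — since this hypothesis is exactly what fails for general $\pi$ and is responsible for uniqueness breaking down; already for $n=4$ there are Tamari chains that are not Hopf chains (Figure~\ref{fig:killer}), so the single-bump assumption cannot be dropped.
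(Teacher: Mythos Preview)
Your overall approach is different from the paper's, and the crucial step you defer is exactly where your plan is shaky.

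The paper does not induct on the area.  It writes down the canonical chain explicitly --- $\pi_{m-\ell}$ is obtained from $\pi$ by deleting $\ell$ area cells, left to right along each row, processing rows from the top down --- and checks directly that this is a Hopf chain of length $\area(\pi)+1$.  For uniqueness it takes an \emph{arbitrary} Tamari chain $\bpi'=(\pi'_1,\dots,\pi'_m)$ of the same length with $\pi'_m=\pi$ and $\bpi'\ne\bpi$, lets $k$ be the largest index where $\pi'_k\ne\pi_k$, and from the single differing box $B'_k$ reads off two cells $U,V$ in adjacent rows together with indices $p<q$ (defined by $B'_p=U$, $B'_q=V$) for which $\pi'_q\not\le\pi'_{q+1}$ in the $\pi'_p$\nobreakdash-Tamari order.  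That triple witnesses the failure of the Hopf condition.  There is no recursion, no passage through pipe dreams or $\nu$\nobreakdash-trees.

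The gap in your plan is the predecessor lemma, and it is structural.  You want to rule out a wrong penultimate path $\pi'_{m-1}$ by exhibiting a bad triple $(\pi_i,\pi'_{m-1},\pi)$, but the witness $\pi_i$ you need lives in the chain \emph{below} $\pi'_{m-1}$, and that chain is not under your control: your induction hypothesis only describes it when $\pi'_{m-1}$ again leaves the diagonal at most once, and nothing forces an arbitrary area\nobreakdash-one Tamari predecessor of $\pi$ to have that property.  More seriously, the paper's argument shows that the obstructing triple in a non\nobreakdash-canonical chain is of the form $(\pi'_p,\pi'_q,\pi'_{q+1})$ with $q+1$ possibly strictly smaller than $m$ --- the failure need not sit at the top step at all.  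Your scheme would have to catch this at some deeper stage of the recursion, precisely where the intermediate paths may have left the single\nobreakdash-bump class and the hypothesis is no longer available.  The direct comparison with the explicit canonical chain, rather than an induction confined to single\nobreakdash-bump paths, is what makes the paper's argument go through.
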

 
\begin{proof}
It is easy to check that the chain~$\bpi = (\pi_1,\ldots,\pi_m)$ where~$\pi_{m-\ell}$ is obtained from $\pi$ by removing~$\ell$ squares from left to right, row by row, from top to bottom, is a strict Hopf chain. It has the desired length and is certainly maximal. \fref{fig:maxStrictHopfChain} illustrates~$\bpi$ with an example.

\begin{figure}[ht]
	\capstart
	\centerline{
		\includegraphics[scale=0.5]{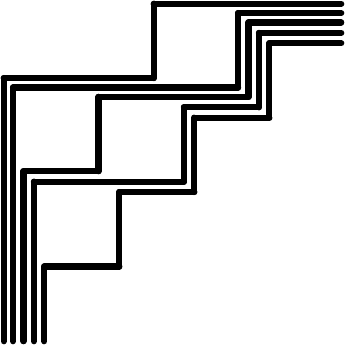}
	}
	\caption{The maximal chain~$\bpi$.}
	\label{fig:maxStrictHopfChain}
\end{figure}

Now assume we have another Tamari chain~$(\pi'_1, \dots, \pi'_m) = \bpi' \ne \bpi$ of the same length and with~$\pi'_m = \pi$. We show that~$\bpi'$ cannot satisfy the Hopf chain condition, thus concluding the proof. For~$k \in [m]$, we denote by~$B_k$ (resp.~$B'_k$) the unique box of the grid located between the paths~$\pi_k$ and~$\pi_{k+1}$ (resp.~$\pi'_k$ and~$\pi'_{k+1}$). Let~$k$ be such that~$\pi'_{k} \ne \pi_{k}$ while~$\pi'_\ell = \pi_\ell$ for any~$\ell > k$. Consider the boxes~$B_k = (x,y)$ and~$B'_k = (x',y')$. Note that $B'_k$ is a box above~$\pi'_k$ in row~$y'$. We can thus consider the rightmost box~$U$ above~$\pi'_k$ in row~$y'$. Moreover, by definition of~$\bpi$, we have~$y > y'$ so that $B_k$ is a box below~$\pi'_k$ in some row above~$y$. Since~$\pi$ leaves the diagonal at most once, it follows that there is at least a box below~$\pi'_k$ in row~$y'+1$. Therefore, we can consider the leftmost box~$V$ below~$\pi'_k$ in row~$y'+1$. Let~$p$ and~$q$ be such that~$B'_p = U$ and~$B'_q = V$. By definition, $\pi'_p$ passes below both~$U$ and~$V$, $\pi'_q$ passes below~$U$ but above~$V$, and $\pi'_{q+1}$ passes above both~$U$ and~$V$. Therefore, $\pi'_q \not< \pi'_{q+1}$ in the~$\pi'_p$-Tamari order. We conclude that~$\bpi'$ is not a Hopf chain as expected.
\end{proof}

Note that for a fixed partition~$\lambda$, there is a unique Dyck path~$\pi_\lambda$ of type~$\lambda$ and maximal area~$m = \sum_{i=1}^n (n-i+1)(\lambda_i-1)$, and moreover this path~$\pi_\lambda$ leaves the diagonal at most once.
Therefore, if the connection between diagonal harmonics and certain modified Hopf chains suggested in Question~\ref{conj:Sconjecture} was established, Conjecture~\ref{conj:structureTopCoeff} would follow from Theorem~\ref{thm:maxchain} applied to~$\pi_\lambda$.


\section*{Acknowledgments}

We thank Fran\c{c}ois Bergeron for fruitful discussions on multivariate diagonal harmonics.
These discussions were essential for the development of Part~\ref{part:multivariateDiagonalHarmonics} and lead in particular to Theorem~\ref{thm:LLT}.
We also thank Nicolas Thi\'ery for sharing his computations.
We are grateful to Myrto Kallipoliti, Robin Sulzgruber and Eleni Tzanaki for pointing out the observation in Remark~\ref{rem_bouncek_heightk}. 
Thanks to their observation we discovered the connection between the Steep-Bounce Conjecture and the zeta map in Section~\ref{sec_steepbounce_conjecture}.
The computation and tests needed along the research were done using the open-source mathematical software \texttt{Sage}~\cite{Sage} and its combinatorics features developed by the \texttt{Sage-combinat} community~\cite{SageCombinat}.
Finally, we are grateful to an anonymous referee for relevant suggestions.


\bibliographystyle{alpha}
\bibliography{BCP_HopfAlgebra_PipeDreams}
\label{sec:biblio}


\end{document}